\documentclass[11pt]{gsm-l}

\usepackage{extsizes}

\usepackage{tikz}
\usepackage{rotating}

\usepackage{enumitem}
\setlist[itemize]{topsep=0pt,itemsep=0pt}
\setlist[enumerate]{topsep=0pt,itemsep=0pt}

\usepackage{scalerel}
\usepackage{tkz-euclide}
\usetikzlibrary{arrows, arrows.meta, bending, decorations.pathmorphing, decorations.markings} % hobby} }

\usepackage{auto-pst-pdf}
\usepackage{hyperref}

\usepackage{pict2e, amssymb,latexsym,amstext,amsmath,amsthm,epsfig,ifthen}
\usepackage{color}
\usepackage{blkarray}
\usepackage[capitalize, nameinlink, noabbrev, nosort]{cleveref}
\usepackage{youngtab}
\usepackage[boxsize=.35 em]{ytableau}

\usepackage[dvips,centering,includehead,width=12.61cm,height=22.1cm,paperheight=23.8cm,paperwidth=17cm]{geometry}

\input epsf

% comment out the chapters that you don't want in the dvi

\includeonly{
%tp-examples,
%combinatorics-of-mutations, 
%geometric-type,
%newfromold,
%finite-type,
%rings,
plabic,
bibliography
}

%%%%%%%%%%%%% manual labels used to reference other chapters

\makeatletter
\newcommand{\manuallabel}[2]{\phantomsection\def\@currentlabel{#2}\label{#1}}
\makeatother

%%%%%%%% global macros %%%%%%%%%%%%%%%%%%%%%%%

\def\LW#1{(\textcolor{blue}{LW:#1})}

\newtheorem{theorem}{Theorem}[section]
\newtheorem{proposition}[theorem]{Proposition}

\newtheorem{corollary}[theorem]{Corollary}
\newtheorem{lemma}[theorem]{Lemma}

\theoremstyle{definition}

\newtheorem{remark}[theorem]{Remark}
\newtheorem{example}[theorem]{Example}
\newtheorem{definition}[theorem]{Definition}

\newtheorem{exercise}[theorem]{Exercise}

\numberwithin{section}{chapter}
\numberwithin{equation}{section}
\numberwithin{figure}{chapter}
\numberwithin{table}{chapter}

\DeclareMathOperator{\aff}{aff}
\DeclareMathOperator{\algn}{align}

\def\ZZ{\mathbb{Z}}

\def\source{\operatorname{source}}
\def\target{\operatorname{target}}

\def\Gr{\operatorname{Gr}}

\newcommand{\light}[1]{{#1}}
\newcommand{\dark}[1]{{#1}}

\newcommand{\white}[1]{{\color{white} #1}}
\newcommand{\red}[1]{{\color{red} #1}}
\newcommand{\yellow}[1]{{\color{yellow} #1}}

\definecolor{darkred}{rgb}{1,0,0}        
\definecolor{lightred}{rgb}{1,0.4,0}     
\definecolor{darkblue}{cmyk}{1,0.4,0,0.4}  %blue
\definecolor{lightblue}{cmyk}{1,0.4,0,0}  %blue
\definecolor{darkgreen}{cmyk}{1,0.5,1,0}  
\definecolor{lightgreen}{cmyk}{1,0,1,0}  
%\definecolor{lightgray}{rgb}{0.1,0.1,0.1}  

\newcommand{\dpi}{{\widetilde{\pi}}}
\def\Bip{\operatorname{Bip}}

\newcommand{\notch}{\scriptstyle\bowtie}

\newsavebox{\digon}
\setlength{\unitlength}{4pt} 
\savebox{\digon}(10,10)[bl]{
\qbezier(5,0)(0,5)(5,10)
\qbezier(5,0)(10,5)(5,10)
\put(5,0){\circle*{1}} 
\put(5,5){\circle*{1}} 
\put(5,10){\circle*{1}} 
}

\newsavebox{\lowbar}
\savebox{\lowbar}(10,10)[bl]{
\put(5,0){\line(0,1){5}}
}

\newsavebox{\lowtag}
\setlength{\unitlength}{4pt} 
\savebox{\lowtag}(10,10)[bl]{
\put(5,3.5){\makebox(0,0){$\notch$}}
}

\newsavebox{\highbar}
\setlength{\unitlength}{4pt} 
\savebox{\highbar}(10,10)[bl]{
\put(5,5){\line(0,1){5}}
}

\newsavebox{\hightag}
\setlength{\unitlength}{4pt} 
\savebox{\hightag}(10,10)[bl]{
\put(5,6.5){\makebox(0,0){$\notch$}}
}

%%%%%%%% end of global macros %%%%%%%%%%%%%%%%%%%%%%%

%RESTORE IN THE FINAL VERSION:
\makeindex

\begin{document}

%%%%%%%%%%%%% mock manual labels temporarily created to verify the actual ones
%%%%%%%%%%%%% comment out all chapters in \includeonly above, 
%%%%%%%%%%%%% then run the commands below for all labels to see their values 

%\makeatletter
%\newcommand{\mockmanuallabel}[2]{\ref{#1}\ \ #2\ \ \def\@currentlabel{#2}\label{#1}\par}
%\makeatother

%RESTORE FRONTMATTER FOR FINAL VERSION
%\iffalse

\frontmatter

\title{
\textsf{\Huge \hbox{Introduction to Cluster Algebras}}\\[.1in]
\textsf{\Huge Chapter 7} \\[.2in]
{\rm\textsf{\LARGE (preliminary version)}}
}

\author{\Large \textsc{Sergey Fomin}}
%\address{Department of Mathematics, 
%University of Michigan,
%Ann Arbor, MI 48109} 
%\email{fomin@umich.edu}

\author{\Large \textsc{Lauren Williams}}
%\address{Department of Mathematics, University of California, Berkeley, CA 94720} 
%\email{williams@math.berkeley.edu}

\author{\Large \textsc{Andrei Zelevinsky}}
%\address{Department of Mathematics, Northeastern University, Boston, MA 02115}
%\email{andrei@neu.edu}

\maketitle

\noindent
\textbf{\Huge Preface}

\vspace{1in}

\manuallabel{ch:tp-examples}{1}
\manuallabel{ch:combinatorics-of-mutations}{2}
\manuallabel{def:graphquiver}{2.5.1}
\manuallabel{urban}{2.5}
\manuallabel{def:urban-renewal}{2.5.3}
\manuallabel{def:tildeB_I}{4.1.1}
\manuallabel{sec:triangulations}{2.2}
\manuallabel{sec:mut-wiring}{2.3}
\manuallabel{fig:quiver-triangulation}{2.2}
\manuallabel{sec:matrices}{1.4}
\manuallabel{rem:opposite}{3.1.10}
\manuallabel{fig:chamber-quiver2}{2.6}
\manuallabel{fig:Q_3}{2.3}
\manuallabel{exercise:FG-sl3-d4}{2.2.3}
\manuallabel{rem:reduceddecomp}{2.3.4}
%\manuallabel{}{}

\manuallabel{ex:SL_2}{1.1.2}
\manuallabel{sec:Ptolemy}{1.2}
\manuallabel{eq:grassmann-plucker-3term}{1.2.1}
\manuallabel{sec:baseaffine}{1.3}
\manuallabel{prop:Muir}{1.3.5}
\manuallabel{fig:moves}{1.10}

\manuallabel{def:Q(T)-polygon}{2.2.1}
\manuallabel{def:quiverwd}{2.3.1}
\manuallabel{sec:mut-double-wiring}{2.4}
\manuallabel{fig:markov-quiver}{2.10}
% OLD LABEL: \manuallabel{fig:markov-quiver}{2.9}

\manuallabel{eq:exch-rel-geom}{3.1.1}
\manuallabel{exercise:quiverexchange}{3.1.3}
\manuallabel{def:Tn}{3.1.4}
\manuallabel{def:cluster-algebra}{3.1.6}
%THE EXAMPLE BELOW IS ABSENT IN THE OLD arXiv VERSION
\manuallabel{ex:base-affine-SL3}{3.2.1}
\manuallabel{example:A(1,2)}{3.2.7}
%OLD LABEL: \manuallabel{example:A(1,2)}{3.2.5}
\manuallabel{thm:Laurent}{3.3.1}
\manuallabel{th:Laurent-sharper}{3.3.6}

\manuallabel{ex:mutation-acyclic}{4.1.5}
\manuallabel{def:clustersubalgebra}{4.2.6}

\manuallabel{th:finite-type-classification}{5.2.8}
\manuallabel{sec:type-A}{5.3}
\manuallabel{example:SL4}{5.3.8}
\manuallabel{sec:type-D}{5.4}
\manuallabel{def:arcs-Dn}{5.4.3}
\manuallabel{def:P-gamma}{5.4.9}
\manuallabel{eq:Omega}{5.3.2}
\manuallabel{eq:Adefn}{5.4.1}
\manuallabel{tab:cluster-numbers}{5.17}

%\manuallabel{ch:plabic}{7}
%\manuallabel{ch:Grassmannians}{8}

\manuallabel{ch:surfaces}{10}
\manuallabel{ch:Grassmannians}{8}
\manuallabel{ch:dynamical}{11}
\manuallabel{ch:general-cluster-algebras}{12}
\manuallabel{thm:Dynkin-hereditary}{10.4.1}
\manuallabel{sec:finite-mutation-type-skew-symmetrizable}{10.2}
\manuallabel{sec:generalized-associahedra}{9.3}
\manuallabel{sec:exchangegraph}{9.1}

\noindent
This is a preliminary draft of Chapter 7 of our forthcoming textbook
\textsl{Introduction to cluster algebras}, 
joint with Andrei Zelevinsky (1953--2013). 

Other chapters have been posted as 
\begin{itemize}[leftmargin=.2in]
	\item  \href{https://arxiv.org/abs/1608.05735}{(Chapters~1--3)}, 
	\item \href{https://arxiv.org/abs/1707.07190}{(Chapters~4--5)}, and 
	\item \href{https://arxiv.org/abs/2008.09189}{(Chapter~6)}.
%\item \texttt{arXiv:2106.02160} (Chapter~7). 
\end{itemize}
We expect to post additional chapters in the not so distant future. 

\medskip

Anne Larsen and Raluca Vlad made a number of valuable suggestions 
that helped improve the quality of the first version of this manuscript.
We are also grateful to 
Zenan Fu, Amal Mattoo, Hanna Mularczyk, and Ashley Wang for their comments on 
a subsequent version  of this chapter, 
and for assistance with creating figures, and to 
Gregory Li, Stella Li, Annabel Ma, Jacob Paltrowitz, and Katherine Tung for their comments on 
the current version of this chapter and assistance with figures.
We would also like to thank Melissa Sherman-Bennett for useful comments.  
Last but not least, we are grateful to Pasha Galashin, 
%and his students Ariana Chin, Thomas Martinez, Robert Miranda, Olha Shevchenko, and Matty
%Tyler, 
%Not sure where to thank his students?  My impression is that they 
% couldn't get through Chapter 7 but they did give feedback on earlier chapters.
whose suggestions greatly clarified
our exposition.

Our work was partially supported by the
NSF grants DMS-1664722, DMS-1854512, DMS-1854316, DMS-2054231, DMS-2152991,
DMS-2348501, and by the Radcliffe Institute for Advanced Study.

\medskip

Comments and suggestions are welcome. 

\bigskip

\rightline{Sergey Fomin}
\rightline{Lauren Williams}

\vfill

\noindent
2020 \emph{Mathematics Subject Classification.} Primary 13F60.

\bigskip

\noindent
\copyright \ 2021 by 
Sergey Fomin, Lauren Williams, and Andrei Zelevinsky

\tableofcontents

\mainmatter

\setcounter{chapter}{6}

%RESTORE FRONTMATTER FOR FINAL VERSION
%\fi

%\include{introduction}
%\include{tp-examples}
%\include{combinatorics-of-mutations}
%\include{geometric-type}
%\include{newfromold}
%\include{finite-type}
%\setcounter{page}{0}

% !TEX root = /Users/sf/Dropbox/Book/Sources/chapter7-hyper.tex
\chapter{Plabic graphs}
\label{ch:plabic}
\def\X{\mathfrak{X}}
\def\affpi{\widetilde{\pi}_{\aff}}

In this chapter, 
we present the combinatorial machinery of \emph{plabic graphs},   
which were introduced by A.~Postnikov~\cite{postnikov}. 
These are planar (unoriented) graphs with bicolored vertices
satisfying some mild technical conditions. 
Plabic graphs can be transformed using certain %naturally defined 
\emph{local moves}. 
A key observation is that each plabic graph gives rise to a quiver,
so that local moves on plabic graphs translate into (a subclass of) quiver mutations.  

Crucially, the combinatorics underlying several important classes of cluster structures 
that arise in applications fits into the plabic graphs framework. 
This in particular applies to the basic examples introduced in Chapter~\ref{ch:tp-examples}. 
More concretely, we show that the combinatorics of flips in triangulations of a convex polygon 
(resp., braid moves in wiring diagrams, either ordinary or double)
can be entirely recast in the language of plabic graphs. 
In these and other examples,  an important role is played by 
the subclass of \emph{reduced} plabic graphs 
that are analogous to---and indeed generalize---reduced decompositions in symmetric groups.

D.~Thurston's \emph{triple diagrams}~\cite{thurston} are closely
related to plabic graphs.  
After making this connection precise and developing the machinery of triple diagrams, 
we use this machinery to establish the fundamental properties of reduced plabic graphs. 

Plabic graphs and related combinatorics have 
arisen in the study of shallow water waves \cite{kodwil,kodwilpnas} (via the KP equation)
and in connection with scattering amplitudes in $\mathcal{N}=4$ super Yang-Mills theory~\cite{amplitudes}.
Constructions closely related to plabic graphs were studied by T.~Kawamura~\cite{kawamura} 
in the context of the topological theory of graph divides.

A reader interested exclusively in the combinatorics of plabic graphs can read this chapter 
independently of the previous ones. 
While we occasionally refer to combinatorial constructions introduced in 
Chapters \ref{ch:tp-examples}--\ref{ch:combinatorics-of-mutations}, 
they are not relied upon 
in the development of the general theory of plabic graphs. % does not rely upon them. 

Cluster algebras as such  do not appear in this chapter. 
On the other hand, as we will see in a subsequent chapter,
reduced plabic graphs introduced herein will 
prominently feature 
in the study of cluster structures in Grassmannians and related varieties. 
%see Chapter~\ref{ch:Grassmannians}.
A reader who wishes to skim the chapter for the main ideas may choose
to read only Sections \ref{sec:overview}-
\ref{sec:plabictriangulations}, and 
Sections \ref{sec:edge-labels}-\ref{weaksep}.

%\smallskip

The structure of this chapter is as follows.

In \cref{sec:overview} we introduce plabic graphs and give an overview of 
the main results about them.
In particular, we introduce the important notions of \emph{reduced plabic graph} and 
 \emph{trip permutation}, and 
 we state 
 (but do not prove) 
the ``fundamental theorem of reduced plabic graphs,'' 
which characterizes the move-equivalence classes
of reduced plabic graphs in terms of associated \emph{decorated permutations}. 

\cref{sec:plabic} describes how to associate a quiver to any plabic graph.

In \cref{sec:plabictriangulations}, 
we recast the combinatorics of triangulations of 
a %convex 
polygon and (ordinary or double) wiring diagrams in the language of plabic graphs.  

\cref{sec:tri} discusses the version of the theory in which all internal vertices 
of plabic graphs are \emph{trivalent}. 
%(This version naturally arises in some applications, cf., e.g.,~\cite{fpst}.)
As \cref{sec:tri} is not strictly necessary for the sections that follow, 
it can be skipped if desired.

\cref{sec:triple-diagrams} introduces the basic notions of  
\emph{triple diagrams}. 
We then show that triple diagrams are in bijection with 
\emph{normal plabic graphs}. 

In \cref{sec:mintriple}, we study \emph{minimal} triple diagrams, largely following~\cite{thurston}.
These diagrams can be viewed as counterparts of reduced plabic graphs. 

In \cref{minred}, we explain how to go between minimal triple diagrams and reduced
plabic graphs.
We then use this correspondence to prove the fundamental theorem of 
reduced plabic graphs. 

In \cref{sec:bad}, we state and prove the \emph{bad features criterion}
that detects whether a plabic graph is reduced or not.

In \cref{sec:affine}, we describe a bijection between decorated permutations
and a certain subclass of \emph{affine permutations}.

In \cref{sec:bridge}, we use a factorization algorithm for affine permutations 
 to construct a family of reduced plabic graphs called \emph{bridge decompositions}.

\cref{sec:edge-labels} discusses \emph{edge labelings} of reduced plabic graphs
and gives the \emph{resonance criterion} for 
recognizing whether a plabic graph is reduced.  

\cref{sec:face-labels} introduces \emph{face labelings} of reduced plabic graphs.

In \cref{weaksep}, we provide an intrinsic combinatorial characterization of collections of
face labels that arise via this construction.  
Face labels  will reappear in 
a subsequent chapter %Chapter~\ref{ch:Grassmannians}
on cluster structures in Grassmannians.

\newpage
\section{Plabic graphs and the main results}
\label{sec:overview}

\begin{definition}
\label{def:plabic}
	A \emph{plabic} (planar bicolored) \emph{graph} is a (planar)
	graph~$G$ embedded
into a closed disk~$\mathbf{D}$, so that:
\begin{itemize}[leftmargin=.2in]
\item
the embedding of $G$ into~$\mathbf{D}$ is planar, i.e., the edges do not cross; 
\item
each internal vertex is colored black or white;
\item
each internal vertex is connected by a path to some boundary vertex;
\item
the (uncolored) vertices lying on the boundary of~$\mathbf{D}$ 
are labeled $1,2,\dots,b$ in clockwise order, for some positive integer~$b$; 
\item
each of these $b$ \emph{boundary vertices} is incident to a single edge. 
%\item
%a leaf (i.e., a degree~1 vertex) cannot be colored white, 
%unless it is a \emph{lollipop}, i.e., is connected by an edge to a boundary vertex. 
\end{itemize}
Loops and multiple edges are allowed.

We consider plabic graphs up to isotopy of the ambient disk~$\mathbf{D}$,
fixing the disk's boundary. 
The \emph{faces} of~$G$ are 
the connected components of the complement of 
$G$ inside~$\mathbf{D}$. 
A degree~1 internal vertex that 
is connected by an edge to a boundary vertex is called
	a \emph{lollipop}.  Any other degree~1 internal vertex is called a \emph{leaf}.  Boundary vertices are not leaves.
\end{definition}

Two examples of plabic graphs are shown in Figure~\ref{fig:plabic}. 
Many more examples will appear throughout this chapter. 
In what follows, we will often omit the boundary 
of the ambient disk when drawing plabic graphs.

\begin{figure}[ht]
\vspace{-5pt}
\begin{center}
{\ }\qquad 
\begin{tabular}{cc}
\setlength{\unitlength}{1pt}
\begin{picture}(120,125)(-57,-70)
\thicklines
\put(15,-5){\circle{120}}
\multiput(1,0)(1,30){2}{\line(1,0){27}}
\multiput(0,1)(30,1){2}{\line(0,1){27}}
\put(0,30){\circle{4}}
\put(30,30){\circle*{4}}
\put(30,0){\circle{4}}
\put(0,0){\circle*{4}}
\put(-10,-25){\circle{4}}
\put(15,-40){\circle*{4}}
\put(40,-25){\circle{4}}
\put(0,0){\line(-10,-25){9.2}}
\put(30.5,-2){\line(10,-25){8.5}}
\put(15,-40){\line(-25,15){23}}
\put(15,-40){\line(25,15){23}}
\put(15,-65){\circle*{3}}
\put(15,-40){\line(0,-1){24}}
\put(-12,48){\circle*{3}}
\put(42,48){\circle*{3}}
\put(-12,48){\line(12,-18){11}}
\put(42,48){\line(-12,-18){11}}
\put(-29,-46){\circle*{3}}
\put(59,-46){\circle*{3}}
\put(-29,-46){\line(19,21){17.5}}
\put(59,-46){\line(-19,21){17.5}}
\put(-17,53){\makebox(0,0){$\mathbf{1}$}}
\put(47,53){\makebox(0,0){$\mathbf{2}$}}
\put(64,-50){\makebox(0,0){$\mathbf{3}$}}
\put(15,-72){\makebox(0,0){$\mathbf{4}$}}
\put(-34,-50){\makebox(0,0){$\mathbf{5}$}}
\end{picture}
\qquad{\ }
& 
{\ }\qquad
\setlength{\unitlength}{1pt}
\begin{picture}(120,120)(-42,-70)
\thicklines
\put(15,-5){\circle{120}}
\multiput(1,0)(1,30){2}{\line(1,0){27}}
\multiput(0,1)(30,1){2}{\line(0,1){27}}
\put(-30,-5){\circle{4}}
\put(-45,-5){\circle*{3}}
\put(-45,-5){\line(1,0){13}}
\put(0,30){\circle{4}}
\put(30,30){\circle*{4}}
\put(30,0){\circle{4}}
\put(0,0){\circle*{4}}
\put(-10,-25){\circle{4}}
\put(15,-40){\circle*{4}}
\put(40,-25){\circle{4}}
\put(0,0){\line(-10,-25){9.2}}
\put(30.5,-2){\line(10,-25){8.5}}
\put(15,-40){\line(-25,15){23}}
\put(15,-40){\line(25,15){23}}
\put(15,-65){\circle*{3}}
\put(15,-40){\line(0,-1){24}}
\put(-12,48){\circle*{3}}
\put(42,48){\circle*{3}}
\put(-12,48){\line(12,-18){11}}
\put(42,48){\line(-12,-18){11}}
\put(-29,-46){\circle*{3}}
\put(59,-46){\circle*{3}}
\put(-29,-46){\line(19,21){17.5}}
\put(59,-46){\line(-19,21){17.5}}
\put(-17,53){\makebox(0,0){$\mathbf{1}$}}
\put(47,53){\makebox(0,0){$\mathbf{2}$}}
\put(64,-50){\makebox(0,0){$\mathbf{3}$}}
\put(15,-72){\makebox(0,0){$\mathbf{4}$}}
\put(-34,-50){\makebox(0,0){$\mathbf{5}$}}
\put(-51,-5){\makebox(0,0){$\mathbf{6}$}}
\end{picture}
\qquad {\ }
\\[.1in]
(a) & (b)
\end{tabular}
\end{center}
\vspace{-.2in} 
\caption{(a) A plabic graph $G$. (b) A plabic graph $G'$ with a white lollipop.}
\label{fig:plabic}
\end{figure}
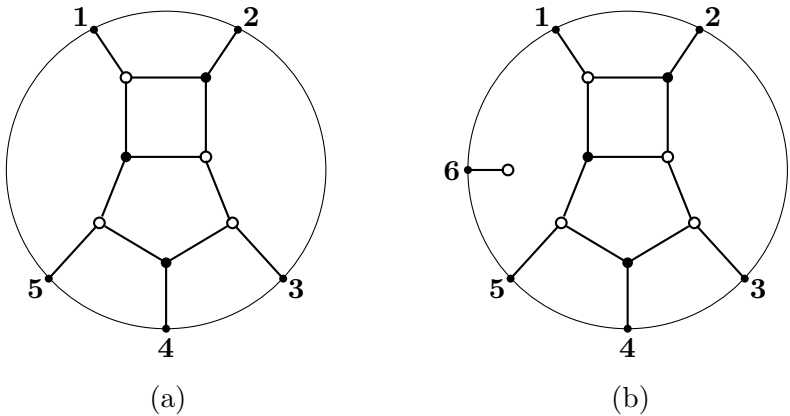

\begin{remark}
\label{rem:plabic-history}
Plabic graphs were originally introduced by A.~Postnikov
\cite[Section~12]{postnikov}, who used them to describe 
parametrizations of cells in totally nonnegative Grassmannians. 
A~closely related class of graphs was defined by 
T.~Kawamura~\cite{kawamura} in the context of the topological theory of \emph{graph divides}.
Our definition is very close to Postnikov's. 
\end{remark}

\begin{remark}
In some contexts, it is useful to drop the third condition in 
\cref{def:plabic}
to allow components of $G$ disconnected from the boundary.
\end{remark}

\begin{definition}\label{def:leafless}
We say that a plabic graph is \emph{leafless}
	if it has no leaves. (Note that a leafless plabic graph may have lollipops.)
\end{definition}

For simplicity, we will mainly 
 restrict our attention to leafless plabic graphs.  The results are simpler to state
 and the proofs are less technical in this setting.  
Moreover, for most applications
of plabic graphs, it suffices to work with leafless plabic graphs.
  A notable exception is the class of 
\emph{normal plabic graphs}, which we will study in \cref{sec:triple-diagrams},
and which are in bijection with \emph{triple diagrams}.  Normal plabic graphs
and triple diagrams  will be a key tool
in the proof of 
\cref{thm:moves}.

A key role in the theory of plabic graphs is played by the  
equivalence relation generated by a family of transformations
called  \emph{(local) moves}. 

\begin{definition}
\label{def:moves}
We say that two leafless plabic graphs $G$ and $G'$ are
\emph{move-equi\-valent}, and write $G \sim G'$, 
if $G$ and~$G'$ can be related to each other via a sequence of 
the following \emph{local moves}, denoted (M1), (M2), and~(M3): 
\begin{itemize}[leftmargin=.4in]
\item[(M1)]
(The \emph{square move}) 
Change the colors of all vertices on the boundary of a quadrilateral face, 
provided these colors alternate and these vertices are trivalent.  
See Figure~\ref{fig:M1}.

\begin{figure}[ht]%[htbp]
\begin{center}
\vspace{-.1in} 
\setlength{\unitlength}{.8pt}
\begin{picture}(30,36)(0,0)
\thicklines
\multiput(1,0)(1,30){2}{\line(1,0){27.3}}
\multiput(0,1)(30,1){2}{\line(0,1){27.3}}
\put(0,0){\circle*{4}}
\put(0,30){\circle{4}}
\put(30,30){\circle*{4}}
\put(30,0){\circle{4}}
\put(-10,40){\line(1,-1){8.8}}
\put(-10,-10){\line(1,1){9}}
\put(40,-10){\line(-1,1){8.8}}
\put(40,40){\line(-1,-1){9}}
\end{picture}
\quad
\begin{picture}(30,36)(0,0)
\put(15,15){\makebox(0,0){\Large{$\longleftrightarrow$}}}
\end{picture}
\quad
\begin{picture}(30,36)(0,0)
\thicklines
\multiput(1.5,0)(0,30){2}{\line(1,0){27}}
\multiput(0,1.5)(30,-0.5){2}{\line(0,1){27.5}}
\put(0,0){\circle{4}}
\put(0,30){\circle*{4}}
\put(30,30){\circle{4}}
\put(30,0){\circle*{4}}
\put(-10,40){\line(1,-1){9}}
\put(-10,-10){\line(1,1){8.5}}
\put(40,-10){\line(-1,1){9}}
\put(40,40){\line(-1,-1){8.5}}
\end{picture}
\end{center}
\vspace{-.1in} 
\caption{Move (M1) on plabic graphs.}
\label{fig:M1}
\end{figure}
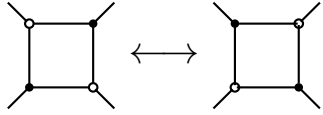

\item[(M2)]
Remove a bivalent vertex (of any color) and merge the edges adjacent to it;  
or, conversely, insert a bivalent vertex in the middle of an~edge. 
See Figure~\ref{fig:M2}.

\begin{figure}[ht]%[htbp]
\begin{center}
\vspace{-.15in} 
\setlength{\unitlength}{1pt}
\begin{picture}(30,20)(0,10)
\thicklines
\put(15,15){\circle*{4}}
\put(0,15){\line(1,0){30}}
\end{picture}
\quad
\begin{picture}(30,20)(0,10)
\put(15,15){\makebox(0,0){\Large{$\longleftrightarrow$}}}
\end{picture}
\quad
\begin{picture}(30,20)(0,10)
\thicklines
\put(0,15){\line(1,0){30}}
\end{picture}
\quad \text{ or }
\quad
\begin{picture}(30,20)(0,10)
\thicklines
\put(15,15){\circle{4}}
\put(0,15){\line(1,0){13}}
\put(17,15){\line(1,0){13}}
\end{picture}
\quad
\begin{picture}(30,20)(0,10)
\put(15,15){\makebox(0,0){\Large{$\longleftrightarrow$}}}
\end{picture}
\quad
\begin{picture}(30,20)(0,10)
\thicklines
\put(0,15){\line(1,0){30}}
\end{picture}
\end{center}
\vspace{-.2in} 
\caption{Move (M2) on plabic graphs.}
\label{fig:M2}
\end{figure}
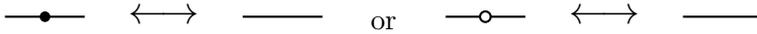

\item[(M3)]
Contract an edge connecting two internal vertices of the same color,
as in \cref{fig:M3}; or, conversely, ``uncontract'' an 
internal vertex of degree $d$ into two vertices of the same color,
joined by an edge,
with degrees
$d_1+1$ and $d_2+1$, where $d_1, d_2 >0$ and $d_1+d_2=d$.
%In the other direction,
% split an internal vertex into two vertices of the same color joined by an edge.
%See \cref{fig:M3}. 

\begin{figure}[ht]
\begin{center}
\vspace{-.1in} 
\setlength{\unitlength}{1pt}
\begin{picture}(45,20)(0,10)
\thicklines
\put(15,15){\circle*{4}}
\put(30,15){\circle*{4}}
\put(15,15){\line(-2,1){15}}
\put(15,15){\line(-2,-1){15}}
\put(15,15){\line(1,0){15}}

\put(30,15){\line(2,1){15}}
\put(30,15){\line(6,1){15}}
\put(30,15){\line(6,-1){15}}
\put(30,15){\line(2,-1){15}}

\end{picture}
\quad
\begin{picture}(30,20)(0,10)
\put(15,15){\makebox(0,0){\Large{$\longleftrightarrow$}}}
\end{picture}
\quad
\begin{picture}(30,20)(15,10)
\thicklines
\put(30,15){\circle*{4}}
\put(30,15){\line(-2,1){15}}
\put(30,15){\line(-2,-1){15}}
\put(30,15){\line(2,1){15}}
\put(30,15){\line(6,1){15}}
\put(30,15){\line(6,-1){15}}
\put(30,15){\line(2,-1){15}}
\end{picture}
\quad \text{ or }
\quad
\begin{picture}(45,20)(0,10)
\thicklines
\put(15,15){\circle{4}}
\put(15,15){\circle{3.9}}
\put(30,15){\circle{4}}
\put(13.3,15){\line(-2,1){15}}
\put(13.3,15){\line(-2,-1){15}}
\put(17,15){\line(1,0){11}}
\put(31.75,15){\line(2,1){15}}
\put(31.75,15){\line(1,0){15}}
\put(31.75,15){\line(2,-1){15}}
\end{picture}
\quad
\begin{picture}(30,20)(0,10)
\put(15,15){\makebox(0,0){\Large{$\longleftrightarrow$}}}
\end{picture}
\quad
\begin{picture}(30,20)(15,10)
\thicklines
\put(30,15){\circle{4}}
\put(28,15){\line(-2,1){15}}
\put(28,15){\line(-2,-1){15}}
\put(32,15){\line(2,1){15}}
\put(32,15){\line(1,0){15}}
\put(32,15){\line(2,-1){15}}
\end{picture}
\vspace{-5pt}\end{center}
\caption{Move (M3) on plabic graphs.
The number of ``hanging'' edges on each side must be positive.}
\label{fig:M3}
\end{figure}

\end{itemize}
\end{definition}

Note that 
if $G$ is bipartite, then the square move~(M1)
is closely related to the \emph{spider move}, see Definition~\ref{def:urban-renewal} 
as well as \cref{def:urban-normal} below.

\begin{definition}
\label{def:reduced-plabic}
%\label{rem:overG}
A leafless plabic graph $G$ is \emph{reduced} if 
there is no plabic graph $G'\sim G$ such that $G'$
contains
%in its move-equivalence class 
%one of the following ``forbidden configurations:''
%\begin{enumerate}[leftmargin=.2in]
%	\item \label{forbid:1}
		a \emph{hollow monogon} or 
		a \emph{hollow digon}, 
		that is, an internal face bounded by one or two edges, see
 \cref{fig:fail}.
%$u$ and $v$ are connected in $G'$ by more than one edge, 
%with no other edges in the region between them; or 
%\item \label{forbid:2}
%an internal leaf that is not a lollipop and does not belong to a collapsible tree. 
%\end{enumerate}
%Applying (M2)--(M3) moves if needed, we can restate this definition as follows:
%$G$~is reduced if no plabic graph $G'\sim G$ contains 
%one of the ``forbidden configurations'' shown 
%See \cref{fig:fail}(a,b,c) for some hollow digons.

	A plabic graph $G$ with leaves is \emph{reduced} if it can be converted into 
	a reduced leafless plabic graph $\overline{G}$ via the 
	following operations:
\begin{enumerate}
\item contract a leaf edge if both of its endpoints have the same color
	(note that this is not an (M3) move);
\item delete a $2$-valent vertex via (M2).
\end{enumerate}
%If $\overline{G}$
%is a leafless reduced plabic graph as above, then we say that $G$ is \emph{reduced}. 
Otherwise
we say
that $G$ is \emph{non-reduced}.
\iffalse
Now let $G$ be a plabic graph with leaves.
Suppose that we can convert $G$ into a leafless plabic graph
$\overline{G}$ 
via the following operations:
\begin{enumerate}
\item contract a leaf edge if both of its endpoints have the same color
	(note that this is not an (M3) move);
\item delete a $2$-valent vertex via (M2).
\end{enumerate}
If $\overline{G}$
is a leafless reduced plabic graph as above, then we say that $G$ is \emph{reduced}. 
Otherwise
we say
that $G$ is \emph{non-reduced}.
\fi
\end{definition}
\begin{figure}[ht]
\begin{center}
\vspace{-.2in}
%\includegraphics[height=1in]{FailureReduced.ps}
%\\
\setlength{\unitlength}{1pt}
\begin{picture}(35,20)(0,10)
\thicklines
\put(15,15){\circle*{4.5}}
\put(13,15){\line(-1,0){10}}
\cbezier(15,17)(30,40)(30,-10)(15,13)
\end{picture}
\quad
\begin{picture}(45,20)(0,10)
\thicklines
\put(15,15){\circle{4}}
\cbezier(15,17)(30,40)(30,-10)(15,13)
\put(15,15){\color{white}\circle*{3}}
\put(13,15){\line(-1,0){10}}
\end{picture}
\quad
\begin{picture}(45,20)(0,10)
\thicklines
\put(15,15){\circle{4}}
\put(30,15){\circle{4}}
\put(13,15){\line(-1,0){10}}
\put(32,15){\line(1,0){10}}
\qbezier(15,17)(22,30)(30,17)
\qbezier(15,13)(22,0)(30,13)
\end{picture}
\quad
\begin{picture}(45,20)(0,10)
\thicklines
\put(15,15){\circle*{4}}
\put(30,15){\circle{4}}
\put(13,15){\line(-1,0){10}}
\put(32,15){\line(1,0){10}}
\qbezier(15,15)(22,30)(30,17)
\qbezier(15,15)(22,0)(30,13)
\end{picture}
\quad
\begin{picture}(45,20)(0,10)
\thicklines
\put(15,15){\circle*{4}}
\put(30,15){\circle*{4}}
\put(13,15){\line(-1,0){10}}
\put(32,15){\line(1,0){10}}
\qbezier(15,15)(22,30)(30,16)
\qbezier(15,15)(22,0)(30,14)
\end{picture}
\vspace{-5pt}
\end{center}
\caption{A leafless plabic graph
 is not reduced if and only if it is move-equivalent
	to a graph containing a hollow monogon or digon.}
\label{fig:fail}
\end{figure}

%\begin{remark}
In this chapter, we will mainly restrict our attention to leafless graphs.
%\end{remark}

\begin{remark}
\label{rem:comments-on-reduced-plabic}
For a leafless plabic graph, 
the property of being reduced is invariant under local moves.
Note however that this property is not readily testable because it requires 
understanding all graphs in  the move-equiv\-a\-lence class
of a plabic graph. % is usually infinite. 
We will later obtain several other criteria
for testing this property, see 
\cref{cor:reduced=min-faces},
%	says that a leafless 
%	plabic graph is reduced if it has the
%smallest number of faces among plabic graphs in its equivalence class.
\cref{thm:reduced}, % gives a criterion for reducedness that involves checking for 
%forbidden subgraphs; 
	and \cref{thm:resonance}.
%gives a criterion that 
%comes from labeling edges of $G$ using trips and checking 
%a kind of tropical balancing condition at each vertex.
\end{remark}

\iffalse
\begin{figure}[ht]
\begin{center}
\vspace{-.25in}
%\includegraphics[height=1in]{FailureReduced.ps}
%\\
\setlength{\unitlength}{1pt}
\begin{picture}(45,20)(0,10)
\thicklines
\put(15,15){\circle{4}}
\put(30,15){\circle{4}}
\put(13,15){\line(-1,0){10}}
\put(32,15){\line(1,0){10}}
\qbezier(15,17)(22,30)(30,17)
\qbezier(15,13)(22,0)(30,13)
\put(22.5,-7){\makebox(0,0){(a)}}
\end{picture}
\quad
\begin{picture}(45,20)(0,10)
\thicklines
\put(15,15){\circle*{4}}
\put(30,15){\circle{4}}
\put(13,15){\line(-1,0){10}}
\put(32,15){\line(1,0){10}}
\qbezier(15,15)(22,30)(30,17)
\qbezier(15,15)(22,0)(30,13)
\put(24,-7){\makebox(0,0){(b)}}
\end{picture}
\quad
\begin{picture}(45,20)(0,10)
\thicklines
\put(15,15){\circle*{4}}
\put(30,15){\circle*{4}}
\put(13,15){\line(-1,0){10}}
\put(32,15){\line(1,0){10}}
\qbezier(15,15)(22,30)(30,17)
\qbezier(15,15)(22,0)(30,13)
\put(24,-7){\makebox(0,0){(c)}}
\end{picture}
%{\ }
%\begin{picture}(30,20)(-10,10)
%\thicklines
%\put(15,15){\circle{4}}
%\put(1,15){\circle*{4}}
%\put(-2,20){$u$}
%\put(12,20){$v$}
%\put(13,15){\line(-1,0){10}}
%\put(16.6,16.2){\line(4,3){8}}
%\put(16.6,13.8){\line(4,-3){8}}
%\put(13,-7){\makebox(0,0){(d)}}
%\end{picture}
%{\ \ \ }
%\begin{picture}(30,20)(-10,10)
%\thicklines
%\put(15,15){\circle*{4}}
%\put(1,15){\circle{4}}
%\put(-2,20){$u$}
%\put(12,20){$v$}
%\put(13,15){\line(-1,0){10}}
%\put(16.6,16.2){\line(4,3){8}}
%\put(16.6,13.8){\line(4,-3){8}}
%\put(13,-7){\makebox(0,0){(e)}}
%\end{picture}
%{\ \ }
\vspace{.17in}
\end{center}
\caption{A leafless plabic graph is not reduced if and only if it is move-equivalent
to a graph containing a hollow digon, as in (a,b,c).
%or a graph containing an internal leaf~$u$ adjacent to a trivalent vertex~$v$ of opposite color
%such that $v$~is not a root of a collapsible tree, see~(d,e). 
}
\vspace{-.1in}
\label{fig:fail}
\end{figure}
\fi

The most fundamental result concerning reduced plabic graphs
is their classification up to move-equivalence (cf.\ \cref{rem:comments-on-reduced-plabic}),
to be given in \cref{thm:moves} below. 
To state this result, we will need some preparation. 

\begin{definition}
\label{def:trip}
A \emph{trip} $\tau$ in a plabic graph $G$ is a walk %possibly a closed one, 
along the edges of $G$ that either begins and ends at boundary vertices (with all intermediate vertices internal),
or is a closed walk entirely contained in the interior of the disk.
This walk must obey the following
 \emph{rules of the road}: 
\begin{itemize}[leftmargin=.2in]
\item
	at a black (respectively, white) vertex of degree at least $2$, 
		$\tau$ always makes the sharpest possible right (respectively, left)
		turn onto a different edge;
%\item
%at a white vertex of degree at least $2$, $\tau$ always makes the sharpest possible left turn onto a different edge;
\item at a vertex of degree $1$ (e.g., a lollipop), $\tau$ makes a U-turn.
\end{itemize}
\end{definition}

\begin{remark}
Any walk which starts at a boundary vertex and obeys the rules of 
the road must necessarily end at a boundary vertex; we call such a 
trip 
 a \emph{one-way trip}.  We refer to a trip which is 
entirely contained in the interior of the disk as 
 a \emph{roundtrip}.  
\end{remark}
%		if $\tau$ is closed, then it is entirely contained in the interior of the ambient disk; 
%such trips are called \emph{roundtrips}; 
%\item
%if $\tau$ is not closed, then it begins and ends at boundary vertices, while 
%all its intermediate vertices lie in the interior of the disk;
%such trips are called \emph{one-way trips}.

A one-way trip may begin and end at the same vertex.
For example, a trip that starts at a boundary vertex~$i$ incident to a lollipop will 
end at~$i$.

\begin{remark}
Just as different countries have different rules regarding 
which side of the road one should drive on,
different authors make conflicting choices for the rules of the 
road for plabic graphs.
In this book, we consistently use the convention chosen in \cref{def:trip}.
\end{remark}

\begin{remark}
The notion of a trip and the condition of being reduced have appeared in the 
study of dimer models in statistical mechanics, 
	 %on quivers.  In this literature,
	wherein trips have been called \emph{zigzag paths} \cite{Kenyon}. 
Reduced plabic graphs were called ``marginally geometrically consistent"  
in \cite[Section 3.4]{Broomhead}, 
and were said to ``obey condition Z" in \cite[Section 8]{Bocklandt}.
\end{remark}

\begin{exercise}\label{ex:diff}
Show that %every one-way trip terminates (necessarily at a boundary vertex).  
one-way trips starting at different vertices terminate at different vertices.
\end{exercise}

\begin{remark}
\label{rem:atmost2}
For any edge $e$ in~$G$ and a choice of direction along $e$, % with distinct endpoints $u$ and~$v$, 
there is a unique trip %$\tau_1$
traversing $e$ in the chosen direction.
It may happen that the same trip traverses $e$ twice (once in each direction). 
\end{remark}

\begin{definition}\label{def:trippermutation}
Let $G$ be a plabic graph with $b$ boundary vertices. 
The \emph{trip permutation} $\pi_G: \{1,\dots,b\} \to \{1,\dots,b\}$
is defined by setting \hbox{$\pi_G(i)=j$} whenever the trip originating at  $i$ terminates at~$j$.
%  If boundary vertex $i$ is attached to a black (respectively, white)
%leaf, then we say that $\pi_G(i)=i$ is a black (respectively, white) fixed point.  
We will mostly use the one-line notation $\pi_G=(\pi_G(1), \dots, \pi_G(b))$
to represent these permutations.
\end{definition}

To illustrate, in \cref{fig:plabic}(a), we have $\pi_G = (3,4,5,1,2)$.
Note that \cref{def:trippermutation} is well-defined because of \cref{ex:diff}.

%\pagebreak[3]

\begin{exercise}
\label{exercise:trip-invariant}
Show that move-equivalent plabic graphs have the same 
 trip permutation. % cf.\ \cref{exercise:forward} below. 
\end{exercise}

\begin{remark}
The notion of a trip permutation can be further enhanced to construct
finer invariants of local moves. 
For example, we can record, in addition to the trip permutation,
the suitably defined \emph{winding number} of each trip. 
These winding numbers do not change under local moves.
A~more powerful invariant associates to any plabic graph 
a particular (transverse) link, see~\cite{fpst}.
\end{remark}

The following statement
will be proved in \cref{minred}.
\begin{proposition}
\label{prop:fixedlollipop}
Let $G$ be a reduced leafless 
plabic graph. 
If $\pi_G(i) = i$, then the connected component of~$G$ 
containing the boundary vertex~$i$ 
is a lollipop. 
\end{proposition}

\begin{definition}\label{def:dectrip}
A \emph{decorated permutation} $\dpi$ on $b$ letters is 
	a permutation of the set $\{1,\dots,b\}$ together with a 
	\emph{decoration} of each fixed point by either an overline or an underline.
	In other words, for every~$i$, we have 
\[
\dpi(i)\in\{\overline{i}, \underline{i}\}\cup \{1,\dots,b\}\setminus\{i\}.
\] 
\end{definition}

An example of a decorated permutation on $6$ letters is  $(3,4,5,1,2,\overline{6})$.

\pagebreak[3]

\begin{exercise}
\label{ex:enumeration}
Show that the number of decorated permutations on $b$ letters is 
equal to $b!\sum_{k=0}^b \frac{1}{k!}$. 
%A more refined enumeration formula for decorated permutations will be given in \cref{pr:A_{a,b}(1)}.
\end{exercise}

\begin{definition}
\label{def:dtp}
Let $G$ be a reduced leafless plabic graph. % with $b$ boundary vertices. 
The \emph{decorated trip permutation} $\pi_G$ associated with~$G$ is defined~by 
\[
\dpi_G(i) = 
\begin{cases}
\pi_G(i) & \text{if  $\pi_G(i) \neq i$;} \\
\ \ \overline{i} & \text{if $G$ has a %tree collapsing to a 
	white lollipop at~$i$;}\\
\ \ \underline{i} & \text{if $G$ has
	%a tree collapsing to 
	a black lollipop at~$i$.}
\end{cases}
\]
\end{definition}
\begin{remark}
	If $G$ is a reduced plabic graph with leaves, we can define
	$\dpi_G$ to be $\dpi_{\overline{G}}$,
	where $\overline{G}$ is as in 
\cref{def:reduced-plabic}.
\end{remark}

\cref{fig:plabic-move-equiv} shows two reduced plabic graphs with the same 
 decorated trip permutation $%\dpi_{G'} = 
 (3,4,5,1,2,\overline{6})$. %cf.\ also \cref{fig:plabic}.
 
\begin{figure}[ht]
\begin{center}
\vspace{-.1in}
\setlength{\unitlength}{1pt}
\begin{picture}(100,125)(-30,-70)
\put(15,-5){\circle{120}}
\thicklines
\multiput(1,0)(1,30){2}{\line(1,0){27.5}}
\multiput(0,1)(30,1){2}{\line(0,1){27.5}}
\put(0,30){\circle{4}}
\put(30,30){\circle*{4}}
\put(30,0){\circle{4}}
\put(0,0){\circle*{4}}
\put(-10,-25){\circle{4}}
\put(15,-40){\circle*{4}}
\put(40,-25){\circle{4}}
\put(0,0){\line(-10,-25){9}}
\put(30,-1.5){\line(10,-25){9}}
\put(15,-40){\line(-25,15){23}}
\put(15,-40){\line(25,15){23}}
\put(15,-65){\circle*{3}}
\put(15,-40){\line(0,-1){24}}
\put(-12,48){\circle*{3}}
\put(42,48){\circle*{3}}
\put(-12,48){\line(12,-18){11}}
\put(42,48){\line(-12,-18){11}}
\put(-29,-46){\circle*{3}}
\put(59,-46){\circle*{3}}
\put(-29,-46){\line(19,21){18}}
\put(59,-46){\line(-19,21){18}}
\put(-17,53){\makebox(0,0){$\mathbf{1}$}}
\put(47,53){\makebox(0,0){$\mathbf{2}$}}
\put(64,-50){\makebox(0,0){$\mathbf{3}$}}
\put(15,-72){\makebox(0,0){$\mathbf{4}$}}
\put(-34,-50){\makebox(0,0){$\mathbf{5}$}}

\put(-51,-5){\makebox(0,0){$\mathbf{6}$}}
\put(-45,-5){\circle*{3}}
\put(-45,-5){\line(1,0){13.5}}
\put(-30,-5){\circle{4}}
\end{picture}\qquad \qquad \qquad
\begin{picture}(100,125)(-30,-70)
\put(15,-5){\circle{120}}
\thicklines
\put(2,30){\line(1,0){26}}
\put(30,2){\line(0,1){26}}
\put(0,-13){\line(0,1){41}}
\put(2,-15){\line(1,0){11}}
\put(0,30){\circle*{4}}
\put(30,30){\circle{4}}
\put(30,0){\circle{4}}
\put(0,-15){\circle{4}}
\put(15,-15){\circle*{4}}
\put(15,-17){\line(0,-1){21}}
\put(15,-15){\line(1,1){13.5}}

%\put(-10,-25){\circle{4}}
\put(40,-25){\circle*{4}}
\put(15,-40){\circle{4}}

%\put(0,0){\line(-10,-25){9}}
\put(30,-1.5){\line(10,-25){9}}
%\put(15,-40){\line(-5,3){23}}
\put(16.5,-39.1){\line(5,3){23}}
\put(15,-65){\circle*{3}}
\put(15,-42){\line(0,-1){22}}

\put(-12,48){\circle*{3}}
\put(42,48){\circle*{3}}
\put(-12,48){\line(12,-18){11}}
\put(42,48){\line(-12,-18){11}}
\put(-29,-46){\circle*{3}}
\put(59,-46){\circle*{3}}
\put(-29,-46){\line(19,21){27.5}}
\put(59,-46){\line(-19,21){18}}
\put(-17,53){\makebox(0,0){$\mathbf{1}$}}
\put(47,53){\makebox(0,0){$\mathbf{2}$}}
\put(64,-50){\makebox(0,0){$\mathbf{3}$}}
\put(15,-72){\makebox(0,0){$\mathbf{4}$}}
\put(-34,-50){\makebox(0,0){$\mathbf{5}$}}

\put(-51,-5){\makebox(0,0){$\mathbf{6}$}}
\put(-45,-5){\circle*{3}}
\put(-45,-5){\line(1,0){13.5}}
\put(-30,-5){\circle{4}}
\end{picture}
\end{center}
\vspace{-.1in} 
\caption{Two reduced plabic graphs sharing the same decorated trip permutation $(3,4,5,1,2,\overline 6)$. 
Cf.\  \cref{fig:plabic}(b).
}
\label{fig:plabic-move-equiv}
\end{figure}

Exercise~\ref{exercise:trip-invariant} can be strengthened as follows. 

\begin{exercise}
\label{exercise:forward}
The decorated trip permutation of a reduced plabic graph is invariant under local moves.
\end{exercise}

We will later show (see \cref{permtoG}) that for each  
decorated permutation $\dpi$ on $b$ letters, 
there exists a reduced leafless
plabic graph whose decorated trip permutation is $\dpi$.

%\cref{thm:moves} gives a partial converse of \cref{exercise:forward}
%in the setting of \emph{reduced} plabic graphs.
%Moreover it will be crucially important for the 
%combinatorial applications that follow as well as for 
%total positivity on the Grassmannian.

Crucially, the move-equivalence class of a reduced plabic graph
is completely determined by its decorated trip permutation: 

%\pagebreak[3]

\begin{theorem}[Fundamental theorem of reduced plabic graphs]
\label{thm:moves}
	Let $G$ and $G'$ be reduced leafless
	plabic graphs.  The following statements are equivalent:
\begin{enumerate}[leftmargin=.3in]
\item[{\rm (1)}] $G$ and $G'$ are move-equivalent;
\item[{\rm (2)}] $G$ and $G'$ have the same decorated trip permutation.
\end{enumerate}
\end{theorem}

\begin{remark}
It is possible
to extend \cref{thm:moves} to the setting of plabic graphs with 
leaves, using a generalized  version of move (M3) that allows
the number of hanging edges to be $0$.  However, the proof 
is  more technical as it needs to deal with 
``collapsible trees'', see \cite{chapter7v1}.
	\end{remark}

To illustrate, the two reduced plabic graphs shown in
\cref{fig:plabic-move-equiv} have the same decorated trip permutation 
and consequently are move-equivalent.  

We note that the number of faces of a plabic graph is an invariant 
of its move-equivalence class: that is, the number of faces is preserved
under moves (M1), (M2), and (M3).  We can use the number of 
faces and 
\cref{thm:moves} to characterize the leafless plabic graphs which 
are reduced:
\begin{corollary}
\label{cor:reduced=min-faces}
Let $\pi$ be a permutation on $b$ letters. 
Consider all leafless plabic graphs $G$ 
%without internal leaves (other than lollipops)
whose trip permutation is~$\pi$; in particular, $G$~has $b$ boundary vertices. 
Among all such~plabic graphs~$G$, the reduced ones are precisely those 
that have the smallest number of faces. 
\end{corollary}
In 
\cref{cor:number-faces},
we will give a formula for the number of faces in a reduced 
plabic graph in terms of the associated decorated trip permutation.

\begin{remark}
%\label{rem:}
In \cref{cor:reduced=min-faces}, the requirement that $G$ is leafless cannot be dropped;
cf.\ \cref{fig:fork-not-lollipop}. 
%For example, the graph in \cref{fig:fork-not-lollipop} has a single face but is not reduced. 
\end{remark}

\begin{figure}[ht]
\begin{center}
%\vspace{-.05in}
\setlength{\unitlength}{1.5pt}
\begin{picture}(30,15)(0,5)
\put(20,15){\circle{30}}
\thinlines
%\qbezier(-5,10)(20,-10)(45,10)
\thicklines
\put(20,0){\circle*{1.2}}
\put(20,10){\circle{2}}
\put(30,13){\circle*{2}}
\put(10,13){\circle*{2}}
%\put(20,-4){\makebox(0,0){$1$}}
%
\put(20,0.6){\line(0,1){8.4}}
\put(21,10){\line(3,1){8}}
\put(19,10){\line(-3,1){8}}
\end{picture}
\vspace{-.13in}
\end{center}
\caption{This plabic graph $G$ with one boundary vertex
	%and $\pi_G(1)\!=\!1$
has a single face but is not reduced. Note that it is not leafless.}
%since it contains a forbidden configuration, 
%cf.\ \cref{fig:fail}(d). The component containing~$i$ is not collapsible.}
\label{fig:fork-not-lollipop}
\end{figure}

\begin{remark}
%\label{rem:}
Some authors call a plabic graph reduced if it has the smallest number of faces among all graphs with a given decorated trip permutation, cf.\ \cref{cor:reduced=min-faces}. 
If one adopts this definition, then the graph~$G$ in \cref{fig:fork-not-lollipop}  becomes reduced.
%This leads to a failure of  \cref{prop:fixedlollipop}, since the component of~$G$
%attached to the boundary vertex~$i$ is not a lollipop. 
However, there are several reasons that we want to consider this graph
to be non-reduced, including the correspondence with triple diagrams,
cf.\ \cref{minred}, where we will 
%Another reason to treat this kind of plabic graph~$G$ as non-reduced 
%will arise in the context of triple diagrams, 
want reduced plabic graphs to be in bijection with minimal triple diagrams.
%(see \cref{red-minimal}), 
%the triple diagram corresponding to~$G$ is not minimal. 
\end{remark}

\cref{thm:moves} and \cref{cor:reduced=min-faces} 
will be proved in \cref{minred}.
The statement (1)$\Rightarrow$(2) in \cref{thm:moves} is easy, cf.\  \cref{exercise:forward}.  
The converse  implication (2)$\Rightarrow$(1) is much harder.
In \cref{minred}, we give a proof of this implication that 
%follows a different ---and, we hope, fairly intuitive---approach 
utilizes D.~Thurston's machinery of triple diagrams, which is presented in 
Sections~\ref{sec:triple-diagrams}--\ref{sec:mintriple}.

A very intricate argument justifying the implication (2)$\Rightarrow$(1)  
was described in Postnikov's original preprint \cite[Section 13]{postnikov}. 
Another proof of \cref{thm:moves}, involving some difficult results about \emph{plabic tilings}
(and relying on \cref{thm:ops} below), %\cite{oh-postnikov-speyer}[Theorem~1.15]), 
was given by S.~Oh and D.~Speyer \cite{oh-speyer}. % see Theorem~1.2.  

\newpage
\section{Plabic graphs and their quivers}
\label{sec:plabic}

We next associate a quiver to any plabic graph, extending the construction in 
 Definition~\ref{def:graphquiver} to the setting of plabic graphs that need not be
 bipartite.

\begin{definition} 
\label{def:Q(G)}
The \emph{quiver~$Q(G)$ associated to a plabic 
graph~$G$} is defined as follows.
The vertices of $Q(G)$ are in one-to-one correspondence
	with the faces of~$G$. A vertex is mutable (respectively, frozen)
	if the corresponding face is internal (respectively, incident to the boundary of the disk).
%	A~vertex of $Q(G)$ is frozen if the corresponding face is incident to the boundary of the 
%	disk, and is mutable otherwise.
%declared mutable or frozen 
%depending on whether the corresponding face is \emph{internal} 
%(i.e., disjoint from the boundary of~$\mathbf{D}$) or not.
%
For each edge $e$ in $G$ connecting a white vertex to a black vertex and 
separating two distinct faces, we introduce an arrow
connecting the faces separated by $e$; this arrow is oriented
so that it ``sees'' the white endpoint of $e$ to the left and the black 
	endpoint to the right as it crosses over~$e$.
%The arrows of $Q(G)$ are constructed in the following way. 
%Let $e$ be an edge in~$G$ that connects a white vertex to a black vertex 
%and separates two distinct faces, at least one of which is internal.
%For each such edge~$e$, we introduce an arrow in $Q(G)$ 
%connecting the  faces separated by~$e$;
%this arrow is oriented so that when we move along the arrow in the direction of its orientation, 
%we see the white endpoint of~$e$ on our left and the black endpoint on our right.
We then remove oriented $2$-cycles from the resulting quiver,
one by one, to get~$Q(G)$.
See~\cref{fig:plabic2}. 
\end{definition}

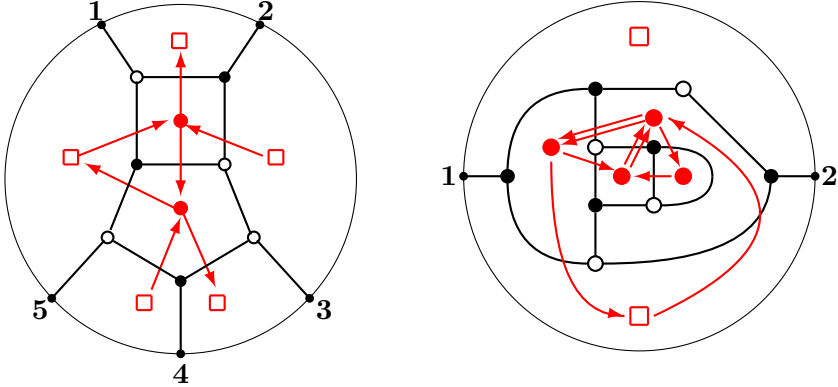
\begin{figure}[ht]%[htbp]
\begin{center}
\vspace{-5pt}
\setlength{\unitlength}{1pt}

\begin{picture}(100,120)(-30,-70)
\thicklines
\put(15,-5){\circle{120}}
\thicklines
\multiput(1,0)(1,30){2}{\line(1,0){27.3}}
\multiput(0,1)(30,1){2}{\line(0,1){27}}
\put(0,30){\circle{4}}
\put(30,30){\circle*{4}}
\put(30,0){\circle{4}}
\put(0,0){\circle*{4}}
\put(-10,-25){\circle{4}}
\put(15,-40){\circle*{4}}
\put(40,-25){\circle{4}}
\put(0,0){\line(-10,-25){9.2}}
\put(30,-2){\line(10,-25){8.6}}
\put(15,-40){\line(-25,15){23}}
\put(15,-40){\line(25,15){23}}
\put(15,-40){\line(0,-1){24}}
\put(-12,48){\line(12,-18){11}}
\put(42,48){\line(-12,-18){11}}
\put(15,-65){\circle*{3}}
\put(-12,48){\circle*{3}}
\put(42,48){\circle*{3}}
\put(-29,-46){\circle*{3}}
\put(59,-46){\circle*{3}}
\put(-29,-46){\line(19,21){17.5}}
\put(59,-46){\line(-19,21){17.5}}
\put(-17,53){\makebox(0,0){$\mathbf{1}$}}
\put(47,53){\makebox(0,0){$\mathbf{2}$}}
\put(64,-50){\makebox(0,0){$\mathbf{3}$}}
\put(15,-72){\makebox(0,0){$\mathbf{4}$}}
\put(-34,-50){\makebox(0,0){$\mathbf{5}$}}

% boxes and circles
\put(15,15){\red{\circle*{5}}}
	\put(15,-15){\red{\circle*{5}}}

% centered the box
	\put(12.5,40){\red{\line(0,1){5}}}
	\put(12.5,40){\red{\line(1,0){5}}}
	\put(12.5,45){\red{\line(1,0){5}}}
	\put(17.5,40){\red{\line(0,1){5}}}

	\put(50,0){\red{\line(0,1){5}}}
	\put(50,0){\red{\line(1,0){5}}}
	\put(50,5){\red{\line(1,0){5}}}
	\put(55,0){\red{\line(0,1){5}}}

	\put(-25,0){\red{\line(0,1){5}}}
	\put(-25,0){\red{\line(1,0){5}}}
	\put(-25,5){\red{\line(1,0){5}}}
	\put(-20,0){\red{\line(0,1){5}}}

	\put(25,-50){\red{\line(0,1){5}}}
	\put(25,-50){\red{\line(1,0){5}}}
	\put(25,-45){\red{\line(1,0){5}}}
	\put(30,-50){\red{\line(0,1){5}}}

	\put(0,-50){\red{\line(0,1){5}}}
	\put(0,-50){\red{\line(1,0){5}}}
	\put(0,-45){\red{\line(1,0){5}}}
	\put(5,-50){\red{\line(0,1){5}}}

% arrows
% big circle centered at (15,-5)
% boxes are centered at (15, 42.5), (-22.5, 2.5), (47.5, 2.5), (27.5, -47.5), (2.5, -47.5)

% should be centered at (52.5, 2.5) - fixed
\put(15,19){\red{\vector(0,1){20}}}
\put(15,11){\red{\vector(0,-1){22}}}
\put(-17,3){\red{\vector(2,.8){27}}}
\put(47,3){\red{\vector(-2,.8){27}}}
\put(11,-13){\red{\vector(-4,2){27}}}
\put(16,-19){\red{\vector(1,-2){12}}}
\put(2.5,-43){\red{\vector(1,2){12}}}
\end{picture}
\qquad
\setlength{\unitlength}{1pt}
\begin{picture}(120,58)(-50,-30)
%\begin{picture}(120,58)(-50,-50)
\thicklines

% \thinlines
\put(25,30){\circle{120}}
%\put(25,30){\circle{130}}

\put(-35,30){\circle*{3}}
\put(85,30){\circle*{3}}
%\put(-45,30){\makebox(0,0){$\mathbf{1}$}}
%\put(95,30){\makebox(0,0){$\mathbf{2}$}}
\put(-40,30){\makebox(0,0){$\mathbf{1}$}}
\put(90,30){\makebox(0,0){$\mathbf{2}$}}

\thicklines
\put(10,22.5){\line(0,1){15}}
\put(10,2.5){\line(0,1){15}}
\put(10,42.5){\line(0,1){15}}
\put(30,22.5){\line(0,1){15}}
\qbezier(32.5,20)(50,20)(50,30)
\qbezier(32.5,40)(50,40)(50,30)
\qbezier(12.5,0)(70,0)(70,30)
\qbezier(7.5,0)(-20,0)(-20,30)
\qbezier(7.5,60)(-20,60)(-20,30)
%\qbezier(12.5,60)(70,60)(70,30)
\put(12.5,60){\line(1,0){25}}
\put(40,60){\circle{5}}
\put(70,30){\line(-1,1){28}}

\put(-20,30){\line(-1,0){15}}
\put(70,30){\line(1,0){15}}

\put(12.5,20){\line(1,0){15}}
\put(12.5,40){\line(1,0){15}}
\put(10,0){\circle{5}}
\put(10,60){\circle*{5}}
\put(10,20){\circle*{5}}
\put(10,40){\circle{5}}
\put(30,20){\circle{5}}
\put(30,40){\circle*{5}}
\put(-20,30){\circle*{5}}
\put(70,30){\circle*{5}}

%quiver
\put(-5,40){\red{\circle*{6}}}
\put(19,30){\red{\circle*{6}}}
\put(40,30){\red{\circle*{6}}}
\put(30,50){\red{\circle*{6}}}

% circle centered at (25,30)
% prevous boxes are length 5
\put(22.5,75){\red{\line(0,1){5}}}
\put(22.5,75){\red{\line(1,0){5}}}
\put(22.5,80){\red{\line(1,0){5}}}
\put(27.5,75){\red{\line(0,1){5}}}

\put(22.5,-20){\red{\line(0,1){5}}}
\put(22.5,-20){\red{\line(1,0){5}}}
\put(22.5,-15){\red{\line(1,0){5}}}
\put(27.5,-20){\red{\line(0,1){5}}}

% boxes are centered at (25, 75.5) and (25, -17.5), symmetric
\put(36,30){\red{\vector(-1,0){13}}}
\put(32,47){\red{\vector(1,-2){6.8}}}
\put(20,33.5){\red{\vector(1,1.6){8.5}}}
\put(18,33.5){\red{\vector(1,1.6){9}}}

\put(-1,38){\red{\vector(3,-1){18}}}
\put(26.5,49){\red{\vector(-7,-2.2){28.1}}}
\put(26.5,51){\red{\vector(-7,-2.2){28.8}}}

\red{\qbezier(30,-18)(100,15)(36,48)}
\put(38,47){\red{\vector(-2,1.2){4}}}
\red{\qbezier(19,-17.5)(-5,-18)(-5,35)}
\put(19,-17.5){\red{\vector(9,-1){2}}}

\end{picture}
\end{center}
\vspace{-.1in} 
\caption{Two plabic graphs and their associated quivers. 
Shown on the left is the graph~$G$ from \cref{fig:plabic}(a).   
The quiver on the right has double arrows, corresponding to the instances where a pair of faces 
 share two boundary segments disconnected from each other.
The frozen vertex~$v$ at the top of the picture is isolated: 
the two arrows between~$v$ and an internal vertex
located underneath~$v$ cancel each other.} 
\label{fig:plabic2}
\end{figure}

%\begin{proposition}
%As in the setting of bipartite graphs
%(\emph{cf.} \cref{ex:urban}), if two plabic graphs $G$ and 
%$G'$ are related
%	via the square move (M1) (subject to the above restriction), $Q(G)$ and $Q(G')$ are related via mutation
%at the corresponding vertex. If two plabic graphs $G$ and $G'$ are 
%	related via (M2) or (M3) then $Q(G)$ and $Q(G')$ are the same.
%\end{proposition}

%The following simple but crucially important observation is implicit in Postnikov's original work~\cite{postnikov}.
%(Cf.\ also \cref{ex:urban}.)

\begin{proposition}
\label{pr:plabic-vs-quivers}
Let $G$ and $G'$ be two plabic graphs %move-equivalent to each other, 
related to each other by one of the local moves {\rm(M1)}, {\rm(M2)}, or~{\rm(M3)},
	subject to the restriction that  we only allow a square move~{\rm(M1)} at a face $F$ if
\vspace{-2pt}
\begin{align}
\label{eq:square-move-tricky}
%&\text{among the four faces surrounding the square, the consecutive ones}\\
%&\text{must be distinct, see \cref{fig:square-move-tricky}.} 
	&\text{$F$ does not share two consecutive sides with another face;} \\[-20pt] \notag
%	&\text{no face sharing an edge with $F$ is incident to two}\\
%	\notag
%	&\text{consecutive sides of the square, see \cref{fig:square-move-tricky}.}\\
%\notag
\end{align}
cf.\ \cref{fig:square-move-tricky}.
Then the quivers $Q(G)$ and $Q(G')$ are mutation equivalent.
\end{proposition}

%There is one (rarely relevant) constraint on permissibility of square moves, see 
%A casual reader may ignore this subtle point. 

\begin{figure}[ht]
\begin{center}
\vspace{-5pt}
\setlength{\unitlength}{1pt}
\begin{picture}(50,25)(-30,15)
\thinlines
%\put(30,30){\circle{72}}
\thicklines
\put(10,22.5){\line(0,1){15}}
\put(30,22.5){\line(0,1){15}}
\qbezier(32.5,20)(50,20)(50,30)
\qbezier(32.5,40)(50,40)(50,30)
\put(12.5,20){\line(1,0){15}}

\put(8,39){\line(-2,-1){16}}
\put(12.5,40){\line(1,0){15}}
\put(-27.5,30){\line(1,0){15}}
\put(-87.5,30){\line(1,0){15}}
\put(8,21){\line(-2,1){16}}
\put(-52,21){\line(-2,1){16}}
\put(-32,31){\line(-2,1){16}}
\put(-32,29){\line(-2,-1){16}}
\put(-52,39){\line(-2,-1){16}}
\put(-50,42.5){\line(0,1){10}}
\put(-50,17.5){\line(0,-1){10}}

\put(10,20){\circle*{5}}
\put(10,40){\circle{5}}
\put(30,20){\circle{5}}
\put(30,40){\circle*{5}}
\put(20,30){\makebox(0,0){$A$}}
\put(-50,30){\makebox(0,0){$B$}}
\put(65,30){\makebox(0,0){$C$}}
\put(-10,30){\circle*{5}}
\put(-30,30){\circle{5}}
\put(-70,30){\circle{5}}
\put(-50,40){\circle*{5}}
\put(-50,20){\circle*{5}}
\end{picture}
\vspace{-10pt}
\end{center}
\caption{
Restriction \eqref{eq:square-move-tricky} allows 
the square move at~$A$---but not at~$B$, 
since face~$C$ is adjacent to two consecutive sides~of~$B$.
 }
\label{fig:square-move-tricky}
\end{figure}
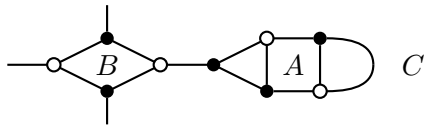

\begin{proof}
It is straightforward to check that a square move in a plabic graph
translates into a quiver mutation at the vertex associated to that 
	square face, provided that condition~\eqref{eq:square-move-tricky}
is satisfied. 
%consecutive faces surrounding the square face are distinct, 
%cf. Figure~\ref{fig:square-move-tricky}. 
It is also straightforward to check that the quiver associated with a plabic graph 
does not change under moves (M2) or (M3), % or a switch (as in Definition~\ref{def:switch}).
see Figure~\ref{fig:plabic-moves-are-mutations}.
\end{proof}

\pagebreak[3]

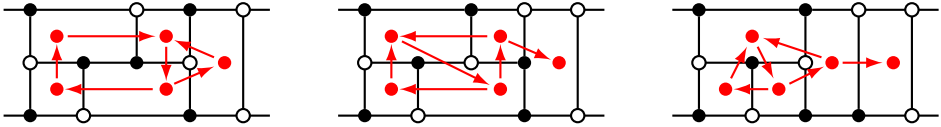
\begin{figure}[ht]
\begin{center}
\vspace{-.1in}
\setlength{\unitlength}{1pt}
\begin{picture}(100,40)(0,0)
\thicklines
%vertices
\put(10,0){\circle*{5}}
\put(30,0){\circle{5}}
\put(70,0){\circle*{5}}
\put(90,0){\circle{5}}
\put(10,20){\circle{5}}
\put(30,20){\circle*{5}}
\put(50,20){\circle*{5}}
\put(70,20){\circle{5}}
\put(10,40){\circle*{5}}
\put(50,40){\circle{5}}
\put(70,40){\circle*{5}}
\put(90,40){\circle{5}}

%edges
\put(0,0){\line(1,0){27.5}}
\put(32.5,0){\line(1,0){55}}
\put(92.5,0){\line(1,0){7.5}}
\put(12.5,20){\line(1,0){55}}
\put(0,40){\line(1,0){47.5}}
\put(52.5,40){\line(1,0){35}}
\put(92.5,40){\line(1,0){7.5}}

\put(10,2.5){\line(0,1){15}}
\put(10,22.5){\line(0,1){15}}
\put(30,17.5){\line(0,-1){15}}
\put(50,22.5){\line(0,1){15}}
\put(70,2.5){\line(0,1){15}}
\put(70,22.5){\line(0,1){15}}
\put(90,2.5){\line(0,1){35}}

%nodes
\put(20,10){\red{\circle*{5}}}
\put(61,10){\red{\circle*{5}}}

\put(20,30){\red{\circle*{5}}}
\put(61,30){\red{\circle*{5}}}
\put(83,20){\red{\circle*{5}}}

\put(24,30){\red{\vector(1,0){33}}}
\put(56,10){\red{\vector(-1,0){33}}}
\put(79,22){\red{\vector(-2.3,1){15}}}
\put(20,14){\red{\vector(0,1){13}}}
\put(61,26){\red{\vector(0,-1){13}}}
\put(64,12){\red{\vector(2.3,1){15}}}

\end{picture}
\qquad
\begin{picture}(100,40)(0,0)
\thicklines
%vertices
\put(10,0){\circle*{5}}
\put(30,0){\circle{5}}
\put(70,0){\circle*{5}}
\put(90,0){\circle{5}}
\put(10,20){\circle{5}}
\put(30,20){\circle*{5}}
\put(50,20){\circle{5}}
\put(70,20){\circle*{5}}
\put(10,40){\circle*{5}}
\put(50,40){\circle*{5}}
\put(70,40){\circle{5}}
\put(90,40){\circle{5}}

%edges
\put(0,0){\line(1,0){27.5}}
\put(32.5,0){\line(1,0){55}}
\put(92.5,0){\line(1,0){7.5}}
\put(12.5,20){\line(1,0){35}}
\put(52.5,20){\line(1,0){15}}
\put(0,40){\line(1,0){47.5}}
\put(52.5,40){\line(1,0){15}}
\put(72.5,40){\line(1,0){15}}
\put(92.5,40){\line(1,0){7.5}}

\put(10,2.5){\line(0,1){15}}
\put(10,22.5){\line(0,1){15}}
\put(30,17.5){\line(0,-1){15}}
\put(50,22.5){\line(0,1){15}}
\put(70,2.5){\line(0,1){15}}
\put(70,22.5){\line(0,1){15}}
\put(90,2.5){\line(0,1){35}}

%nodes
\put(20,10){\red{\circle*{5}}}
\put(61,10){\red{\circle*{5}}}

\put(20,30){\red{\circle*{5}}}
\put(61,30){\red{\circle*{5}}}
\put(83,20){\red{\circle*{5}}}

\put(56,30){\red{\vector(-1,0){33}}}
\put(56,10){\red{\vector(-1,0){33}}}
%\put(79,22){\red{\vector(-2.3,1){15}}}
\put(20,14){\red{\vector(0,1){13}}}
\put(61,14){\red{\vector(0,1){13}}}
\put(64,28){\red{\vector(2.3,-1){16}}}
\put(24,28){\red{\vector(2,-1){33}}}

\end{picture}
\qquad
\begin{picture}(100,40)(0,0)
\thicklines
%vertices
\put(10,0){\circle*{5}}
\put(30,0){\circle{5}}
\put(70,0){\circle*{5}}
\put(90,0){\circle{5}}
\put(10,20){\circle{5}}
\put(30,20){\circle*{5}}
\put(50,20){\circle{5}}
\put(50,0){\circle*{5}}
\put(10,40){\circle*{5}}
\put(50,40){\circle*{5}}
\put(70,40){\circle{5}}
\put(90,40){\circle{5}}

%edges
\put(0,0){\line(1,0){27.5}}
\put(32.5,0){\line(1,0){55}}
\put(92.5,0){\line(1,0){7.5}}
\put(12.5,20){\line(1,0){35}}
\
\put(0,40){\line(1,0){47.5}}
\put(52.5,40){\line(1,0){15}}
\put(72.5,40){\line(1,0){15}}
\put(92.5,40){\line(1,0){7.5}}

\put(10,2.5){\line(0,1){15}}
\put(10,22.5){\line(0,1){15}}
\put(30,17.5){\line(0,-1){15}}
\put(50,22.5){\line(0,1){15}}
\put(50,2.5){\line(0,1){15}}
\put(70,2.5){\line(0,1){35}}
\put(90,2.5){\line(0,1){35}}

%nodes
\put(20,10){\red{\circle*{5}}}
\put(40,10){\red{\circle*{5}}}

\put(30,30){\red{\circle*{5}}}
\put(60,20){\red{\circle*{5}}}
\put(83,20){\red{\circle*{5}}}

\put(56,22){\red{\vector(-3,1){22}}}
\put(36,10){\red{\vector(-1,0){13}}}
%\put(79,22){\red{\vector(-2.3,1){15}}}
\put(22,14){\red{\vector(1,2){6}}}
\put(44,12){\red{\vector(2,1){13}}}

\put(64,20){\red{\vector(1,0){15}}}
\put(32,26){\red{\vector(1,-2){6}}}

\end{picture}
\vspace{-.05in}
\end{center}
\caption{Fragments of plabic graphs and their associated quivers.
The first two plabic graphs are related by a square move (M1); their quivers are related by a single mutation.
The second and the third graphs are related by moves of type (M3), 
and have isomorphic quivers.
}
%\vspace{-.1in}
\label{fig:plabic-moves-are-mutations}
\end{figure}
\begin{remark}
\label{rem:square-move-tricky}
Suppose that condition~\eqref{eq:square-move-tricky} fails at a square face~$B$,
with $B$~incident to another face~$C$ along
two consecutive edges, as in \cref{fig:square-move-tricky}. 
Then the arrows transversal to these edges cancel each other,
so they do not appear in the associated quiver. 
This leads to a discrepancy 
between the square move and the quiver mutation. 
\end{remark}

\begin{remark} 
\label{rem:urban}
%The key difference between the setting of this section \emph{vs.}\ 
%Section~\ref{urban} is that here we do not require plabic graphs to be bipartite.  
%This distinction is not particularly important, 
%since we 
One can always apply a sequence of moves (M2)  to a plabic graph to make it bipartite.
\end{remark}

\begin{remark}
\label{rem:restricted-M1}
In light of \cref{pr:plabic-vs-quivers}, one may choose to adjust the definition
of the square move~(M1)---hence the notion of move-equivalence of plabic graphs---by
forbidding square moves violating condition~\eqref{eq:square-move-tricky}. 
(This convention was adopted in~\cite{fpst}.) 
We note that for the important subclass of \emph{reduced} plabic graphs
(see \cref{def:reduced-plabic}),
condition~\eqref{eq:square-move-tricky} is automatically satisfied, so there is no need to worry about it.  
\end{remark}

\begin{remark}
\label{rem:cluster-algebra-from-plabic}
Using \cref{def:Q(G)}, we can associate 
a seed pattern---hence a cluster algebra---to any plabic graph~$G$.
By \cref{pr:plabic-vs-quivers}, 
this cluster algebra only depends on the move-equivalence class of~$G$,
assuming that we adopt a restricted notion of move-equivalence, cf.\ \cref{rem:restricted-M1}. 
We will soon see that this family of cluster algebras includes all the main examples
of cluster algebras (defined by quivers) introduced in the earlier chapters. 
This justifies the importance of the combinatorial study of plabic graphs,
and in particular their classification up to move-equivalence. 
\end{remark}

The quivers arising from plabic graphs are quite special; in particular, they are planar. 
%In particular, each quiver $Q(G)$ is planar.  
%moreover $Q(G)$ possesses a planar embedding in which
%the arrows surrounding every bounded face form an oriented cycle. 
Nevertheless, \cref{pr:embed-into-plabic} below, stated here without proof, 
shows that
%asserts that the mutation class of any quiver without frozen vertices 
%can be embedded into the mutation class of a quiver of a plabic~graph.
%Thus 
quivers from plabic graphs are, in some sense, ``universal.''

\begin{proposition}[{\cite{FIL}}]
\label{pr:embed-into-plabic}
Let $Q$ be a quiver whose vertices are all mutable. 
Then there exists a plabic graph~$G$ such that $Q$ 
is a full subquiver (see Definition~\ref{def:tildeB_I})
of a quiver mutation-equivalent to~$Q(G)$. 
\end{proposition}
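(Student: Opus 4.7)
My plan is to exhibit $G$ together with a sequence of mutations that produces a quiver containing $Q$ as a full subquiver on a designated set of internal faces. The overall strategy mirrors how one realizes non-planar quivers in any planarly structured setting: build a planar skeleton whose faces encode the vertices of $Q$, let the arrows of $Q$ cross freely in the plane, and repair each crossing locally by a fixed ``crossing gadget'' whose auxiliary mutable vertices can be mutated away at the end.

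First, I would lay out $|Q|$ pairwise disjoint small internal regions in the disk $\mathbf{D}$, one for each vertex of $Q$; these are the \emph{main} internal faces of $G$, and the corresponding vertices in $Q(G)$ will form our embedded copy of $Q$. Next, for each arrow $i\to j$ of $Q$, I draw a path in $\mathbf{D}$ connecting face $i$ to face $j$, allowing transverse crossings of paths (unavoidable once $Q$ is not planar). Along each such path I place a ladder of bicolored edges acting as a channel between faces $i$ and $j$: each rung, with a white endpoint on the $i$-side and a black endpoint on the $j$-side, contributes exactly one arrow from face $i$ to face $j$ in $Q(G)$, as dictated by Definition~\ref{def:Q(G)}. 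Parallel arrows in $Q$ of multiplicity $m$ are produced by $m$ parallel rungs.

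At each crossing I insert a \emph{crossing gadget}: a fixed small bipartite planar plabic fragment with a constant number of auxiliary internal faces (in addition to the four ``external'' faces that correspond to the endpoints of the two crossing arrows), whose associated local quiver has the following property: after mutation at the auxiliary interior faces in a prescribed order, the induced quiver on the four external faces consists of exactly the two disjoint arrows we wanted to ``cross,'' with no spurious arrows among them. I then attach lollipops or short boundary tails so that every internal vertex of $G$ is path-connected to the boundary, making $G$ a bona fide plabic graph. Finally, I mutate $Q(G)$ at all auxiliary interior vertices of all gadgets in the prescribed order. Because distinct gadgets share no vertices and their interior faces are pairwise non-adjacent, the mutations on different gadgets commute and may be performed independently. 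On the $|Q|$ main faces, the resulting quiver is precisely $Q$ with no extra arrows, so $Q$ is the full subquiver on this vertex set, as required.

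The main obstacle is the crossing gadget: one must exhibit a concrete small bipartite planar fragment whose quiver, after a short sequence of mutations at its interior faces, realizes the effect of two non-interacting arrows passing over each other, \emph{without} introducing arrows among the four external faces. This is a finite local computation carried out in \cite{FIL}, where a suitable hexagonal patch is shown to have exactly the right behavior. The remaining verifications---that parallel rungs accumulate correctly in $Q(G)$ without producing $2$-cycles among main faces, that the boundary-completion step does not alter the internal part of the quiver, and that condition~\eqref{eq:square-move-tricky} is never an issue for our interior mutations---are then routine and follow from the locality of \cref{def:Q(G)}.
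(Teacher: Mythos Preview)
The paper does not actually prove this proposition: it is stated with a citation to \cite{FIL} and no argument is given. So there is no ``paper's own proof'' to compare your proposal against.

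As for your sketch itself, the high-level strategy---draw the arrows of $Q$ in the plane, allow crossings, and resolve each crossing with a local gadget whose auxiliary mutable vertices can be mutated out---is indeed the approach of \cite{FIL}. But your write-up is not a self-contained proof: at the crucial step you explicitly defer to \cite{FIL} for the existence and correctness of the crossing gadget, which is exactly the nontrivial content of the result. The surrounding claims (that distinct gadgets' mutations commute because they share no vertices, that rungs accumulate multiplicities correctly, that boundary attachments don't disturb the interior quiver) are not automatic and would each need a careful check; in particular, ``interior faces of distinct gadgets are pairwise non-adjacent'' requires the gadgets to be sufficiently insulated from one another, which your construction does not guarantee as stated. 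So what you have is an accurate outline of the argument in \cite{FIL}, not an independent proof.
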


\clearpage
\newpage
\section{Triangulations and wiring diagrams via plabic graphs}
\label{sec:plabictriangulations}

In this section, we explain how the machinery of local moves on plabic graphs 
unifies the combinatorial constructions of Chapter~\ref{ch:combinatorics-of-mutations},
%Sections~\ref{sec:triangulations}--\ref{sec:mut-double-wiring}, 
including: 
\begin{itemize}[leftmargin=.2in]
\item
flips in triangulations of a polygon 
%by its diagonals 
		(Section~\ref{sec:triangulations});
\item
braid moves for wiring diagrams (Section~\ref{sec:mut-wiring});
\item
their analogues for double wiring diagrams (Section~\ref{sec:mut-double-wiring}). 
\end{itemize}
As mentioned 
in Remark~\ref{rem:urban},
the spider move for bipartite graphs  (Section~\ref{urban}) 
is equivalent to the square move for plabic graphs. 

\begin{example}[{\textbf{Triangulations of a polygon}, see \cite[Algorithm~12.1]{kodwil}}]  
\label{TriangulationA}
Let $T$ be a triangulation of a convex $m$-gon $\mathbf{P}_m\,$. 
The plabic graph $G(T)$ associated to $T$ is constructed as follows: 
\begin{enumerate}[leftmargin=.3in]
\item Place a white vertex of $G(T)$ at each vertex of~$\mathbf{P}_m$.
\item Place a black vertex of $G(T)$ in the interior of each triangle~of~$T$.
Connect it by edges to the three white vertices of the triangle.
\item Embed $\mathbf{P}_m$ into the interior of a disk~$\mathbf{D}$.
\item Place $m$ uncolored vertices of $G(T)$ on the boundary of~$\mathbf{D}$.
\item Connect each white vertex of $G(T)$ to a boundary vertex. These edges must not cross.  
\end{enumerate}
We emphasize that the set of edges of $G(T)$ includes
neither the sides of $\mathbf{P}_m$ nor the diagonals of~$T$.
See \cref{fig:triang-plabic0}. 
\end{example}

\begin{figure}[ht]
\begin{center}
%\vspace{.1in}
\setlength{\unitlength}{1.7pt}
\begin{picture}(50,55)(0,-3)
\thicklines
  \multiput(0,20)(60,0){2}{\red{\line(0,1){20}}}
  \multiput(20,0)(0,60){2}{\red{\line(1,0){20}}}
  \multiput(0,40)(40,-40){2}{\red{\line(1,1){20}}}
  \multiput(20,0)(40,40){2}{\red{\line(-1,1){20}}}

  \multiput(20,0)(20,0){2}{\red{\circle*{1.5}}}
  \multiput(20,60)(20,0){2}{\red{\circle*{1.5}}}
  \multiput(0,20)(0,20){2}{\red{\circle*{1.5}}}
  \multiput(60,20)(0,20){2}{\red{\circle*{1.5}}}

% vertices are: (20,0), (40,0), (20,60), (40,60), (0,20), (0,40), (60,20), (60,40)
\thicklines

% \thinlines
\put(40,0){\red{\line(1,2){20}}}
\put(0,40){\red{\line(1,0){60}}}
\put(0,20){\red{\line(2,-1){40}}}
\put(0,40){\red{\line(1,-1){40}}}
\put(20,60){\red{\line(2,-1){40}}}

\end{picture}
	\hspace{1in}
\setlength{\unitlength}{1.35pt}
\begin{picture}(50,50)(0,-12)

% layer the green lines above the black lines
% adjust line lengths
% add dots on circle
\thicklines
  \multiput(20,0)(20,0){2}{\circle{3}}
  \multiput(20,60)(20,0){2}{\circle{3}}
  \multiput(0,20)(0,20){2}{\circle{3}}
  \multiput(60,20)(0,20){2}{\circle{3}}

\put(32,28){\circle*{3}}
\put(12,21){\circle*{3}}
\put(22,5){\circle*{3}}
\put(55,21){\circle*{3}}
\put(20,48){\circle*{3}}
\put(38,55){\circle*{3}}

\thicklines
\put(30,30){\circle{80}}

% \put(32,28){\circle*{3}}
% \put(12,21){\circle*{3}}
% \put(22,5){\circle*{3}}
% \put(55,21){\circle*{3}}
% \put(20,48){\circle*{3}}
% \put(38,55){\circle*{3}}

% \put(19,-1){\line(-0.6,-1){3.8}}
% \put(41,-1){\line(0.6,-1){3.8}}
% \put(19,61){\line(-0.6,1){3.8}}
% \put(41,61){\line(0.6,1){3.8}}

\put(32,28){\line(1,0.42){27}}
\put(32,28){\line(-1,0.38){30.5}}
\put(32,28){\line(0.3,-1){8}}

\put(12,21){\line(-0.65,1){11.5}}
\put(12,21){\line(1,-0.75){27}}
\put(12,21){\line(-1,-0.1){10.5}}

\put(22,5){\line(1,-0.27){16.5}}
\put(22,5){\line(-0.35,-1){1.3}}
\put(22,5){\line(-1,0.67){21}}

\put(55,21){\line(1, -0.15){3.5}}
\put(55,21){\line(0.27, 1){4.7}}
\put(55,21){\line(-0.7, -1){14}}

\put(20,48){\line(0, 1){10.5}}
\put(20,48){\line(-1, -0.4){18.8}}
\put(20,48){\line(5, -1){38.5}}

\put(38,55){\line(1, 2.5){1.5}}
\put(38,55){\line(-3.6, 1){16.5}}
\put(38,55){\line(4.4, -3){21}}

\multiput(20,60)(20,0){2}{\circle{3}}
\multiput(0,20)(0,20){2}{\circle{3}}
\multiput(60,20)(0,20){2}{\circle{3}}
  
\put(19,-1){\line(-0.6,-1){3.8}}
\put(41,-1){\line(0.6,-1){3.8}}
  
\put(19,61){\line(-0.6,1){3.8}}
\put(41,61){\line(0.6,1){3.8}}
  
\put(-1, 19){\line(-1, -0.6){6}}
\put(-1, 41){\line(-1, 0.6){6}}

\put(61, 19){\line(1, -0.6){6}}
\put(61, 41){\line(1, 0.6){6}}
\thicklines

% \thinlines
	\multiput(0,20)(60,0){2}{\red{\line(0,1){20.05}}}
	\multiput(20,0)(0,60){2}{\red{\line(1,0){20.05}}}
	\multiput(0,40)(40,-40){2}{\red{\line(1,1){20.05}}}
	\multiput(20,0)(40,40){2}{\red{\line(-1,1){20.05}}}
\thicklines

% \thinlines
	\put(40,0){\red{\line(1,2){20.05}}}
	\put(0,40){\red{\line(1,0){60.05}}}
	\put(0,20){\red{\line(2,-1){40.05}}}
	\put(0,40){\red{\line(1,-1){40.05}}}
	\put(20,60){\red{\line(2,-1){40.05}}}

% \put(19,-1){\line(-0.6,-1){3.8}}
% \put(41,-1){\line(0.6,-1){3.8}}
% \put(19,61){\line(-0.6,1){3.8}}
% \put(41,61){\line(0.6,1){3.8}}
% \put(-1, 19){\line(-1, -0.6){6}}
% \put(-1, 41){\line(-1, 0.6){6}}
% \put(61, 19){\line(1, -0.6){6}}
% \put(61, 41){\line(1, 0.6){6}}
\multiput(15.3,-7.2)(29.2,0){2}{\circle*{2}}
\multiput(15.3,67.2)(29.2,0){2}{\circle*{2}}
\multiput(-7.3,15.4)(0,29.2){2}{\circle*{2}}
\multiput(67.3,15.4)(0,29.2){2}{\circle*{2}}
\end{picture}
\vspace{-15pt}
\end{center}
\caption{A triangulation $T$ of an octagon, and the corresponding
	plabic graph $G(T)$, cf.\ Figure~\ref{fig:quiver-triangulation}.}
\label{fig:triang-plabic0}
\end{figure}

\begin{exercise}
	Show that $Q(G(T)) = Q(T)$, 
i.e., the quiver associated to the plabic graph of a triangulation $T$ 
	coincides with the quiver $Q(T)$ associated to $T$, as in Definition~\ref{def:Q(T)-polygon}.
\end{exercise}

\begin{exercise}
Show that if triangulations $T$ and $T'$ are related by a flip, 
then the plabic graphs $G(T)$ and $G(T')$ are move-equivalent to each~other.
More concretely, flipping a diagonal in~$T$ translates into a square move 
at the corresponding quadrilateral face of~$G(T)$,
plus some (M3) moves to make each vertex of that face trivalent.
\end{exercise}

\begin{example}[{\textbf{Wiring diagrams}}]
\label{def:wiringplabic}

Let $D$ be a wiring diagram, as in Section~\ref{sec:baseaffine}.
We associate a plabic graph $G(D)$ to~$D$ by replacing 
each crossing in $D$ by a pair of trivalent
vertices connected vertically, with a black vertex on top
and a white vertex on the bottom.
We then enclose the resulting graph~in~a~disk.
%and label
%the boundary vertices from $1$ to $2n$ in clockwise order,
%starting at the lower left.

\nopagebreak

% !!! Testing -- commented this one for now but put it back later
%\newsavebox{\ssone}
%\setlength{\unitlength}{2.4pt} 
%\savebox{\ssone}(10,20)[bl]{
%\thicklines 
%\qbezier(5,5)(7,10)(10,10)
%\qbezier(5,5)(3,0)(0,0)
%\qbezier(5,5)(3,10)(0,10)
%\qbezier(5,5)(7,0)(10,0)
%\put(0,20){\line(1,0){10}} 
%}

%\begin{figure}
%\begin{picture}(60,40)(0,-18) 
%\put(0,0){\makebox(0,0){\usebox{\ssone}}} 
%\put(10,0){\makebox(0,0){\usebox{\sstwo}}} 
%\put(20,0){\makebox(0,0){\usebox{\ssone}}} 
%\end{picture}
%\setlength{\unitlength}{1.2pt}
%\begin{picture}(60,40)(0,-18)
%\thicklines
%\put(0,40){\line(1,0){28.5}}
%\put(60,40){\line(-1,0){28.5}}
%\put(0,20){\line(1,0){8.5}}
%\put(11.5,20){\line(1,0){37}}
%\put(60,20){\line(-1,0){8.5}}
%\put(0,0){\line(1,0){60}}
%\put(10,1.5){\line(0,1){17}}
%\put(30,21.5){\line(0,1){17}}
%\put(50,1.5){\line(0,1){17}}
%\put(10,0){\circle{3}}
%\put(30,20){\circle{3}}
%\put(50,0){\circle{3}}
%\put(10,20){\circle*{3}}
%\put(30,40){\circle*{3}}
%\put(50,20){\circle*{3}}
%\end{picture}
%	\caption{ A wiring diagram $D$ and the 
%	corresponding plabic graph $G(D)$. }
%\end{figure}

%%%%%%%%%%%%%%%%%%%%%%%%%%%%%%%%%%%%%%%%%%%%%%%%%%%%%%%%

\newsavebox{\sssone}
\setlength{\unitlength}{2pt} 
\savebox{\sssone}(10,30)[bl]{
\thicklines 
\qbezier(5,5)(7,10)(10,10)
\qbezier(5,5)(3,0)(0,0)
\qbezier(5,5)(3,10)(0,10)
\qbezier(5,5)(7,0)(10,0)
\put(0,20){\line(1,0){10}} 
\put(0,30){\line(1,0){10}} 
}

\newsavebox{\ssstwo}
\setlength{\unitlength}{2pt} 
\savebox{\ssstwo}(10,30)[bl]{
\thicklines 
\qbezier(5,15)(7,20)(10,20)
\qbezier(5,15)(3,10)(0,10)
\qbezier(5,15)(3,20)(0,20)
\qbezier(5,15)(7,10)(10,10)
\put(0,0){\line(1,0){10}} 
\put(0,30){\line(1,0){10}} 
}

\newsavebox{\sssthree}
\setlength{\unitlength}{2pt} 
\savebox{\sssthree}(10,30)[bl]{
\thicklines 
\qbezier(5,25)(7,30)(10,30)
\qbezier(5,25)(3,20)(0,20)
\qbezier(5,25)(3,30)(0,30)
\qbezier(5,25)(7,20)(10,20)
\put(0,0){\line(1,0){10}} 
\put(0,10){\line(1,0){10}} 
}

\newsavebox{\sssonethree}
\setlength{\unitlength}{2pt} 
\savebox{\sssonethree}(10,30)[bl]{
\thicklines 
\qbezier(5,25)(7,30)(10,30)
\qbezier(5,25)(3,20)(0,20)
\qbezier(5,25)(3,30)(0,30)
\qbezier(5,25)(7,20)(10,20)
\qbezier(5,5)(7,10)(10,10)
\qbezier(5,5)(3,0)(0,0)
\qbezier(5,5)(3,10)(0,10)
\qbezier(5,5)(7,0)(10,0)
}

\newsavebox{\ssslines}
\setlength{\unitlength}{2pt} 
\savebox{\ssslines}(5,30)[bl]{
\thicklines 
\put(0,0){\line(1,0){5}} 
\put(0,10){\line(1,0){5}} 
\put(0,20){\line(1,0){5}} 
\put(0,30){\line(1,0){5}} 
}

%%%%%%%%%%%%%%%%%%%%%%%%%%%%%%%%%%%%%%%%%%%%%%%%%%%%%%%%

\begin{figure}[ht]
\begin{center}
\vspace{-1.2cm}
\setlength{\unitlength}{2pt} 
\begin{picture}(70,30)(-10,0)
\thicklines
% \thinlines
%\put(10,-10){\makebox(0,0){\lightblue{$\Delta_{1}$}}}
%\put(-8,0){\makebox(0,0){\lightblue{$\Delta_{12}$}}}
%\put(0,10){\makebox(0,0){\lightblue{$\Delta_{123}$}}}

%\put(10,0){\makebox(0,0){\lightred{$\Delta_{13}$}}}
%\put(30,0){\makebox(0,0){\lightred{$\Delta_{14}$}}}
%\put(40,-10){\makebox(0,0){\lightblue{$\Delta_{4}$}}}

%\put(30,10){\makebox(0,0){\lightred{$\Delta_{134}$}}}
%\put(50,0){\makebox(0,0){\lightblue{$\Delta_{34}$}}}

%\put(58,10){\makebox(0,0){\lightblue{$\Delta_{234}$}}}

\put(-7.5,0){\makebox(0,0){\usebox{\ssslines}}} 
\put(0,0){\makebox(0,0){\usebox{\ssstwo}}} 
\put(10,0){\makebox(0,0){\usebox{\sssthree}}} 
\put(20,0){\makebox(0,0){\usebox{\ssstwo}}} 
\put(30,0){\makebox(0,0){\usebox{\sssone}}} 
\put(40,0){\makebox(0,0){\usebox{\ssstwo}}} 
\put(50,0){\makebox(0,0){\usebox{\sssthree}}} 
\put(57.5,0){\makebox(0,0){\usebox{\ssslines}}} 
\end{picture}
\quad
	\hspace{.02cm}
\begin{picture}(70,30)(-10,0)
\thicklines

% \thinlines
%\put(10,-10){\makebox(0,0){\lightblue{$\Delta_{1}$}}}
%\put(0,0){\makebox(0,0){\lightblue{$\Delta_{12}$}}}
%\put(-8,10){\makebox(0,0){\lightblue{$\Delta_{123}$}}}

%\put(10,10){\makebox(0,0){\lightred{$\Delta_{124}$}}}
%\put(25,0){\makebox(0,0){\lightred{$\Delta_{14}$}}}
%\put(40,-10){\makebox(0,0){\lightblue{$\Delta_{4}$}}}

%\put(35,10){\makebox(0,0){\lightred{$\Delta_{134}$}}}
%\put(50,0){\makebox(0,0){\lightblue{$\Delta_{34}$}}}

%\put(58,10){\makebox(0,0){\lightblue{$\Delta_{234}$}}}
\put(-7.5,0){\makebox(0,0){\usebox{\ssslines}}} 
\put(0,0){\makebox(0,0){\usebox{\sssthree}}} 
\put(10,0){\makebox(0,0){\usebox{\ssstwo}}} 
\put(20,0){\makebox(0,0){\usebox{\sssthree}}} 
\put(30,0){\makebox(0,0){\usebox{\sssone}}} 
\put(40,0){\makebox(0,0){\usebox{\ssstwo}}} 
\put(50,0){\makebox(0,0){\usebox{\sssthree}}} 
\put(57.5,0){\makebox(0,0){\usebox{\ssslines}}} 
\end{picture}
\end{center}
	\vspace{1cm}
\begin{center}
\setlength{\unitlength}{1pt}
\begin{picture}(140,65)(0,0)
\thicklines

% \thinlines
\put(0,40){\line(1,0){18.5}}
\put(21.5,40){\line(1,0){37}}
\put(61.5,40){\line(1,0){37}}
\put(101.5,40){\line(1,0){38.5}}
\put(0,20){\line(1,0){78.5}}
\put(140,20){\line(-1,0){58.5}}
\put(70,20){\line(-1,0){28.5}}
\put(0,0){\line(1,0){140}}
\put(0,60){\line(1,0){38.5}}
\put(41.5,60){\line(1,0){77}}
\put(121.5,60){\line(1,0){18.5}}
\put(20,20){\line(0,1){18.5}}
\put(80,0){\line(0,1){18.5}}
\put(60,20){\line(0,1){18.5}}
\put(100,20){\line(0,1){18.5}}
\put(40,40){\line(0,1){18.5}}
\put(120,40){\line(0,1){18.5}}
\put(20,20){\circle{3}}
\put(80,0){\circle{3}}
\put(60,20){\circle{3}}
\put(100,20){\circle{3}}
\put(40,40){\circle{3}}
\put(120,40){\circle{3}}
\put(20,40){\circle*{3}}
\put(80,20){\circle*{3}}
\put(60,40){\circle*{3}}
\put(100,40){\circle*{3}}
\put(40,60){\circle*{3}}
\put(120,60){\circle*{3}}
\put(20,20){\white{\circle*{2.7}}}
\put(80,0){\white{\circle*{2.7}}}
\put(60,20){\white{\circle*{2.7}}}
\put(100,20){\white{\circle*{2.7}}}
\put(40,40){\white{\circle*{2.7}}}
\put(120,40){\white{\circle*{2.7}}}

\thicklines
%\put(40,10){\lightblue{\circle{6}}}\put(40,10){\lightblue{\circle{3}}}
%\put(100,10){\lightblue{\circle{6}}}\put(100,10){\lightblue{\circle{3}}}
%\put(10,30){\lightblue{\circle{6}}}\put(10,30){\lightblue{\circle{3}}}
%\put(120,30){\lightblue{\circle{6}}}\put(120,30){\lightblue{\circle{3}}}
%\put(20,50){\lightblue{\circle{6}}}\put(20,50){\lightblue{\circle{3}}}
%\put(130,50){\lightblue{\circle{6}}}\put(130,50){\lightblue{\circle{3}}}
%\put(40,30){\lightred{\circle{6}}}
%\put(80,30){\lightred{\circle{6}}}
%\put(80,50){\lightred{\circle{6}}}
\linethickness{1.5pt}
%\put(15,30){\vector(1,0){20}}
%\put(45,30){\vector(1,0){30}}
%\put(85,30){\vector(1,0){30}}
%\qbezier(42.5,35)(50,50)(75,50)
%\put(42.5,35){\vector(-1,-2){0.7}}
%\qbezier(117.5,35)(110,50)(86,50)
%\put(86,50){\vector(-1,0){2.3}}
%\qbezier(84,52)(105,60)(125,52)
%\put(125,52){\vector(4,-1){1.5}}
%\qbezier(24,52)(50,60)(74,52)
%\put(74,52){\vector(4,-1){1.5}}
%\put(36.5,33.5){\vector(-1,1){13.5}}
%\qbezier(77.5,25)(70,10)(46,10)
%\put(46,10){\vector(-1,0){2.3}}
%\put(96.5,13.5){\vector(-1,1){13.5}}
\end{picture}
\quad
\setlength{\unitlength}{1pt}
\begin{picture}(140,65)(0,0)
\thicklines

% \thinlines
\put(0,40){\line(1,0){38.5}}
\put(41.5,40){\line(1,0){57}}
\put(101.5,40){\line(1,0){38.5}}
\put(0,20){\line(1,0){78.5}}
\put(140,20){\line(-1,0){58.5}}
\put(70,20){\line(-1,0){28.5}}
\put(0,0){\line(1,0){140}}
\put(0,60){\line(1,0){18.5}}
\put(21.5,60){\line(1,0){37}}
\put(61.5,60){\line(1,0){57}}
\put(121.5,60){\line(1,0){18.5}}

\put(20,40){\line(0,1){18.5}}
\put(80,0){\line(0,1){18.5}}
\put(60,40){\line(0,1){18.5}}
\put(100,20){\line(0,1){18.5}}
\put(40,20){\line(0,1){18.5}}
\put(120,40){\line(0,1){18.5}}
\put(20,40){\circle{3}}
\put(80,0){\circle{3}}
\put(60,40){\circle{3}}
\put(100,20){\circle{3}}
\put(40,20){\circle{3}}
\put(120,40){\circle{3}}
\put(20,60){\circle*{3}}
\put(80,20){\circle*{3}}
\put(60,60){\circle*{3}}
\put(100,40){\circle*{3}}
\put(40,40){\circle*{3}}
\put(120,60){\circle*{3}}

\put(20,40){\white{\circle*{2.7}}}
\put(80,0){\white{\circle*{2.7}}}
\put(60,40){\white{\circle*{2.7}}}
\put(100,20){\white{\circle*{2.7}}}
\put(40,20){\white{\circle*{2.7}}}
\put(120,40){\white{\circle*{2.7}}}

\thicklines
%\put(40,10){\lightblue{\circle{6}}}\put(40,10){\lightblue{\circle{3}}}
%\put(100,10){\lightblue{\circle{6}}}\put(100,10){\lightblue{\circle{3}}}
%\put(20,30){\lightblue{\circle{6}}}\put(20,30){\lightblue{\circle{3}}}
%\put(120,30){\lightblue{\circle{6}}}\put(120,30){\lightblue{\circle{3}}}
%\put(10,50){\lightblue{\circle{6}}}\put(10,50){\lightblue{\circle{3}}}
%\put(130,50){\lightblue{\circle{6}}}\put(130,50){\lightblue{\circle{3}}}
%\put(40,50){\lightred{\circle{6}}}
%\put(80,30){\lightred{\circle{6}}}
%\put(80,50){\lightred{\circle{6}}}
\linethickness{1.5pt}
%\put(25,30){\vector(1,0){50}}
%\put(85,30){\vector(1,0){30}}
%\put(15,50){\vector(1,0){20}}
%\put(45,50){\vector(1,0){30}}
%\qbezier(43.5,45)(50,36)(75,32)
%\put(43.5,45){\vector(-1,1){1.2}}
%\qbezier(117.5,35)(110,50)(86,50)
%\put(86,50){\vector(-1,0){2.3}}
%\qbezier(84,52)(105,60)(125,52)
%\put(125,52){\vector(4,-1){1.5}}
%\qbezier(24,52)(50,60)(74,52)
%\put(74,52){\vector(4,-1){1.5}}
%\put(36.5,46.5){\vector(-1,-1){13.5}}
%\qbezier(77.5,25)(70,10)(46,10)
%\put(46,10){\vector(-1,0){2.3}}
%\put(96.5,13.5){\vector(-1,1){13.5}}
%\put(80,45){\vector(0,-1){10.8}}
\end{picture}
\end{center}
	\vspace{.1cm}
\begin{center}
\setlength{\unitlength}{1pt}
\begin{picture}(140,65)(0,0)
\thicklines
% \thinlines

\put(0,40){\line(1,0){18.5}}
\put(21.5,40){\line(1,0){37}}
\put(61.5,40){\line(1,0){37}}
\put(101.5,40){\line(1,0){38.5}}
\put(0,20){\line(1,0){78.5}}
\put(140,20){\line(-1,0){58.5}}
\put(70,20){\line(-1,0){28.5}}
\put(0,0){\line(1,0){140}}
\put(0,60){\line(1,0){38.5}}
\put(41.5,60){\line(1,0){77}}
\put(121.5,60){\line(1,0){18.5}}
\put(20,20){\line(0,1){18.5}}
\put(80,0){\line(0,1){18.5}}
\put(60,20){\line(0,1){18.5}}
\put(100,20){\line(0,1){18.5}}
\put(40,40){\line(0,1){18.5}}
\put(120,40){\line(0,1){18.5}}
\put(20,20){\circle{3}}
\put(80,0){\circle{3}}
\put(60,20){\circle{3}}
\put(100,20){\circle{3}}
\put(40,40){\circle{3}}
\put(120,40){\circle{3}}
\put(20,40){\circle*{3}}
\put(80,20){\circle*{3}}
\put(60,40){\circle*{3}}
\put(100,40){\circle*{3}}
\put(40,60){\circle*{3}}
\put(120,60){\circle*{3}}

\put(20,20){\white{\circle*{2.7}}}
\put(80,0){\white{\circle*{2.7}}}
\put(60,20){\white{\circle*{2.7}}}
\put(100,20){\white{\circle*{2.7}}}
\put(40,40){\white{\circle*{2.7}}}
\put(120,40){\white{\circle*{2.7}}}
\thicklines
	\put(37,7){\red{\line(0,1){6}}}
	\put(37,7){\red{\line(1,0){6}}}
	\put(43,13){\red{\line(0,-1){6}}}
	\put(43,13){\red{\line(-1,0){6}}}
	\put(97,7){\red{\line(0,1){6}}}
	\put(97,7){\red{\line(1,0){6}}}
	\put(103,13){\red{\line(0,-1){6}}}
	\put(103,13){\red{\line(-1,0){6}}}
	\put(7,27){\red{\line(0,1){6}}}
	\put(7,27){\red{\line(1,0){6}}}
	\put(13,33){\red{\line(0,-1){6}}}
	\put(13,33){\red{\line(-1,0){6}}}
	\put(117,27){\red{\line(0,1){6}}}
	\put(117,27){\red{\line(1,0){6}}}
	\put(123,33){\red{\line(0,-1){6}}}
	\put(123,33){\red{\line(-1,0){6}}}
	\put(17,47){\red{\line(0,1){6}}}
	\put(17,47){\red{\line(1,0){6}}}
	\put(23,53){\red{\line(0,-1){6}}}
	\put(23,53){\red{\line(-1,0){6}}}
	\put(127,47){\red{\line(0,1){6}}}
	\put(127,47){\red{\line(1,0){6}}}
	\put(133,53){\red{\line(0,-1){6}}}
	\put(133,53){\red{\line(-1,0){6}}}
	
%	\put(67,67){\red{\line(0,1){6}}}
%	\put(67,67){\red{\line(1,0){6}}}
%	\put(73,73){\red{\line(0,-1){6}}}
%	\put(73,73){\red{\line(-1,0){6}}}
%	\put(67,-13){\red{\line(0,1){6}}}
%	\put(67,-13){\red{\line(1,0){6}}}
%	\put(73,-7){\red{\line(0,-1){6}}}
%	\put(73,-7){\red{\line(-1,0){6}}}
\put(40,30){\red{\circle*{5}}}
\put(80,30){\red{\circle*{5}}}
\put(80,50){\red{\circle*{5}}}
\linethickness{1pt}
\put(35,30){\red{\vector(-1,0){20}}}
\put(75,30){\red{\vector(-1,0){30}}}
\put(115,30){\red{\vector(-1,0){30}}}
\put(42.5,35){\red{\vector(2.2,1){31}}}
% {\red{\qbezier(42.5,35)(50,50)(75,50)}}
% \put(76,50){\red{\vector(1,0){0.7}}}
\put(86,48){\red{\vector(2.5,-1){32}}}
% {\red{\qbezier(117.5,35)(110,50)(86,50)}}
%\put(117,36){\red{\vector(2,-3){1.5}}}
\put(125,50){\red{\vector(-1,0){40}}}
% {\red{\qbezier(84,52)(105,60)(125,52)}}
% \put(84,52){\red{\vector(-4,-1){1.5}}}
\put(74,50){\red{\vector(-1,0){47}}}
% {\red{\qbezier(25,52)(50,60)(74,52)}}
% \put(25,52){\red{\vector(-4,-1){1.5}}}
\put(46,10){\red{\vector(2,1){31.5}}}
% \put(24,46){\red{\vector(1,-1){12.5}}}
% {\red{\qbezier(77.5,25)(70,10)(46,10)}}
% \put(77.5,25){\red{\vector(2,3){1}}}
\put(83.5,26.5){\red{\vector(1,-1){12.5}}}
\end{picture}
\quad
\setlength{\unitlength}{1pt}
\begin{picture}(140,65)(0,0)
\thicklines
% \thinlines
\put(0,40){\line(1,0){38.5}}
\put(41.5,40){\line(1,0){57}}
\put(101.5,40){\line(1,0){38.5}}
\put(0,20){\line(1,0){78.5}}
\put(140,20){\line(-1,0){58.5}}
\put(70,20){\line(-1,0){28.5}}
\put(0,0){\line(1,0){140}}
\put(0,60){\line(1,0){18.5}}
\put(21.5,60){\line(1,0){37}}
\put(61.5,60){\line(1,0){57}}
\put(121.5,60){\line(1,0){18.5}}

\put(20,40){\line(0,1){18.5}}
\put(80,0){\line(0,1){18.5}}
\put(60,40){\line(0,1){18.5}}
\put(100,20){\line(0,1){18.5}}
\put(40,20){\line(0,1){18.5}}
\put(120,40){\line(0,1){18.5}}
\put(20,40){\circle{3}}
\put(80,0){\circle{3}}
\put(60,40){\circle{3}}
\put(100,20){\circle{3}}
\put(40,20){\circle{3}}
\put(120,40){\circle{3}}
\put(20,60){\circle*{3}}
\put(80,20){\circle*{3}}
\put(60,60){\circle*{3}}
\put(100,40){\circle*{3}}
\put(40,40){\circle*{3}}
\put(120,60){\circle*{3}}
        \thicklines
	\put(37,7){\red{\line(0,1){6}}}
	\put(37,7){\red{\line(1,0){6}}}
	\put(43,13){\red{\line(0,-1){6}}}
	\put(43,13){\red{\line(-1,0){6}}}
	\put(97,7){\red{\line(0,1){6}}}
	\put(97,7){\red{\line(1,0){6}}}
	\put(103,13){\red{\line(0,-1){6}}}
	\put(103,13){\red{\line(-1,0){6}}}
	\put(17,27){\red{\line(0,1){6}}}
	\put(17,27){\red{\line(1,0){6}}}
	\put(23,33){\red{\line(0,-1){6}}}
	\put(23,33){\red{\line(-1,0){6}}}
	\put(117,27){\red{\line(0,1){6}}}
	\put(117,27){\red{\line(1,0){6}}}
	\put(123,33){\red{\line(0,-1){6}}}
	\put(123,33){\red{\line(-1,0){6}}}
	\put(7,47){\red{\line(0,1){6}}}
	\put(7,47){\red{\line(1,0){6}}}
	\put(13,53){\red{\line(0,-1){6}}}
	\put(13,53){\red{\line(-1,0){6}}}
	\put(127,47){\red{\line(0,1){6}}}
	\put(127,47){\red{\line(1,0){6}}}
	\put(133,53){\red{\line(0,-1){6}}}
	\put(133,53){\red{\line(-1,0){6}}}
	
%	\put(67,67){\red{\line(0,1){6}}}
%	\put(67,67){\red{\line(1,0){6}}}
%	\put(73,73){\red{\line(0,-1){6}}}
%	\put(73,73){\red{\line(-1,0){6}}}
%	\put(67,-13){\red{\line(0,1){6}}}
%	\put(67,-13){\red{\line(1,0){6}}}
%	\put(73,-7){\red{\line(0,-1){6}}}
%	\put(73,-7){\red{\line(-1,0){6}}}
\put(40,50){\red{\circle*{5}}}
\put(70,30){\red{\circle*{5}}}
\put(90,50){\red{\circle*{5}}}

% \linethickness{1pt}
%\qbezier(70,25)(70,10)(46,10)
\thicklines
\put(115,30){\red{\vector(-1,0){40}}}
\put(65,30){\red{\vector(-1,0){40}}}
\put(85,50){\red{\vector(-1,0){40}}}
\put(125,50){\red{\vector(-1,0){30}}}
\put(35,50){\red{\vector(-1,0){20}}}
\put(24.5,34.5){\red{\vector(1,1){12.5}}}
\put(74,34){\red{\vector(1,1){13}}}
\put(44.5,47){\red{\vector(3,-2){22.5}}}
\put(94.5,47){\red{\vector(3,-2){21.5}}}
\put(74.5,27){\red{\vector(3,-2){21.5}}}
\put(44.5,13){\red{\vector(3,2){22.5}}}
\thicklines
\put(20,40){\white{\circle*{2.7}}}
\put(80,0){\white{\circle*{2.7}}}
\put(60,40){\white{\circle*{2.7}}}
\put(100,20){\white{\circle*{2.7}}}
\put(40,20){\white{\circle*{2.7}}}
\put(120,40){\white{\circle*{2.7}}}

\end{picture}
\vspace{-5pt}
\end{center}
\caption{Top: wiring diagrams $D_1$ and $D_2$ associated to 
reduced expressions $s_2 s_3 s_2 s_1 s_2 s_3$ and $s_3 s_2 s_3 s_1 s_2 s_3$
for $w_0=(4,3,2,1)\in S_4$.
These wiring diagrams (resp., reduced expressions) are related via a braid move. 
Middle: the plabic graphs $G(D_1)$ and $G(D_2)$. 
Bottom: the quivers $Q(G(D_1))$ and $Q(G(D_2))$, with isolated frozen vertices removed.
}
\label{fig:wiring-plabic0}
\end{figure}
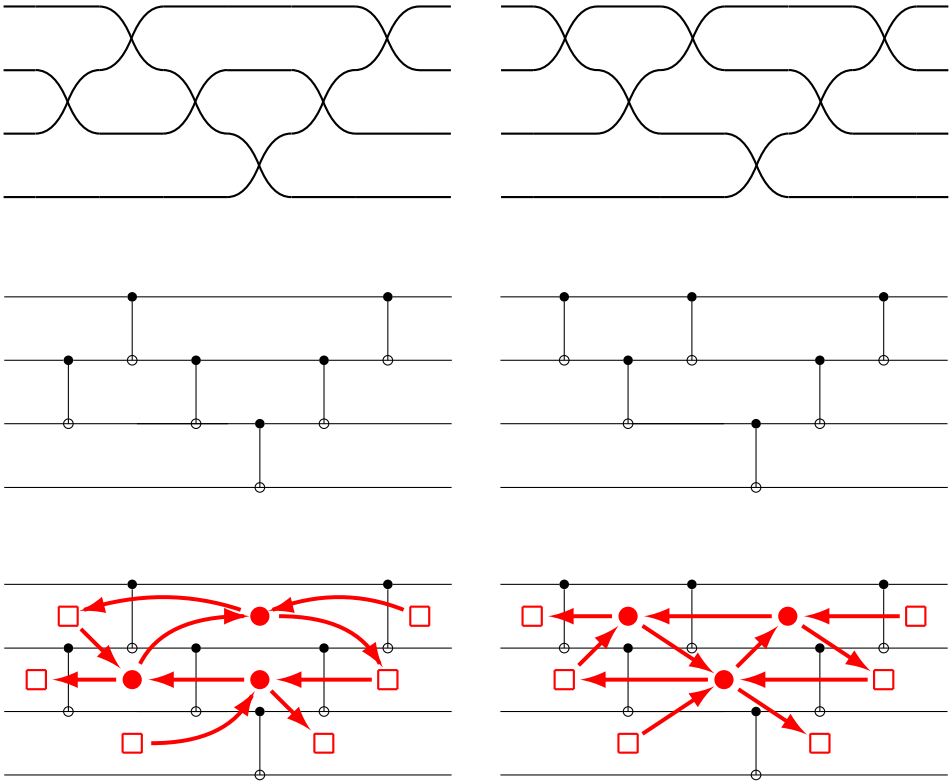

This construction applies to a more general version of wiring diagrams.
Let $s_i$ denote the simple transposition in the symmetric
	group $\mathcal{S}_n$ that exchanges $i$ and $i+1$.
Given a sequence $\mathbf{w}=s_{i_1} s_{i_2} \dots s_{i_m}$ of simple
transpositions, we associate to it a diagram $D(\mathbf{w})$
by concatenating 
$m$ graphs; here the graph associated to $s_{j}$ consists
of $n$ wires, of which $n-2$ are horizontal, while 
the $j$th and $({j}+1)$st wires cross over each other.
See \cref{fig:wiring-plabic0}. 
\end{example}

\begin{exercise}
Let $D$ be a wiring diagram.
Show that the trips starting~at the left side of~$G(D)$ follow the pattern determined by the strands
of~$D$, while the trips starting at the right side of~$G(D)$ proceed horizontally to the~left. 
\end{exercise}

%\pagebreak[3]

\begin{exercise}
\label{exercise:Q(G(D))=Q(D)}
Show that after removing isolated frozen vertices at the top and bottom,
the quiver $Q(G(D))$ 
associated to the plabic graph of a wiring diagram $D$ 
coincides with the quiver $Q(D)$ associated to~$D$, 
as in Definition~\ref{def:quiverwd}, up to a global reversal of arrows.
	\end{exercise}
	
\begin{remark}
If we changed our convention in \cref{def:wiringplabic}, swapping the 
	colors of the black and white vertices,
 we'd recover precisely the quiver $Q(D)$ associated to
the wiring diagram.  
	However, we prefer the convention used in 
\cref{def:wiringplabic} because it will
	 lead to a transparent algorithm for recovering the chamber minors, 
	 as shown in 
\cref{fig:wiring-plabic}.
	  And as noted in 
	 Remark~\ref{rem:opposite}, the cluster algebra associated to 
	 a given quiver is the same as the cluster algebra associated to the 
	 opposite quiver.
\end{remark}

\begin{remark}
\label{rem:braid-move=plabic-move}
If two wiring diagrams $D$ and $D'$ are related by a braid
move, then the corresponding plabic graphs $G(D)$ and $G(D')$
are related by a square move plus some (M3) moves, see 
\cref{fig:braidsquare0}.
	\end{remark}

\newsavebox{\ssone}
\setlength{\unitlength}{2pt} 
\savebox{\ssone}(10,20)[bl]{
\thicklines 
\qbezier(5,5)(7,10)(10,10)
\qbezier(5,5)(3,0)(0,0)
\qbezier(5,5)(3,10)(0,10)
\qbezier(5,5)(7,0)(10,0)
\put(0,20){\line(1,0){10}} 
}
\newsavebox{\sstwo}
\setlength{\unitlength}{2pt} 
\savebox{\sstwo}(10,20)[bl]{
\thicklines 
\qbezier(5,15)(7,20)(10,20)
\qbezier(5,15)(3,10)(0,10)
\qbezier(5,15)(3,20)(0,20)
\qbezier(5,15)(7,10)(10,10)
\put(0,0){\line(1,0){10}} 
}

\begin{figure}[ht]
\setlength{\unitlength}{2pt} 
	\begin{center}
\vspace{-15pt}
        \hspace{-0.1cm}
		\begin{picture}(30,20)(-7.5,-10) 
%\begin{picture}(36,17)(0,0) 
\put(0,0){\makebox(0,0){\usebox{\ssone}}} 
\put(10,0){\makebox(0,0){\usebox{\sstwo}}} 
\put(20,0){\makebox(0,0){\usebox{\ssone}}} 
\end{picture}
		\hspace{7.8cm}
\setlength{\unitlength}{2pt} 
\begin{picture}(30,20)(-2.5,-10) 
%\begin{picture}(60,40)(0,-18) 
\put(0,0){\makebox(0,0){\usebox{\sstwo}}} 
\put(10,0){\makebox(0,0){\usebox{\ssone}}} 
\put(20,0){\makebox(0,0){\usebox{\sstwo}}} 
\end{picture}
\end{center}
\vspace{.2cm}
\begin{center}
\setlength{\unitlength}{1pt}
\begin{picture}(60,40)(0,-10)
\thicklines
\put(0,40){\line(1,0){28.5}}
\put(60,40){\line(-1,0){28.5}}
\put(0,20){\line(1,0){8.5}}
\put(11.5,20){\line(1,0){37}}
\put(60,20){\line(-1,0){8.5}}
\put(0,0){\line(1,0){60}}
\put(10,1.5){\line(0,1){17}}
\put(30,21.5){\line(0,1){17}}
\put(50,1.5){\line(0,1){17}}
\put(10,0){\circle{3}}
\put(30,20){\circle{3}}
\put(50,0){\circle{3}}
\put(10,20){\circle*{3}}
\put(30,40){\circle*{3}}
\put(50,20){\circle*{3}}

\put(10,0){\white{\circle*{2.4}}}
\put(30,20){\white{\circle*{2.4}}}
\put(50,0){\white{\circle*{2.4}}}
\end{picture}
\setlength{\unitlength}{1pt}
\begin{picture}(20,10)(0,-10)
\put(5,23){\makebox(10,0){\large{$\overset{\scriptscriptstyle(\mathrm{M}3)^2}{\longleftrightarrow}$}}}
\end{picture}
\setlength{\unitlength}{1pt}
\begin{picture}(60,40)(0,-10)
\thicklines
\put(0,40){\line(1,0){28.5}}
\put(60,40){\line(-1,0){28.5}}
\put(0,20){\line(1,0){18.5}}
\put(60,20){\line(-1,0){18.5}}
\put(0,0){\line(1,0){60}}
\put(30,0){\line(0,1){8.5}}
\put(30,30){\line(0,1){8.5}}
\put(30,30){\line(1,-1){8.9}}
\put(30,30){\line(-1,-1){8.9}}
\put(30,10){\line(1,1){8.9}}
\put(30,10){\line(-1,1){8.9}}
\put(30,0){\circle{3}}
\put(30,30){\circle{3}}
\put(30,10){\circle{3}}
\put(20,20){\circle*{3}}
\put(40,20){\circle*{3}}
\put(30,40){\circle*{3}}

\put(30,0){\white{\circle*{2.4}}}
\put(30,30){\white{\circle*{2.4}}}
\put(30,10){\white{\circle*{2.4}}}
\end{picture}
\setlength{\unitlength}{1pt}
\begin{picture}(20,10)(0,-10)
\put(5,23){\makebox(10,0){\large{$\overset{\scriptscriptstyle\mathrm{M}1}{\longleftrightarrow}$}}}
\end{picture}
\setlength{\unitlength}{1pt}
\begin{picture}(60,40)(0,-10)
\thicklines
\put(0,40){\line(1,0){28.5}}
\put(60,40){\line(-1,0){28.5}}
\put(0,20){\line(1,0){18.5}}
\put(60,20){\line(-1,0){18.5}}
\put(0,0){\line(1,0){60}}
\put(30,0){\line(0,1){8.5}}
\put(30,31.5){\line(0,1){7}}
\put(20,20){\line(1,-1){8.9}}
\put(20,20){\line(1,1){8.9}}
\put(40,20){\line(-1,-1){8.9}}
\put(40,20){\line(-1,1){8.9}}
\put(30,0){\circle{3}}
\put(30,30){\circle*{3}}
\put(30,10){\circle*{3}}
\put(20,20){\circle{3}}
\put(40,20){\circle{3}}
\put(30,40){\circle*{3}}

\put(30,0){\white{\circle*{2.4}}}
\put(20,20){\white{\circle*{2.4}}}
\put(40,20){\white{\circle*{2.4}}}

\end{picture}
\setlength{\unitlength}{1pt}
\begin{picture}(20,10)(0,-10)
\put(5,23){\makebox(10,0){\large{$\overset{\scriptscriptstyle(\mathrm{M}3)^2}{\longleftrightarrow}$}}}
\end{picture}
\setlength{\unitlength}{1pt}
\begin{picture}(60,45)(0,-10)
\thicklines
\put(0,40){\line(1,0){8.5}}
\put(11.5,40){\line(1,0){37}}
\put(60,40){\line(-1,0){8.5}}
\put(0,20){\line(1,0){28.5}}
\put(60,20){\line(-1,0){28.5}}
\put(0,0){\line(1,0){60}}
\put(10,20){\line(0,1){18.5}}
\put(30,0){\line(0,1){18.5}}
\put(50,20){\line(0,1){18.5}}
\put(10,20){\circle{3}}
\put(30,0){\circle{3}}
\put(50,20){\circle{3}}
\put(10,40){\circle*{3}}
\put(30,20){\circle*{3}}
\put(50,40){\circle*{3}}

\put(10,20){\white{\circle*{2.4}}}
\put(30,0){\white{\circle*{2.4}}}
\put(50,20){\white{\circle*{2.4}}}

\end{picture}
\vspace{-18pt}
\end{center}
	\caption{
	A braid move on wiring diagrams, and a 
	corresponding sequence of moves on plabic graphs.
The first two (resp., the last two) plabic graphs are related by two (M3) moves;
the two plabic graphs in the middle are related by an (M1) move.}
\label{fig:braidsquare0}
\end{figure}
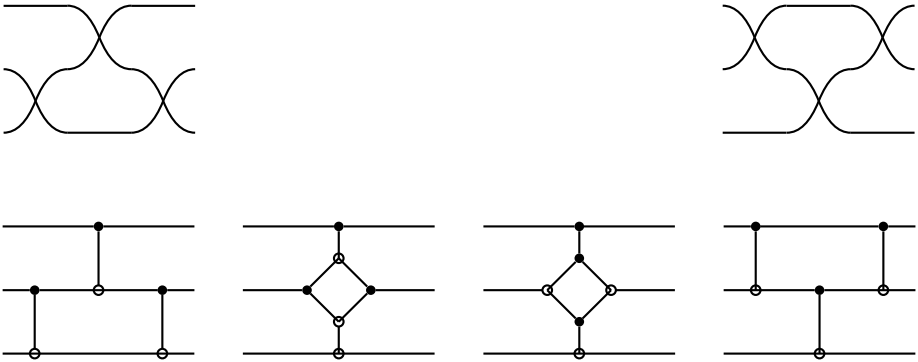

\vspace{-5pt}

\begin{exercise}
\label{exercise:braid-equivalence}
Consider sequences (or ``words'') $\mathbf{w} = s_{i_1} \cdots s_{i_m}$
of simple transpositions in a symmetric group, as in 
\cref{def:wiringplabic}.   We say that 
two such words are \emph{braid-equivalent} if they can be related to each other 
using the braid relations $s_{i} s_{i+1} s_i = s_{i+1} s_i s_{i+1}$
and $s_i s_j = s_j s_i$ for $|i-j| \geq 2$.  
A~word $\mathbf{w}$ is called a \emph{reduced expression} if no word 
in its braid equivalence class has two consecutive equal entries: $\cdots s_i s_i \cdots$.
Show that if 
$\mathbf{w}$ fails to be a reduced expression, then 
$G(D(\mathbf{w}))$ fails to be a reduced plabic graph.
\end{exercise}

\begin{remark}
Conversely, if $\mathbf{w}$ is reduced, then 
$G(D(\mathbf{w}))$ is reduced; this can be proved using 
\cref{thm:reduced} or~\cref{thm:resonance}. % below.
In this sense, reduced plabic graphs can be interpreted as generalizations
of reduced expressions in symmetric groups.
\end{remark}

\begin{example}[{\textbf{Double wiring diagrams}}]
	\label{ex:DWD}
Let $D$ be a double wiring diagram, as in Section~\ref{sec:matrices}. 
The plabic graph $G(D)$ associated to~$D$ is defined by adjusting 
the construction of Example~\ref{def:wiringplabic} in the following way: 
as before, we replace each crossing in the double wiring diagram 
by a pair of trivalent vertices connected vertically, 
and color one of these vertices white and the other black.
If the crossing is thin, the top vertex gets colored white and the bottom one black; if the crossing is thick, the colors of the two vertices are reversed.
%	We decide whether to color the top vertex white or black \linebreak[3]
%(thus, whether to color the bottom vertex black or white)
%depending on the color (or ``thickness'') of the strands involved in the crossing. 
See \cref{fig:double-wiring-plabic}.

\begin{figure}[ht]
\setlength{\unitlength}{1.2pt}
\begin{center}
\begin{picture}(180,45)(6,0)
\thicklines
\dark{

  \put(0,0){\line(1,0){40}}
  \put(60,0){\line(1,0){100}}
  \put(180,0){\line(1,0){10}}
  \put(0,20){\line(1,0){40}}
  \put(60,20){\line(1,0){10}}
  \put(90,20){\line(1,0){70}}
  \put(180,20){\line(1,0){10}}
  \put(0,40){\line(1,0){70}}
  \put(90,40){\line(1,0){100}}

  \put(40,0){\line(1,1){20}}
  \put(70,20){\line(1,1){20}}
  \put(160,0){\line(1,1){20}}

  \put(40,20){\line(1,-1){20}}
  \put(70,40){\line(1,-1){20}}
  \put(160,20){\line(1,-1){20}}
  \put(193,-6){$\mathbf{3}$}
  \put(193,14){$\mathbf{2}$}
  \put(193,34){$\mathbf{1}$}

  \put(-6,-6){$\mathbf{1}$}
  \put(-6,14){$\mathbf{2}$}
  \put(-6,34){$\mathbf{3}$}
}

\light{
% \thinlines
\thicklines

  \put(0,2){\line(1,0){100}}
  \put(120,2){\line(1,0){70}}
  \put(0,22){\line(1,0){10}}
  \put(30,22){\line(1,0){70}}
  \put(120,22){\line(1,0){10}}
  \put(150,22){\line(1,0){40}}
  \put(0,42){\line(1,0){10}}
  \put(30,42){\line(1,0){100}}
  \put(150,42){\line(1,0){40}}

  \put(10,22){\line(1,1){20}}
  \put(100,2){\line(1,1){20}}
  \put(130,22){\line(1,1){20}}

  \put(10,42){\line(1,-1){20}}
  \put(100,22){\line(1,-1){20}}
  \put(130,42){\line(1,-1){20}}

  \put(193,2){${1}$}
  \put(193,22){${2}$}
  \put(193,42){${3}$}

  \put(-6,2){${3}$}
  \put(-6,22){${2}$}
  \put(-6,42){${1}$}
}
\end{picture}
	\end{center}
			\vspace{0.1cm}
		\begin{center}
		\setlength{\unitlength}{1.5pt}
			\begin{picture}(140,45)(0,0)
				% \thinlines
                    \thicklines
				\put(0,20){\line(1,0){58.5}}
				\put(61.5,20){\line(1,0){17}}
				\put(81.5,20){\line(1,0){58.5}}
				\put(0,0){\line(1,0){38.5}}
				\put(140,0){\line(-1,0){18.5}}
				\put(118.5,0){\line(-1,0){77}}
				\put(0,40){\line(1,0){18.5}}
				\put(21.5,40){\line(1,0){37}}
				\put(61.5,40){\line(1,0){37}}
				\put(101.5,40){\line(1,0){38.5}}
				
				\put(20,21.5){\line(0,1){17}}
				\put(80,1.5){\line(0,1){17}}
				\put(60,21.5){\line(0,1){17}}
				\put(40,1.5){\line(0,1){17}}
				\put(100,21.5){\line(0,1){17}}
				\put(120,1.5){\line(0,1){17}}
				
				\put(20,20){\circle*{3}}
				\put(40,20){\circle*{3}}
				\put(60,20){\circle{3}}
				\put(80,20){\circle{3}}
				\put(100,20){\circle*{3}}
				\put(120,20){\circle*{3}}
				
				\put(20,40){\circle{3}}
				\put(60,40){\circle*{3}}
				\put(100,40){\circle{3}}
				
				\put(40,0){\circle{3}}
				\put(80,0){\circle*{3}}
				\put(120,0){\circle{3}}
				\thicklines
				\linethickness{1.5pt}
			\end{picture}
		\end{center}
			\vspace{.1cm}
		\begin{center}
			\setlength{\unitlength}{1.5pt}
			\begin{picture}(140,65)(0,0)
				% \thinlines
                    \thicklines
				\put(0,40){\line(1,0){58.5}}
				\put(61.5,40){\line(1,0){17}}
				\put(81.5,40){\line(1,0){58.5}}
				\put(0,20){\line(1,0){38.5}}
				\put(140,20){\line(-1,0){18.5}}
				\put(118.5,20){\line(-1,0){77}}
				\put(0,60){\line(1,0){18.5}}
				\put(21.5,60){\line(1,0){37}}
				\put(61.5,60){\line(1,0){37}}
				\put(101.5,60){\line(1,0){38.5}}
				
				\put(20,41.5){\line(0,1){17}}
				\put(80,21.5){\line(0,1){17}}
				\put(60,41.5){\line(0,1){17}}
				\put(40,21.5){\line(0,1){17}}
				\put(100,41.5){\line(0,1){17}}
				\put(120,21.5){\line(0,1){17}}
				
				\put(20,40){\circle*{3}}
				\put(40,40){\circle*{3}}
				\put(60,40){\circle{3}}
				\put(80,40){\circle{3}}
				\put(100,40){\circle*{3}}
				\put(120,40){\circle*{3}}
				
				\put(20,60){\circle{3}}
				\put(60,60){\circle*{3}}
				\put(100,60){\circle{3}}
				
				\put(40,20){\circle{3}}
				\put(80,20){\circle*{3}}
				\put(120,20){\circle{3}}
				
				\thicklines
				\put(77,7){\red{\line(0,1){6}}}
				\put(77,7){\red{\line(1,0){6}}}
				\put(83,13){\red{\line(0,-1){6}}}
				\put(83,13){\red{\line(-1,0){6}}}
				
				\put(17,27){\red{\line(0,1){6}}}
				\put(17,27){\red{\line(1,0){6}}}
				\put(23,33){\red{\line(0,-1){6}}}
				\put(23,33){\red{\line(-1,0){6}}}
				
				\put(127,27){\red{\line(0,1){6}}}
				\put(127,27){\red{\line(1,0){6}}}
				\put(133,33){\red{\line(0,-1){6}}}
				\put(133,33){\red{\line(-1,0){6}}}
				
				\put(7,47){\red{\line(0,1){6}}}
				\put(7,47){\red{\line(1,0){6}}}
				\put(13,53){\red{\line(0,-1){6}}}
				\put(13,53){\red{\line(-1,0){6}}}
				
				\put(117,47){\red{\line(0,1){6}}}
				\put(117,47){\red{\line(1,0){6}}}
				\put(123,53){\red{\line(0,-1){6}}}
				\put(123,53){\red{\line(-1,0){6}}}
				
				\put(40,50){\red{{\circle*{3}}}}
				\put(60,30){\red{{\circle*{3}}}}
				\put(100,30){\red{{\circle*{3}}}}
				\put(80,50){\red{{\circle*{3}}}}
				
				\put(57,67){\red{\line(0,1){6}}}
				\put(57,67){\red{\line(1,0){6}}}
				\put(63,73){\red{\line(0,-1){6}}}
				\put(63,73){\red{\line(-1,0){6}}}
				
				\linethickness{1pt}
\put(65,30){\red{\vector(1,0){30}}}
				\put(55,30){\red{\vector(-1,0){30}}}
				\put(75,50){\red{\vector(-1,0){30}}}
				\put(85,50){\red{\vector(1,0){30}}}
				\put(15,50){\red{\vector(1,0){20}}}
				\put(125,30){\red{\vector(-1,0){20}}}
				
				\put(43.5,53.5){\red{\vector(1,1){12.5}}}
				\put(64.5,65.5){\red{\vector(1,-1){12.5}}}
				
				\put(96.5,26.5){\red{\vector(-1,-1){12.5}}}
				\put(75.5,14.5){\red{\vector(-1,1){12.5}}}
				
				\put(43.5,46.5){\red{\vector(1,-1){13}}}
				\put(96.5,33.5){\red{\vector(-1,1){13}}}

				%\put(95,30){\red{\vector(-1,0){30}}}
				%\put(25,30){\red{\vector(1,0){30}}}
				%\put(45,50){\red{\vector(1,0){30}}}
				%\put(115,50){\red{\vector(-1,0){30}}}
				%\put(35,50){\red{\vector(-1,0){20}}}
				%\put(105,30){\red{\vector(1,0){20}}}
				
				%\put(55.5,65.5){\red{\vector(-1,-1){12.5}}}
				%\put(76.5,53.5){\red{\vector(-1,1){12.5}}}
				
				%\put(96.5,26.5){\red{\vector(-1,-1){12.5}}}
				%\put(75.5,14.5){\red{\vector(-1,1){12.5}}}
				
				%\put(56.5,33.5){\red{\vector(-1,1){13}}}
				%\put(84,46){\red{\vector(1,-1){13}}}
			\end{picture}
			\vspace{-.5cm}
\end{center}
\caption{%\LW{The red squares and circles are too big, compare with Figure 7.12} 
A double wiring diagram $D$, the 
plabic graph $G(D)$, and the corresponding quiver.  If one
removes the bottom frozen vertex, one recovers the quiver from
Figure~\ref{fig:chamber-quiver2} (up to a global reversal of arrows).}
\label{fig:double-wiring-plabic}
\end{figure}
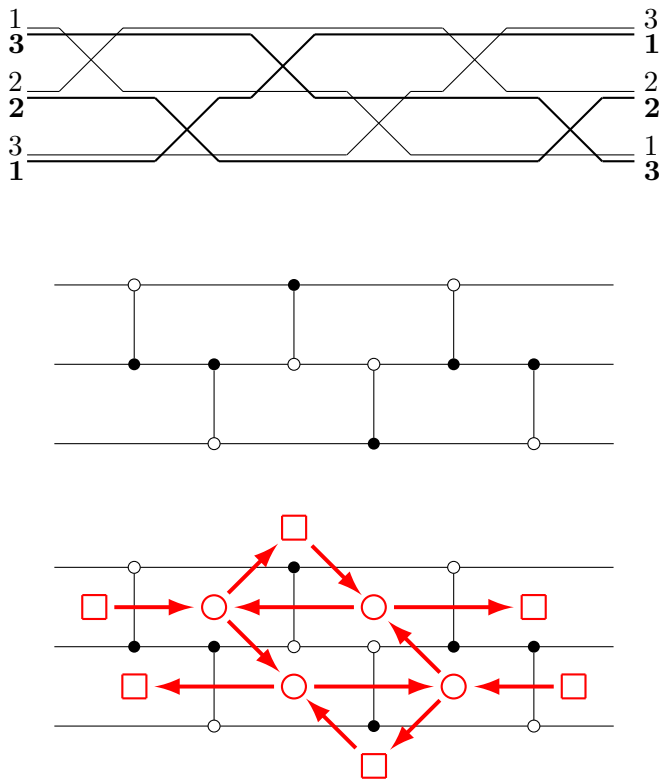

As in \cref{def:wiringplabic}, the above construction works for 
a more general version of double wiring diagrams.
Given two sequences $\mathbf{w} = s_{i_1} s_{i_2} \dots s_{i_m}$
and $\overline{\mathbf{w}} = \overline{s}_{j_1} \overline{s}_{j_2}
\dots \overline{s}_{j_{\ell}}$,  choose an arbitrary shuffle
of $\mathbf{w}$ and $\overline{\mathbf{w}}$.  Then we can 
	associate a (generalized) double wiring diagram 
to this shuffle, 
where thick crossings are associated to factors in $\mathbf{w}$,
and thin crossings are associated to factors in 
$\overline{\mathbf{w}}$.  So, e.g., the double wiring diagram in 
\cref{fig:double-wiring-plabic} is associated to the shuffle
$\overline{s}_2 s_1 s_2 \overline{s}_1 \overline{s}_2 s_1$.
\end{example}

\begin{exercise}
Describe the trips in a plabic graph associated to a double wiring diagram. 
Extend the statements of Exercise~\ref{exercise:Q(G(D))=Q(D)}
and Remark~\ref{rem:braid-move=plabic-move}
to the case of double wiring diagrams. 
\end{exercise}

In addition to triangulations and %(ordinary or double) 
wiring diagrams, 
plabic graphs can also be used to describe 
Fock-Goncharov cluster structures~\cite{fock-goncharov-ihes}:

\begin{exercise}
Construct a plabic graph whose associated quiver is the quiver shown in Figure~\ref{fig:Q_3}. 
How does this construction generalize to a quiver $Q_3(T)$ associated to an arbitrary triangulation~$T$
of a convex polygon, cf.\ Exercise~\ref{exercise:FG-sl3-d4}?
\end{exercise}

\clearpage

\section{Trivalent plabic graphs}
\label{sec:tri}

In \cref{sec:plabic}, we introduced plabic graphs and described 
local moves that generate an equivalence relation on them.
In this section, we focus on \emph{trivalent plabic graphs}, i.e., those plabic graphs
whose interior vertices are all trivalent. 
This will require working with an alternative set of moves that preserve the property of being trivalent.
As \cref{sec:tri} is not strictly necessary for the sections that follow, 
it can be skipped if desired. 

\begin{remark}
Trivalent plabic graphs arise naturally in the studies of 
\begin{itemize}[leftmargin=.2in]
\item 
soliton solutions to the KP equation \cite{kodwil}, 
\item
sections of fine zonotopal tilings of $3$-dimensional cyclic zonotopes~\cite{Galashin}, 
\item
combinatorics of planar divides and associated links~\cite{fpst}, 
and 
\item
subdivisions induced by projection of a hypersimplex onto a polygon~\cite{PostnikovICM}.
%$\pi$-induced subdivisions for a projection $\pi$ of a hypersimplex $\Delta_{k,n}$ onto an $n$-gon \cite{PostnikovICM}.
\end{itemize}
\end{remark}

%We begin with a simple observation: 

\begin{lemma}
\label{lem:bitri}
Any leafless plabic graph without lollipops
%Any plabic graph with no interior leaves 
%(i.e., no degree $1$ interior vertices) 
	can be transformed 
by a sequence of moves of type {\rm (M2)} and/or {\rm (M3)} into a plabic graph all of whose interior vertices are trivalent.
\end{lemma}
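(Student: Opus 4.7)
My plan is to use a simple degree-based potential function argument. The idea is that moves (M2) and (M3) are exactly tailored to locally adjust the degrees of interior vertices without disturbing the rest of the graph, so one should be able to push each interior vertex down to degree $3$ independently. The hypothesis of no interior leaves ensures that no vertex is stuck at degree $1$, which cannot be reached from below by (M2) or (M3).

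First I would introduce the potential
\[
\mu(G) \;=\; \sum_{v\ \text{interior}} |\deg(v)-3|,
\]
which is a nonnegative integer and vanishes precisely when every interior vertex is trivalent. The task reduces to showing that whenever $\mu(G)>0$, some move of type (M2) or (M3) strictly decreases $\mu$. Pick an interior vertex $v$ with $\deg(v)\neq 3$; by hypothesis $\deg(v)\neq 1$. If $\deg(v)=2$, apply (M2) in the contraction direction to remove $v$ and merge its two incident edges (possibly creating a loop or multi-edge, both of which are allowed by \cref{def:plabic}). The two neighbors of $v$ each lose one edge and gain one edge, so their degrees are unchanged; $\mu$ drops by exactly $1$. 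If $\deg(v)=d\geq 4$, apply (M3) in the splitting direction to replace $v$ by two internal vertices $v_1,v_2$ of the same color as $v$, joined by a new edge, distributing the $d$ original incident edges so that $v_1$ receives exactly two of them and $v_2$ receives the remaining $d-2\geq 2$. Then $\deg(v_1)=3$ and $\deg(v_2)=d-1\geq 3$, and $\mu$ again drops by exactly $1$. Iterating terminates in finitely many steps.

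The only thing requiring care is the verification that no move introduces a new interior leaf, so that the hypothesis is preserved and termination at $\mu=0$ really does force every interior vertex to be trivalent. This is straightforward: (M2) does not alter the degree of any vertex other than the one being removed, and the (M3) split described above always produces two vertices of degree at least $3$. Colorings are respected by construction, and the remaining verifications (planarity, connectivity of interior vertices to the boundary) are preserved because the local moves are defined precisely to preserve the plabic-graph structure. I expect the main — and essentially only — obstacle to be this degree bookkeeping together with a quick check that (M2) does not accidentally disconnect some interior vertex from the boundary; since (M2) merges two edges into one and (M3) only subdivides a vertex, paths to the boundary are preserved, so no genuine difficulty arises.
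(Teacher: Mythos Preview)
Your proof is correct and follows essentially the same approach as the paper's: remove bivalent vertices via (M2) and split high-degree vertices via (M3). The paper's proof is a two-line sketch stating exactly these two operations, while you have added a potential-function formalization and some careful bookkeeping; the underlying argument is the same.
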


\begin{proof}
We can get rid of bivalent vertices using the moves~(M2). 
If there are any vertices of degree at least $4$, 
split those vertices using~(M3) until all internal vertices are trivalent. 
\end{proof}

The alternative set of moves for trivalent plabic graphs consists of the square move (M1)   
together with the \emph{flip move} (M4) defined below.
%As we will see in \cref{thm:newmoves1}, plabic graphs together 
%with moves (M1), (M2), (M3)
%are essentially equivalent to trivalent plabic graphs together with moves (M1) and (M4).

\begin{definition}
The \emph{flip move} (sometimes also called the \emph{Whitehead move})
for trivalent plabic graphs is defined as follows:
\begin{itemize}[leftmargin=.4in]
\item[(M4)] Replace a fragment containing two trivalent vertices of the same color connected by an edge 
by another such fragment, see \cref{fig:flipmove0}.
\end{itemize}
\end{definition}

\begin{figure}[ht]
\vspace{-15pt}
%\begin{center}
%\includegraphics[height=.8in]{FlipMove0.ps}
%\end{center}
\begin{center}
\begin{tikzpicture}[scale=0.80]
\node[circle, fill=black, draw=black, inner sep=0pt, minimum size=0pt] (1) at (8.15,0.75) {};
\node[circle, fill=black, draw=black, inner sep=0pt, minimum size=0pt] (2) at (9.85,0.75) {};
\node[circle, thick, fill=white, draw=black, inner sep=0pt, minimum size=4pt] (3) at (9,0.5) {};
\node[circle, fill=black, draw=black, inner sep=0pt, minimum size=0pt] (4) at (9,0) {};
\node[circle, thick, fill=white, draw=black, inner sep=0pt, minimum size=4pt] (5) at (9,-0.5) {};
\node[circle, fill=black, draw=black, inner sep=0pt, minimum size=0pt] (6) at (8.15,-0.75) {};
\node[circle, fill=black, draw=black, inner sep=0pt, minimum size=0pt] (7) at (9.85,-0.75) {};
\draw[thick] (1) -- (3); \draw[thick] (2) -- (3); \draw[thick] (3) -- (5);
\draw[thick] (6) -- (5); \draw[thick] (7) -- (5);
% \draw[>={Stealth[length=8pt]}, <->] (10.5,0) -- (12,0);
\end{tikzpicture}
\begin{picture}(30,20)(0,7)
\put(15,15){\makebox(0,0){\Large{$\longleftrightarrow$}}}
\end{picture}
\begin{tikzpicture}[scale=0.80]
\node[circle, fill=black, draw=black, inner sep=0pt, minimum size=0pt] (21) at (12.6,0.7) {};
\node[circle, fill=black, draw=black, inner sep=0pt, minimum size=0pt] (22) at (14.4,0.7) {};
\node[circle, thick, fill=white, draw=black, inner sep=0pt, minimum size=4pt] (23) at (12.9,0) {};
\node[circle, fill=black, draw=black, inner sep=0pt, minimum size=0pt] (24) at (13.5,0) {};
\node[circle, thick, fill=white, draw=black, inner sep=0pt, minimum size=4pt] (25) at (14.1,0) {};
\node[circle, fill=black, draw=black, inner sep=0pt, minimum size=0pt] (26) at (12.6,-0.7) {};
\node[circle, fill=black, draw=black, inner sep=0pt, minimum size=0pt] (27) at (14.4,-0.7) {};
\draw[thick] (21) -- (23); \draw[thick] (22) -- (25); \draw[thick] (23) -- (25);
\draw[thick] (26) -- (23); \draw[thick] (27) -- (25);
\end{tikzpicture}
\vspace{-10pt}
\end{center}
\caption{The flip move (or Whitehead move) for trivalent graphs.  The four vertices shown should 
	either all be white, or all be black.}
\label{fig:flipmove0}
\end{figure}
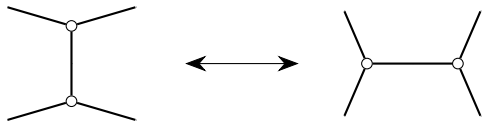
\begin{remark}
\label{rem:flip}
A flip move (M4) can be expressed as a composition of two moves of type~(M3).
\end{remark}

The main result of this section is the following.

\begin{theorem}
\label{thm:newmoves1}
Two trivalent plabic graphs  are related 
via a sequence of local moves of types {\rm (M1)}, {\rm (M2)}, and {\rm (M3)} if and only if 
they are related by a sequence of moves of types {\rm (M1)} and {\rm (M4)}.
\end{theorem}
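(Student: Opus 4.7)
The reverse implication is immediate from Remark~\ref{rem:flip}, which expresses any move of type (M4) as a composition of two (M3) moves. For the forward implication, my plan is to work modulo the moves (M2) and (M3) by introducing an invariant that I will call the \emph{bipartite contraction} $\bar G$ of a plabic graph $G$ without internal leaves: iteratively remove all bivalent internal vertices (inverse (M2)) and contract every edge joining two internal vertices of the same color (inverse (M3)). The result is a bipartite planar graph embedded in the disk, and it is manifestly invariant under moves (M2) and (M3). A routine verification confirms that two plabic graphs without internal leaves are (M2)/(M3)-equivalent if and only if they have the same bipartite contraction.

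The core step is then the following lemma: if $H$ and $H'$ are trivalent plabic graphs with $\bar H = \bar{H'}$, then $H$ and $H'$ are connected by a sequence of (M4) moves. To prove it, I would use that every trivalent refinement of a fixed $\bar G$ is obtained by replacing each internal vertex $v$ of $\bar G$ of degree $d_v \geq 3$ with a plane binary tree $T_v$ whose $d_v$ leaves are attached, in the prescribed cyclic order, to the edges at $v$; the interior vertices of $T_v$ are trivalent and inherit the color of $v$. Under this correspondence, a flip move (M4) on a trivalent refinement is precisely a rotation performed on one of the trees $T_v$. The lemma therefore reduces to the classical rotation-distance theorem: any two plane binary trees with a fixed cyclic order of $d_v$ leaves are related by rotations, equivalently any two triangulations of a $d_v$-gon are related by diagonal flips.

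To assemble the proof, I would inductively build trivalent representatives $\tilde G_0, \tilde G_1, \dots, \tilde G_N$ of the (M2)/(M3)-classes of the graphs $G_i$ appearing in a given sequence $G = G_0 \to \cdots \to G_N = G'$ of (M1), (M2), (M3) moves, with $\tilde G_0 = G$ and $\tilde G_N = G'$; such representatives exist by Lemma~\ref{lem:bitri}. Whenever the step $G_i \to G_{i+1}$ is of type (M2) or (M3), the bipartite contractions agree and the main lemma supplies a sequence of (M4) moves between $\tilde G_i$ and $\tilde G_{i+1}$. Whenever the step is a square move (M1), I would choose the trivalent representatives so that the four trivalent square vertices and the square face itself are left intact, which is possible because the trivalentification elsewhere is independent; then the same square move connects $\tilde G_i$ to $\tilde G_{i+1}$. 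The main obstacle I anticipate is the bookkeeping required to make these choices of $\tilde G_i$ compatible across an (M1) move, together with a careful formulation of the rotation-distance fact in our planar-with-boundary setting; the classical rotation theorem itself is standard.
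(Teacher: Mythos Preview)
Your overall architecture matches the paper's: reduce to showing that two trivalent graphs related by (M2)/(M3) moves are already related by (M4) moves, and then interleave this with the (M1) moves. The gap is in the proof of your core lemma. You assert that every trivalent refinement of the bipartite contraction $\bar G$ is obtained by replacing each vertex $v$ of $\bar G$ with a plane binary tree on $d_v$ leaves, reducing the problem to rotation-connectivity of binary trees. But a maximal monochromatic connected subgraph of a trivalent plabic graph need not be a tree: for instance, three black trivalent vertices may form a triangle, each carrying one additional edge to a white vertex. Such a component encloses an internal face and is not a tree; correspondingly, your iterated contraction stalls at a monochromatic loop, so $\bar G$ is not well defined as a bipartite graph, and the binary-tree rotation argument does not cover this case.

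The paper confronts exactly this issue. Rather than contracting, it treats each monochromatic component as a (generalized) plabic graph in a disk, possibly with holes (a ``swiss-cheese'' shape, see Remark~\ref{rem:component-as-plabic-graph}), dualizes to a triangulation of a polygon with one interior puncture for each internal face of the component, and then invokes the flip-connectivity of triangulations of a polygon with interior points (Lemma~\ref{lem:flipwhite}). This is strictly stronger than rotation-connectivity of binary trees, which is the puncture-free special case. Apart from this, your assembly step and handling of (M1) moves are essentially what the paper does.
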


\pagebreak[3]

The rest of this section is devoted to the proof of \cref{thm:newmoves1}. 

\pagebreak[3]

\begin{lemma} \label{lem:noM1}
If two %trivalent 
	plabic graphs  are related by a sequence of moves
of type {\rm(M2)} and/or {\rm (M3)}, then they are related by moves of type~{\rm(M3)}.
\end{lemma}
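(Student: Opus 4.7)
The plan is to induct on the number of (M2) moves in the given sequence, showing that this count can always be decreased by one of two mechanisms; the base case (zero (M2) moves) is immediate.

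First, suppose some (M2) move in the sequence involves a bivalent vertex $w$ of color $c$ that is adjacent to an internal vertex $u$ of the same color $c$. Then that (M2) move coincides with an (M3) move on the edge $(u,w)$: an (M2) insertion of such a $w$ in an edge $(u,v)$ is the same as the (M3) split of $u$ that assigns only the edge to $v$ to the peeled-off (bivalent) piece, and an (M2) removal of such a $w$ is the (M3) contraction of $(u,w)$. Replacing the (M2) move by the corresponding (M3) move reduces the (M2) count by one.

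Otherwise every (M2) move in the sequence is \emph{lonely}, meaning the bivalent vertex involved has no same-color internal neighbor at the moment the move is performed. The earliest such move must be an insertion (rather than a removal): under (M3) alone, any bivalent vertex is created by an (M3) split and is thereby born with a same-color internal neighbor, and no subsequent (M3) move can strip that neighbor away without simultaneously destroying the bivalent. Let the earliest (M2) move thus insert a lonely bivalent $w$ of color $c$ at some step $i$. Because only two colors appear, $w$ remains lonely for the rest of the sequence: an (M3) operation near $w$ produces only color-$\bar c$ vertices, and any (M2) insertion of a bivalent $y$ on an edge between $w$ and an internal vertex of color $\bar c$ would force $y$ (whose two internal neighbors then have colors $c$ and $\bar c$) to share a color with one of its neighbors, making the $y$-insertion non-lonely and thus disallowed in the present case. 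Hence $w$ never participates in an (M3) move, and must be removed by an (M2) removal at some later step $j>i$.

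The final step is to slide the insertion at step $i$ forward so as to be adjacent to the removal at step $j$, then cancel the adjacent pair, decreasing the (M2) count by two. This is the main technical obstacle. Each intermediate move must be translated into a corresponding move on the graph in which $w$ has been collapsed (the path $u\!-\!w\!-\!v$ being replaced by the single edge $u\!-\!v$); because $w$ is inert under (M3), the only intermediate moves interacting with $w$ are (M2) insertions on $w$'s incident edges, which translate to (M2) insertions on the merged edge, and (M3) moves on a neighbor of $w$, which translate identically with the relevant edge rerouted through the collapsed position. Once this routine case-check is carried out, the excision yields an equivalent sequence with two fewer (M2) moves, and the induction closes.
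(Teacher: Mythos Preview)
Your Case~1 reduction (replacing a non-lonely (M2) by an (M3)) is correct and matches the paper's opening observation. The gap is in Case~2, specifically in the claim that the first lonely insertion $w$ ``remains lonely for the rest of the sequence.'' You check (M3) moves near $w$ and lonely (M2) \emph{insertions} on edges incident to~$w$, but you do not address lonely (M2) \emph{removals} of a neighbor of~$w$, and these can destroy the loneliness of~$w$. Concretely: after inserting white $w$ on a black--black edge with endpoint~$a$, one may (M3)-split $a$ so that the piece $u_1$ adjacent to $w$ is bivalent black with other neighbor $u_2$ (black); then insert a lonely white $z$ on the edge $(u_1,u_2)$; now $u_1$ is bivalent black with neighbors $w$ and $z$, both white, so its removal is a lonely (M2). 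After removing $u_1$, $w$ is adjacent to the white vertex~$z$, hence no longer lonely, and a subsequent (M3) contraction of $(w,z)$ is possible. One can complete this to a sequence between trivalent graphs in which every (M2) is lonely yet $w$ genuinely participates in an (M3) move, so your sliding/cancellation step does not go through as stated.

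The paper avoids this difficulty by working with a \emph{shortest} (M2)/(M3) sequence rather than tracking a specific vertex. It first shows that no intermediate graph in a shortest sequence can contain an internal leaf (else the creation and later destruction of that leaf could be excised), and then argues that no creative (M2) can occur: the bivalent vertex it introduces (and any bivalent copies obtained by subdividing it) must eventually be removed, and deleting all moves involving those vertices yields a strictly shorter sequence. Your induction-on-(M2)-count strategy is reasonable in spirit, but to make Case~2 rigorous you would need either an invariant robust under lonely (M2) removals, or to switch to the paper's minimality argument.
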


\begin{proof}
We first note that in many cases, move (M2) 
can be thought of as an instance of move~(M3):
instead of using (M2) to add or remove 
a bivalent vertex that is adjacent via an edge~$e$ to
a vertex of the same color, we can  use (M3) to (un)contract~$e$, to the same effect.

The only (M2) moves that are genuinely different from (M3) moves are the (M2) moves that 
add or remove a white (resp., black) vertex in the middle of a black-black 
or black-boundary
(resp., white-white or white-boundary) edge.
We  call them \emph{creative} or \emph{destructive} (M2) moves, see 
%	the left and middle of 
\cref{fig:trees}.
However, if one performs a creative (M2) move,  e.g. adding a white
vertex in the middle of a black-black or black-boundary edge, 
there is no further move that the new white vertex can participate in, 
except for a destructive (M2) move that removes it.
	Thus, such creative and destructive (M2) moves are unnecessary.
\end{proof}
%We need to show that it is never necessary to use a creative or destructive (M2) move
%to connect $G$ and $G'$ as above.

\begin{figure}[ht]
\vspace{-15pt}
\begin{center}
\begin{tikzpicture}[scale=1.2]
\node[circle, fill=black, draw=black, inner sep=0pt, minimum size=0pt] (1) at (-0.5,0.5) {};
\node[circle, fill=black, draw=black, inner sep=0pt, minimum size=0pt] (2) at (-0.5,-0.5) {};
\node[circle, fill=black, draw=black, inner sep=0pt, minimum size=4pt] (3) at (0,0) {};
%\node[circle, thick, fill=white, draw=black, inner sep=0pt, minimum size=4pt] (4) at (0.5,0) {};
\node[circle, fill=black, draw=black, inner sep=0pt, minimum size=4pt] (5) at (1,0) {};
\node[circle, fill=black, draw=black, inner sep=0pt, minimum size=0pt] (6) at (1.6,0.5) {};
\node[circle, fill=black, draw=black, inner sep=0pt, minimum size=0pt] (7) at (1.6,0) {};
\node[circle, fill=black, draw=black, inner sep=0pt, minimum size=0pt] (8) at (1.25,-0.55) {};
\draw[thick] (1) -- (3); \draw[thick] (2) -- (3); \draw[thick] (3) -- (5);
	%\draw[thick] (4) -- (5); 
\draw[thick] (6) -- (5); \draw[thick] (7) -- (5); \draw[thick] (8) -- (5); 
%\draw[thick] (4) -- (9); 
	%\draw[thick] (9) -- (10); \draw[thick] (9) -- (11); \draw[thick] (10) -- (12); 
%\draw[thick] (10) -- (13); 
%	\draw[thick] (11) -- (14); 
	%\draw[thick] (11) -- (15); 
\end{tikzpicture}
% \hspace{1cm}
\begin{picture}(10,20)(0,7)
\put(5,16.5){\makebox(0,0){\Large{$\overset{\scriptscriptstyle \mathrm{M}2}{\rightarrow}$}}}
\end{picture}
\begin{tikzpicture}[scale=1.2]
\node[circle, fill=black, draw=black, inner sep=0pt, minimum size=0pt] (1) at (-0.5,0.5) {};
\node[circle, fill=black, draw=black, inner sep=0pt, minimum size=0pt] (2) at (-0.5,-0.5) {};
\node[circle, fill=black, draw=black, inner sep=0pt, minimum size=4pt] (3) at (0,0) {};
\node[circle, thick, fill=white, draw=black, inner sep=0pt, minimum size=4pt] (4) at (0.5,0) {};
\node[circle, fill=black, draw=black, inner sep=0pt, minimum size=4pt] (5) at (1,0) {};
\node[circle, fill=black, draw=black, inner sep=0pt, minimum size=0pt] (6) at (1.6,0.5) {};
\node[circle, fill=black, draw=black, inner sep=0pt, minimum size=0pt] (7) at (1.6,0) {};
\node[circle, fill=black, draw=black, inner sep=0pt, minimum size=0pt] (8) at (1.25,-0.55) {};
%\node[circle, thick, fill=white, draw=black, inner sep=0pt, minimum size=4pt] (9) at (0.55,0.4) {};
%\node[circle, thick, fill=white, draw=black, inner sep=0pt, minimum size=4pt] (10) at (0.3,0.7) {};
%\node[circle, thick, fill=white, draw=black, inner sep=0pt, minimum size=4pt] (11) at (0.9,0.7) {};
%\node[circle, thick, fill=white, draw=black, inner sep=0pt, minimum size=4pt] (12) at (0.1,1) {};
%\node[circle, thick, fill=white, draw=black, inner sep=0pt, minimum size=4pt] (13) at (0.4,1) {};
%\node[circle, thick, fill=white, draw=black, inner sep=0pt, minimum size=4pt] (14) at (0.9,1) {};
%\node[circle, thick, fill=white, draw=black, inner sep=0pt, minimum size=4pt] (15) at (1.25,0.9) {};
\draw[thick] (1) -- (3); \draw[thick] (2) -- (3); \draw[thick] (3) -- (4); \draw[thick] (4) -- (5); 
\draw[thick] (6) -- (5); \draw[thick] (7) -- (5); \draw[thick] (8) -- (5); 
%\draw[thick] (4) -- (9); 
	%\draw[thick] (9) -- (10); \draw[thick] (9) -- (11); \draw[thick] (10) -- (12); 
%\draw[thick] (10) -- (13); 
%	\draw[thick] (11) -- (14); 
	%\draw[thick] (11) -- (15); 
\end{tikzpicture}
% \hspace{1cm}
\iffalse
\begin{picture}(10,20)(0,7)
\put(5,16.5){\makebox(0,0){\Large{$\overset{\scriptscriptstyle \mathrm{M}3}{\rightarrow}$}}}
\end{picture}
\begin{tikzpicture}[scale=1.2]
\node[circle, fill=black, draw=black, inner sep=0pt, minimum size=0pt] (1) at (-0.5,0.5) {};
\node[circle, fill=black, draw=black, inner sep=0pt, minimum size=0pt] (2) at (-0.5,-0.5) {};
\node[circle, fill=black, draw=black, inner sep=0pt, minimum size=4pt] (3) at (0,0) {};
\node[circle, thick, fill=white, draw=black, inner sep=0pt, minimum size=4pt] (4) at (0.5,0) {};
\node[circle, fill=black, draw=black, inner sep=0pt, minimum size=4pt] (5) at (1,0) {};
\node[circle, fill=black, draw=black, inner sep=0pt, minimum size=0pt] (6) at (1.6,0.5) {};
\node[circle, fill=black, draw=black, inner sep=0pt, minimum size=0pt] (7) at (1.6,0) {};
\node[circle, fill=black, draw=black, inner sep=0pt, minimum size=0pt] (8) at (1.25,-0.55) {};
\node[circle, thick, fill=white, draw=black, inner sep=0pt, minimum size=4pt] (9) at (0.55,0.4) {};
%\node[circle, thick, fill=white, draw=black, inner sep=0pt, minimum size=4pt] (10) at (0.3,0.7) {};
%\node[circle, thick, fill=white, draw=black, inner sep=0pt, minimum size=4pt] (11) at (0.9,0.7) {};
%\node[circle, thick, fill=white, draw=black, inner sep=0pt, minimum size=4pt] (12) at (0.1,1) {};
%\node[circle, thick, fill=white, draw=black, inner sep=0pt, minimum size=4pt] (13) at (0.4,1) {};
%\node[circle, thick, fill=white, draw=black, inner sep=0pt, minimum size=4pt] (14) at (0.9,1) {};
%\node[circle, thick, fill=white, draw=black, inner sep=0pt, minimum size=4pt] (15) at (1.25,0.9) {};
\draw[thick] (1) -- (3); \draw[thick] (2) -- (3); \draw[thick] (3) -- (4); \draw[thick] (4) -- (5); 
\draw[thick] (6) -- (5); \draw[thick] (7) -- (5); \draw[thick] (8) -- (5); 
\draw[thick] (4) -- (9); 

%\node[circle, fill=black, draw=black, inner sep=0pt, minimum size=0pt] (21) at (3.5,0.5) {};
%\node[circle, fill=black, draw=black, inner sep=0pt, minimum size=0pt] (22) at (3.5,-0.5) {};
%\node[circle, thick, fill=white, draw=black, inner sep=0pt, minimum size=4pt] (23) at (4,0) {};
%\node[circle, fill=black, draw=black, inner sep=0pt, minimum size=4pt] (24) at (4.5,0) {};
%\node[circle, thick, fill=white, draw=black, inner sep=0pt, minimum size=4pt] (25) at (5,0) {};
%\node[circle, fill=black, draw=black, inner sep=0pt, minimum size=0pt] (26) at (5.6,0.5) {};
%\node[circle, fill=black, draw=black, inner sep=0pt, minimum size=0pt] (27) at (5.6,0) {};
%\node[circle, fill=black, draw=black, inner sep=0pt, minimum size=0pt] (28) at (5.25,-0.55) {};
%\node[circle, fill=black, draw=black, inner sep=0pt, minimum size=4pt] (29) at (4.55,0.4) {};
%\node[circle, fill=black, draw=black, inner sep=0pt, minimum size=4pt] (210) at (4.3,0.7) {};
%\node[circle, fill=black, draw=black, inner sep=0pt, minimum size=4pt] (211) at (4.9,0.7) {};
%\node[circle, fill=black, draw=black, inner sep=0pt, minimum size=4pt] (212) at (4.1,1) {};
%\node[circle, fill=black, draw=black, inner sep=0pt, minimum size=4pt] (213) at (4.4,1) {};
%\draw[thick] (21) -- (23); \draw[thick] (22) -- (23); \draw[thick] (23) -- (24); \draw[thick] (24) -- (25); 
%\draw[thick] (26) -- (25); \draw[thick] (27) -- (25); \draw[thick] (28) -- (25); 
%\draw[thick] (24) -- (29); \draw[thick] (29) -- (210); \draw[thick] (29) -- (211); \draw[thick] (210) -- (212); 
%\draw[thick] (210) -- (213);
\end{tikzpicture}
\fi
\vspace{-20pt}
\end{center}
\caption{A creative (M2) move.} % and then an (M3) move to grow a white leaf.}
\label{fig:trees}
\end{figure}

\pagebreak[3]

\begin{lemma} 
\label{lem:flipwhite}
Let $G$ and $G'$ be two trivalent plabic graphs such that
\begin{itemize}[leftmargin=.2in]
\item
each of the graphs $G$ and $G'$ is connected; 
\item
each of the graphs $G$ and $G'$ has $f$ interior faces, $b$ boundary vertices, 
		and $b$ boundary faces (the number of boundary vertices
		equals the number of boundary faces since the graphs are connected);
%and $f$ interior faces;  
\item
in each of the graphs $G$ and $G'$, all interior vertices have the same color, 
and this color is the same in both graphs.
\end{itemize}
Then $G$ and $G'$  can be connected by a sequence of flip moves~{\rm (M4)}.
\end{lemma}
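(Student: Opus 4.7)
The plan is to reduce this to the classical connectedness of the flip graph on triangulations of a polygon with interior marked points, via planar duality.

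First, I would associate to each eligible trivalent plabic graph $G$ a triangulation $T(G)$ of a $b$-gon with $f$ unlabeled interior vertices. Form an auxiliary planar graph $\tilde G$ by adjoining to $G$ the $b$ arcs of the disk boundary (one for each pair of consecutive boundary vertices). In $\tilde G$, every vertex is trivalent: each interior vertex of $G$ was already trivalent, and each boundary vertex of $G$ acquires its single interior edge plus two disk-boundary arcs. Viewed on the $2$-sphere, the planar dual $\tilde G^*$ is therefore a triangulation, one triangle per vertex of $\tilde G$. Deleting the vertex of $\tilde G^*$ dual to the unbounded face of $\tilde G$, together with its $b$ incident edges and $b$ incident triangles, yields $T(G)$: a triangulation of a $b$-gon whose $b$ boundary vertices are dual to the boundary faces of $G$ and whose $f$ interior vertices are dual to the interior faces of $G$. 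The common color of the interior vertices of $G$ plays no role in this construction, and since $G$ and $G'$ have matching color, that ambiguity is inconsequential.

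Second, I would check by a direct local picture that an (M4) flip between two adjacent same-color trivalent vertices of $G$ corresponds exactly to a diagonal flip in $T(G)$: the two vertices are dual to two triangles of $T(G)$ sharing an edge, and (M4) swaps this shared edge with the other diagonal of the quadrilateral they bound. The correspondence $G \mapsto T(G)$ is a bijection up to relabeling of interior faces/vertices, so conversely every diagonal flip lifts to an (M4) move.

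Third, I would invoke the classical fact that the flip graph on triangulations of a $b$-gon with a fixed number $f$ of unlabeled interior marked points is connected. For $f = 0$ this is the connectivity of the associahedron, and the general case follows by induction on $f$ (pick an interior vertex $p$, use flips to bring it to a small-degree configuration, delete it, apply the inductive hypothesis, and re-insert). The main obstacle is handling degenerate configurations, since the definition of plabic graph allows loops and multiple edges, which translate into multi-edges or loops in $T(G)$ and require working in a generalized notion of triangulation. One resolves this either by carrying out the flip-connectedness argument in this generality, or by first applying a preliminary sequence of (M4) moves to $G$ and $G'$ to remove all such degeneracies before appealing to the standard theorem.
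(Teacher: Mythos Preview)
Your proposal is correct and follows essentially the same approach as the paper: pass to the planar dual to obtain a (generalized) triangulation of a $b$-gon with $f$ interior punctures, observe that (M4) flips correspond to diagonal flips, and invoke connectedness of the flip graph on such triangulations. The paper constructs the dual directly inside the disk (rather than via your spherical-dual-then-delete recipe) and simply cites \cite{harer-1986,hatcher} for flip-connectedness rather than sketching an inductive proof, and it likewise notes that self-folded triangles can arise.
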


\begin{proof}
	The \emph{dual graph} $G_\textup{dual}$ of a trivalent connected plabic graph~$G$ is obtained as follows. 
Place a vertex of $G_\textup{dual}$ in the interior of each face of~$G$. 
For each edge~$e$ of~$G$, introduce a (transversal) edge of $G_\textup{dual}$ 
	connecting the vertices of $G_\textup{dual}$ located in the faces of~$G$ on both sides of~$e$, see \cref{fig:dual}. 
(This new edge may be a loop.) 
Under the conditions of the lemma, the dual graph $G_\textup{dual}$ is a generalized triangulation~$T$ of a (dual) $b$-gon.
	(We note that $T$ may 
	contain \emph{self-folded} triangles -- loops with 
	an interior ``pendant'' edge --  
	coming from the faces of~$G$ enclosed by a loop in~$G$, as in \cref{fig:dual}.) 
The triangulation~$T$ has $b+f$ vertices: the $b$ vertices of the dual $b$-gon 
together with the $f$ interior points (``punctures'').  

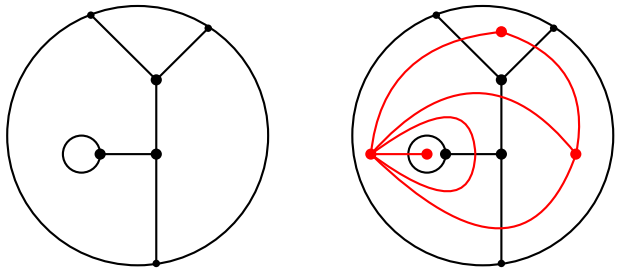
\begin{figure}[ht]
\begin{center}
\vspace{-.1in}
%graph G
\setlength{\unitlength}{1.2pt}
\begin{picture}(70,65)

\thicklines
\put(25,35){\circle{70}}

%vertices
\put(30,50){\circle*{3}}
\put(30,30){\circle*{3}}
\put(15,30){\circle*{3}}

%boundary vertices
\put(30,0.5){\circle*{2}}
\put(12.5,67.5){\circle*{2}}
\put(44,64){\circle*{2}}

%edges
\put(10,30){\circle{10}}
\put(30,50){\line(-1,1){17.5}}
\put(30,50){\line(1,1){14}}
\put(30,50){\line(0,-1){20}}
\put(30,30){\line(-1,0){15}}
\put(30,30){\line(0,-1){30}}

\end{picture}\hspace{0in}
%dual graph
\begin{picture}(70,65)

\thicklines
\put(45,35){\circle{70}}

%graph vertices
\put(50,50){\circle*{3}}
\put(50,30){\circle*{3}}
\put(35,30){\circle*{3}}

%graph edges
\put(30,30){\circle{10}}
\put(50,50){\line(-1,1){17.5}}
\put(50,50){\line(1,1){14}}
\put(50,50){\line(0,-1){20}}
\put(50,30){\line(-1,0){15}}
\put(50,30){\line(0,-1){30}}

%graph boundary edges
\put(50,0.5){\circle*{2}}
\put(32.5,67.5){\circle*{2}}
\put(64,64){\circle*{2}}

%dual vertices
\put(50,63){\red{\circle*{3}}}
\put(30,30){\red{\circle*{3}}}
\put(15,30){\red{\circle*{3}}}
\put(70,30){\red{\circle*{3}}}

%dual edges
\put(15,30){\red{\line(1,0){15}}}
\red{\qbezier(15,30)(42,10)(43,30)
\qbezier(15,30)(42,50)(43,30)
\qbezier(15,30)(44,63)(70,30)
\qbezier(15,30)(56,-10)(70,30)
\qbezier(15,30)(18,60)(50,63)
\qbezier(50,63)(75,55)(70,30)}

\end{picture}

\vspace{-.15in}
\end{center}
\caption{A trivalent plabic graph $G$ and its dual graph
$G_\textup{dual}$ (in red); the latter contains a self-folded triangle.  
Here $b=3$ and $f=1$.}
\vspace{-.05in}
\label{fig:dual}
\end{figure}

%\vspace{-.1in}

\cref{fig:fliptri} shows that a flip move in a trivalent plabic graph corresponds
to a flip in the corresponding triangulation.
The claim that $G$ and $G'$ are connected by flip moves 
can now be obtained from the well-known fact~\cite{harer-1986, hatcher} 
that any two triangulations of a $b$-gon (including triangulations involving
self-folded triangles) with  $f$ interior points are connected by flips.
\end{proof}

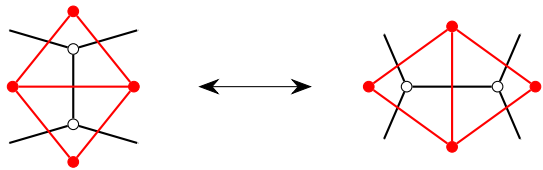
\begin{figure}[ht]
%\begin{center}
%\includegraphics[height=.8in]{FlipTriang.ps}
%\end{center}
\begin{center}
\vspace{-.2in}
\begin{tikzpicture}[scale=0.80]
% black
\node[circle, fill=black, draw=black, inner sep=0pt, minimum size=0pt] (1) at (8.15,0.75) {};
\node[circle, fill=black, draw=black, inner sep=0pt, minimum size=0pt] (2) at (9.85,0.75) {};
\node[circle, thick, fill=white, draw=black, inner sep=0pt, minimum size=4pt] (3) at (9,0.5) {};
\node[circle, fill=black, draw=black, inner sep=0pt, minimum size=0pt] (4) at (9,0) {};
\node[circle, thick, fill=white, draw=black, inner sep=0pt, minimum size=4pt] (5) at (9,-0.5) {};
\node[circle, fill=black, draw=black, inner sep=0pt, minimum size=0pt] (6) at (8.15,-0.75) {};
\node[circle, fill=black, draw=black, inner sep=0pt, minimum size=0pt] (7) at (9.85,-0.75) {};
\draw[thick] (1) -- (3); \draw[thick] (2) -- (3); \draw[thick] (3) -- (5);
\draw[thick] (6) -- (5); \draw[thick] (7) -- (5);
\node[circle, fill=red, draw=red, inner sep=0pt, minimum size=4pt] (31) at (9,1) {};
\node[circle, fill=red, draw=red, inner sep=0pt, minimum size=4pt] (32) at (8.2,0) {};
\node[circle, fill=red, draw=red, inner sep=0pt, minimum size=4pt] (33) at (9.8,0) {};
\node[circle, fill=red, draw=red, inner sep=0pt, minimum size=4pt] (34) at (9,-1) {};
\draw[red, thick] (31) -- (32); \draw[red, thick] (32) -- (33); \draw[red, thick] (31) -- (33); \draw[red, thick] (32) -- (34); \draw[red, thick] (33) -- (34);
\end{tikzpicture}
\begin{picture}(30,20)(0,10)
\put(15,20){\makebox(0,0){\Large{$\longleftrightarrow$}}}
\end{picture}
% \draw[>={Stealth[length=8pt]}, <->] (10.65,0) -- (12.15,0);
% red
\begin{tikzpicture}[scale=0.80]
\node[circle, fill=black, draw=black, inner sep=0pt, minimum size=0pt] (21) at (13.1,0.7) {};
\node[circle, fill=black, draw=black, inner sep=0pt, minimum size=0pt] (22) at (14.9,0.7) {};
\node[circle, thick, fill=white, draw=black, inner sep=0pt, minimum size=4pt] (23) at (13.4,0) {};
\node[circle, fill=black, draw=black, inner sep=0pt, minimum size=0pt] (24) at (14,0) {};
\node[circle, thick, fill=white, draw=black, inner sep=0pt, minimum size=4pt] (25) at (14.6,0) {};
\node[circle, fill=black, draw=black, inner sep=0pt, minimum size=0pt] (26) at (13.1,-0.7) {};
\node[circle, fill=black, draw=black, inner sep=0pt, minimum size=0pt] (27) at (14.9,-0.7) {};
\draw[thick] (21) -- (23); \draw[thick] (22) -- (25); \draw[thick] (23) -- (25);
\draw[thick] (26) -- (23); \draw[thick] (27) -- (25);
\node[circle, fill=red, draw=red, inner sep=0pt, minimum size=4pt] (41) at (14,0.8) {};
\node[circle, fill=red, draw=red, inner sep=0pt, minimum size=4pt] (42) at (12.9,0) {};
\node[circle, fill=red, draw=red, inner sep=0pt, minimum size=4pt] (43) at (15.1,0) {};
\node[circle, fill=red, draw=red, inner sep=0pt, minimum size=4pt] (44) at (14,-0.8) {};
\draw[red, thick] (41) -- (42); \draw[red, thick] (41) -- (44); \draw[red, thick] (41) -- (43); \draw[red, thick] (42) -- (44); \draw[red, thick] (43) -- (44);

\end{tikzpicture}
\end{center}
\vspace{-.15in}
\caption{The flip move for trivalent graphs corresponds to a flip of the 
corresponding dual triangulations.}
\label{fig:fliptri}
\end{figure}

\begin{definition}
\label{def:component}
A \emph{white component} $W$ of a plabic graph ${G}$ is obtained by taking a 
maximal (by inclusion) connected induced subgraph of~${G}$ all of whose internal vertices are white,
together with the half-edges extending from the (white) vertices of $W$ towards
black vertices outside $W$ or towards boundary vertices of~$G$.
\emph{Black components} of ${G}$ are defined in the same way, 
with the roles of black and white vertices reversed.
\end{definition}

\begin{remark}
\label{rem:component-as-plabic-graph}
Each black or white component $C$ of a plabic graph $G$ can itself be regarded  
as a (generalized) plabic graph.
To this end, enclose~$C$ by a simple closed curve~$\gamma$ passing through the endpoints of 
the half-edges on the outer boundary of~$C$. 
If the portion of~$G$ located inside~$\gamma$ is exactly~$C$, then we get a usual plabic graph.
It may however happen that $C$ contains ``holes,'' i.e., some of the half-edges on the boundary of~$C$
may be entirely contained in the interior of the disk enclosed by~$\gamma$. 
In that case, we need to draw simple closed curves through the endpoints of those half-edges,
so that $C$ becomes a generalized plabic graph inside a ``swiss-cheese'' shape 
	(a disk with some smaller disks removed), as in \cref{fig:cheese}.
The argument in the proof of \cref{lem:flipwhite} extends to this setting,
so \cref{lem:flipwhite} also holds for black/white components
	of trivalent plabic graphs. 
We~will use this generalization in the proof of \cref{prop:flipmoves} below. 
\end{remark}

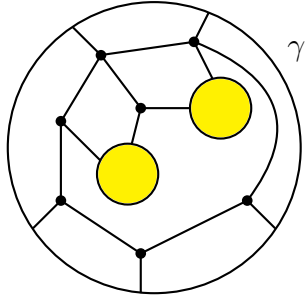
\begin{figure}[ht]
\vspace{-.1in}
\setlength{\unitlength}{0.8pt}
\begin{picture}(110,120)
\thicklines
%outer-curve
\put(55,60){\circle{110}}
\put(105,95){\large $\gamma$}

%swiss-holes
\put(80,75){\yellow{\circle*{22}}}
\put(80,75){\circle{23}}
\put(45,50){\yellow{\circle*{22}}}
\put(45,50){\circle{23}}

%vertices
\put(70,100){\circle*{4}}
\put(35,95){\circle*{4}}
\put(50,75){\circle*{4}}
\put(20,70){\circle*{4}}
\put(20,40){\circle*{4}}
\put(50,20){\circle*{4}}
\put(90,40){\circle*{4}}

%edges
\put(70,100){\line(1,2){5.5}}
\put(70,100){\line(-7,-1){35}}
\put(70,100){\line(1,-2){7}}
\put(35,95){\line(-1,1){10.5}}
\put(35,95){\line(-3,-5){15}}
\put(35,95){\line(3,-4){15}}
\put(50,75){\line(1,0){18.5}}
\put(50,75){\line(-1,-4){3.4}}
\put(20,70){\line(0,-1){30}}
\put(20,70){\line(1,-1){15}}
\put(20,40){\line(-1,-1){10.5}}
\put(20,40){\line(3,-2){30}}
\put(50,20){\line(0,-1){15}}
\put(50,20){\line(2,1){40}}
\put(90,40){\line(1,-1){10.5}}
\qbezier(90,40)(120,80)(70,100)

%vertices on edge
\put(75.5,111){\circle*{3}}
\put(9.5,29.5){\circle*{3}}
\put(100.5,29.5){\circle*{3}}
\put(24.5,105.5){\circle*{3}}
\put(50,5){\circle*{3}}

\end{picture}
\vspace{-.1in}
		\caption{A generalized plabic graph inside a ``swiss-cheese'' shape
		(in this case, a disk with two smaller disks removed).}
\label{fig:cheese}
\end{figure}
\vspace{-.1in}

\begin{proposition}
\label{prop:flipmoves}
If two trivalent plabic graphs are related to each other by moves {\rm(M2)}
and/or~{\rm (M3)}, then they are related via flip~moves~{\rm(M4)}.
\end{proposition}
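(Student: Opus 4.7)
The plan is to combine \cref{lem:noM1} with \cref{lem:flipwhite} applied componentwise. By \cref{lem:noM1}, any (M2)/(M3) sequence between trivalent plabic graphs $G$ and $G'$ can be replaced by an (M3)-only sequence. The key observation is that a single (M3) move affects only the internal structure of one monochromatic (black or white) component of the graph: it contracts or splits an edge joining two same-color interior vertices, and does not touch any edge incident to a vertex of the opposite color or to a boundary vertex. In particular, the combinatorial and topological data of ``how each monochromatic component is attached to the rest of the graph''---namely the cyclic sequence of half-edges leaving the component together with its embedding into the ambient disk---is invariant under (M3).

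Consequently, the (M3)-sequence produces a canonical bijection between the black (respectively, white) components of $G$ and those of $G'$. For each corresponding pair $(C, C')$ of components, I would record that: (a)~both are trivalent, because $G$ and $G'$ are; (b)~both embed in the same swiss-cheese region of the ambient disk, carved out by the outgoing half-edges; (c)~both have the same number of boundary vertices, namely the endpoints of those half-edges; and (d)~both have the same number $f$ of interior faces, because (M3) preserves the Euler relation $V-E+F$ of the component (contracting or splitting a non-loop edge changes $V$ and $E$ by the same amount and leaves $F$ fixed). Viewing $C$ and $C'$ as generalized plabic graphs inside this swiss-cheese region as in \cref{rem:component-as-plabic-graph}, the extended form of \cref{lem:flipwhite} noted there yields a sequence of flip moves (M4) transforming $C$ into $C'$.

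Finally, I would concatenate these componentwise (M4) sequences, handling one component at a time. Each (M4) performed inside a component of $G$ is a legitimate (M4) move in the full plabic graph, and it only alters that component's interior, leaving all other components intact. Therefore the composed sequence of flip moves carries $G$ all the way to $G'$.

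The main obstacle will be setting up rigorously the bijection between monochromatic components of $G$ and $G'$ and verifying that corresponding components occupy the same swiss-cheese region, including the subtle case where one component of $G$ ``surrounds'' another, giving rise to the holes mentioned in \cref{rem:component-as-plabic-graph}. This amounts to tracking how the (M3) sequence acts on the monochromatic components step by step and checking that neither the embedding in the disk nor the attaching data of each component is disturbed; combined with the Euler-characteristic bookkeeping that matches $f$ across the sequence, this reduces the proposition to the already-established \cref{lem:flipwhite}.
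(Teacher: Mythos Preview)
Your proposal is correct and follows essentially the same approach as the paper: reduce to (M3)-only moves via \cref{lem:noM1}, observe that each (M3) move acts within a single monochromatic component so that the components of $G$ and $G'$ are in bijection with matching boundary and face counts, and then apply the swiss-cheese extension of \cref{lem:flipwhite} from \cref{rem:component-as-plabic-graph} to each pair of corresponding components. The paper's version is terser---it simply asserts that (M3) preserves the number of boundary vertices and faces rather than invoking an Euler-characteristic argument---but the logical skeleton is identical.
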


\begin{proof}
%Let $G$ and $G'$ be the plabic graphs in question. 
Without loss of generality, we assume that the given plabic graphs $G, G'$ are connected.  
By \cref{lem:noM1}, they are related by moves~(M3).

Each of the graphs $G$ and $G'$ breaks into disjoint (white or black) components. 
Each (M3) move only affects a single component. 
It follows that the white (resp., black) components $W_1,\dots,W_{\ell}$ 
(resp., $B_1,\dots,B_m$) of~$G$ are in bijection with the components 
$W'_1,\dots,W'_{\ell}$ (resp., $B'_1,\dots,B'_m$) of~$G'$, 
so that each $W_i$ (resp., $B_j$) is related to $W_i'$ (resp., $B_j'$) 
via (M3) moves. 

Since an (M3) move preserves the number of boundary vertices and the number of faces,
both $W_i$ and $W'_i$ (respectively, $B_j$ and~$B'_j$) have the same
number of boundary vertices and the same number of faces.
It now follows from \cref{lem:flipwhite} 
(more precisely, from its extension to components~of plabic graphs, see \cref{rem:component-as-plabic-graph})
that each pair $W_i$ and $W'_i$ can be connected
by flip moves, and similarly for $B_j$ and $B'_j$.  The proposition follows.
\end{proof}
\vspace{-.05in}

\begin{proof}[Proof of \cref{thm:newmoves1}.]

The ``if'' direction follows from \cref{rem:flip}.

Suppose that $G$ and $G'$ are related via a sequence of 
(M1), (M2), and (M3) moves.
Let $k$ denote the number of square moves~(M1) in the sequence.
%	We want to show that they can 
%alternatively be related by a 
%Let us choose a sequence of (M1), (M2),
%and (M3) moves relating $G$ and $G'$ which minimizes 
%square (or (M1)) moves used.  Suppose this number is $k$.
We then have a sequence of move-equivalences
\[
%G=G_0 \sim G_1 \sim G_2 \sim \dots \sim G_k=G',
G=G_0' \sim G_1 \sim G_1' \sim G_2 \sim G_2'\sim \dots \sim G_k\sim G_k'\sim G_{k+1}=G',
\]
where for all $i$,  
$G_i$ is related to $G_i'$  by a single square move, whereas 
$G_i'$ is related to $G_{i+1}$ by a sequence of (M2) and (M3) moves. 
Since a square move only involves trivalent vertices, we may assume,
applying extra (M2) and (M3) moves as needed, 
that all plabic graphs $G_i$ and $G_i'$ are trivalent.
It then follows by \cref{prop:flipmoves} that for every~$i$, 
the graphs $G_i'$ and $G_{i+1}$ are related by flip moves alone, and we are done.
\end{proof}

\begin{remark}
Plabic graphs associated to wiring diagrams (ordinary or double), cf.\ Examples~\ref{def:wiringplabic}
and~\ref{ex:DWD}, are trivalent.
Consequently, one can express the 
transformations corresponding to braid moves using square moves and flip moves, 
as shown in \cref{fig:braidsquare0}.
On the other hand, plabic graphs associated to triangulations of a polygon
(see \cref{TriangulationA} and \cref{fig:triang-plabic0}) are \emph{not} trivalent.
\end{remark}

\section{Triple diagrams and normal plabic graphs}
\label{sec:triple-diagrams}

\emph{Triple diagrams} (or triple crossing diagrams), introduced by D.~Thurston in~\cite{thurston}, 
are planar topological gadgets closely related to plabic graphs.
Our treatment of triple diagrams in Sections~\ref{sec:triple-diagrams}--\ref{sec:mintriple}
is largely based on~\cite{thurston}.

\begin{definition}
\label{def:triple-diagram}
Let $\mathfrak{X}$ be a collection of oriented closed intervals 
and~oriented circles immersed into a closed disk~$\mathbf{D}$. 
The image of each interval~$I$ or circle~$C$ is called a \emph{strand}; 
it inherits its orientation from $I$ or~$C$. %the corresponding in\-ter\-val or circle. 
The strands that are immersed intervals (resp., circles)~are called \emph{arcs} (resp., \emph{closed strands}). 
%	Their images are collectively called \emph{strands}. 
%The images of oriented intervals (resp., circles) are the \emph{arcs} (resp., \emph{closed strands}) of~$\X$. 
A~\emph{face} of~$\mathfrak{X}$ is a connected component~of the complement of the union of all strands within~$\mathbf{D}$. 
We call  $\mathfrak{X}$ a \emph{triple diagram}  if 
\begin{itemize}[leftmargin=.2in]
\item 
the endpoints of arcs are distinct points located on the boundary $\partial \mathbf{D}$; 
each arc meets the boundary transversally;
%no other points map to the boundary; 
\item 
each closed strand is entirely contained in the interior of~$\mathbf{D}$; 
\item 
every point that lies on more than one local branch of a strand is a \emph{triple point} in the interior of~$\mathbf{D}$ 
where exactly three local branches meet, intersecting each other transversally;
\item
the union of the strands and the boundary $\partial \mathbf{D}$ is connected;  
this ensures that each face is homeomorphic to an open disk; 
\item 
the strand segments lying on the boundary of each face 
are oriented consistently (i.e., clockwise or counterclockwise); 
in particular, as we move along the boundary,
the \emph{sources} (endpoints where an arc runs away from~$\partial \mathbf{D}$) 
alternate with the \emph{targets} (where an arc runs into~$\partial \mathbf{D}$). 
 \end{itemize}

Each triple diagram, say with $b$ arcs, comes with a selection of $b$ distinguished points on~$\partial \mathbf{D}$
that are called \emph{boundary vertices}. 
There is one such boundary vertex within every other segment of~$\partial \mathbf{D}$
between two consecutive arc endpoints. 
Specifically, we place boundary vertices so that, moving clockwise along the boundary,
each boundary vertex follows (resp., precedes) a source (resp., a target). 
We label the boundary vertices $1,\dots,b$ in clockwise order. 
See \cref{fig:triple}. 
\end{definition}

\begin{figure}[ht]
\begin{center}
\vspace{-.2in}
\begin{tikzpicture}[scale=0.90]
% circle outlines
\filldraw[fill=white, thick] (12,0) circle (1.75cm);

% number labels
\node at (10.7,1.8) {\large \textbf{1}};
\node at (13.3,1.8) {\large \textbf{2}};
\node at (13.6,-1.4) {\large \textbf{3}};
\node at (12,-2.2) {\large \textbf{4}};
\node at (10.4,-1.4) {\large \textbf{5}};

\filldraw[color=black, fill=black, very thick](11.03,1.46) circle (0.1em);
\filldraw[color=black, fill=black, very thick](12.97,1.46) circle (0.1em);
\filldraw[color=black, fill=black, very thick](13.3,-1.17) circle (0.1em);
\filldraw[color=black, fill=black, very thick](12,-1.76) circle (0.1em);
\filldraw[color=black, fill=black, very thick](10.7,-1.17) circle (0.1em);

	% circle 3
\draw[red, thick] plot [smooth] coordinates {(11.15, 1.55) (11.68, 0.98) (11.75,0.8) (11.9, 0.45) (12.25,0.3) (12.5, 0.015) (12.5, -0.35) (12.1, -0.8) (12.1, -1.75)}; % 1 to 4
\draw[red, thick] plot [smooth] coordinates {(12.85, 1.55) (12.25, 1) (11.75,0.81) (11.66, 0.62) (11.65,0.3) (11.49,-0.1) (11.5,-0.25) (11.6,-0.45) (11.9,-0.8) (11.9,-1.75)}; % 2 to 4
\draw[red, thick] plot [smooth] coordinates {(13.08, 1.38) (12.4, 0.8) (12.33, 0.39) (12.25,0.3) (12.15, 0.25) (11.8, 0.15) (11.5,-0.25) (10.8, -1.3)}; % 2 to 5
\draw[red, thick] plot [smooth] coordinates {(13.4, -1.07) (12.58,-0.29) (12,-0.5) (11.5,-0.25) (11.25,-0.35) (10.6,-1.07)}; % 3 to 5
\draw[red, thick] plot [smooth] coordinates {(10.92,1.38) (11.75,0.8) (12.1,0.65) (12.25,0.3) (12.28,0.03) (12.38,-0.12) (12.5,-0.25) (12.63,-0.45) (13.2,-1.3)}; % 1 to 3
\draw[>={Stealth[length=6pt]}, red, semithick, ->] (10.95,1.35) -- (11.38,1.05); % 1 out
\draw[>={Stealth[length=6pt]}, red, semithick, ->] (11.53,1.15) -- (11.25,1.45); % 1 in
\draw[>={Stealth[length=6pt]}, red, semithick, ->] (12.85,1.55) -- (12.44,1.15); % 2 out
\draw[>={Stealth[length=6pt]}, red, semithick, ->] (12.6,1) -- (12.92,1.25); % 2 in
\draw[>={Stealth[length=6pt]}, red, semithick, ->] (13.4,-1.07) -- (13.1,-0.75); % 3 out
\draw[>={Stealth[length=6pt]}, red, semithick, ->] (12.76,-0.65) -- (13.07,-1.1); % 3 in
\draw[>={Stealth[length=6pt]}, red, semithick, ->] (12.1,-1.75) -- (12.08,-1.1); % 4 out
\draw[>={Stealth[length=6pt]}, red, semithick, ->] (11.91,-1.3) -- (11.91,-1.55); % 4 in
\draw[>={Stealth[length=6pt]}, red, semithick, ->] (10.8,-1.3) -- (11.08,-0.9); % 5 out
\draw[>={Stealth[length=6pt]}, red, semithick, ->] (11.1,-0.5) -- (10.78,-0.87); % 5 in

\filldraw[fill=none, thick] (12,0) circle (1.75cm);
\end{tikzpicture}
\vspace{-.3in}
\end{center}
	\caption{A triple diagram $\X$ with the strand permutation \hbox{$\pi_{\X}=(3, 4,5,1,2)$}.}
\label{fig:triple}
\end{figure}
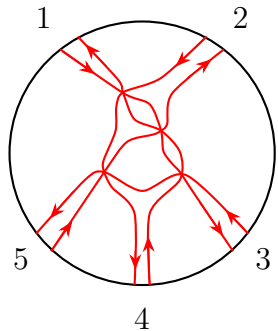

Triple diagrams are viewed up to smooth isotopy among such diagrams.
This makes them essentially combinatorial objects: 6-valent/uni\-valent directed graphs 
with some additional structure.

\begin{remark}
In \cite{thurston}, the definition of a triple diagram does not include the restriction
appearing in \cref{def:triple-diagram} 
that requires the union of the strands and the boundary~$\partial\mathbf{D}$ 
to be connected. 
In the terminology of~\cite{thurston}, all our triple diagrams are \emph{connected}. 
\end{remark}

\begin{remark}
In order to ensure consistent orientations along the face boundaries, 
the orientations of strands must alternate between ``in'' and ``out'' around each triple point.
Given a triple diagram with unoriented strands, we can 
satisfy this condition as follows: start anywhere and
propagate out by assigning alternating orientations around vertices.  
\end{remark}

\begin{definition}
\label{def:strandperm}
Let $\mathfrak{X}$ be a triple diagram
with $b$ boundary vertices (hence $b$ arcs).
For each boundary vertex~$i$, let $s_i$ (resp.,~$t_i$) denote the source (resp., target)
arc endpoint located next to~$i$ on the boundary of~$\mathbf{D}$. 

The \emph{strand permutation} $\pi_{\mathfrak{X}}$
is defined by setting $\pi_{\mathfrak{X}}(i)=j$ whenever the arc originating at~$s_i$
ends up at~$t_j$. 
Thus, the strand permutation describes the connectivity of the arcs. 
See \cref{fig:triple}. 
\end{definition}

We will soon see (cf.\ \cref{def:standard-triple} below) 
that any permutation can arise as a strand permutation of a triple diagram.

Just as the local moves on plabic graphs preserve the (decorated)
trip permutation (see \cref{exercise:forward}),
there is a notion of a local move on triple diagrams that keeps 
the strand permutation invariant.

\begin{definition}
\label{def:2-2-move-equivalence}
A \emph{swivel move} is a local transformation of triple diagrams that is shown in \cref{fig:2-2-move}.
(This move is called a \emph{$2\leftrightarrow2$ move} in~\cite{thurston}.)
We say that two triple diagrams $\X$ and $\X'$ are
\emph{move-equivalent} to each other, and write
$\X \sim \X'$, if one can get from $\X$ to~$\X'$ 
via a sequence of swivel moves.
\end{definition}

%%%%%%%%%%%%%%%%%%%%%%%%%%%%%%%%%%%%%%%%%%%%%%%%%%%%%%%%%%%%%%%%%%%%%%%%%%%
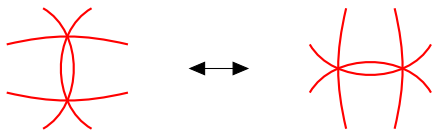
\begin{figure}[ht]
\vspace{-5pt}
\begin{tikzpicture}[scale=0.8]

\draw [red, thick] plot [smooth] coordinates {(0.35, 2.5) (0.4, 2.3) (0.6, 2) (1.1, 1.5) (1.1, 0.7) (0.6,0.2) (0.4, -0.1)  (0.35, -0.3)};

\draw [red, thick] plot [smooth] coordinates { (1.65, 2.5) (1.6, 2.3) (1.4, 2) (0.9, 1.5) (0.9, 0.7) (1.4,0.2) (1.6, -0.1)  (1.65, -0.3)};

\draw [red, thick] plot [smooth] coordinates {(2.5, 0.5) (2.3, 0.65) (2, 0.7) (1.4, 0.55) (0.6, 0.55) (0, 0.7) (-0.3, 0.65)  (-0.5, 0.5)};

\draw [red, thick] plot [smooth] coordinates {(2.5, 1.7) (2.3, 1.55) (2, 1.5) (1.4, 1.65) (0.6, 1.65) (0, 1.5) (-0.3, 1.55)  (-0.5, 1.7)};

\draw [line width=.5pt,dash pattern=on 1.5pt off 3pt, thick](1,1.1) circle(1cm);

\draw[>=triangle 45, <->] (3,1) -- (4,1);

\draw [red, thick] plot [smooth] coordinates {(5.35, 2.5) (5.4, 2.3) (5.6, 2) (5.45, 1.5) (5.45, 0.7) (5.6,0.2) (5.4, -0.1)  (5.35, -0.3)};

\draw [red, thick] plot [smooth] coordinates {(6.65, 2.5) (6.6, 2.3) (6.4, 2) (6.55, 1.5) (6.55, 0.7) (6.4,0.2) (6.6, -0.1)  (6.65, -0.3)};

\draw [red, thick] plot [smooth] coordinates {(7.5, 0.5) (7.3, 0.65) (7, 0.7) (6.8, 0.9) (6.4, 1.2) (5.6, 1.2) (5.2, 0.9) (5, 0.7) (4.7, 0.65)  (4.5, 0.5)};

\draw [red, thick] plot [smooth] coordinates {(7.5, 1.7) (7.3, 1.55) (7, 1.5) (6.8, 1.35) (6.4, 1.05) (5.6, 1.05) (5.2, 1.35) (5, 1.5) (4.7, 1.55)  (4.5, 1.7)};

\draw [line width=.5pt,dash pattern=on 1.5pt off 3pt, thick](6,1.1) circle(1cm);

\end{tikzpicture}
\caption{The swivel move
replaces one of these fragments of a triple diagram by the other fragment,
then smoothens out the strands.
The orientation of each strand on the left should match
the orientation of the strand on the right that has the same endpoints.}
\label{fig:2-2-move}
\end{figure}

%%%%%%%%%%%%%%%%%%%%%%%%%%%%%%%%%%%%%%%%%%%%%%%%%%%%%%%%%%%%%%%%%%%%%%%%%%

\begin{remark}
The connectivity of strands on both sides of \cref{fig:2-2-move}
is the same, so the strand permutation is invariant under the swivel move.
\end{remark}

We will soon see that triple diagrams are cryptomorphic to a variant
of plabic graphs that we call \emph{normal plabic graphs}, 
 defined below.

\pagebreak[3]

\begin{definition}
\label{def:normal}
Let $G$ be a plabic graph, where we allow leaves.
We say that $G$ is \emph{normal} if the coloring of its 
internal vertices %$G$ 
is bipartite, all white 
vertices in $G$ are trivalent, and each boundary vertex is adjacent 
to a black vertex. 
See \cref{fig:normal-plabic}. 
\end{definition}

\begin{figure}[ht]
\vspace{-15pt}
\begin{center}
%\vspace{-.2in}
\begin{tikzpicture}[scale=1]

% circle outlines
\filldraw[fill=white, thick] (0,0) circle (1.75cm);

% number labels
\node at (-1.2,1.7) {\large $\mathbf{1}$};
\node at (1.2,1.7) {\large $\mathbf{2}$};
\node at (1.5,-1.4) {\large $\mathbf{3}$};
\node at (0,-2.1) {\large $\mathbf{4}$};
\node at (-1.5,-1.4) {\large $\mathbf{5}$};

% circle 1
\node[circle, fill=black, draw=black, inner sep=0pt, minimum size=2.5pt] (1) at (-0.95,1.45) {};
\node[circle, fill=black, draw=black, inner sep=0pt, minimum size=2.5pt] (2) at (0.95,1.45) {};
\node[circle, fill=black, draw=black, inner sep=0pt, minimum size=4pt] (3) at (-0.6,1.15) {};
\node[circle, thick, fill=white, draw=black, inner sep=0pt, minimum size=4pt] (4) at (-0.25,0.8) {};
\node[circle, fill=black, draw=black, inner sep=0pt, minimum size=4pt] (5) at (0.25,0.8) {};
\node[circle, fill=black, draw=black, inner sep=0pt, minimum size=4pt] (6) at (-0.25,0.3) {};
\node[circle, thick, fill=white, draw=black, inner sep=0pt, minimum size=4pt] (7) at (0.25,0.3) {};
\node[circle, fill=black, draw=black, inner sep=0pt, minimum size=4pt] (8) at (0.375,0.015) {};
\node[circle, thick, fill=white, draw=black, inner sep=0pt, minimum size=4pt] (9) at (-0.5,-0.25) {};
\node[circle, thick, fill=white, draw=black, inner sep=0pt, minimum size=4pt] (10) at (0.5,-0.25) {};
\node[circle, fill=black, draw=black, inner sep=0pt, minimum size=4pt] (11) at (-0.88,-0.7) {};
\node[circle, fill=black, draw=black, inner sep=0pt, minimum size=4pt] (12) at (0,-0.7) {};
\node[circle, fill=black, draw=black, inner sep=0pt, minimum size=4pt] (13) at (0.88,-0.7) {};
\node[circle, fill=black, draw=black, inner sep=0pt, minimum size=2.5pt] (14) at (-1.28,-1.18) {};
\node[circle, fill=black, draw=black, inner sep=0pt, minimum size=2.5pt] (15) at (1.28,-1.18) {};
\node[circle, fill=black, draw=black, inner sep=0pt, minimum size=2.5pt] (16) at (0,-1.76) {};
\draw[thick] (1) -- (4);
\draw[thick] (2) -- (5);
\draw[thick] (4) -- (5);
\draw[thick] (4) -- (6);
\draw[thick] (5) -- (7);
\draw[thick] (6) -- (7);
\draw[thick] (6) -- (9);
\draw[thick] (7) -- (10);
\draw[thick] (9) -- (14);
\draw[thick] (10) -- (15);
\draw[thick] (9) -- (12);
\draw[thick] (10) -- (12);
\draw[thick] (12) -- (16);

\end{tikzpicture}
\vspace{-.2in}
\end{center}
\caption{A normal plabic graph~$G$. This graph was 
obtained from  the one in \cref{fig:plabic}(a) by inserting several bivalent black vertices.}
\label{fig:normal-plabic}
\end{figure}
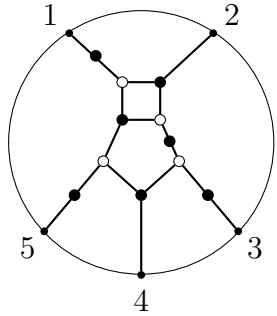

\begin{remark}
\label{rem:no-white-lollipop}
If a normal plabic graph 
$G$ has a leaf or a lollipop, it must be black, since 
 white vertices are required to be trivalent.
Therefore, in the case of normal graphs, there is no need to decorate the trip permutation. 
\end{remark}

\begin{remark}\label{rem:neverreduced}
If a normal plabic graph $G$ has a black leaf, then $G$ will 
fail to be reduced, see 
\cref{def:reduced-plabic}.
\end{remark}

\cref{def:GX} below associates a diagram $\X(G)$ to a normal plabic graph.  
We will then show in 
\cref{lem:normaltriple} that $\X(G)$ is indeed a triple diagram.

\begin{definition}
\label{def:GX}
Given a normal plabic graph $G$, 
we associate a diagram $\X(G)$ as follows.
To each trip in~$G$---either a one-way trip or a roundtrip---we associate a strand
in the ambient disk~$\mathbf{D}$ by slightly deforming the trip, 
as shown in \cref{fig:plabic-to-triple}, 
so that the strand 
\begin{itemize}[leftmargin=.2in]
\item 
runs along each edge of the trip, keeping the edge on its left,
\item
makes a U-turn at each black internal leaf (a vertex of degree~1),
\item
ignores black vertices of degree~2,
\item 
makes a right turn (as sharp as possible) at each other black vertex, and % of degree $>2$, and 
\item 
makes a left turn at each white vertex~$v$ along the trip, passing through~$v$.
\end{itemize}
%This collection of strands forms a triple diagram $\mathfrak{X}(G)$, 
\end{definition}

\begin{figure}[ht]
\vspace{-.1in}
\begin{center}
\begin{tabular}[h]{ccc}
\begin{tabular}[h]{c}
\ \\[-.9in]
\setlength{\unitlength}{1pt}
\begin{picture}(35,16)(0,0)
\thicklines
\put(30,10){\circle*{4}}
\put(0,10){\line(1,0){28}}
\red{
\put(0,16){\line(1,0){30}}
\put(0,4){\line(1,0){30}}
\put(19,4){\vector(1,0){1}}
\put(11,16){\vector(-1,0){1}}
\qbezier(30,16)(36,16)(36,10)
\qbezier(30,4)(36,4)(36,10)
}
\end{picture}
\\[.4in]
\setlength{\unitlength}{1pt}
\begin{picture}(60,16)(0,0)
\thicklines
\put(30,10){\circle*{4}}
\put(0,10){\line(1,0){28}}
\put(60,10){\line(-1,0){28}}
\red{
\put(0,4){\line(1,0){60}}
\put(0,16){\line(1,0){60}}
\put(33,16){\vector(-1,0){1}}
\put(27,4){\vector(1,0){1}}
}
\end{picture}
\end{tabular}
&
\ \ \quad
\setlength{\unitlength}{1pt}
\begin{picture}(70,60)(0,0)
\thicklines
\put(35,20){\circle*{4}}
\put(0,0){\line(7,4){33.25}}
\put(70,0){\line(-7,4){33.25}}
\put(35,22){\line(0,1){38}}
\red{\qbezier(2,-3.5)(35,16)(68,-3.5)
\qbezier(-2,3.5)(31.5,22)(31,60)
\qbezier(72,3.5)(38.5,22)(39,60)
\put(36,6){\vector(1,0){3}}
\put(48,25){\vector(-4,7){3}}
\put(21.5,25){\vector(-4,-7){3}}
}
\end{picture}
\qquad\quad
&
\setlength{\unitlength}{1pt}
\begin{picture}(70,60)(0,0)
\thicklines
\put(35,20){\circle{4}}
\put(0,0){\line(7,4){33.25}}
\put(70,0){\line(-7,4){33.25}}
\put(35,22){\line(0,1){38}}
\red{\qbezier(16,4.5)(42,15.5)(39,44)
\put(2,-3.5){\vector(7,4){16}}
\put(39,44){\vector(0,1){7}}
\put(39,44){\line(0,1){16}}
\qbezier(54,4.5)(28,15.5)(31,44)
\put(31,44){\line(0,1){16}}
\put(68,-3.5){\line(-7,4){16}}
\put(54,4.5){\vector(7,-4){3}}
\put(31,60){\vector(0,-1){17}}
\qbezier(12,11.5)(35,28.4)(58,11.5)
\put(-2,3.5){\line(7,4){16}}
\put(12,11.5){\vector(-7,-4){5}}
\put(58,11.5){\vector(-7,4){4}}
\put(72,3.5){\line(-7,4){16}}
}
\end{picture}
\end{tabular}
\vspace{-.1in}
\end{center}
\caption{Constructing a triple diagram  from a normal plabic graph.}
\label{fig:plabic-to-triple}
\end{figure}
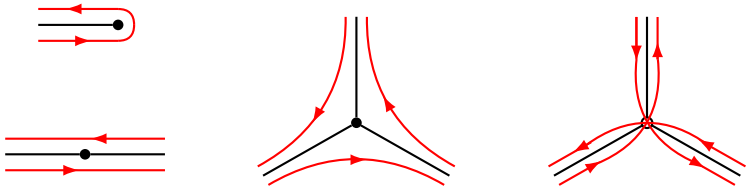
	See \cref{fig:normaltriple} for an example.  %and \cref{lem:normaltriple}.

\pagebreak[3]

\begin{figure}[ht]
\begin{center}
\vspace{-.05in}
\begin{tikzpicture}[scale=1]

% circle outlines
\filldraw[fill=none, thick] (6,0) circle (1.75cm);
\filldraw[fill=none, thick] (12,0) circle (1.75cm);

% number labels
\node at (4.7,1.8) {\large $\mathbf{1}$};
\node at (7.3,1.8) {\large $\mathbf{2}$};
\node at (7.6,-1.4) {\large $\mathbf{3}$};
\node at (6,-2.2) {\large $\mathbf{4}$};
\node at (4.4,-1.4) {\large $\mathbf{5}$};

\node at (10.7,1.8) {\large $\mathbf{1}$};
\node at (13.3,1.8) {\large $\mathbf{2}$};
\node at (13.6,-1.4) {\large $\mathbf{3}$};
\node at (12,-2.2) {\large $\mathbf{4}$};
\node at (10.4,-1.4) {\large $\mathbf{5}$};

% circle 2
\node[circle, fill=black, draw=black, inner sep=0pt, minimum size=2.5pt] (21) at (5.05,1.45) {};
\node[circle, fill=black, draw=black, inner sep=0pt, minimum size=2.5pt] (22) at (6.95,1.45) {};
\node[circle, fill=black, draw=black, inner sep=0pt, minimum size=4pt] (23) at (5.4,1.15) {};
\node[circle, thick, fill=white, draw=black, inner sep=0pt, minimum size=4pt] (24) at (5.75,0.8) {};
\node[circle, fill=black, draw=black, inner sep=0pt, minimum size=4pt] (25) at (6.25,0.8) {};
\node[circle, fill=black, draw=black, inner sep=0pt, minimum size=4pt] (26) at (5.75,0.3) {};
\node[circle, thick, fill=white, draw=black, inner sep=0pt, minimum size=4pt] (27) at (6.25,0.3) {};
\node[circle, fill=black, draw=black, inner sep=0pt, minimum size=4pt] (28) at (6.375,0.015) {};
\node[circle, thick, fill=white, draw=black, inner sep=0pt, minimum size=4pt] (29) at (5.5,-0.25) {};
\node[circle, thick, fill=white, draw=black, inner sep=0pt, minimum size=4pt] (210) at (6.5,-0.25) {};
\node[circle, fill=black, draw=black, inner sep=0pt, minimum size=4pt] (211) at (5.1,-0.7) {};
\node[circle, fill=black, draw=black, inner sep=0pt, minimum size=4pt] (212) at (6,-0.7) {};
\node[circle, fill=black, draw=black, inner sep=0pt, minimum size=4pt] (213) at (6.9,-0.7) {};
\node[circle, fill=black, draw=black, inner sep=0pt, minimum size=2.5pt] (214) at (4.72,-1.2) {};
\node[circle, fill=black, draw=black, inner sep=0pt, minimum size=2.5pt] (215) at (7.28,-1.2) {};
\node[circle, fill=black, draw=black, inner sep=0pt, minimum size=2.5pt] (216) at (6,-1.76) {};
\draw[thick] (21) -- (24);
\draw[thick] (22) -- (25);
\draw[thick] (24) -- (25);
\draw[thick] (24) -- (26);
\draw[thick] (25) -- (27);
\draw[thick] (26) -- (27);
\draw[thick] (26) -- (29);
\draw[thick] (27) -- (210);
\draw[thick] (29) -- (214);
\draw[thick] (210) -- (215);
\draw[thick] (29) -- (212);
\draw[thick] (210) -- (212);
\draw[thick] (212) -- (216);
\draw[red, thick] plot [smooth] coordinates {(5.15, 1.55) (5.75, 0.95) (24) (5.9, 0.45) (27) (6.5, 0.015) (6.5, -0.35) (6.15, -0.8) (6.1, -1.75)}; % 1 to 4
\draw[red, thick] plot [smooth] coordinates {(6.85, 1.55) (6.25, 0.95) (24) (5.65, 0.77) (5.65,0.3) (5.45,-0.1) (29) (5.6,-0.45) (5.85,-0.8) (5.9,-1.75)}; % 2 to 4
\draw[red, thick] plot [smooth] coordinates {(7.08, 1.38) (6.4, 0.8) (6.35, 0.35) (27) (6.15, 0.25) (5.8, 0.15) (29) (5.5, -0.4) (4.8, -1.3)}; % 2 to 5
\draw[red, thick] plot [smooth] coordinates {(7.4,-1.07) (6.7,-0.3) (210) (6,-0.5) (29) (5.25,-0.35) (4.6,-1.07)}; % 3 to 5
\draw[red, thick] plot [smooth] coordinates {(4.92,1.38) (5.65,0.77) (24) (6.1,0.65) (27) (6.23,0.03) (6.3,-0.12) (210) (6.5,-0.45) (7.2,-1.3)}; % 1 to 3
\draw[>={Stealth[length=6pt]}, red, semithick, ->] (4.95,1.35) -- (5.3,1.05); % 1 out
\draw[>={Stealth[length=6pt]}, red, semithick, ->] (5.56,1.15) -- (5.26,1.45); % 1 in
\draw[>={Stealth[length=6pt]}, red, semithick, ->] (6.85,1.55) -- (6.48,1.15); % 2 out
\draw[>={Stealth[length=6pt]}, red, semithick, ->] (6.6,1) -- (6.92,1.25); % 2 in
\draw[>={Stealth[length=6pt]}, red, semithick, ->] (7.4,-1.07) -- (7.12,-0.75); % 3 out
\draw[>={Stealth[length=6pt]}, red, semithick, ->] (6.65,-0.65) -- (7.03,-1.1); % 3 in
\draw[>={Stealth[length=6pt]}, red, semithick, ->] (6.1,-1.75) -- (6.11,-1.1); % 4 out
\draw[>={Stealth[length=6pt]}, red, semithick, ->] (5.89,-1.3) -- (5.9,-1.55); % 4 in
\draw[>={Stealth[length=6pt]}, red, semithick, ->] (4.8,-1.3) -- (5.12,-0.9); % 5 out
\draw[>={Stealth[length=6pt]}, red, semithick, ->] (5.1,-0.5) -- (4.78,-0.87); % 5 in
% circle 3
\draw[red, thick] plot [smooth] coordinates {(11.15, 1.55) (11.68, 0.98) (11.75,0.8) (11.9, 0.45) (12.25,0.3) (12.5, 0.015) (12.5, -0.35) (12.1, -0.8) (12.1, -1.75)}; % 1 to 4
\draw[red, thick] plot [smooth] coordinates {(12.85, 1.55) (12.25, 1) (11.75,0.81) (11.66, 0.62) (11.65,0.3) (11.49,-0.1) (11.5,-0.25) (11.6,-0.45) (11.9,-0.8) (11.9,-1.75)}; % 2 to 4
\draw[red, thick] plot [smooth] coordinates {(13.08, 1.38) (12.4, 0.8) (12.33, 0.39) (12.25,0.3) (12.15, 0.25) (11.8, 0.15) (11.5,-0.25) (10.8, -1.3)}; % 2 to 5
\draw[red, thick] plot [smooth] coordinates {(13.4, -1.07) (12.58,-0.29) (12,-0.5) (11.5,-0.25) (11.25,-0.35) (10.6,-1.07)}; % 3 to 5
\draw[red, thick] plot [smooth] coordinates {(10.92,1.38) (11.75,0.8) (12.1,0.65) (12.25,0.3) (12.28,0.03) (12.38,-0.12) (12.5,-0.25) (12.63,-0.45) (13.2,-1.3)}; % 1 to 3
\draw[>={Stealth[length=6pt]}, red, semithick, ->] (10.95,1.35) -- (11.38,1.05); % 1 out
\draw[>={Stealth[length=6pt]}, red, semithick, ->] (11.53,1.15) -- (11.25,1.45); % 1 in
\draw[>={Stealth[length=6pt]}, red, semithick, ->] (12.85,1.55) -- (12.44,1.15); % 2 out
\draw[>={Stealth[length=6pt]}, red, semithick, ->] (12.6,1) -- (12.92,1.25); % 2 in
\draw[>={Stealth[length=6pt]}, red, semithick, ->] (13.4,-1.07) -- (13.1,-0.75); % 3 out
\draw[>={Stealth[length=6pt]}, red, semithick, ->] (12.76,-0.65) -- (13.07,-1.1); % 3 in
\draw[>={Stealth[length=6pt]}, red, semithick, ->] (12.1,-1.75) -- (12.08,-1.1); % 4 out
\draw[>={Stealth[length=6pt]}, red, semithick, ->] (11.91,-1.3) -- (11.91,-1.55); % 4 in
\draw[>={Stealth[length=6pt]}, red, semithick, ->] (10.8,-1.3) -- (11.08,-0.9); % 5 out
\draw[>={Stealth[length=6pt]}, red, semithick, ->] (11.1,-0.5) -- (10.78,-0.87); % 5 in

% circle outlines
\filldraw[fill=none, thick] (6,0) circle (1.75cm);
\filldraw[fill=none, thick] (12,0) circle (1.75cm);
\end{tikzpicture}
\vspace{-.2in}
\end{center}
\caption{Left: 
A normal plabic graph~$G$ (cf.\ \cref{fig:normal-plabic}) 
together with the associated triple diagram~$\X=\mathfrak{X}(G)$.
Conversely, $G=G(\X)$. 
Right: The triple diagram~$\X=\mathfrak{X}(G)$.  
The trip permutation of $G$ and the strand permutation of $\X$
are equal: $\pi_G=\pi_\X=(3,4,5,1,2)$.}
\label{fig:normaltriple}
\end{figure}

\begin{lemma}
\label{lem:normaltriple}
	The diagram $\X(G)$ 
	associated to a normal plabic graph~$G$ as in  \cref{def:GX} is a triple diagram.
\end{lemma}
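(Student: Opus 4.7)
The plan is to verify, one by one, the four defining properties of a triple diagram listed in \cref{def:triple-diagram}. The core idea is to translate the local behavior of trips at vertices of~$G$ into the local behavior of strands in~$\X(G)$, exploiting the fact that $G$ is normal (bipartite, trivalent white vertices, boundary vertices attached to black vertices).

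First I would analyze the \emph{local picture at each type of vertex}. At a white vertex $v$ (necessarily trivalent, with edges $e_1, e_2, e_3$ listed counterclockwise), the three trips that arrive at $v$ along the edges $e_i$ each make a sharp left turn and continue along $e_{i+1}$ (indices mod 3). After the deformation of \cref{fig:plabic-to-triple}, these three trips produce three strand segments that all pass through~$v$, and a short computation (or direct inspection of the picture) shows that the three tangent directions at~$v$ are pairwise transverse. At a black vertex~$b$ of degree~$\geq 2$, the trips swing around~$b$ on one side, making right turns; the strand segments thereby obtained stay disjoint from~$b$ and from each other in a small neighborhood. A degree-1 black vertex produces a U-turn of a single strand, and bivalent black vertices are transparent. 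This establishes that the only multiple intersections of strands occur at white vertices, and each such intersection is a transverse triple point, proving the first bullet of \cref{def:triple-diagram}.

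Next I would treat the \emph{boundary behavior}. Each boundary vertex $i$ of $G$ is adjacent (by normality) to a black internal vertex via a single edge $\varepsilon_i$. The two strand segments running parallel to $\varepsilon_i$ terminate transversally on $\partial\mathbf{D}$ on either side of~$i$: one as a source (trip departing from~$i$) and one as a target (trip arriving at~$i$), in agreement with the orientation conventions. This gives, around $\partial\mathbf{D}$, the alternating pattern source--boundary-vertex--target--source--boundary-vertex--target$\cdots$ demanded in the last bullet of \cref{def:triple-diagram}; in particular, the $b$ boundary vertices of $G$ serve as the $b$ labeled boundary vertices of the triple diagram. Distinctness of arc endpoints is immediate from the construction.

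It remains to check the \emph{connectivity} and the \emph{global orientation} conditions. Connectivity of the union of strands with $\partial\mathbf{D}$ follows from the fact that every internal vertex of $G$ lies on some strand (every edge carries two strand segments), together with the plabic-graph axiom that every internal vertex is connected to the boundary. For orientations, orient each strand in the direction of travel of the corresponding trip; at every triple point (white vertex), the three strands alternate in/out around the vertex because left turns at a trivalent white vertex permute the three ``in'' sides cyclically to the three ``out'' sides. I~expect the verification of the triple-point transversality and of the alternation of in/out at white vertices to be the least routine part, essentially because it is where the specific rules of the road interact with the geometry of the deformation in \cref{fig:plabic-to-triple}; the remaining conditions then follow from the normality of~$G$ by inspection.
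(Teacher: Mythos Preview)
Your overall plan is the same as the paper's---verify the defining properties of a triple diagram---and your treatment of the local picture at white and black vertices, as well as the boundary behavior, is fine and indeed more detailed than the paper's (which dismisses these in one sentence). The only nontrivial condition is connectivity of the union of strands with~$\partial\mathbf{D}$, and this is where your argument has a gap.

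You write that connectivity ``follows from the fact that every internal vertex of $G$ lies on some strand \dots\ together with the plabic-graph axiom that every internal vertex is connected to the boundary.'' But you yourself observed earlier that at a black vertex the strand segments ``stay disjoint from~$b$''---so black vertices do \emph{not} lie on any strand. More importantly, connectivity of~$G$ as a graph does not automatically transfer to connectivity of the strand configuration~$\X(G)$: one must check that, at each vertex of~$G$, the strand segments running along the incident edges are connected \emph{to each other} inside~$\X(G)$. At a white vertex this is clear (they all pass through the triple point). At a black vertex~$u$ of degree~$k$, however, this requires an argument: the $k$ strand segments making right turns at~$u$ each link two cyclically consecutive edges, so together they cyclically connect the $k$ white (or boundary) neighbors of~$u$. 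The paper makes exactly this point---that a two-edge path $v\!-\!u\!-\!v'$ through a black vertex yields a strand path between the triple points~$v$ and~$v'$---and then invokes path-connectivity of~$G$ to the boundary. Once you insert this missing step, your proof coincides with the paper's.
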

\begin{proof}
Since the white vertices in $G$ are trivalent, 
$\X(G)$ has a triple point for every white vertex in~$G$, and no other crossings.
We need to check  that 
$\X=\X(G)$ is connected, or more precisely, 
that the union of the strands and the boundary of the disk is connected,
as required by \cref{def:triple-diagram}.  
%Clearly
%every one-way trip in $G$ becomes a strand which is connected
%to the boundary of the disk, so we just need to check that there
%is no union of closed strands (obtained from roundtrips in $G$)
%which is disconnected from the collection of strands
%connected to the boundary of $G$.

Let us ignore any component consisting of a single black vertex 
which is adjacent only to boundary vertices
(e.g. a black lollipop), as the corresponding strands
are clearly connected to the boundary of the disk. 
Consider any other strand $S$ in~$\X$.
By construction, $S$~passes through at least one white vertex
of the (bipartite) graph~$G$, which is a~triple point on~$S$. 
It therefore suffices to show that every triple point in~$\X$ is connected 
to the boundary within~$\X$ (i.e., via strand segments of~$\X$). 

Let $u$ be a $k$-valent black vertex in~$G$ and let $v_1,\dots,v_k$
be the white or boundary vertices adjacent to~$u$. 
The strands of~$\X$ that run along the $k$ edges of~$G$ incident to~$u$
cyclically connect the triple points $v_1,\dots,v_k$ to each other.
(If the list $v_1,\dots,v_k$ includes boundary vertices, then the corresponding
strand segments are connected via the boundary.) 
We conclude that for any two-edge path $v-\!\!\!-u-\!\!\!-v'$ in~$G$
connecting two white or boundary vertices $v$ and~$v'$ 
via a black vertex~$u$,
the triple (or nearby boundary) points $v$ and~$v'$ are connected within~$\X$. 
It follows that for any path in the bipartite graph~$G$ connecting a white vertex~$v$ 
to the boundary, there is a path in~$\X$ that connects the triple point~$v$ to the boundary. 

It remains to note that by \cref{def:plabic}, any white vertex $v$ in~$G$
is connected by a path in~$G$ to some boundary vertex.
\end{proof}

We now go in the opposite direction, from a triple diagram to a normal plabic graph. 

\begin{definition}
\label{def:G(X)}
The normal plabic graph $G\!=\!G(\X)$ associated 
to a triple  diagram~$\X$ is constructed as follows. 
Place a white vertex of~$G$ at each triple crossing in~$\X$.
Treat each boundary vertex of~$\X$ as a boundary vertex of~$G$. 
For each region~$R$ of~$\X$ whose boundary is oriented counter\-clock\-wise, 
place a black vertex in the interior of~$R$ and connect it to the white and boundary vertices 
lying on the boundary of~$R$, so that each white (resp., boundary) vertex 
is trivalent (resp., univalent). 
The resulting plabic graph $G\!=\!G(\mathfrak{X})$ is normal by construction.
\end{definition}

\begin{proposition}
\label{pr:bijplabictriple}
The maps $G \mapsto \X(G)$ and $\X\mapsto G(\X)$ 
described in Definitions \ref{def:GX} and~\ref{def:G(X)} are mutually inverse bijections between 
normal plabic graphs and  triple diagrams with the same number of boundary vertices.
The trip permutation $\pi_G$ of a normal graph~$G$ is equal to the strand permutation 
$\pi_{\mathfrak{X}}$ of the corresponding triple diagram $\X=\X(G)$.
\end{proposition}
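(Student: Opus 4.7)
The plan is to verify three things in sequence: (a) both maps $G\mapsto\mathfrak{X}(G)$ and $\mathfrak{X}\mapsto G(\mathfrak{X})$ are well-defined, (b) they are mutually inverse, and (c) the trip permutation and strand permutation agree. Well-definedness of $\mathfrak{X}(G)$ is essentially \cref{lem:normaltriple}; what remains is to check the orientation axioms. At each triple point (a white vertex $v$ of $G$), the three strands passing through $v$ come from the three edges at $v$, and the ``keep the edge on the left'' convention combined with the left-turn rule at $v$ forces the three strands to alternate in/out around $v$ as required. Along $\partial\mathbf{D}$, the two strand endpoints adjacent to a given boundary vertex $i$ of $G$ are one source (the trip outgoing from $i$) and one target (the trip terminating at $i$), so sources and targets alternate. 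For well-definedness of $G(\mathfrak{X})$, the white vertices are trivalent by construction, each edge connects a white triple-point vertex to a black ccw-face vertex, and each boundary vertex lies on the boundary of exactly one ccw face (by the alternation of source/target along $\partial\mathbf{D}$), so it is attached to a single black vertex. Hence $G(\mathfrak{X})$ is normal.

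For mutual inversion, I would proceed as follows. Starting from a normal plabic graph $G$, the triple points of $\mathfrak{X}(G)$ are precisely the white vertices of $G$, so $G(\mathfrak{X}(G))$ has the same set of white vertices. The key point is that each black vertex $u$ of $G$ is enclosed by a face of $\mathfrak{X}(G)$ whose boundary is oriented counterclockwise: indeed, strands running along edges incident to $u$ turn right at $u$ (with U-turns at leaves and passing through degree-$2$ vertices), so locally the strand segments spiral counterclockwise around $u$. Therefore $G(\mathfrak{X}(G))$ has a black vertex inside each such face, and the adjacencies match by inspection. Conversely, starting from $\mathfrak{X}$, the strands of $\mathfrak{X}(G(\mathfrak{X}))$ can be traced directly: a trip in $G(\mathfrak{X})$ starting at a boundary vertex $i$ travels along edges of the bipartite graph, turns right at each black vertex (which lies inside a ccw face of $\mathfrak{X}$, so turning right means following the face boundary), and turns left at each white vertex (i.e., continues straight through the corresponding triple point). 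This is exactly the strand of $\mathfrak{X}$ originating at the source $s_i$.

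Assertion (c) then follows immediately from the construction in \cref{def:GX}: the strand obtained by deforming the trip starting at boundary vertex $i$ of $G$ terminates next to the boundary vertex at which the original trip ends, so $\pi_G(i)=\pi_{\mathfrak{X}(G)}(i)$.

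The main obstacle will be the verification that faces of $\mathfrak{X}(G)$ enclosing black vertices $u\in G$ are counterclockwise-oriented, and, conversely, that every ccw face of $\mathfrak{X}$ arises in this way when one reconstructs $G(\mathfrak{X})$. This needs a careful case analysis by the degree of $u$: at a generic black vertex the right-turn rule together with the ``edge-on-the-left'' convention immediately yields a small ccw polygon around $u$; at a degree-$2$ black vertex (which is skipped) the adjacent strand segments fuse into one boundary component of the face; at a black leaf, the U-turn closes off the face. One must also verify that no other ccw face is created, i.e., the ccw faces of $\mathfrak{X}(G)$ are in bijection with the black vertices of $G$, which follows from an Euler-characteristic or face-coloring argument since the faces of a triple diagram admit a canonical checkerboard coloring by the orientation of their boundary.
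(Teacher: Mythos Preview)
Your proposal is correct and follows essentially the same route as the paper. The paper's proof is terser: it phrases the key observation as a decomposition of $G$ into star-shaped subgraphs $S_v$ (one per black vertex $v$), each of which contributes a counterclockwise-bounded fragment of $\mathfrak{X}(G)$; since these stars tile the disk, the bijection between black vertices and ccw faces is immediate, and your proposed Euler-characteristic/checkerboard argument becomes unnecessary.
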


\cref{fig:bijreg} illustrates the bijection between normal plabic graphs and triple diagrams
in the case of reduced normal plabic graphs on three nodes.

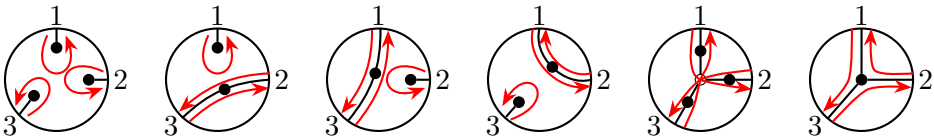
\begin{figure}[ht]
\begin{center}
\vspace{-.2in}
\begin{tikzpicture}[scale=0.85]

\filldraw[fill=white, thick] (0,0) circle (0.8cm);
\filldraw[fill=white, thick] (2.5,0) circle (0.8cm);
\filldraw[fill=white, thick] (5,0) circle (0.8cm);
\filldraw[fill=white, thick] (7.5,0) circle (0.8cm);
\filldraw[fill=white, thick] (10,0) circle (0.8cm);
\filldraw[fill=white, thick] (12.5,0) circle (0.8cm);

\node at (0,1) (1) {$\mathbf{1}$}; \node at (1,0) (2) {$\mathbf{2}$}; \node at (-0.72,-0.72) (3) {$\mathbf{3}$};
\node at (2.5,1) (21) {$\mathbf{1}$}; \node at (3.5,0) (22) {$\mathbf{2}$}; \node at (1.78,-0.72) (23) {$\mathbf{3}$};
\node at (5,1) (31) {$\mathbf{1}$}; \node at (6,0) (32) {$\mathbf{2}$}; \node at (4.28,-0.72) (33) {$\mathbf{3}$};
\node at (7.5,1) (41) {$\mathbf{1}$}; \node at (8.5,0) (42) {$\mathbf{2}$}; \node at (6.78,-0.72) (43) {$\mathbf{3}$};
\node at (10,1) (51) {$\mathbf{1}$}; \node at (11,0) (52) {$\mathbf{2}$}; \node at (9.28,-0.72) (53) {$\mathbf{3}$};
\node at (12.5,1) (61) {$\mathbf{1}$}; \node at (13.5,0) (62) {$\mathbf{2}$}; \node at (11.78,-0.72) (63) {$\mathbf{3}$};

% circle 1
\node[circle, fill=black, draw=black, inner sep=0pt, minimum size=4pt] at (0,0.5) (n1) {};
\node[circle, fill=black, draw=black, inner sep=0pt, minimum size=4pt] at (0.5,0) (n2) {};
\node[circle, fill=black, draw=black, inner sep=0pt, minimum size=4pt] at (-0.35,-0.25) (n3) {};
\draw[thick] (0,0.8) -- (n1);
\draw[thick] (0.8,0) -- (n2);
\draw[thick] (-0.59,-0.56) -- (n3);
\draw [>=Stealth, ->, red, thick] (1) to[out=245, in=295, distance=25pt] (1);
\draw [>=Stealth, ->, red, thick] (2) to[out=155, in=205, distance=25pt] (2);
\draw [>=Stealth, ->, red, thick] (3) to[out=30, in=75, distance=25pt] (3);

% circle 2
\node[circle, fill=black, draw=black, inner sep=0pt, minimum size=4pt] at (2.5,0.5) (n21) {};
\node[circle, fill=black, draw=black, inner sep=0pt, minimum size=4pt] at (2.61,-0.15) (n22) {};
\draw[thick] (2.5,0.8) -- (n21);
\draw[thick] (1.95,-0.6) to[bend left=20] (3.3,0);
\draw[>=Stealth, ->, red, thick] (21) to[out=245, in=295, distance=25pt] (21);
\draw[>=Stealth, ->, red, thick] (3.3,0.1) to[bend right=20] (1.9,-0.5);
\draw[>=Stealth, ->, red, thick] (2.07,-0.65) to[bend left=20] (3.3,-0.13);

% circle 3
\node[circle, fill=black, draw=black, inner sep=0pt, minimum size=4pt] at (4.95,0.1) (n31) {};
\node[circle, fill=black, draw=black, inner sep=0pt, minimum size=4pt] at (5.5,0) (n32) {};
\draw[thick] (5.8,0) -- (n32);
\draw[thick] (4.52,-0.65) to[bend right=20] (5.03,0.8);
\draw[>=Stealth, ->, red, thick] (32) to[out=155, in=205, distance=25pt] (32);
\draw[>=Stealth, ->, red, thick] (4.9,0.78) to[bend left=20] (4.42,-0.53);
\draw[>=Stealth, ->, red, thick] (4.65,-0.7) to[bend right=20] (5.15,0.77);

% circle 4
\node[circle, fill=black, draw=black, inner sep=0pt, minimum size=4pt] at (7.7,0.2) (n41) {};
\node[circle, fill=black, draw=black, inner sep=0pt, minimum size=4pt] at (7.18,-0.35) (n42) {};
\draw[thick] (6.98,-0.58) -- (n42);
\draw[thick] (7.5,0.8) to[bend right=60] (8.3,0);
\draw[>=Stealth, ->, red, thick] (43) to[out=20, in=65, distance=25pt] (43);
\draw[>=Stealth, ->, red, thick] (7.4,0.8) to[bend right=60] (8.3,-0.1);
\draw[>=Stealth, ->, red, thick] (8.3,0.1) to[bend left=60] (7.6,0.8);

% circle 5
\node[circle, fill=black, draw=black, inner sep=0pt, minimum size=4pt] at (10,0.45) (n51) {};
\node[circle, fill=black, draw=black, inner sep=0pt, minimum size=4pt] at (10.45,0) (n52) {};
\node[circle, fill=black, draw=black, inner sep=0pt, minimum size=4pt] at (9.8,-0.35) (n53) {};
\draw[thick] (10,0.8) -- (10,0);
\draw[thick] (10.8,0) -- (10,0);
\draw[thick] (9.6,-0.7) -- (10,0);
\node[circle, thick, fill=white, draw=black, inner sep=0pt, minimum size=4pt] at (10,0) (n54) {};
\draw[>=Stealth, ->, red, thick] plot [smooth] coordinates {(9.88,0.8) (9.85,0.3) (10,0) (10.8,-0.15)};
\draw[>=Stealth, ->, red, thick] plot [smooth] coordinates {(10.8,0.15) (10,0) (9.5,-0.6)};
\draw[>=Stealth, ->, red, thick] plot [smooth] coordinates {(9.75,-0.75) (9.9,-0.4) (10,0) (10.1,0.2) (10.15,0.38) (10.15,0.78)};

% circle 6
\node[circle, fill=black, draw=black, inner sep=0pt, minimum size=4pt] at (12.5,0) (n61) {};
\draw[thick] (n61) -- (12.5,0.8);
\draw[thick] (n61) -- (13.3,0);
\draw[thick] (n61) -- (11.98,-0.6);
\draw[>=Stealth, ->, red, thick] plot [smooth] coordinates {(13.3,0.1) (12.7,0.15) (12.65,0.8)};
\draw[>=Stealth, ->, red, thick] plot [smooth] coordinates {(12.35,0.8) (12.3,0) (11.9,-0.5)};
\draw[>=Stealth, ->, red, thick] plot [smooth] coordinates {(12.05,-0.66) (12.6,-0.15) (13.28,-0.12)};

\end{tikzpicture}
\vspace{-.3in}
\end{center}
\caption{The six reduced normal plabic graphs with three boundary
vertices, shown together with the corresponding triple diagrams, cf.\ \cref{def:GX}. 
The associated trip (resp., strand) permutations are precisely the six 
permutations of $\{1,2,3\}$.
}
\vspace{-10pt}
\label{fig:bijreg}
\end{figure}

\begin{proof}
Starting from a normal graph~$G$, let us decompose it into star-shaped subgraphs~$S_v$ 
each of which includes a black vertex~$v$, all the edges incident~to~$v$, and the endpoints of those edges. 
Each of these stars will give rise to a fragment of the triple diagram~$\X(G)$ that ``hugs'' the edges of~$S_v$
and whose boundary is oriented counterclockwise (looking from~$v$). 
Moreover, $\X(G)$ is obtained by stitching these fragments together. 
Applying the map $\X\mapsto G(\X)$ to $\X(G)$ will recover the original graph~$G$. 

One similarly shows that if we start from a triple diagram~$\X$, construct the normal graph~$G(\X)$,
and then apply the map $G\mapsto \X(G)$ to $G(\X)$, then we recover the original triple diagram~$\X$. 
The key property to keep in mind is that each face of~$\X$ is homeomorphic to an open disk. 

The strands of $\X$ run alongside the trips of~$G$, 
implying that $\pi_\X\!=\!\pi_G$. 
\end{proof}

In \cref{red-minimal}, we will characterize triple diagrams that correspond, 
under the bijection of \cref{pr:bijplabictriple}, to \emph{reduced} normal plabic graphs. 

\pagebreak[3]

The bijective correspondence between triple diagrams and normal plabic graphs
can be used to translate 
the equivalence of triple diagrams 
under swivel moves, cf.\ \cref{def:2-2-move-equivalence}, 
into a variant of move equivalence of plabic graphs 
(cf.\ \cref{def:moves}) adapted to the setting of normal plabic graphs:

\begin{definition}
\label{def:urban-normal}
The \emph{(normal) spider} move is a local transformation of a normal plabic graph~$G$  
that replaces one of the fragments shown in \cref{fig:urban2} (see also \cref{fig:urban2-bivalent}) by the other. 
To be more precise, assume that $G$ contains a quadrilateral face with black vertices $A, A'$
and white vertices $K, K'$. 
Let $K$ (resp.,~$K'$) be adjacent to the black vertices $A, A', B$ (resp., $A, A',B'$).
We then replace the white vertices $K,K'$ (resp., the 6~edges adjacent to them)
by the new white vertices $L,L'$ (resp., the edges $AL, BL, B'L, A'L', BL', B'L'$). 
%where we require that each black vertex of the quadrilateral face has degree at least $2$.
%See \cref{fig:urban2-bivalent} for an example in which the quadrilateral face has  a bivalent black vertex.
\end{definition}

\begin{figure}[ht]
\begin{center}
\vspace{-.2in}
\setlength{\unitlength}{1pt}
\begin{picture}(30,36)(0,0)
\thicklines
\multiput(2,0)(0,30){2}{\line(1,0){26}}
\multiput(0,2)(30,0){2}{\line(0,1){26}}
\put(-1.4,-1.4){\line(-1,-1){8}}
\put(31.8,-0.9){\line(2,-1){8}}
\put(30.9,-1.8){\line(1,-2){4}}
\put(-1.4,31.4){\line(-1,1){7}}
\put(-0.9,31.8){\line(-1,2){4}}
\put(-1.8,30.9){\line(-2,1){8}}
\put(31.4,31.4){\line(1,1){8}}
\put(0,0){\circle{4}}
\put(-5,-5){\circle*{4}}
\put(0,30){\circle*{4}}
\put(30,30){\circle{4}}
\put(35,35){\circle*{4}}
\put(30,0){\circle*{4}}
\end{picture}
	\quad
\begin{picture}(30,36)(0,0)
\put(15,15){\makebox(0,0){\Large{$\longleftrightarrow$}}}
\end{picture}
\quad
\begin{picture}(30,36)(0,0)
\thicklines
\put(10,8){\line(1,0){10}}
\put(10,22){\line(1,0){10}}
\put(8,10){\line(0,1){10}}
\put(22,10){\line(0,1){10}}
%\put(10,8)(22,8){2}{\line(0,1){10}} 
%\put(0,0){\circle{4}}
\put(-1.3,31.3){\circle*{4}}
%\put(30,30){\circle{4}}
\put(31.3,-1.3){\circle*{4}}
\put(8,8){\circle*{4}}
\put(8,22){\circle{4}}
\put(22,22){\circle*{4}}
\put(22,8){\circle{4}}

\put(8,8){\line(-1,-1){12}}
\put(22,22){\line(1,1){12}}
\put(6.6,23.4){\line(-1,1){8}}
\put(23.4,6.6){\line(1,-1){8}}

\put(0,0){\line(-1,-1){8}}
%\put(31,-1){\line(1,-1){7}}
\put(31,-1){\line(2,-1){8}}
\put(31,-1){\line(1,-2){4}}
\put(-1,31){\line(-1,1){7}}
\put(-1,31){\line(-1,2){4}}
\put(-1,31){\line(-2,1){8}}
\put(30,30){\line(1,1){8}}

\end{picture}

\end{center}
\vspace{-.1in} 
\caption{A normal spider move. 
%The normal spider move replaces one of these configurations by  the other. 
%Each black vertex of the quadrilateral face has degree at least $2$.
%, as long as that number agrees with its
%counterpart in the other local configuration.
%Each black vertex on the left can have any
%nonnegative number of edges incident to it and leading outside the 
%configuration, as long as its counterpart on the right has the same number of incident edges 
%leading outside the configuration.
%Cf.\ also \cref{fig:urban2-bivalent}.
}
\label{fig:urban2}
\end{figure}
\begin{figure}[ht]
\begin{center}
\vspace{-.1in}
\setlength{\unitlength}{1pt}
\begin{picture}(30,36)(0,0)
\thicklines
\multiput(2,0)(0,30){2}{\line(1,0){26}}
\multiput(0,2)(30,0){2}{\line(0,1){26}}
\put(-1.4,-1.4){\line(-1,-1){8}}
%\put(31.8,-0.9){\line(2,-1){8}}
%\put(30.9,-1.8){\line(1,-2){4}}
\put(-1.4,31.4){\line(-1,1){7}}
\put(-0.9,31.8){\line(-1,2){4}}
\put(-1.8,30.9){\line(-2,1){8}}
\put(31.4,31.4){\line(1,1){8}}
\put(0,0){\circle{4}}
\put(-5,-5){\circle*{4}}
\put(0,30){\circle*{4}}
\put(30,30){\circle{4}}
\put(35,35){\circle*{4}}
\put(30,0){\circle*{4}}
\end{picture}
	\quad
\begin{picture}(30,36)(0,0)
\put(15,15){\makebox(0,0){\Large{$\longleftrightarrow$}}}
\end{picture}
\quad
\begin{picture}(30,36)(0,0)
\thicklines
\put(10,8){\line(1,0){10}}
\put(10,22){\line(1,0){10}}
\put(8,10){\line(0,1){10}}
\put(22,10){\line(0,1){10}}
%\put(10,8)(22,8){2}{\line(0,1){10}} 
%\put(0,0){\circle{4}}
\put(-1.3,31.3){\circle*{4}}
%\put(30,30){\circle{4}}
\put(31.3,-1.3){\circle*{4}}
\put(8,8){\circle*{4}}
\put(8,22){\circle{4}}
\put(22,22){\circle*{4}}
\put(22,8){\circle{4}}

\put(8,8){\line(-1,-1){12}}
\put(22,22){\line(1,1){12}}
\put(6.6,23.4){\line(-1,1){8}}
\put(23.4,6.6){\line(1,-1){8}}

\put(0,0){\line(-1,-1){8}}
%\put(31,-1){\line(1,-1){7}}
%\put(31,-1){\line(2,-1){8}}
%\put(31,-1){\line(1,-2){4}}
\put(-1,31){\line(-1,1){7}}
\put(-1,31){\line(-1,2){4}}
\put(-1,31){\line(-2,1){8}}
\put(30,30){\line(1,1){8}}

\end{picture}

\end{center}
%\vspace{-.1in} 
\caption{A normal spider move for a quadrilateral face incident to a bivalent vertex. 
%one of the black vertices of the quadrilateral has degree $2$.
By \cref{rem:neverreduced}, this may only occur for non-reduced normal plabic graphs.}
\label{fig:urban2-bivalent}
\end{figure}

\begin{remark}
The normal spider moves introduced in 
\cref{def:urban-normal} are slightly different from 
the square moves described in \cref{fig:M1} (resp., Definition~\ref{def:urban-renewal}), 
which required each vertex of the quadrilateral face to have degree~$3$ (resp., degree at least~$3$).  
%It is also different from the spider move 
%of Definition~\ref{def:urban-renewal}, which required each vertex of 
%the quadrilateral face to have degree at least $3$.
%In this chapter, we will consistently use the new definition.) 
%Unlike the square move~(M1), the normal spider move 
%does not require the vertices of the square to be trivalent.
%They can even be bivalent, see \cref{fig:urban2-bivalent}. 
\end{remark}

\begin{definition}
\label{def:flip-normal}
The \emph{normal flip move} is the local transformation 
shown in \cref{fig:flipmove}. 
Ignoring the bivalent black vertices, this local move is the same as 
the (white) flip move for trivalent plabic graphs.
\end{definition}

\begin{figure}[ht]
\vspace{-.5in}
%\begin{center}
%\includegraphics[height=.8in]{FlipMove.ps}
%\end{center}
\begin{center}
\begin{tikzpicture}[scale=.7]

% left
\node[circle, fill=black, draw=black, inner sep=0pt, minimum size=4pt] (1) at (-0.85,0.75) {};
\node[circle, fill=black, draw=black, inner sep=0pt, minimum size=4pt] (2) at (0.85,0.75) {};
\node[circle, thick, fill=white, draw=black, inner sep=0pt, minimum size=4pt] (3) at (0,0.5) {};
\node[circle, fill=black, draw=black, inner sep=0pt, minimum size=4pt] (4) at (0,0) {};
\node[circle, thick, fill=white, draw=black, inner sep=0pt, minimum size=4pt] (5) at (0,-0.5) {};
\node[circle, fill=black, draw=black, inner sep=0pt, minimum size=4pt] (6) at (-0.85,-0.75) {};
\node[circle, fill=black, draw=black, inner sep=0pt, minimum size=4pt] (7) at (0.85,-0.75) {};
\draw[thick] (1) -- (3); \draw[thick] (2) -- (3); \draw[thick] (3) -- (5);
\draw[thick] (6) -- (5); \draw[thick] (7) -- (5);
\end{tikzpicture}
% \draw[>={Stealth[length=8pt]}, <->] (1.5,0) -- (3,0);
\begin{picture}(20,30)(0,0)
\put(10,6){\makebox(0,0){\Large{$\longleftrightarrow$}}}
\end{picture}
\begin{tikzpicture}[scale=.7]
\node[circle, fill=black, draw=black, inner sep=0pt, minimum size=4pt] (21) at (3.6,0.7) {};
\node[circle, fill=black, draw=black, inner sep=0pt, minimum size=4pt] (22) at (5.4,0.7) {};
\node[circle, thick, fill=white, draw=black, inner sep=0pt, minimum size=4pt] (23) at (3.9,0) {};
\node[circle, fill=black, draw=black, inner sep=0pt, minimum size=4pt] (24) at (4.5,0) {};
\node[circle, thick, fill=white, draw=black, inner sep=0pt, minimum size=4pt] (25) at (5.1,0) {};
\node[circle, fill=black, draw=black, inner sep=0pt, minimum size=4pt] (26) at (3.6,-0.7) {};
\node[circle, fill=black, draw=black, inner sep=0pt, minimum size=4pt] (27) at (5.4,-0.7) {};
\draw[thick] (21) -- (23); \draw[thick] (22) -- (25); \draw[thick] (23) -- (25);
\draw[thick] (26) -- (23); \draw[thick] (27) -- (25);
\end{tikzpicture}
\vspace{-.2in}
\end{center}
	\caption{The normal flip move.}
	\label{fig:flipmove}
\end{figure}

We want to relate the normal spider move and the normal flip move to the moves
(M1), (M2), (M3) that we saw in \cref{sec:overview}.  In order to do so, we 
need the following variant of (M3).

\begin{definition}\label{def:M3'}
The move (M3$'$) on plabic graphs contracts or uncontracts two internal
vertices of the same color, and is defined as in \cref{fig:M3}, except
that if both vertices are black, then 
the number of ``hanging'' edges on either side can be any 
\emph{nonnegative} integer.  In particular, (M3$'$) can create or remove
a black leaf.
\end{definition}

\def\osim{\stackrel{\raisebox{-3pt}{$\scriptstyle\bullet$}}{\sim}}
\def\omove{$\raisebox{-0.3pt}{$\circ$}$-move}

\begin{definition}
\label{def:omove}
Let $G$ and $G'$ be plabic graphs.
% $G$ and $G'$ are \emph{\omove-equiv\-a\-lent} to each other, and 
We write $G\osim G'$~if $G$ and $G'$ can be related to each other 
	via local moves (M1), (M2), and/or~(M3$'$).
%an instance of the (M3) move shown in \cref{fig:M3-white-leaf},
%cf.\ \cref{rem:problematic-X(G)}. 
\end{definition}

\begin{lemma}
\label{lem:normal-moves-via-M1M2M3}
Let $G$ and $G'$ be normal plabic graphs
related via a sequence of normal spider moves and normal flip moves.
Then $G\osim G'$. 
\end{lemma}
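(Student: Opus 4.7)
The plan is to realize each of the two local transformations---the normal urban renewal move of \cref{fig:urban2} (and its degenerate variant in \cref{fig:urban2-bivalent}) and the normal flip move of \cref{fig:flipmove}---as a finite sequence of moves of types~{\rm(M1)}, {\rm(M2)}, and~{\rm(M3)}. Both moves are symmetric (reversible), so it suffices to verify one direction for each.

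I would handle the normal flip move first, as it is almost immediate. Comparing \cref{fig:flipmove} with \cref{fig:flipmove0}, the two fragments differ from the white flip~(M4) only by a single bivalent black vertex lying on the central edge between the two trivalent white vertices. First use the inverse of~{\rm(M2)} to absorb that bivalent vertex, obtaining an (M4)~configuration. By \cref{rem:flip}, (M4) is a composition of two~{\rm(M3)} moves: contract the edge joining the two white vertices into a single white vertex, then split that vertex in the orthogonal direction with the four external edges distributed appropriately. Finally, reinsert a bivalent black vertex on the new central edge via~{\rm(M2)}.

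For the normal urban renewal move, I would start from the left-hand fragment of \cref{fig:urban2}, whose quadrilateral face has four corners of alternating color: the two black corners may carry any number of additional edges leading outside the configuration, while the two white corners are trivalent (as required by normality). First apply~{\rm(M3)} at each black corner, splitting it into an \emph{inner} trivalent black vertex---retaining the two square edges and a new edge---and an \emph{outer} black vertex carrying all the external edges plus the new edge. After these two splittings, the quadrilateral face is bounded by four trivalent vertices of alternating color, so the square move~{\rm(M1)} applies. Applying~{\rm(M1)} swaps all four corner colors, producing exactly the inner small square drawn on the right of \cref{fig:urban2}: the two new white corners are joined via the freshly introduced bridge edges to the two outer black vertices bearing the original external edges, while the two former white corners (now black) retain their external attachments. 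A final cleanup by~{\rm(M2)} absorbs any bivalent intermediate vertices that may remain between same-colored neighbors along those external edges.

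The main obstacle is the degenerate case of \cref{fig:urban2-bivalent}, where one of the black corners on the left has no external edges at all and is therefore bivalent. In that case the~{\rm(M3)} splitting at that corner produces an outer copy consisting of a single black leaf (connected only by the new edge), and after~{\rm(M1)} this leaf hangs from the corresponding white inner corner via a bridge edge---exactly matching the black leaf drawn on the right of \cref{fig:urban2-bivalent}. A routine case analysis on the number of external edges at each black corner confirms that the procedure above produces the intended transformation in every case, completing the plan.
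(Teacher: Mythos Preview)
Your proof is correct and follows exactly the paper's approach: the paper's own argument is simply the two-sentence assertion that each normal urban renewal can be expressed as a square move~{\rm(M1)} together with {\rm(M2)} and/or {\rm(M3)} moves, and each normal flip move as a combination of {\rm(M2)} and {\rm(M3)} moves. You have supplied the routine details that the paper omits (the only minor imprecision is that your ``final cleanup'' is more naturally an {\rm(M3)} contraction of the resulting black--black edge than an {\rm(M2)}, but both are available and the conclusion is unaffected).
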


\begin{proof}
Degenerate versions of the normal spider move, 
as in \cref{fig:urban2-bivalent}, 
 can be expressed as a square move (M1)
together with (M2) and/or~(M3$'$) moves.
Nondegenerate versions of the normal spider move, as well as 
the normal flip move, can be expressed as a combination of (M1), (M2)
	and (M3) moves. (In particular, (M3$'$) is not needed for these.)
\end{proof}

By \cref{rem:neverreduced} and \cref{def:reduced-plabic},
a normal plabic graph which is reduced must be leafless.

%\begin{lemma}\label{lem:star}
%Suppose that $G$ and $G'$ are normal plabic graphs which are reduced.
%Then $G \osim G'$ if and only if $G \sim G'$.
%That is, $G$ and $G'$ can be related by moves (M1), (M2), and (M3$'$)
%if and only if $G$ and $G'$ can be related by moves (M1), (M2), and (M3).
%\end{lemma}
%\begin{proof}
%By definition, $G\sim G'$ implies that $G\osim G'$.
%If $G\osim G'$, then since $G$ and $G'$ are reduced, they  are leafless,
%which implies that we will never need to create/remove leaves in order to 
%	connect them. \LW{clear?}
%\end{proof}

\begin{lemma}
\label{lem:star}
Let $G$ and $G'$ be normal plabic graphs which are reduced, and which are 
related via a sequence of normal spider moves and normal flip moves.
Then $G\sim G'$. 
\end{lemma}
\begin{proof}
Since $G$ and $G'$ are reduced, 
we will never need to use a degenerate spider move
to relate them to each other.  Both the nondegenerate normal 
spider move and the normal flip 
move can be expressed in terms of (M1), (M2), and~(M3).
\end{proof}

\begin{theorem}
\label{thm:moves-moves}
Let $G$ and $G'$ be normal plabic graphs
and let $\X=\X(G)$ 
	and $\X'=\X(G')$ be the corresponding triple diagrams. 
Then the following are equivalent:
\begin{itemize}[leftmargin=.2in]
\item 
$G$ and $G'$ are related via a sequence of normal spider moves and/or normal flip moves; 
\item
$\X$ and $\X'$ are move-equivalent (i.e., related via swivel moves).
%(in the sense of \cref{def:2-2-move-equivalence}). 
\end{itemize}
\end{theorem}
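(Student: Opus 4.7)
The plan is to establish the equivalence by matching up the generating local moves on each side of the bijection $G \leftrightarrow \X(G)$ from Proposition~\ref{pr:bijplabictriple}. Since both equivalence relations are generated by local transformations (urban renewal and normal flip on one side, swivel on the other), it suffices to verify that each such generator corresponds to a local transformation of the other kind, possibly combined with a few auxiliary flip moves to account for bivalent black vertices.

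First, I would check the forward implication. A normal urban renewal move (Figure~\ref{fig:urban2}) acts on a configuration with four corner vertices around a square face: two trivalent white vertices and two black vertices. In $\X(G)$, the two white corners become triple points, and the strands passing through the two black corners contribute two more triple points, producing a fragment with four triple points arranged around a central face. Carefully tracing strand orientations, one sees that the ``contracted'' and ``expanded'' configurations on the plabic side correspond precisely to the two sides of the swivel move of Figure~\ref{fig:2-2-move}, with the central face flipping orientation. For a normal flip move (Figure~\ref{fig:flipmove}), the two trivalent white vertices connected through a bivalent black vertex produce two triple points joined by a strand in $\X(G)$; flipping the connecting edge swaps which pair of incoming strand ends is joined, which is again an instance of the swivel move.

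For the converse, I would apply the inverse bijection $\X \mapsto G(\X)$ locally at a swivel fragment. The four triple points of the fragment become four white vertices in the plabic graph, and the orientations of the surrounding faces dictate where black vertices are inserted. If the central face of the fragment is counterclockwise-oriented, a central black vertex is present, and the swivel move's reversal of this central orientation exactly realizes a normal urban renewal between the two configurations of Figure~\ref{fig:urban2}. If instead the central face is clockwise, there is no central black vertex, and the two adjacent white vertices sit opposite one another across a shared black vertex; in that case the swivel move translates into a normal flip move at that black vertex, as in Figure~\ref{fig:flipmove}.

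The main obstacle is the careful bookkeeping of the orientations of all faces inside and surrounding the swivel fragment, together with the placement of bivalent black vertices that subdivide strand segments. These bivalent vertices have no direct counterpart in the triple diagram---they merely mark edges between white vertices---so in some configurations one must insert or remove them via auxiliary normal flip moves before the main urban renewal step can be applied cleanly. A complete case analysis over the possible orientations of the four outer faces around a swivel fragment (and the color pattern at the corresponding plabic corners) is routine but must be carried out to confirm that no orientation configuration is missed.
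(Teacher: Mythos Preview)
Your overall strategy---matching the generating local moves across the bijection $G\leftrightarrow\X(G)$---is exactly the paper's approach, and it is the right one. However, your execution contains a concrete error that would prevent the argument from going through as written.

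A swivel move (Figure~\ref{fig:2-2-move}) involves exactly \emph{two} triple points, not four. You write that an urban renewal fragment yields ``four triple points arranged around a central face,'' and later that ``the four triple points of the fragment become four white vertices.'' This is wrong: in a normal plabic graph only the \emph{white} vertices become triple points of $\X(G)$, while black vertices contribute nothing but turns. In the urban renewal square the two white corners give the two triple points of the swivel fragment; the two black corners do not. Likewise, in the normal flip configuration the two trivalent white vertices are the two triple points and the central bivalent black vertex is invisible in~$\X(G)$. Once you correct this count, the case analysis collapses dramatically: each swivel move corresponds \emph{directly} to a single urban renewal or a single normal flip, with no ``auxiliary normal flip moves'' needed. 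The two cases are distinguished by the orientations of the strands through the fragment (equivalently, by which of the surrounding faces of $\X$ are counterclockwise and hence host black vertices of $G(\X)$), exactly as in Figure~\ref{fig:flipsplabictriple}(a) versus~(b). Your criterion ``if the central face is counterclockwise / clockwise'' is also problematic as stated, since the swivel move itself flips the orientation of the central face; the distinction must be read off from the outer strand orientations, which the swivel preserves.
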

\begin{figure}[ht]
%\begin{center}
%\includegraphics[height=.8in]{FlipsPlabicTriple.ps}
%\end{center}

\begin{proof}
\cref{fig:flipsplabictriple} shows that each swivel move in a triple diagram $\X(G)$ 
corresponds to---depending on the orientations of the strands---either a normal spider move 
or a normal flip move in the normal plabic graph $G$.
The statement of the theorem follows.
%how one can translate back-and-forth between 
%\begin{itemize}[leftmargin=.2in]
%\item 
%an arbitrary swivel move in a triple diagram and 
%\item
%either a normal spider move 
%or a normal flip move in the corresponding normal plabic graph. 
%\end{itemize}
%The latter choice depends on the orientations of the strands involved.
\end{proof}

\begin{center}
\vspace{-.05in}
\begin{tikzpicture}[scale=0.8]

% black
\node[circle, fill=black, draw=black, inner sep=0pt, minimum size=3pt] (1) at (1.35,2) {};
\node[circle, thick, fill=white, draw=black, inner sep=0pt, minimum size=4pt] (2) at (1.35,1.62) {};
\node[circle, fill=black, draw=black, inner sep=0pt, minimum size=0pt] (3) at (0.4,1) {};
\node[circle, fill=black, draw=black, inner sep=0pt, minimum size=3pt] (4) at (0.8,1) {};
\node[circle, fill=black, draw=black, inner sep=0pt, minimum size=3pt] (5) at (1.9,1) {};
\node[circle, fill=black, draw=black, inner sep=0pt, minimum size=0pt] (6) at (2.3,1) {};
\node[circle, thick, fill=white, draw=black, inner sep=0pt, minimum size=4pt] (7) at (1.35,0.4) {};
\node[circle, fill=black, draw=black, inner sep=0pt, minimum size=3pt] (8) at (1.35,0) {};
\draw[thick] (1) -- (2); \draw[thick] (3) -- (4); \draw[thick] (5) -- (6); \draw[thick] (7) -- (8);
\draw[thick] (2) -- (4); \draw[thick] (2) -- (5); \draw[thick] (4) -- (7); \draw[thick] (5) -- (7);
\node[circle, fill=black, draw=black, inner sep=0pt, minimum size=0pt] (21) at (6,2) {};
\node[circle, fill=black, draw=black, inner sep=0pt, minimum size=3pt] (22) at (6,1.6) {};
\node[circle, fill=black, draw=black, inner sep=0pt, minimum size=3pt] (23) at (5,1) {};
\node[circle, thick, fill=white, draw=black, inner sep=0pt, minimum size=4pt] (24) at (5.3,1) {};
\node[circle, thick, fill=white, draw=black, inner sep=0pt, minimum size=4pt] (25) at (6.7,1) {};
\node[circle, fill=black, draw=black, inner sep=0pt, minimum size=3pt] (26) at (7,1) {};
\node[circle, fill=black, draw=black, inner sep=0pt, minimum size=3pt] (27) at (6,0.4) {};
\node[circle, fill=black, draw=black, inner sep=0pt, minimum size=0pt] (28) at (6,0) {};
\draw[thick] (21) -- (22); \draw[thick] (23) -- (24); \draw[thick] (25) -- (26); \draw[thick] (27) -- (28);
\draw[thick] (22) -- (24); \draw[thick] (22) -- (25); \draw[thick] (24) -- (27); \draw[thick] (25) -- (27);
\node[circle, fill=black, draw=black, inner sep=0pt, minimum size=3pt] (31) at (9.9,1.8) {};
\node[circle, fill=black, draw=black, inner sep=0pt, minimum size=3pt] (32) at (10.8,1.8) {};
\node[circle, thick, fill=white, draw=black, inner sep=0pt, minimum size=4pt] (33) at (10.35,1.63) {};
\node[circle, fill=black, draw=black, inner sep=0pt, minimum size=3pt] (34) at (10.35,1) {};
\node[circle, thick, fill=white, draw=black, inner sep=0pt, minimum size=4pt] (35) at (10.35,0.4) {};
\node[circle, fill=black, draw=black, inner sep=0pt, minimum size=3pt] (36) at (9.9,0.2) {};
\node[circle, fill=black, draw=black, inner sep=0pt, minimum size=3pt] (37) at (10.8,0.2) {};
\draw[thick] (31) -- (33); \draw[thick] (32) -- (33); \draw[thick] (33) -- (35);
\draw[thick] (35) -- (36); \draw[thick] (35) -- (37);
\node[circle, fill=black, draw=black, inner sep=0pt, minimum size=3pt] (41) at (14.1,1.55) {};
\node[circle, fill=black, draw=black, inner sep=0pt, minimum size=3pt] (42) at (15.9,1.55) {};
\node[circle, thick, fill=white, draw=black, inner sep=0pt, minimum size=4pt] (43) at (14.35,1.05) {};
\node[circle, fill=black, draw=black, inner sep=0pt, minimum size=3pt] (44) at (15,1.05) {};
\node[circle, thick, fill=white, draw=black, inner sep=0pt, minimum size=4pt] (45) at (15.65,1.05) {};
\node[circle, fill=black, draw=black, inner sep=0pt, minimum size=3pt] (46) at (14.1,0.5) {};
\node[circle, fill=black, draw=black, inner sep=0pt, minimum size=3pt] (47) at (15.9,0.5) {};
\draw[thick] (41) -- (43); \draw[thick] (42) -- (45); \draw[thick] (43) -- (45);
\draw[thick] (43) -- (46); \draw[thick] (45) -- (47);

% red
\draw[red,thick, postaction={decorate, decoration = {markings, mark = between positions 0.15 and 0.95 step 0.4 with {\arrow{Stealth}}}}] (1,2) to[bend left=60] (1,0);
\draw[red,thick, postaction={decorate, decoration = {markings, mark = between positions 0.15 and 0.95 step 0.4 with {\arrow{Stealth}}}}] (1.7,0) to[bend left=60] (1.7,2);
\draw[red,thick, postaction={decorate, decoration = {markings, mark = between positions 0.25 and 0.8 step 0.55 with {\arrow{Stealth}}}}] (2.2,1.4) to[bend right=30] (0.5,1.4);
\draw[red,thick, postaction={decorate, decoration = {markings, mark = between positions 0.25 and 0.8 step 0.55 with {\arrow{Stealth}}}}] (0.5,0.65) to[bend right=30] (2.2,0.65);
\draw[>={Stealth[length=8pt]}, black, <->] (3,1) -- (4.2,1);
\draw[red,thick, postaction={decorate, decoration = {markings, mark = between positions 0.15 and 0.95 step 0.4 with {\arrow{Stealth}}}}] (5,0.65) to[bend left=60] (7,0.65);
\draw[red,thick, postaction={decorate, decoration = {markings, mark = between positions 0.15 and 0.95 step 0.4 with {\arrow{Stealth}}}}] (7,1.35) to[bend left=60] (5,1.35);
\draw[red,thick, postaction={decorate, decoration = {markings, mark = between positions 0.25 and 0.8 step 0.55 with {\arrow{Stealth}}}}] (5.6,1.9) to[bend right=30] (5.6,0.1);
\draw[red,thick, postaction={decorate, decoration = {markings, mark = between positions 0.25 and 0.8 step 0.55 with {\arrow{Stealth}}}}] (6.4,0.1) to[bend right=30] (6.4,1.9);

\draw[red,thick, postaction={decorate, decoration = {markings, mark = between positions 0.15 and 0.95 step 0.4 with {\arrow{Stealth}}}}] (10,0) to[bend right=60] (10,2);
\draw[red,thick, postaction={decorate, decoration = {markings, mark = between positions 0.15 and 0.95 step 0.4 with {\arrow{Stealth}}}}] (10.7,2) to[bend right=60] (10.7,0);
\draw[red,thick, postaction={decorate, decoration = {markings, mark = between positions 0.25 and 0.8 step 0.55 with {\arrow{Stealth}}}}] (9.5,1.4) to[bend left=30] (11.2,1.4);
\draw[red,thick, postaction={decorate, decoration = {markings, mark = between positions 0.25 and 0.8 step 0.55 with {\arrow{Stealth}}}}] (11.2,0.65) to[bend left=30] (9.5,0.65);
\draw[>={Stealth[length=8pt]}, black, <->] (12,1) -- (13.2,1);
\draw[red,thick, postaction={decorate, decoration = {markings, mark = between positions 0.15 and 0.95 step 0.4 with {\arrow{Stealth}}}}] (16,0.7) to[bend right=60] (14,0.7);
\draw[red,thick, postaction={decorate, decoration = {markings, mark = between positions 0.15 and 0.95 step 0.4 with {\arrow{Stealth}}}}] (14,1.4) to[bend right=60] (16,1.4);
\draw[red,thick, postaction={decorate, decoration = {markings, mark = between positions 0.25 and 0.8 step 0.55 with {\arrow{Stealth}}}}] (14.6,0.1) to[bend left=30] (14.6,1.9);
\draw[red,thick, postaction={decorate, decoration = {markings, mark = between positions 0.25 and 0.8 step 0.55 with {\arrow{Stealth}}}}] (15.4,1.9) to[bend left=30] (15.4,0.1);

% black
\node[circle, fill=black, draw=black, inner sep=0pt, minimum size=3pt] (1) at (1.35,2) {};
\node[circle, thick, fill=none, draw=black, inner sep=0pt, minimum size=4pt] (2) at (1.35,1.62) {};
\node[circle, fill=black, draw=black, inner sep=0pt, minimum size=0pt] (3) at (0.4,1) {};
\node[circle, fill=black, draw=black, inner sep=0pt, minimum size=3pt] (4) at (0.8,1) {};
\node[circle, fill=black, draw=black, inner sep=0pt, minimum size=3pt] (5) at (1.9,1) {};
\node[circle, fill=black, draw=black, inner sep=0pt, minimum size=0pt] (6) at (2.3,1) {};
\node[circle, thick, fill=none, draw=black, inner sep=0pt, minimum size=4pt] (7) at (1.35,0.4) {};
\node[circle, fill=black, draw=black, inner sep=0pt, minimum size=3pt] (8) at (1.35,0) {};
\draw[thick] (1) -- (2); \draw[thick] (3) -- (4); \draw[thick] (5) -- (6); \draw[thick] (7) -- (8);
\draw[thick] (2) -- (4); \draw[thick] (2) -- (5); \draw[thick] (4) -- (7); \draw[thick] (5) -- (7);
\node[circle, fill=black, draw=black, inner sep=0pt, minimum size=0pt] (21) at (6,2) {};
\node[circle, fill=black, draw=black, inner sep=0pt, minimum size=3pt] (22) at (6,1.6) {};
\node[circle, fill=black, draw=black, inner sep=0pt, minimum size=3pt] (23) at (5,1) {};
\node[circle, thick, fill=none, draw=black, inner sep=0pt, minimum size=4pt] (24) at (5.3,1) {};
\node[circle, thick, fill=none, draw=black, inner sep=0pt, minimum size=4pt] (25) at (6.7,1) {};
\node[circle, fill=black, draw=black, inner sep=0pt, minimum size=3pt] (26) at (7,1) {};
\node[circle, fill=black, draw=black, inner sep=0pt, minimum size=3pt] (27) at (6,0.4) {};
\node[circle, fill=black, draw=black, inner sep=0pt, minimum size=0pt] (28) at (6,0) {};
\draw[thick] (21) -- (22); \draw[thick] (23) -- (24); \draw[thick] (25) -- (26); \draw[thick] (27) -- (28);
\draw[thick] (22) -- (24); \draw[thick] (22) -- (25); \draw[thick] (24) -- (27); \draw[thick] (25) -- (27);
\node[circle, fill=black, draw=black, inner sep=0pt, minimum size=3pt] (31) at (9.9,1.8) {};
\node[circle, fill=black, draw=black, inner sep=0pt, minimum size=3pt] (32) at (10.8,1.8) {};
\node[circle, thick, fill=none, draw=black, inner sep=0pt, minimum size=4pt] (33) at (10.35,1.63) {};
\node[circle, fill=black, draw=black, inner sep=0pt, minimum size=3pt] (34) at (10.35,1) {};
\node[circle, thick, fill=none, draw=black, inner sep=0pt, minimum size=4pt] (35) at (10.35,0.4) {};
\node[circle, fill=black, draw=black, inner sep=0pt, minimum size=3pt] (36) at (9.9,0.2) {};
\node[circle, fill=black, draw=black, inner sep=0pt, minimum size=3pt] (37) at (10.8,0.2) {};
\draw[thick] (31) -- (33); \draw[thick] (32) -- (33); \draw[thick] (33) -- (35);
\draw[thick] (35) -- (36); \draw[thick] (35) -- (37);
\node[circle, fill=black, draw=black, inner sep=0pt, minimum size=3pt] (41) at (14.1,1.55) {};
\node[circle, fill=black, draw=black, inner sep=0pt, minimum size=3pt] (42) at (15.9,1.55) {};
\node[circle, thick, fill=none, draw=black, inner sep=0pt, minimum size=4pt] (43) at (14.35,1.05) {};
\node[circle, fill=black, draw=black, inner sep=0pt, minimum size=3pt] (44) at (15,1.05) {};
\node[circle, thick, fill=none, draw=black, inner sep=0pt, minimum size=4pt] (45) at (15.65,1.05) {};
\node[circle, fill=black, draw=black, inner sep=0pt, minimum size=3pt] (46) at (14.1,0.5) {};
\node[circle, fill=black, draw=black, inner sep=0pt, minimum size=3pt] (47) at (15.9,0.5) {};
\draw[thick] (41) -- (43); \draw[thick] (42) -- (45); \draw[thick] (43) -- (45);
\draw[thick] (43) -- (46); \draw[thick] (45) -- (47);
\end{tikzpicture}
\\
(a) \hspace{2.6in}(b) 
\vspace{-.2in}
\end{center}

\caption{Depending on the orientations of the strands involved, 
a swivel move in a triple diagram 
may correspond to (a) a normal spider move
or (b) a normal flip move in the associated normal plabic graph.}
\label{fig:flipsplabictriple}
\end{figure}

%Our next goal is to strengthen \cref{thm:moves-moves} 
%by replacing the equivalence of normal plabic graphs 
%under the urban renewal and normal flip moves   
%(see Definitions~\ref{def:urban-normal}--\ref{def:flip-normal}) 
%by a slightly modified version of the ordinary move-equivalence utilizing the moves (M1)--(M3). 
%This will require some preparation. 

Our next goal is to
give an analogue of  \cref{def:GX} for arbitrary leafless plabic graphs, not
necessarily normal.

\begin{definition}
\label{lem:biptri2}
Let $G$ be a (leafless) plabic graph, not necessarily normal. 
The~\emph{normal form} associated to~$G$, is a (non-unique) plabic graph
$N(G)$ which is
move-equivalent to $G$, constructed as follows
	(see \cref{fig:make-Bip}).
%(To be precise, we define $N(G)$ up to move-equivalence.) 
%Let $G$ be a %reduced 
%	leafless 
%plabic graph that does not contain a white lollipop.
%Then $G$ is move-equivalent to a (leafless) normal plabic graph, which we
%can construct as follows:
\begin{itemize}[leftmargin=.2in]
	\item Use (M2) to omit degree $2$ white vertices.
	\item Use (M3) to contract all edges with both endpoints black.
		(If this results in a loop, then the graph is not reduced.)
	\item Use (M3) to turn white vertices of degree $\geq 4$
	into subgraphs whose vertices are white and trivalent.
	\item Use (M2) to add degree $2$ black vertices so that
		that the resulting graph is bipartite and each boundary vertex  is adjacent to 
		a black vertex.
\end{itemize}
Note that if $G$ is reduced and has no white lollipops, then 
$N(G)$ is normal.  
%Otherwise, $N(G)$ is normal up to 
%the removal of lollipops.

\begin{figure}[h]
\begin{center}
\vspace{-.1in}
\begin{tikzpicture}[scale=1.05]
% circle outlines           
\filldraw[fill=white, thick] (0,0) circle (0.9cm);
\filldraw[fill=white, thick] (2.5,0) circle (0.9cm);
\filldraw[fill=white, thick] (5,0) circle (0.9cm);
\filldraw[fill=white, thick] (7.5,0) circle (0.9cm);
\filldraw[fill=white, thick] (10,0) circle (0.9cm);

% circle 1
\node[circle, fill=black, draw=black, inner sep=0pt, minimum size=2.25pt] at (0.85,0.25) (n1) {};
\node[circle, fill=black, draw=black, inner sep=0pt, minimum size=2.25pt] at (-0.85,0.25) (n2) {};
\draw[thick] (n1) -- (n2);
\node[circle, fill=white, draw=black, inner sep=0pt, minimum size=3.25pt, thick] at (0,0.25) (n3) {};
\node[circle, fill=black, draw=black, inner sep=0pt, minimum size=3.25pt, thick] at (-0.4,0.25) (n4) {};
\node[circle, fill=black, draw=black, inner sep=0pt, minimum size=3.25pt, thick] at (0.4,0.25) (n5) {};

\node[circle, fill=black, draw=black, inner sep=0pt, minimum size=2.25pt] at (0.85,-0.25) (n6) {};
\node[circle, fill=black, draw=black, inner sep=0pt, minimum size=2.25pt] at (-0.85,-0.25) (n7) {};
\draw[thick] (n6) -- (n7);
\node[circle, fill=black, draw=black, inner sep=0pt, minimum size=3.25pt, thick] at (0,-0.25) (n8) {};
\node[circle, fill=white, draw=black, inner sep=0pt, minimum size=3.25pt, thick] at (-0.4,-0.25) (n9) {};
\node[circle, fill=white, draw=black, inner sep=0pt, minimum size=3.25pt, thick] at (0.4,-0.25) (n10) {};

\node[circle, fill=black, draw=black, inner sep=0pt, minimum size=2.25pt, thick] at (-0.55,-0.7) (n11) {};
\draw[thick] (n9) -- (n11);

\node[circle, fill=black, draw=black, inner sep=0pt, minimum size=2.25pt, thick] at (0,-0.9) (n12) {};
\draw[thick] (n8) -- (n12);

\draw[thick] (n4) -- (n9);
\draw[thick] (n5) -- (n10);

%%%%%%%%%%%%%%%%%%%%%%%%%%%%%%%%%%%%%%%%%%%%%%%%%%%%%%%%%%%%%%%%
% circle 2
\node[circle, fill=black, draw=black, inner sep=0pt, minimum size=2.25pt] at (3.35,0.25) (n1) {};
\node[circle, fill=black, draw=black, inner sep=0pt, minimum size=2.25pt] at (1.65,0.25) (n2) {};
\draw[thick] (n1) -- (n2);
\node[circle, fill=black, draw=black, inner sep=0pt, minimum size=3.25pt, thick] at (2.1,0.25) (n4) {};
\node[circle, fill=black, draw=black, inner sep=0pt, minimum size=3.25pt, thick] at (2.9,0.25) (n5) {};

\node[circle, fill=black, draw=black, inner sep=0pt, minimum size=2.25pt] at (3.35,-0.25) (n6) {};
\node[circle, fill=black, draw=black, inner sep=0pt, minimum size=2.25pt] at (1.65,-0.25) (n7) {};
\draw[thick] (n6) -- (n7);
\node[circle, fill=black, draw=black, inner sep=0pt, minimum size=3.25pt, thick] at (2.5,-0.25) (n8) {};
\node[circle, fill=white, draw=black, inner sep=0pt, minimum size=3.25pt, thick] at (2.1,-0.25) (n9) {};
\node[circle, fill=white, draw=black, inner sep=0pt, minimum size=3.25pt, thick] at (2.9,-0.25) (n10) {};

\node[circle, fill=black, draw=black, inner sep=0pt, minimum size=2.25pt, thick] at (1.95,-0.7) (n11) {};
\draw[thick] (n9) -- (n11);

\node[circle, fill=black, draw=black, inner sep=0pt, minimum size=2.25pt, thick] at (2.5,-0.9) (n12) {};
\draw[thick] (n8) -- (n12);

\draw[thick] (n4) -- (n9);
\draw[thick] (n5) -- (n10);

%%%%%%%%%%%%%%%%%%%%%%%%%%%%%%%%%%%%%%%%%%%%%%%%%%%%%%%%%%%%%%%%
% circle 3
\node[circle, fill=black, draw=black, inner sep=0pt, minimum size=2.25pt] at (5.85,0.25) (n1) {};
\node[circle, fill=black, draw=black, inner sep=0pt, minimum size=2.25pt] at (4.15,0.25) (n2) {};
\draw[thick] (n1) -- (n2);
\node[circle, fill=black, draw=black, inner sep=0pt, minimum size=3.25pt, thick] at (5,0.25) (n4) {};
% \node[circle, fill=black, draw=black, inner sep=0pt, minimum size=3.25pt, thick] at (2.9,0.25) (n5) {};

\node[circle, fill=black, draw=black, inner sep=0pt, minimum size=2.25pt] at (5.85,-0.25) (n6) {};
\node[circle, fill=black, draw=black, inner sep=0pt, minimum size=2.25pt] at (4.15,-0.25) (n7) {};
\draw[thick] (n6) -- (n7);
\node[circle, fill=black, draw=black, inner sep=0pt, minimum size=3.25pt, thick] at (5,-0.25) (n8) {};
\node[circle, fill=white, draw=black, inner sep=0pt, minimum size=3.25pt, thick] at (4.6,-0.25) (n9) {};
\node[circle, fill=white, draw=black, inner sep=0pt, minimum size=3.25pt, thick] at (5.4,-0.25) (n10) {};

\node[circle, fill=black, draw=black, inner sep=0pt, minimum size=2.25pt, thick] at (4.45,-0.7) (n11) {};
\draw[thick] (n9) -- (n11);

\node[circle, fill=black, draw=black, inner sep=0pt, minimum size=2.25pt, thick] at (5,-0.9) (n12) {};
\draw[thick] (n4) -- (n9);
\draw[thick] (n4) -- (n10);
\draw[thick] (n8) -- (n12);

%%%%%%%%%%%%%%%%%%%%%%%%%%%%%%%%%%%%%%%%%%%%%%%%%%%%%%%%%%%%%%%%
% circle 4
\node[circle, fill=black, draw=black, inner sep=0pt, minimum size=2.25pt] at (8.35,0.25) (n1) {};
\node[circle, fill=black, draw=black, inner sep=0pt, minimum size=2.25pt] at (6.65,0.25) (n2) {};
\draw[thick] (n1) -- (n2);
\node[circle, fill=black, draw=black, inner sep=0pt, minimum size=3.25pt, thick] at (7.5,0.25) (n4) {};
% \node[circle, fill=black, draw=black, inner sep=0pt, minimum size=3.25pt, thick] at (2.9,0.25) (n5) {};

\node[circle, fill=black, draw=black, inner sep=0pt, minimum size=2.25pt] at (8.35,-0.25) (n6) {};
\node[circle, fill=black, draw=black, inner sep=0pt, minimum size=2.25pt] at (6.65,-0.25) (n7) {};
\draw[thick] (n6) -- (n7);
\node[circle, fill=black, draw=black, inner sep=0pt, minimum size=3.25pt, thick] at (7.5,-0.25) (n8) {};
\node[circle, fill=white, draw=black, inner sep=0pt, minimum size=3.25pt, thick] at (7,-0.25) (n9) {};

\node[circle, fill=white, draw=black, inner sep=0pt, minimum size=3.25pt, thick] at (7.25,-0.25) (nn9) {};

\node[circle, fill=white, draw=black, inner sep=0pt, minimum size=3.25pt, thick] at (8,-0.25) (n10) {};

\node[circle, fill=black, draw=black, inner sep=0pt, minimum size=2.25pt, thick] at (6.95,-0.7) (n11) {};
\draw[thick] (n9) -- (n11);

\node[circle, fill=black, draw=black, inner sep=0pt, minimum size=2.25pt, thick] at (7.5,-0.9) (n12) {};
\draw[thick] (n4) -- (nn9);
\draw[thick] (n4) -- (n10);
\draw[thick] (n8) -- (n12);

%%%%%%%%%%%%%%%%%%%%%%%%%%%%%%%%%%%%%%%%%%%%%%%%%%%%%%%%%%%%%%%%
% circle 5
\node[circle, fill=black, draw=black, inner sep=0pt, minimum size=2.25pt] at (10.85,0.25) (n1) {};
\node[circle, fill=black, draw=black, inner sep=0pt, minimum size=2.25pt] at (9.15,0.25) (n2) {};
\draw[thick] (n1) -- (n2);
\node[circle, fill=black, draw=black, inner sep=0pt, minimum size=3.25pt, thick] at (10,0.25) (n4) {};
% \node[circle, fill=black, draw=black, inner sep=0pt, minimum size=3.25pt, thick] at (2.9,0.25) (n5) {};

\node[circle, fill=black, draw=black, inner sep=0pt, minimum size=2.25pt] at (10.85,-0.25) (n6) {};
\node[circle, fill=black, draw=black, inner sep=0pt, minimum size=2.25pt] at (9.15,-0.25) (n7) {};
\draw[thick] (n6) -- (n7);
\node[circle, fill=black, draw=black, inner sep=0pt, minimum size=3.25pt, thick] at (10,-0.25) (n8) {};
\node[circle, fill=white, draw=black, inner sep=0pt, minimum size=3.25pt, thick] at (9.45,-0.25) (n9) {};

\node[circle, fill=black, draw=black, inner sep=0pt, minimum size=3.25pt, thick] at (9.28,-0.25) {};
\node[circle, fill=black, draw=black, inner sep=0pt, minimum size=3.25pt, thick] at (9.62,-0.25) {};
\node[circle, fill=black, draw=black, inner sep=0pt, minimum size=3.25pt, thick] at (10.67,-0.25) {};

\node[circle, fill=white, draw=black, inner sep=0pt, minimum size=3.25pt, thick] at (9.8,-0.25) (nn9) {};

\node[circle, fill=white, draw=black, inner sep=0pt, minimum size=3.25pt, thick] at (10.5,-0.25) (n10) {};

\node[circle, fill=black, draw=black, inner sep=0pt, minimum size=3.25pt, thick] at (9.45,-0.48) {};

\node[circle, fill=black, draw=black, inner sep=0pt, minimum size=2.25pt, thick] at (9.45,-0.7) (n11) {};
\draw[thick] (n9) -- (n11);

\node[circle, fill=black, draw=black, inner sep=0pt, minimum size=2.25pt, thick] at (10,-0.9) (n12) {};
\draw[thick] (n4) -- (nn9);
\draw[thick] (n4) -- (n10);
\draw[thick] (n8) -- (n12);
\end{tikzpicture}
\vspace{-.25in}
\end{center}
 \caption{A reduced plabic graph $G$ (left) %and the corresponding bipartite graph~$\Bip(G)$.
        and a normal form $N(G)$ (right). 
}
\label{fig:make-Bip}
\end{figure}

%\begin{figure}
%\begin{center}
%\includegraphics[height=1.3in]{Fig7-30.ps}
%\vspace{-.2in}
%\end{center}
%	\caption{A reduced plabic graph $G$ (left) %and the corresponding bipartite graph~$\Bip(G)$.
%	and a normal form $N(G)$ (right). \LW{Figure to be made.}
%}
%\label{fig:make-Bip}
%\end{figure}

Now we define the \emph{generalized triple diagram}
$\X(G)=\X(N(G))$ associated to $G$
by applying \cref{def:GX} to the normal form $N(G)$,
with the following additional rule 
dealing with white lollipops:
\begin{itemize}[leftmargin=.2in]
\item 
at a  white lollipop in~$G$, make a U-turn: \setlength{\unitlength}{0.7pt}
\begin{picture}(35,16)(0,5)
\thicklines
\put(30,10){\circle{4}}
\put(0,10){\line(1,0){28}}
\red{
\put(0,16){\line(1,0){30}}
\put(0,4){\line(1,0){30}}
\put(19,4){\vector(1,0){1}}
\put(11,16){\vector(-1,0){1}}
\qbezier(30,16)(36,16)(36,10)
\qbezier(30,4)(36,4)(36,10)
}
\end{picture}
.
%see \cref{fig:plabic-low-degree-to-triple}.
\end{itemize}
\end{definition}

\iffalse
\begin{figure}[ht]
\vspace{-.15in}
\begin{center}
\setlength{\unitlength}{0.7pt}
\begin{picture}(35,16)(0,0)
\thicklines
\put(30,10){\circle{4}}
\put(0,10){\line(1,0){28}}
\red{
\put(0,16){\line(1,0){30}}
\put(0,4){\line(1,0){30}}
\put(19,4){\vector(1,0){1}}
\put(11,16){\vector(-1,0){1}}
\qbezier(30,16)(36,16)(36,10)
\qbezier(30,4)(36,4)(36,10)
}
\end{picture}
%\qquad\qquad
%\setlength{\unitlength}{0.7pt}
%\begin{picture}(60,16)(0,0)
%\thicklines
%\put(30,10){\circle{4}}
%\put(0,10){\line(1,0){28}}
%\put(60,10){\line(-1,0){28}}
%\red{
%\put(0,4){\line(1,0){60}}
%\put(0,16){\line(1,0){60}}
%\put(33,16){\vector(-1,0){1}}
%\put(27,4){\vector(1,0){1}}
%}
%\end{picture}
\vspace{-.25in}
\end{center}
	\caption{Constructing a triple diagram around a white lollipop.}
	%or a white vertex of degree~2 in a general plabic graph.}
\label{fig:plabic-low-degree-to-triple}
\end{figure}
\fi

\pagebreak[3]

\begin{remark}
Given $G$, there are many 
possible choices for $N(G)$,
since the trivalent tree replacing~$v$ is not unique.  Nevertheless,
all these trees are related to each other by flip moves, 
cf.\ \cref{fig:flipmove0}.  
Hence all triple diagrams constructed from them are move-equivalent to each other,
cf.\ \cref{fig:flipsplabictriple}(b)   
(remove the black vertex in the center).  
\end{remark}

The following statement is immediate from the definitions. 

\begin{lemma}
\label{lem:generalized-triple-diagram-connected}
Let $G$ be a plabic graph. If the union of the strands in~$\X(G)$ 
and the boundary~$\partial\mathbf{D}$ is connected, then $\X(G)$~is a triple diagram
in the sense of \cref{def:triple-diagram}. 
\end{lemma}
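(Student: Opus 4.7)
The plan is to verify each bullet in \cref{def:triple-diagram} using local analysis of the construction of \cref{def:triple-non-normal}, treating the connectivity of the union of strands and $\partial\mathbf{D}$ as given by hypothesis.

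First I would handle the triple-point condition. After replacing each white vertex of degree $\ge 4$ by a trivalent white tree as prescribed, the only crossings in $\X(G)$ occur at trivalent white vertices; at each such vertex, exactly three strand segments meet transversally, corresponding to the three trips making sharp left turns through the vertex (\cref{fig:plabic-to-triple}, middle). Univalent white vertices produce U-turns and bivalent white vertices produce straight-throughs (\cref{fig:plabic-low-degree-to-triple}), neither of which creates a crossing. No crossing arises at a black vertex of any degree, since the strand segments around a black vertex of degree $k$ form $k$ disjoint arcs skirting the vertex (\cref{fig:plabic-to-triple}, right). Hence every interior point of~$\mathbf{D}$ has preimage of cardinality $0$, $1$, or $3$, with transverse intersection in the last case.

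Next I would verify the conditions on arc endpoints and on face-boundary orientations. Each boundary vertex of $G$ is univalent, so in the corresponding local picture two strand endpoints meet $\partial \mathbf{D}$ transversally on either side of the boundary vertex, one as a source and one as a target; hence sources and targets alternate around $\partial\mathbf{D}$. The strand orientations inside the disk are determined edge-by-edge by the direction in which the corresponding trip traverses the edge, and a local case check at each vertex type confirms that the orientations of strand segments along the boundary of each face agree: around a black vertex the skirting segments circulate in a single rotational direction, while at a trivalent white vertex the three crossing strands give the required alternation across the six local sectors. This same alternation combined with the given connectivity hypothesis ensures that each face is homeomorphic to an open disk.

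The main obstacle is the well-definedness and consistency at high-degree white vertices, where a choice of trivalent tree must be made. I would invoke the observation recorded in \cref{def:triple-non-normal} that any two such choices of tree are related by flip moves, and flip moves at white vertices correspond to swivel moves in the associated triple diagram (\cref{fig:flipsplabictriple}(b)); since swivel moves preserve all the defining properties of a triple diagram, the local verification needs to be carried out only at a single trivalent replacement. Combining these local verifications with the given connectivity condition yields all the bullets of \cref{def:triple-diagram}, so $\X(G)$ is indeed a triple diagram.
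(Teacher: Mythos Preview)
Your proposal is correct and follows the same approach the paper intends: the paper simply states that the lemma is ``immediate from the definitions,'' and your argument is a careful unpacking of exactly that verification, checking each bullet of \cref{def:triple-diagram} via the local pictures of \cref{def:GX} and \cref{def:triple-non-normal}. One minor slip: your parenthetical references to \cref{fig:plabic-to-triple} are swapped---the trivalent white vertex (where the three strands cross) is the rightmost picture, and the trivalent black vertex (where the strands skirt without crossing) is the middle picture.
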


%\begin{remark}
%\label{rem:generalized-triple-diagram}
The connectedness condition in \cref{lem:generalized-triple-diagram-connected} does not hold in general. 
To be concrete, if~$G$ contains a cycle~$C$ all of whose vertices are black,
then the strands located inside~$C$ are disconnected from the rest of~$\X(G)$.    

\begin{lemma}
\label{lem:M1M2M3-to-triple}
Let $G$ and $G'$ be (leafless) plabic graphs such that $G\osim G'$. 
Then the corresponding (generalized) triple diagrams $\X(G)$ and~$\X(G')$ are move-equivalent 
(i.e., related to each other via swivel moves). 
\end{lemma}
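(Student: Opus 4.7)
The plan is to reduce the claim to a case analysis on the elementary moves comprising the relation $G \osim G'$. Since move equivalence is transitive, it suffices to verify that for each single move $G \to G'$ permitted by Definition~\ref{def:omove}, the generalized triple diagrams $\X(G)$ and $\X(G')$ differ by a (possibly empty) sequence of swivel moves. I would argue one move type at a time.

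First I would dispose of the (M2) moves and the (M3) black-black moves as essentially trivial. By construction (Definition~\ref{def:triple-non-normal} together with the rule in \cref{fig:plabic-low-degree-to-triple}), a bivalent white vertex is invisible to the strand structure: strands pass through it unperturbed, so inserting or deleting one does not even alter $\X(G)$ up to isotopy. A bivalent black vertex is likewise ignored. For an (M3) move that contracts or splits an edge between two black vertices, the rule at black vertices depends only on the local cyclic order of incident edges and the instruction to make the sharpest right turn; the union-of-stars picture in the proof of \cref{pr:bijplabictriple} applies verbatim, so $\X(G) = \X(G')$ up to isotopy.

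Next I would handle the interesting case: (M3) moves between two white vertices of degree $\ge 2$. Here the two white vertices $v_1, v_2$ of degrees $d_1, d_2$ joined by an edge $e$ are replaced (or produced) by a single white vertex of degree $d_1 + d_2 - 2$. By Definition~\ref{def:triple-non-normal}, both sides of the move are resolved into trivalent trees of white vertices before constructing the triple diagram, and the resulting triple diagrams are well defined up to the flip moves relating these trees, which in turn correspond to swivel moves on triple diagrams (cf.\ \cref{fig:flipsplabictriple}(b) after deleting the central black vertex). The contracted picture and the uncontracted picture yield two trivalent resolutions of the same $(d_1+d_2-2)$-valent abstract white vertex, hence yield triple diagrams related by swivel moves. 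The one situation where this argument fails is precisely when $d_1 = 1$ (a white leaf) meets a white vertex of degree $\ge 3$: here, contracting $e$ actually removes a triple point from the diagram, as shown in \cref{fig:M3-white-leaf}, and this is exactly the move excluded by Definition~\ref{def:omove}. I~expect this identification of the pathological case as precisely the excluded move to be the main conceptual step.

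Finally I would treat the square move (M1). The argument parallels the one used in \cref{thm:moves-moves}, with the four corners of the square resolved as follows. First use the preceding cases to deform $G$ via $\osim$ moves so that the four vertices of the square are trivalent and properly colored; the triple diagram is carried along by swivel moves. Then check that (M1) on this normal local piece corresponds to either an urban renewal (\cref{fig:urban2}) or a normal flip move (\cref{fig:flipmove}), depending on the color assignment, and each of these is a swivel move by \cref{thm:moves-moves} and \cref{fig:flipsplabictriple}. Recoloring the square to match the (M1) conclusion yields the desired equivalence of triple diagrams, completing the case analysis and proving the lemma.
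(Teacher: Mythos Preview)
Your approach is essentially the same case-by-case analysis as the paper's proof, and your treatment of (M2), black (M3), and white (M3) is correct and even a bit more carefully articulated than the paper's terse version. Your handling of (M1) is over-complicated, however: by the definition of the square move the four vertices are already trivalent with alternating colors, so no preliminary deformation is needed, and the square move corresponds only to the swivel of \cref{fig:flipsplabictriple}(a) (urban renewal), never to a normal flip move---the latter encodes a white (M3), not an (M1).
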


We note that $\X(G)$ and $\X(G')$ are defined up to move-equivalence, so the statement
that they are move-equivalent to each other makes sense. 

%We first observe that the plabic graphs $G$ and $G'$ can be connected to each other via a sequence
%of (M1)--(M3) moves that do not go through graphs with white leaves---essentially because
%growing white leaves cannot create any additional opportunities for local moves.
%Although adding a white leaf may create a new configuration containing an ``alternating square''
%as in \cref{fig:M1}, an (M1) move would not be allowed at that location 
%due to the restriction illustrated in \cref{fig:square-move-tricky}.
%
%We then 
%\begin{proof}

\noindent\textbf{Proof.}
It is straightforward to 
verify, case by case, that each of the local moves (M1)--(M3$'$)
%with the exception of the move shown in \cref{fig:M3-white-leaf}, 
either leaves the associated (generalized) triple diagram intact
or applies a swivel move to it 
(more precisely, to any of the possible diagrams
obtained via the construction in 
	\cref{lem:biptri2}).
	%\cref{def:triple-non-normal}). 
To be specific:
\begin{itemize}[leftmargin=.2in]
\item 
a square move (M1) translates into a swivel move, see \cref{fig:flipsplabictriple}(a);  
\item
both the move (M2) 
and a black (de)contraction move (M3$'$) leave the triple diagram invariant (up to isotopy);
\item
a white (de)contraction move (M3$'$)
		%, other than the instance shown in \cref{fig:M3-white-leaf},
translates into a swivel move, see \cref{fig:flipsplabictriple}(b)
(remove the black vertex in the center).  
\qed
\end{itemize}
%\end{proof}
%The last statement relies on the absence of white leaves:  
%it actually fails in the special case of contracting/decontracting an edge connecting 
%a white leaf to a white vertex, see \cref{fig:M3-white-leaf}. 

%The ``normal'' local moves introduced in Definitions~\ref{def:urban-normal} and~\ref{def:flip-normal}
%can be applied to any (not necessarily normal) plabic graph~$G$ 
%that contains a local configuration shown in \cref{fig:urban2} or \cref{fig:flipmove}.  
%But if $G$ is normal, then the transformed graph will be normal as well.

\begin{corollary}
\label{cor:newmoves}
Let $G$ and $G'$ be normal plabic graphs. 
%and let $\X=\X(G)$ and $\X'=\X(G')$ be the corresponding triple diagrams. 
The following are equivalent:
\begin{enumerate}[leftmargin=.3in]
\item[{\rm(1)}]
$G\osim G'$; % are move-equivalent (in the sense of \cref{def:moves}); 
\item[{\rm(2)}]
$G$ and $G'$ are related via a sequence of normal spider moves and normal flip moves; 
\item[{\rm(3)}]
$\X(G)$ and $\X(G')$ are move-equivalent (in the sense of \cref{def:2-2-move-equivalence}). 
\end{enumerate}
\end{corollary}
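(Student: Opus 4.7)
The plan is to prove the three implications (2) $\Leftrightarrow$ (3), (1) $\Rightarrow$ (3), and (2) $\Rightarrow$ (1); together these give all pairwise equivalences. The first of these is exactly the content of Theorem~\ref{thm:moves-moves}, and the second is Lemma~\ref{lem:M1M2M3-to-triple}, so the only new content lies in (2) $\Rightarrow$ (1): every normal urban renewal move and every normal flip move can be realized by a sequence of (M1), (M2), (M3) moves that does \emph{not} include the exceptional instance shown in \cref{fig:M3-white-leaf}.

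This is a strengthening of Lemma~\ref{lem:normal-moves-via-M1M2M3}, which already produces a decomposition in terms of (M1), (M2), (M3); the task is simply to choose the decomposition carefully. The forbidden (M3) instance is the one that contracts an edge between a white leaf and a white vertex of degree at least three. In the two patterns to be decomposed, namely \cref{fig:urban2} and \cref{fig:flipmove}, every white vertex already present is trivalent by normality, so the initial configuration contains no white leaves; I only need to arrange that no white leaves are created along the way.

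For the normal urban renewal move, I will first apply (M3) moves at the two black corner vertices to split off any external half-edges they carry, producing trivalent black corners; these (M3) moves affect only black vertices and leave the (trivalent) whites untouched. The resulting fragment is precisely the square configuration of \cref{fig:M1}, so a single (M1) move applies, after which the black (M3) splits are reversed and any bivalent black vertices are reinserted using (M2). For the normal flip move, I would use \cref{rem:flip}: first remove the bivalent black vertex in the middle of \cref{fig:flipmove} with (M2), perform the two (M3) moves at the two trivalent white vertices (as in the ordinary white flip), and then reinsert a bivalent black vertex with (M2). In both cases, every (M3) used is either a black split/contraction or a white split/contraction between two trivalent white vertices, so the forbidden instance of \cref{fig:M3-white-leaf} never arises.

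The main obstacle, and the step I expect to require the most care, is the bookkeeping for the urban renewal decomposition when the black corners have high valence: one must verify that the sequence of black (M3) splits can be chosen so that every intermediate plabic graph remains well-defined and so that no white vertex is ever rendered a leaf. This reduces to the elementary observation that an (M3) move at a black vertex does not alter the degree of any adjacent white vertex, so trivalency of whites is preserved throughout, and the forbidden move is avoided.
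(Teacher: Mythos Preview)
Your proof is correct and follows exactly the same strategy as the paper: the implications (2)$\Leftrightarrow$(3) and (1)$\Rightarrow$(3) are cited from Theorem~\ref{thm:moves-moves} and Lemma~\ref{lem:M1M2M3-to-triple} respectively, while (2)$\Rightarrow$(1) is obtained as an enhancement of Lemma~\ref{lem:normal-moves-via-M1M2M3}. The paper simply calls this last step ``an easy enhancement'' without spelling it out; your elaboration---decomposing each normal urban renewal and normal flip into (M1), (M2), (M3) moves while checking that every (M3) used is either black or between two trivalent whites, so the forbidden white-leaf instance of \cref{fig:M3-white-leaf} never occurs---is exactly what that phrase means.
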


\begin{proof}
The implication (2)$\Rightarrow$(1) is %an easy enhancement 
 \cref{lem:normal-moves-via-M1M2M3}. 
The equivalence (2)$\Leftrightarrow$(3) was established in \cref{thm:moves-moves}. 
The implication (1)$\Rightarrow$(3) was proved in \cref{lem:M1M2M3-to-triple}. 
\end{proof}

We note the similarity between \cref{cor:newmoves} and \cref{thm:newmoves1}. 

\newpage

\section{Minimal triple diagrams}
\label{sec:mintriple}

\begin{definition}
A triple diagram  is called \emph{minimal} if %it is connected and 
it has no more triple points than any other triple diagram with the same strand permutation.
\end{definition}
%(The condition of connectivity merely rules out small loops with no crossings
%disconnected from the rest of the diagram.)

We will show in Section~\ref{minred} that minimal triple diagrams 
are the natural counterparts of reduced normal plabic graphs.  

Much of this section is devoted to the proof of the following key result.

\begin{theorem}
\label{thm:domino-flip}
 Any two minimal triple diagrams with the same strand permutation
are %related to each other via a sequence of swivel moves.
move-equivalent to each other. 
\end{theorem}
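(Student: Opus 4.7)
The plan is to prove \cref{thm:domino-flip} by a strand-by-strand induction, in the spirit of the argument that any two reduced wiring diagrams for the same permutation are braid-equivalent. The backbone is a set of structural properties of minimal triple diagrams, together with the observation that the number of triple points in a minimal diagram is, for each pair of strands, a combinatorial invariant of the strand permutation.

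First I would establish the following structural lemmas for a minimal triple diagram~$\X$: (i) $\X$ has no closed strands; (ii) no strand of~$\X$ self-intersects (no ``monogons''); (iii) any two distinct strands of~$\X$ cross at most once, and they do cross precisely when their source--target pairs link on~$\partial\mathbf{D}$ (no ``bigons'' in either the oriented or unoriented sense). Each item is proved by assuming such a configuration is present and producing an explicit sequence of swivel moves that isolates it inside an embedded sub-disk whose interior contains no other strands, after which one applies a short, explicit local sequence (swivels plus a ``cancellation'' of two triple points along the bigon) to strictly decrease the triple-point count without changing the strand permutation. This contradicts minimality. The key technical point is bigon removal in the presence of ``innermost'' strands threading through the bigon; the standard cure is to choose an innermost bigon and push such strands out by swivels one at a time before cancelling the bigon.

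Next, given two minimal triple diagrams $\X$ and~$\X'$ sharing the strand permutation $\pi$, fix an index~$i$ and let $S\subset\X$ and $S'\subset\X'$ be the arcs from the source at~$i$ to the target at~$\pi(i)$. By (ii)--(iii), $S$~and~$S'$ are embedded arcs each crossing any other strand at most once, and the set of other strands they cross is the same on both sides: it is precisely the set of arcs whose source--target pair links with $(i,\pi(i))$. I would then show, using only swivel moves, that $\X$ can be transformed into a triple diagram whose $i$-th strand coincides (up to isotopy fixing $\partial\mathbf{D}$) with~$S'$. This is done by iterating ``strand isotopy through swivels'': each swivel slides a crossing along~$S$ past an adjacent crossing, and Step~1, applied inside the disks bounded by $S$ and the relevant strand segments, guarantees there are no obstructions (no captured closed strands or bigons) that could prevent the isotopy.

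Finally I would complete the induction by deleting the common strand from both diagrams. Removing~$S$ from a minimal triple diagram yields a triple diagram (possibly generalized in the sense of \cref{sec:triple-diagrams}, which we absorb into the framework) whose strand permutation is $\pi$ with the pair $(i,\pi(i))$ deleted, and whose number of triple points equals the original number minus the number of strands crossing~$S$. Since the latter quantity is fixed by~$\pi$ (it is the number of pairs linking with $(i,\pi(i))$ on the boundary), the deleted diagrams $\X\setminus S$ and $\X'\setminus S$ are themselves minimal with the same strand permutation. By induction on the number of strands, $\X\setminus S \sim \X'\setminus S$; reinserting~$S$ along the same path in both gives $\X\sim\X'$.

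The main obstacle is the bigon-removal step~(iii): strands threading through a bigon must be pushed out by swivels in a controlled way, and one must verify that the net effect is a strict decrease in the triple-point count; the other subtlety is the inductive step, where deletion of~$S$ may produce a diagram that fails the connectedness clause of \cref{def:triple-diagram} (e.g., producing an isolated closed strand), and this degenerate case must be handled separately by invoking~(i) for the ambient diagram to rule it out.
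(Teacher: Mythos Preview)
Your proposal contains a genuine gap at step~(iii). You assert that in a minimal triple diagram any two distinct strands cross at most once, but this is false: minimal triple diagrams may contain \emph{anti-parallel} digons. What minimality forbids are \emph{badgons}---monogons and \emph{parallel} digons (\cref{lem:nomonogondigon}, \cref{thm:minimal}); anti-parallel digons are not badgons. On the plabic side this is exactly the distinction in \cref{def:bad-features}: a ``bad double crossing'' requires both trips to traverse $e_1$ then $e_2$; if one trip goes $e_1\to e_2$ and the other $e_2\to e_1$, the graph can still be reduced. The move~\eqref{it:move-1} in fact exhibits two move-equivalent (hence both minimal) configurations in which a fixed pair of strands crosses twice on one side and not at all on the other, so the number of crossings between two given strands is not even a swivel invariant.

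This breaks the induction. The claim that $S$ crosses exactly those strands whose endpoints link with $(i,\pi(i))$ fails; the triple-point count along~$S$ is therefore not determined by~$\pi$, and you cannot conclude that $\X\setminus S$ and $\X'\setminus S'$ are minimal with the same number of triple points. There is also a more basic problem with the deletion step: removing a strand from a triple diagram leaves, at every former triple point on~$S$, a transverse crossing of only two strands, so $\X\setminus S$ is not a triple diagram in the sense of \cref{def:triple-diagram} at all.

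The paper avoids all of this by working toward a canonical target rather than aligning $\X$ with $\X'$ directly. It defines \emph{standard} diagrams (\cref{def:standard-triple}), built recursively by laying boundary-parallel strands along minimal intervals, and shows (\cref{lem:straighten2}, \cref{lem:straighten}) that every minimal diagram is swivel-equivalent to any standard one with the same strand permutation; the theorem follows at once. The inductive engine is: pick a minimal interval~$I$ and use swivels to make the corresponding strand~$S$ boundary-parallel. Anti-parallel digons are not assumed away but are explicitly removed in Step~1 of the proof of \cref{lem:straighten2}, via~\eqref{it:move-1}, by pushing doubly-crossing strands off~$S$ one at a time. Once $S$ is boundary-parallel one recurses on the genuine triple diagram in the smaller disk on the far side of~$S$; no strand deletion is required.
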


\begin{lemma}
\label{lem:min-diag-move-equiv}
If a triple diagram $\X$ is  minimal, then so is 
every triple diagram move-equivalent to~$\X$. 
\end{lemma}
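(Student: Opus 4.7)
The plan is to reduce the claim to the single observation that the swivel move preserves the total number of triple points in a triple diagram. Since move-equivalence of triple diagrams is, by \cref{def:2-2-move-equivalence}, generated by swivel moves, this local invariance immediately yields a global invariance: any two move-equivalent triple diagrams have the same number of triple points.

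First I would inspect \cref{fig:2-2-move} and verify, by direct visual comparison of the two fragments, that the local configurations on either side of a swivel move contain the same number of triple points. Since the portion of the triple diagram outside the swivel region is unchanged by the move, this local count is the only thing that matters, and the total number of triple points is therefore preserved. I would combine this with the fact (observed immediately after \cref{fig:2-2-move}) that the strand permutation is invariant under swivel moves, so that both the strand permutation and the number of triple points are invariants of the move-equivalence class.

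With these invariants in hand, the lemma follows by a one-line argument. Given $\X' \sim \X$, we have $\pi_{\X'} = \pi_\X$ and the two diagrams contain the same number of triple points. If $\X$ is minimal, then this common number equals the minimum number of triple points taken over all triple diagrams with strand permutation $\pi_\X$. Hence $\X'$ also achieves this minimum, so $\X'$ is minimal as well. The only real content here is the picture-level check that a swivel move has equal triple-point counts on its two sides, which is the step I would expect to be the ``main obstacle'' if one counts it as an obstacle at all.
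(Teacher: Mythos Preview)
Your proposal is correct and matches the paper's own proof essentially verbatim: the paper simply notes that a swivel move preserves both the number of triple points and the strand permutation, from which the claim follows immediately.
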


\begin{proof}
It is easy to see that a swivel move preserves both the number of triple points
and the strand permutation. The claim follows. 
\end{proof}

\iffalse
We begin by constructing, for each permutation $\pi$, a particular
triple diagram whose strand permutation is $\pi$.
We will call these triple diagrams \emph{standard}; 
we will see in \cref{cor:standardminimal}
that they are all minimal.
This will be helpful in the proof of \cref{thm:domino-flip}.
\fi

We next describe certain ``bad features'' (of a triple diagram) 
and show that they cannot occur in a minimal triple diagram. 
	
\begin{definition}
\label{def:monogon}
A strand in a triple diagram that intersects itself forms a \emph{monogon}.  
A pair of strands that intersect at two points $x$ and $y$ form either a \emph{parallel} 
or \emph{anti-parallel digon}, 
depending on whether their segments connecting $x$ and~$y$ run in the same or opposite direction, 
see \cref{fig:monogondigon}.
We use the term \emph{badgon} to refer to either a monogon or a parallel digon.
\end{definition}

\begin{figure}[ht]
\begin{center}
\vspace{-.25in}
\begin{tikzpicture}[scale=0.8]
\draw [thick, postaction={decorate, decoration = {markings, mark = between positions 0.3 and 0.5 step 0.5 with {\arrow[rotate=-9]{Stealth}}}}] (-0.7,-1) to[out=20,in=265] (0.5,0) to[out=92,in=355] (0,0.8) to[out=185,in=89] (-0.5,0) to[out=274,in=160] (0.7,-1);
\draw[thick, postaction={decorate, decoration = {markings, mark = between positions 0.58 and 0.8 step 0.5 with {\arrow{Stealth}}}}] (2.5,0.75) to[bend left=55] (2.5,-1);
\draw[thick, postaction={decorate, decoration = {markings, mark = between positions 0.58 and 0.8 step 0.5 with {\arrow{Stealth}}}}] (2.95,0.75) to[bend right=55] (2.95,-1);
%\node[circle, fill=black, draw=black, inner sep=0pt, minimum size=3pt] (37) at (2.73,0.53) {};
%\node[circle, fill=black, draw=black, inner sep=0pt, minimum size=3pt] (37) at (2.73,-0.77) {};
\node at (2.73,0.89) {\small $x$};
\node at (2.73,-1.15) {\small $y$};
%\draw[thick, postaction={decorate, decoration = {markings, mark = between positions 0.58 and 0.8 step 0.5 with {\arrow{Stealth}}}}] (5,0.75) to[bend left=55] (5,-1);
%		\draw[thick, postaction={decorate, decoration = {markings, mark = between positions 0.58 and 0.8 step 0.5 with {\arrow{Stealth}}}}] (5.45,0.75) to[bend right=55] (5.45,-1);
%\node[circle, fill=black, draw=black, inner sep=0pt, minimum size=3pt] (37) at (5.23,0.53) {};
\draw[thick, postaction={decorate, decoration = {markings, mark = between positions 0.56 and 0.8 step 0.5 with {\arrow{Stealth}}}}] (5,-1) to[bend right=55] (5,0.75);
\draw[thick, postaction={decorate, decoration = {markings, mark = between positions 0.58 and 0.8 step 0.5 with {\arrow{Stealth}}}}] (5.45,0.75) to[bend right=55] (5.45,-1);
%\node[circle, fill=black, draw=black, inner sep=0pt, minimum size=3pt] (37) at (5.23,0.53) {};
\node at (5.23,0.89) {\small $x$};
\node at (5.23,-1.15) {\small $y$};
\end{tikzpicture}
\vspace{-.25in}
\end{center}
\caption{A monogon, a parallel digon, and an anti-parallel digon. 
The actual picture will contain %(potentially many) 
additional strands and intersections.}
\label{fig:monogondigon}
\end{figure}
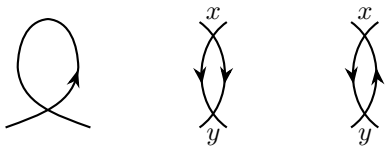

\begin{lemma}
\label{lem:no-closed-strands}
A triple diagram without badgons has no closed strands.
\end{lemma}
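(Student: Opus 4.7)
The plan is to argue by contradiction: assume $\X$ has no badgons but contains a closed strand, and then produce a parallel digon. First I would reduce to the case of an innermost, embedded closed strand. If any closed strand self-intersects at a triple point, that strand immediately forms a monogon — a badgon — so every closed strand in $\X$ must be embedded. Since $\X$ has only finitely many strands, I can choose a closed strand $S$ whose bounding disk $D_S\subset\mathbf{D}$ contains no other closed strand in its interior. The connectedness requirement in the definition of a triple diagram, combined with $S\subset\mathrm{int}(\mathbf{D})$, forces at least one triple point to lie on $S$.

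Next, at any triple point $p\in S$, I would analyze the six half-edges incident to $p$. Two are along $S$; the remaining four belong to at most two other strands, and if they all belonged to a single strand, that strand would self-intersect at $p$, giving a monogon. So the four remaining half-edges split between two distinct strands. The alternating source/target condition around $p$ (which follows from the consistent orientation of the surrounding faces) combined with the fact that the three strands meeting at $p$ interleave cyclically then shows that exactly one of the two non-$S$ strands has its half-edges pointing into $D_S$ at $p$; call that strand $T_p$. Following $T_p$ forward from $p$, innermostness of $S$ rules out $T_p$ closing up inside $D_S$, and the no-monogon assumption rules out $T_p$ returning to $p$. Hence $T_p$ must exit $D_S$ at some other triple point $q\in S$ with $q\ne p$, so $T_p$ meets $S$ in (at least) the two distinct points $p$ and $q$.

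The decisive step is then an orientation check. Along $T_p$'s natural direction, the segment of $T_p$ from $p$ to $q$ runs $p\to q$. Orienting $S$ so that $D_S$ lies on its left, one of the two arcs of $S$ between $p$ and $q$ likewise runs $p\to q$ in $S$'s natural direction. These two segments connecting the intersection points $p$ and $q$ run in the same direction, so by \cref{def:monogon} the pair $(T_p,S)$ forms a parallel digon — a badgon — contradicting the hypothesis. The main obstacle I expect is the orientation bookkeeping at the triple point $p$: one must carefully use the in/out alternation together with the cyclic interleaving of the three strands at $p$ to justify that precisely one of the two non-$S$ strands enters $D_S$ there. Once this local claim is pinned down, the remainder of the argument is immediate.
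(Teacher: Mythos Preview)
Your argument is correct and arrives at the same endgame as the paper: a strand $T$ meeting the closed strand $S$ in two distinct points, together with one of the two arcs of $S$ between those points, forms a parallel digon. However, you work much harder than necessary. The paper's proof is four lines: since $\X$ has no monogons, any closed strand $S$ is embedded; by connectedness some other strand $T$ meets $S$ in (at least) two points $x,y$; and because $S$ is \emph{closed}, one of its two arcs from $x$ to $y$ is automatically co-oriented with the chosen segment of $T$, yielding a parallel digon.

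That last observation is the key efficiency: once you know $S$ is a closed oriented curve, you get arcs in both directions between any two of its points, so no orientation bookkeeping at the triple point is needed, and your clause ``orienting $S$ so that $D_S$ lies on its left'' (which is not something you are free to do, since $S$ already carries an orientation) becomes irrelevant. Likewise, the innermostness of $S$ and the local analysis of which non-$S$ strand ``enters'' $D_S$ at $p$ are unnecessary: any strand $T\neq S$ through a triple point on $S$ crosses $S$ transversally there, and parity alone forces a second crossing. Your detailed local analysis is not wrong, just superfluous.
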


\begin{proof}
Let $\X$ be a triple diagram without badgons.  Since $\X$ does not 
contain monogons, no strand of $\X$ can intersect itself.
Suppose that $\X$ 
contains a closed strand $S$.
%Suppose that $S$ is a (non-self-intersecting) closed strand in such 
%	a triple diagram~$\X$.
%Since $S$ does not contain monogons, $S$~does not intersect itself.
Let $T$ be another strand of $\X$ intersecting~$S$ at points~$x$ and~$y$; 
such~$T$ exists since $\X$ must be connected to the boundary~$\partial\mathbf{D}$. 
%Suppose $T$ runs from a point $x\in S$ to another point $y\in S$.
Then the segment of~$T$ between $x$ and~$y$ 
together with one of the segments of~$S$ connecting $x$ and~$y$ form a parallel digon, which is a contradiction.
\end{proof}

%\pagebreak[3]

\begin{lemma}
\label{lem:nomonogondigon}
A minimal triple diagram does not contain badgons.
Therefore (cf.\ \cref{lem:no-closed-strands}) it does not contain closed strands.
	%, and neither can any 
%diagram move-equivalent to $\X$.
%no strand of $\X$ (or any diagram move-equivalent to it) 
%can form a monogon. 
%And no pair of strands of $\X$ (or any diagram move-equivalent to it)
%can form a parallel digon.
\end{lemma}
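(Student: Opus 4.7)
The plan is a contradiction argument: assume that a minimal triple diagram $\X$ contains a badgon and construct a triple diagram $\X'$ with $\pi_{\X'}=\pi_\X$ and strictly fewer triple points than $\X$. Since \cref{lem:min-diag-move-equiv} tells us that minimality is preserved under swivel moves, we are free to replace $\X$ by any swivel-equivalent diagram during the argument.

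First I would pass to an \emph{innermost} badgon $B\subset\X$: one whose enclosed disk $D$ is minimal, in the swivel-equivalence class of $\X$, with respect to the number of triple points it contains. In particular, $D$ contains no other badgon, so any strand segment that enters $D$ through a point of $\partial B$ cannot return to the same boundary piece without creating a monogon or a digon nested inside $B$. Using the alternating in/out convention at triple points (\cref{def:triple-diagram}) together with the fact that strands cannot close up inside $D$ (which would create a parallel digon with part of $\partial B$), each strand segment crossing into $D$ must eventually cross out through a different piece of $\partial B$. By the usual Euler/innermost-triangle argument applied to the cell decomposition of $D$, there is then an ``innermost triangle'' in $D$---a disk bounded by three strand arcs meeting at three triple points---to which one can apply the swivel move of \cref{fig:2-2-move} so as to strictly reduce the number of triple points inside~$D$, without changing $\pi_\X$. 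Iterating, I reach a swivel-equivalent diagram in which the chosen badgon~$B$ bounds a disk $D$ containing no triple points in its interior at all.

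At that point~$B$ is ``empty'' in the sense that the only triple points on or inside $\partial B$ are the vertex (for a monogon) or the two vertices $x,y$ (for a parallel digon). The main step will be to check that such an empty badgon can be \emph{eliminated}: the two strand arcs forming the digon (or the loop of the monogon) can be pushed past each other so that the two triple points $x,y$ (respectively, the single self-intersection) are cancelled, and the third strands through these points are reconnected in the unique way compatible with their orientations. One has to verify that the resulting picture is again a triple diagram (the parallelism of the digon and the alternation of sources/targets around each triple point are exactly what makes the cancellation compatible) and that the reconnection does not alter $\pi_\X$. This is where I expect the technical work to be: one must carefully enumerate the local orientation patterns around $x$ and $y$ and check in each case that a legal cancellation exists, using the hypothesis of parallelism to rule out the anti-parallel configuration (which does not admit such a cancellation, cf.~the third picture of \cref{fig:monogondigon}).

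Granting this cancellation, the resulting diagram $\X'$ satisfies $\pi_{\X'}=\pi_\X$ and has either one or two fewer triple points than $\X$, contradicting the minimality of~$\X$. This proves the first assertion of the lemma. The second assertion is immediate from \cref{lem:no-closed-strands}, since a diagram without badgons has no closed strands.
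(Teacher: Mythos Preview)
Your plan is substantially more elaborate than what the paper does, and the extra machinery is not needed. The paper's argument is a direct local surgery, with no swivel moves and no ``emptying'' of the badgon. For a monogon at a triple point~$v$, one deforms~$\X$ in a small neighborhood of~$v$ so that the self-intersecting strand spins off a closed loop and the triple point at~$v$ disappears; if the loop becomes disconnected from~$\partial\mathbf{D}$, it is simply deleted. For a parallel digon with vertices $x$ and~$y$ on strands $S$ and~$T$, one smooths both $x$ and~$y$: the third strands through $x$ and $y$ continue straight, while the middle segments $\overline S$ and $\overline T$ are swapped between $S$ and~$T$. In each case the strand permutation is unchanged (connectivity of boundary endpoints is preserved) and the number of triple points drops, contradicting minimality. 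Crucially, none of this depends on what lies inside the region bounded by the badgon---the surgery is purely local to neighborhoods of the triple points involved.

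Your Step~1 (swivel-emptying the badgon via an innermost-triangle argument) is therefore unnecessary, and it is also where your argument is weakest. Swivel moves preserve the total number of triple points, so the claim is really that one can always push triple points out of a given disk bounded by strand arcs. Statements of that flavor (e.g., making a strand boundary-parallel) are proved later in the paper, in \cref{lem:straighten2}, and the logical flow there relies on the present lemma via \cref{lem:straighten}. So your route risks circularity. Your Step~2 (cancelling an empty badgon) is essentially what the paper does---but the paper observes that emptiness is irrelevant, which collapses the whole argument to that one step.
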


\begin{proof}
Let $\X$ be a triple diagram containing a monogon, 
i.e., a strand~$S$ with a self-intersection at a triple point~$v$.  
Construct the triple diagram $\X'$ by deforming $\X$ around~$v$
so that $S$ ``spins off'' a closed strand while the triple point disappears, see \cref{fig:fewer}. 
(If the spun-off portion is disconnected from the rest of~$\X$, then remove it altogether.)
The triple diagram $\X'$ has~the same strand permutation as~$\X$
but fewer triple points; thus $\X$ is not minimal.

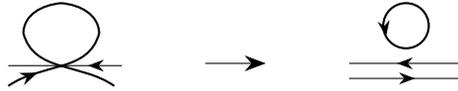
\begin{figure}[ht]
\begin{center}
\vspace{-.1in}
\begin{tikzpicture}
\draw[>={Stealth[length=8pt]}, ->] (-0.4,0) -- (0.4,0);
\draw[postaction={decorate, decoration = {markings, mark = between positions 0.3 and 0.5 step 0.5 with {\arrow{Stealth[length=6pt]}}}}] (-1.5,-0.03) -- (-3,-0.03);
\draw [thick, postaction={decorate, decoration = {markings, mark = between positions 0.1 and 0.9 step 0.9 with {\arrow[rotate=5]{Stealth}}}}] (-3,-0.3) to[out=35,in=255] (-1.8,0.4) to[out=92,in=355] (-2.3,0.8) to[out=185,in=89] (-2.8,0.4) to[out=285,in=145] (-1.6,-0.3);
\draw[postaction={decorate, decoration = {markings, mark = between positions 0.58 and 0.6 step 0.5 with {\arrow{Stealth[length=6pt]}}}}] (1.5,-0.2) -- (3,-0.2);
\draw[postaction={decorate, decoration = {markings, mark = between positions 0.58 and 0.6 step 0.5 with {\arrow{Stealth[length=6pt]}}}}] (3,0) -- (1.5,0);
\draw[thick, decoration={markings, mark=at position 0.58 with {\arrow[rotate=-20]{Stealth[length=6pt]}}},postaction={decorate}] (2.25,0.5) circle (0.3);
\end{tikzpicture}
\vspace{-.15in}
\end{center}
\caption{In the presence of a monogon, we can reduce the number of triple points 
while keeping the same strand permutation.
The triple diagram may contain additional strands intersecting the monogon,
as well as additional points of self-intersection.}
\label{fig:fewer}
\end{figure}

\noindent
Now suppose that $\X$ does not contain monogons but does 
contain two strands $S$ and $T$ that form a parallel digon. 
Say, $S$ and $T$ contain segments $\overline S$ and~$\overline T$ 
that run from a triple point~$x$ to a triple point~$y$. 
Let $U$ (resp.,~$V$) be the third strand passing through~$x$ (resp.,~$y$). 
We then deform $\X$ around both $x$ and~$y$ by 
smoothing each of the two triple points: the strands $U$ and $V$ 
continue to go straight through, whereas the endpoints of~$\overline S$ (resp.,~$\overline T$)
get connected to~$T$ (resp.,~$S$). 
Thus, the strands $S$ and $T$ swap their segments $\overline S$ and $\overline T$
with each other (with appropriate smoothings), the overall connectivity (i.e., the strand permutation)
is preserved, and the triple points at $x$ and~$y$ disappear. 
(If the diagram becomes disconnected from~$\partial\mathbf{D}$, then remove the disconnected portion.) 
We then conclude that $\X$ was not minimal. 
\end{proof}

\begin{definition}
\label{def:boundary-parallel}
Let $S$ be an arc in a triple diagram; its endpoints $s$ and~$t$ lie on the boundary 
of the ambient disk~$\mathbf{D}$. 
We call $S$ \emph{boundary-parallel}
if it runs along a segment~$I$ of the boundary~$\partial\mathbf{D}$ between $s$ and~$t$
(in either direction), 
so that every other strand with an endpoint inside~$I$  runs
directly to or from~$S$, without any triple crossings in between. 
See \cref{fig:standard}.
\end{definition}

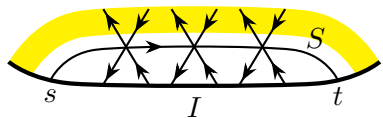
\begin{figure}[ht]
\begin{center}
\vspace{-.2in}
\begin{tikzpicture}

\draw[line width=10pt,yellow] plot [smooth] coordinates {(-2.3,-0.3) (-1.5,0.3) (1.5,0.3) (2.3,-0.3)};

\draw[thick, postaction={decorate, decoration = {markings, mark = between positions 0.3 and 1 step 0.65 with {\arrow{Stealth}}}}] (-0.6,-0.5) -- (-1.2,0.5);
\draw[thick, postaction={decorate, decoration = {markings, mark = between positions 0.3 and 1 step 0.65 with {\arrow{Stealth}}}}] (-0.6,0.5) -- (-1.2,-0.5);
\draw[thick, postaction={decorate, decoration = {markings, mark = between positions 0.3 and 1 step 0.65 with {\arrow{Stealth}}}}] (0.3,0.5) -- (-0.3,-0.5);
\draw[thick, postaction={decorate, decoration = {markings, mark = between positions 0.3 and 1 step 0.65 with {\arrow{Stealth}}}}] (0.3,-0.5) -- (-0.3,0.5);
\draw[thick, postaction={decorate, decoration = {markings, mark = between positions 0.3 and 1 step 0.65 with {\arrow{Stealth}}}}] (1.2,0.5) -- (0.6,-0.5);
\draw[thick, postaction={decorate, decoration = {markings, mark = between positions 0.3 and 1 step 0.65 with {\arrow{Stealth}}}}] (1.2,-0.5) -- (0.6,0.5);

\draw[thick, postaction={decorate, decoration = {markings, mark = between positions 0.4 and 0.5 step 0.55 with {\arrow{Stealth}}}}] plot [smooth] coordinates {(-1.9,-0.45) (-1.3,-0.04) (1.3,-0.04) (1.9,-0.45)};
%{ (1.9,-0.45) (1.3,-0.04) (-1.3,-0.04) (-1.9,-0.45)};

\draw[ultra thick] plot [smooth] coordinates {(-2.45, -0.2) (-1.6,-0.49) (1.6,-0.49) (2.45, -0.2)};

\node at (1.6,0.12) {$S$};
\node at (0,-0.8) {$I$};
\node at (-1.9,-0.65) {$s$};
\node at (1.9,-0.65) {$t$};

\end{tikzpicture}
\vspace{-.35in}
\end{center}
\caption{A boundary-parallel strand $S$ in a triple diagram.}
\label{fig:standard}
\end{figure}

We next describe a particular way to construct, for any given permutation~$\pi$, 
a triple diagram whose strand permutation is~$\pi$.

\begin{definition}
\label{def:standard-triple}
Let $\pi$ be a permutation of $b$ letters $1,\dots,b$. 
A~triple diagram in the disk~$\mathbf{D}$ is called \emph{standard} (for~$\pi$) 
if it can be constructed using the following recursive process. 
(The process involves some choices, so a standard diagram for~$\pi$ is not unique.) 

\pagebreak[3]

We place $b$ boundary vertices on the boundary $\partial\mathbf{D}$
and label them $1,\dots,b$ clockwise. 
Next to each boundary vertex~$v$, we mark two endpoints of the future strands:
a source endpoint that precedes $v$ in the clockwise order
and a target endpoint that follows~$v$ in this order. 
We know which source is to be matched to which target
by the strand permutation~$\pi$. 
The source and target of a given strand divides the circle~$\partial\mathbf{D}$
into two intervals. 
Let us partially order these $2b$ intervals by inclusion and select a \emph{minimal
interval~$I$} with respect to this partial order.

We start constructing the triple diagram by running a boundary-parallel strand~$S$
along the interval~$I$, introducing a triple crossing for each pair of strands that
need to terminate in the interior of~$I$, as shown in \cref{fig:standard}.
There will always be an even number (possibly zero) of strands to cross over,
so the construction will proceed without a hitch.

Let $\mathbf{D'}$ be the disk obtained from~$\mathbf{D}$ by 
removing the region between the boundary  segment~$I$ and the strand~$S$ 
together with a small neighborhood of~$S$; so $\mathbf{D'}$
is the shaded region in \cref{fig:standard}.
We accordingly remove $S$ and its endpoints 
from the original pairing of the in- and out-endpoints, and swap each pair that $S$ crossed over.
This yields $2(b-1)$ endpoints on the boundary of~$\mathbf{D'}$;
note that the in- and out-endpoints alternate, as before. 
We then determine the new pairing of these endpoints 
(thus, a new strand permutation, after an appropriate renumbering) 
and recursively continue the process in~$\mathbf{D'}$
	until the desired (standard) triple diagram is constructed.  
	See \cref{fig:standardtriple}.
\end{definition}
	
%\begin{figure}[h]
%\includegraphics[height=1.5in]{StandardTriple.ps}
%\caption{\LW{Figure to make} Constructing a standard triple diagram associated to the permutation
%$\pi=(4,6,5,3,1,2)$.  The figure shows the stages in the construction after:
%adding boundary-parallel strands $4\to 3$ and $3\to 5$; then $1\to 4$ and $6\to 2$; then
%$2\to 6$ and $5 \to 1$.\label{fig:standardtriple}}
%\end{figure}

\tikzset{
mid arrow/.style={postaction={decorate,decoration={
        markings,
        mark=at position .5 with {\arrow[#1]{stealth}}
      }}},
}

\begin{figure}[h]
\centering
\begin{tikzpicture}[scale = 0.62]
% Define common styles
\tikzset{vertex/.style={circle,draw,minimum size=0.6cm}, arrow/.style={-Stealth}}

% Loop to create 4 diagrams
\def \i {1}
    \begin{scope}[xshift=5*\i cm]
        % Draw circle
        \draw[thick](0,0) circle (2cm);
        % Place numbers around the circle
        \foreach \n in {1,...,6} {
            \node at ({210-60*\n}:2.3cm) {\n};
            % Two inward arrows per point
            \draw[arrow, thick] ({60*(\n-1)-83}:2cm) -- ({60*(\n-1)-83}:1.65cm);
            \draw[thick]({60*(\n-1)-83}:1.75cm) -- ({60*(\n-1)-83}:1.55cm);
            \draw[arrow, thick] ({60*(\n-1)-97}:1.55cm) -- ({60*(\n-1)-97}:1.88cm);
            \draw[thick] ({60*(\n-1)-97}:1.85cm) -- ({60*(\n-1)-97}:2cm);
        }
    \end{scope}

\def \i {2}
    \begin{scope}[xshift=5*\i cm]
        % Draw circle
        \draw[thick](0,0) circle (2cm);
        % Place numbers around the circle
        \foreach \n in {1,...,6} {
            \node at ({210-60*\n}:2.3cm) {\n};

            \node (10*\n+1) at ({217-60*\n}:2cm){};
            \node (10*n\n+2) at ({203-60*\n}:2cm){};
        }

        \foreach \n in {4,5,6}{
        \draw[arrow, thick] ({60*(\n-1)-83}:2cm) -- ({60*(\n-1)-83}:1.65cm);
        \draw[thick]({60*(\n-1)-83}:1.75cm) -- ({60*(\n-1)-83}:1.55cm);
        \draw[arrow, thick] ({60*(\n-1)-97}:1.55cm) -- ({60*(\n-1)-97}:1.88cm);
        \draw[thick] ({60*(\n-1)-97}:1.85cm) -- ({60*(\n-1)-97}:2cm);
        };

        \draw[thick]plot [smooth, tension = 1.5] coordinates {({217-60*4}:2cm) ({210-60*3.5}:1.5cm) ({203-60*3}:2cm)};

        \draw[arrow, thick] ({210-60*3.41}:1.518cm) -- ({210-60*3.4}:1.52cm);

        \draw[arrow, thick] ({217-60*3.015}:1.8cm) -- ({217-60*3.053}:1.65cm); 

        \draw[arrow, thick] ({203-60*4.965}:1.65cm) --({203-60*5}:1.8cm) ; 

        \draw[arrow, thick] ({217-60*4.91}:1.82cm)  --({217-60*4.85}:1.7cm); 

        \draw[arrow, thick] ({203-60*4.15}:1.65cm) --({203-60*4.08}:1.8cm) ;

        \draw[thick]plot [smooth, tension = 0.7] coordinates {({217-60*3}:2cm)  ({203-60*(2.9)}:1.5cm) ({203-60*4}:1.3cm) ({203-60*4.9}:1.5cm)({203-60*5}:2cm)};

        \draw[thick]plot [smooth, tension = 0.7] coordinates {({217-60*5}:2cm) ({203-60*4.5}:1.5cm) ({203-60*4.3}:1cm)};

        \draw[thick]plot [smooth, tension = 0.7] coordinates {({203-60*4}:2cm) ({203-60*4.43}:1.3cm) ({203-60*4.7}:1.1cm)};

        % \draw[arrow, thick] plot [smooth, tension = 1.5] coordinates {({217-60*4}:2cm) ({210-60*3.75}:1.55cm)  ({210-60*3.5}:1.45cm)};

        % \draw[thick]plot [smooth, tension = 1.5] coordinates {({210-60*3.5}:1.45cm) ({210-60*3.25}:1.55cm)  ({203-60*3}:2cm)};

    \end{scope}

\def \i {3}
    \begin{scope}[xshift=5*\i cm]
        % Draw circle
        \draw[thick](0,0) circle (2cm);
        % Place numbers around the circle
        \foreach \n in {1,...,6} {
            \node at ({210-60*\n}:2.3cm) {\n};
        }
        \draw[thick]plot [smooth, tension = 1.5] coordinates {({217-60*4}:2cm) ({210-60*3.5}:1.5cm) ({203-60*3}:2cm)};

        \draw[thick]plot [smooth, tension = 0.7] coordinates {({217-60*3}:2cm)  ({203-60*(2.9)}:1.5cm) ({203-60*4}:1.3cm) ({203-60*4.9}:1.5cm)({203-60*5}:2cm)};

        \draw[thick]plot [smooth, tension = 0.7] coordinates {({217-60*5}:2cm) ({203-60*4.5}:1.5cm) ({203-60*4.3}:1cm)};

        \draw[thick]plot [smooth, tension = 0.7] coordinates {({203-60*4}:2cm) ({203-60*4.43}:1.3cm) ({203-60*5.5}:1.3cm)  ({210-60*0.8}:1.6cm) ({217-60*1}:2cm)};

        \draw[thick]plot [smooth, tension = 0.7] coordinates {({203-60*2}:2cm) ({210-60*2.05}:1.5cm) ({210-60*1.3}:1.2cm) ({210-60*5.95}:1.5cm)({217-60*6}:2cm)};

        \draw[thick]plot [smooth, tension = 0.7] coordinates {({210-60*6.15}:1cm) ({203-60*6}:2cm)};

        \draw[thick]plot [smooth, tension = 0.7] coordinates {({210-60*6.15}:1cm) ({203-60*6}:2cm)};

        \draw[thick]plot [smooth, tension = 0.7] coordinates {({210-60*1.9}:1.1cm) ({210-60*1.5}:1.3cm)({210-60*1.2}:1.6cm)  ({203-60*1}:2cm)};

        \draw[thick]plot [smooth, tension = 1] coordinates {({217-60*2}:2cm) ({210-60*1.73}:1.7cm) ({210-60*1.27}:1cm)};

        \draw[arrow, thick] ({210-60*3.41}:1.518cm) -- ({210-60*3.4}:1.52cm);

        \draw[arrow, thick] ({217-60*3.015}:1.8cm) -- ({217-60*3.053}:1.65cm); 

        \draw[arrow, thick] ({203-60*4.965}:1.65cm) --({203-60*5}:1.8cm) ; 

        \draw[arrow, thick] ({217-60*4.91}:1.82cm)  --({217-60*4.85}:1.7cm); 

        \draw[arrow, thick] ({203-60*4.15}:1.65cm) --({203-60*4.08}:1.8cm) ; 

        \draw[arrow, thick] ({203-60*1.985}:1.65cm) --({203-60*2.01}:1.8cm) ; 

        \draw[arrow, thick] ({217-60*1.895}:1.82cm)  --({217-60*1.85}:1.71cm); 

        \draw[arrow, thick] ({203-60*1.07}:1.64cm) --({203-60*1.02}:1.8cm) ; 

        \draw[arrow, thick] ({217-60*.99}:1.78cm)  --({217-60*0.93}:1.6cm); 

        \draw[arrow, thick] ({203-60*6.01}:1.64cm) --({203-60*6.008}:1.8cm) ; 

        \draw[arrow, thick] ({217-60*5.99}:1.78cm)  --({217-60*6.035}:1.55cm); 

        \draw[arrow, thick] ({210-60*0.9}:1.064cm)  --({210-60*0.935}:1.07cm);

    \end{scope}

\def \i {4}
    \begin{scope}[xshift=5*\i cm]
        % Draw circle
        \draw[thick](0,0) circle (2cm);
        % Place numbers around the circle
        \foreach \n in {1,...,6} {
            \node at ({210-60*\n}:2.3cm) {\n};
        }
        \draw[thick]plot [smooth, tension = 1.5] coordinates {({217-60*4}:2cm) ({210-60*3.5}:1.5cm) ({203-60*3}:2cm)};

        \draw[thick]plot [smooth, tension = 0.7] coordinates {({217-60*3}:2cm)  ({203-60*(2.9)}:1.5cm) ({203-60*4}:1.3cm) ({203-60*4.9}:1.5cm)({203-60*5}:2cm)};

        \draw[thick]plot [smooth, tension = 0.7] coordinates {({217-60*5}:2cm)  ({203-60*4.1}:1cm) ({210-60*2.5}:0.6cm) ({210-60*1.56}:1.3cm) ({203-60*1}:2cm)};

        \draw[thick]plot [smooth, tension = 0.7] coordinates {({203-60*4}:2cm) ({203-60*4.43}:1.3cm) ({203-60*5.5}:1.3cm)  ({210-60*0.8}:1.6cm) ({217-60*1}:2cm)};

        \draw[thick]plot [smooth, tension = 0.7] coordinates {({203-60*2}:2cm) ({210-60*2.05}:1.5cm) ({210-60*1.3}:1.2cm) ({210-60*5.95}:1.5cm)({217-60*6}:2cm)};

        \draw[thick]plot [smooth, tension = 1] coordinates {({217-60*2}:2cm) ({210-60*1.73}:1.7cm) ({210-60*1.27}:1cm) ({210-60*6.15}:1.25cm) ({203-60*6}:2cm)};

            \draw[arrow, thick] ({210-60*3.41}:1.518cm) -- ({210-60*3.4}:1.52cm);

        \draw[arrow, thick] ({217-60*3.015}:1.8cm) -- ({217-60*3.053}:1.65cm); 

        \draw[arrow, thick] ({203-60*4.965}:1.65cm) --({203-60*5}:1.8cm) ; 

        \draw[arrow, thick] ({217-60*4.93}:1.82cm)  --({217-60*4.83}:1.6cm); 

        \draw[arrow, thick] ({203-60*4.15}:1.65cm) --({203-60*4.08}:1.8cm) ; 

        \draw[arrow, thick] ({203-60*1.985}:1.65cm) --({203-60*2.01}:1.8cm) ; 

        \draw[arrow, thick] ({217-60*1.895}:1.82cm)  --({217-60*1.85}:1.71cm); 

        \draw[arrow, thick] ({203-60*1.2}:1.64cm) --({203-60*1.113}:1.8cm) ; 

        \draw[arrow, thick] ({217-60*.99}:1.78cm)  --({217-60*0.93}:1.6cm); 

        \draw[arrow, thick] ({203-60*5.96}:1.64cm) --({203-60*5.95}:1.8cm) ; 

        \draw[arrow, thick] ({217-60*5.99}:1.78cm)  --({217-60*6.035}:1.55cm); 

        \draw[arrow, thick] ({210-60*0.9}:1.064cm)  --({210-60*0.935}:1.07cm); 

        \draw[arrow, thick] ({210-60*1}:0.885cm)  --({210-60*0.935}:0.87cm); 

        \draw[arrow, thick] ({210-60*3}:0.531cm)-- ({210-60*2.7}:0.55cm); 

        \draw[arrow, thick] ({210-60*5.35}:1.27cm)  --({210-60*5.3}:1.27cm); 
    \end{scope}

\end{tikzpicture}
\caption{Constructing a standard triple diagram associated to the permutation
$\pi=(4,6,5,3,1,2)$.  The figure shows the stages in the construction after:
adding boundary-parallel strands $4\to 3$ and $3\to 5$; then $1\to 4$ and $6\to 2$; then 
$2\to 6$ and $5 \to 1$.\label{fig:standardtriple}}
\end{figure}

\iffalse
	\begin{figure}[h]
	\includegraphics[height=1.3in]{StandardTriple2.ps}
	\caption{\LW{alternative figure} Constructing a standard triple diagram associated to the permutation
	$\pi=(3,5,4,1,2)$.  The figure shows the stages in the construction after:
	adding boundary-parallel strands $1\to 3$;  $3\to 4$;  $4 \to 1$; $5 \to 2$; and $2\to 5$.
		\label{fig:standardtriple2}}
\end{figure}
\fi

We shall keep in mind that a standard triple diagram is constructed 
by choosing a sequence of minimal intervals. 

\begin{exercise}
\label{lem:useful-moves}
For each of the three pairs of triple diagrams shown in \cref{fig:7.6.1-7.6.3},
demonstrate that the two diagrams are move-equivalent to each other, 
i.e., are related via a sequence of swivel moves. 
(In each of the three cases, the central section can involve an arbitrary number of repetitions.)
\end{exercise}

\begin{figure}[ht]
\vspace{-.2in}
\begin{align}	
\label{it:move-1}
&\begin{tikzpicture}[baseline={(0,-0.1)}]
\draw[thick, postaction={decorate, decoration = {markings, mark = between positions 0.55 and 0.8 step 0.5 with {\arrow{Stealth}}}}] (-4,0.15) to[bend right=30] (-1,0.15);
\draw[thick, postaction={decorate, decoration = {markings, mark = between positions 0.55 and 0.8 step 0.5 with {\arrow{Stealth}}}}] (-1,-0.15) to[bend right=30] (-4,-0.15);
\draw[thick, postaction={decorate, decoration = {markings, mark = between positions 0.45 and 0.9 step 0.5 with {\arrow{Stealth}}}}] (-3.72,-0.3) -- (-3.72,0.3);
\draw[thick, postaction={decorate, decoration = {markings, mark = between positions 0.45 and 0.9 step 0.7 with {\arrow{Stealth}}}}] (-1.28,0.3) -- (-1.28,-0.3);
\draw[thick, postaction={decorate, decoration = {markings, mark = between positions 0.65 and 0.8 step 0.5 with {\arrow{Stealth}}}}] (-1.95,-0.45) to[bend left=55] (-1.95,0.45);
\draw[thick, postaction={decorate, decoration = {markings, mark = between positions 0.65 and 0.8 step 0.5 with {\arrow{Stealth}}}}] (-2.2,0.45) to[bend left=55] (-2.2,-0.45);
\draw[thick, postaction={decorate, decoration = {markings, mark = between positions 0.65 and 0.8 step 0.5 with {\arrow{Stealth}}}}] (-2.8,-0.45) to[bend left=55] (-2.8,0.45);
\draw[thick, postaction={decorate, decoration = {markings, mark = between positions 0.65 and 0.8 step 0.5 with {\arrow{Stealth}}}}] (-3.05,0.45) to[bend left=55] (-3.05,-0.45);
\draw[>={Stealth[length=6pt]}, <->] (-0.45,0) -- (0.55,0);
\draw[thick, postaction={decorate, decoration = {markings, mark = between positions 0.65 and 0.8 step 0.5 with {\arrow{Stealth}}}}] (1.35,-0.45) to[bend right=55] (1.35,0.45);
\draw[thick, postaction={decorate, decoration = {markings, mark = between positions 0.65 and 0.8 step 0.5 with {\arrow{Stealth}}}}] (1.6,0.45) to[bend right=55] (1.6,-0.45);
\draw[thick, postaction={decorate, decoration = {markings, mark = between positions 0.65 and 0.8 step 0.5 with {\arrow{Stealth}}}}] (2.05,-0.45) to[bend right=55] (2.05,0.45);
\draw[thick, postaction={decorate, decoration = {markings, mark = between positions 0.65 and 0.8 step 0.5 with {\arrow{Stealth}}}}] (2.3,0.45) to[bend right=55] (2.3,-0.45);
\draw[thick, postaction={decorate, decoration = {markings, mark = between positions 0.65 and 0.8 step 0.5 with {\arrow{Stealth}}}}] (2.75,-0.45) to[bend right=55] (2.75,0.45);
\draw[thick, postaction={decorate, decoration = {markings, mark = between positions 0.65 and 0.8 step 0.5 with {\arrow{Stealth}}}}] (3,0.45) to[bend right=55] (3,-0.45);
\draw[thick, postaction={decorate, decoration = {markings, mark = between positions 0.43 and 0.8 step 0.5 with {\arrow{Stealth}}}}] (1,0.3) -- (3.3,0.3);
\draw[thick, postaction={decorate, decoration = {markings, mark = between positions 0.7 and 0.8 step 0.5 with {\arrow{Stealth}}}}] (3.3,-0.3) -- (1,-0.3);
\end{tikzpicture}
\\[.1in]
& \label{it:move-2}
\begin{tikzpicture}[baseline={(0,-0.1)}]
\draw[thick, postaction={decorate, decoration = {markings, mark = between positions 0.35 and 0.9 step 0.7 with {\arrow{Stealth}}}}] (-3.71,-0.35) -- (-3.71,0.35);
\draw[thick, postaction={decorate, decoration = {markings, mark = between positions 0.65 and 0.8 step 0.5 with {\arrow{Stealth}}}}] (-1.3,-0.45) to[bend left=55] (-1.3,0.45);
\draw[thick, postaction={decorate, decoration = {markings, mark = between positions 0.65 and 0.8 step 0.5 with {\arrow{Stealth}}}}] (-1.55,0.45) to[bend left=55] (-1.55,-0.45);
%\draw[thick, postaction={decorate, decoration = {markings, mark = between positions 0.65 and 0.8 step 0.5 with {\arrow{Stealth}}}}] (-1.55,-0.45) to[bend right=55] (-1.55,0.45);
%\draw[thick, postaction={decorate, decoration = {markings, mark = between positions 0.65 and 0.8 step 0.5 with {\arrow{Stealth}}}}] (-1.3,0.45) to[bend right=55] (-1.3,-0.45);
\draw[thick, postaction={decorate, decoration = {markings, mark = between positions 0.65 and 0.8 step 0.5 with {\arrow{Stealth}}}}] (-2.15,-0.45) to[bend left=55] (-2.15,0.45);
\draw[thick, postaction={decorate, decoration = {markings, mark = between positions 0.65 and 0.8 step 0.5 with {\arrow{Stealth}}}}] (-2.4,0.45) to[bend left=55] (-2.4,-0.45);
%\draw[thick, postaction={decorate, decoration = {markings, mark = between positions 0.65 and 0.8 step 0.5 with {\arrow{Stealth}}}}] (-2.4,-0.45) to[bend right=55] (-2.4,0.45);
%\draw[thick, postaction={decorate, decoration = {markings, mark = between positions 0.65 and 0.8 step 0.5 with {\arrow{Stealth}}}}] (-2.15,0.45) to[bend right=55] (-2.15,-0.45);
\draw[thick, postaction={decorate, decoration = {markings, mark = between positions 0.65 and 0.8 step 0.5 with {\arrow{Stealth}}}}] (-3,-0.45) to[bend left=55] (-3,0.45);
\draw[thick, postaction={decorate, decoration = {markings, mark = between positions 0.65 and 0.8 step 0.5 with {\arrow{Stealth}}}}] (-3.25,0.45) to[bend left=55] (-3.25,-0.45);
%\draw[thick, postaction={decorate, decoration = {markings, mark = between positions 0.65 and 0.8 step 0.5 with {\arrow{Stealth}}}}] (-3.25,-0.45) to[bend right=55] (-3.25,0.45);
%\draw[thick, postaction={decorate, decoration = {markings, mark = between positions 0.65 and 0.8 step 0.5 with {\arrow{Stealth}}}}] (-3,0.45) to[bend right=55] (-3,-0.45);
\draw[thick, postaction={decorate, decoration = {markings, mark = between positions 0.55 and 0.6 step 0.55 with {\arrow{Stealth}}}}] plot [smooth] coordinates {(-1,0.33) (-3,0.33) (-3.7,0) (-3.95,-0.26)};
\draw[thick, postaction={decorate, decoration = {markings, mark = between positions 0.52 and 0.6 step 0.55 with {\arrow{Stealth}}}}] plot [smooth] coordinates {(-3.95,0.26) (-3.7,0) (-3,-0.33) (-1,-0.33)};
\draw[>={Stealth[length=6pt]}, <->] (-0.45,0) -- (0.55,0);
\draw[thick, postaction={decorate, decoration = {markings, mark = between positions 0.65 and 0.8 step 0.5 with {\arrow{Stealth}}}}] (1.35,-0.45) to[bend right=55] (1.35,0.45);
\draw[thick, postaction={decorate, decoration = {markings, mark = between positions 0.65 and 0.8 step 0.5 with {\arrow{Stealth}}}}] (1.6,0.45) to[bend right=55] (1.6,-0.45);
\draw[thick, postaction={decorate, decoration = {markings, mark = between positions 0.65 and 0.8 step 0.5 with {\arrow{Stealth}}}}] (2.05,-0.45) to[bend right=55] (2.05,0.45);
\draw[thick, postaction={decorate, decoration = {markings, mark = between positions 0.65 and 0.8 step 0.5 with {\arrow{Stealth}}}}] (2.3,0.45) to[bend right=55] (2.3,-0.45);
\draw[thick, postaction={decorate, decoration = {markings, mark = between positions 0.65 and 0.8 step 0.5 with {\arrow{Stealth}}}}] (2.75,-0.45) to[bend right=55] (2.75,0.45);
\draw[thick, postaction={decorate, decoration = {markings, mark = between positions 0.65 and 0.8 step 0.5 with {\arrow{Stealth}}}}] (3,0.45) to[bend right=55] (3,-0.45);
\draw[thick, postaction={decorate, decoration = {markings, mark = between positions 0.575 and 0.7 step 0.55 with {\arrow{Stealth}}}}] plot [smooth] coordinates {(1,0.33) (3,0.3) (3.7,-0.26)};
\draw[thick, postaction={decorate, decoration = {markings, mark = between positions 0.51 and 0.6 step 0.55 with {\arrow{Stealth}}}}] plot [smooth] coordinates {(3.7,0.26) (3,-0.3) (1,-0.33)};
\draw[thick, postaction={decorate, decoration = {markings, mark = between positions 0.35 and 0.9 step 0.7 with {\arrow{Stealth}}}}] (3.48,-0.35) -- (3.48,0.35);
\end{tikzpicture}
\\[.1in]
&\label{it:move-3}
\begin{tikzpicture}[baseline={(0,-0.1)}]
\draw[thick, postaction={decorate, decoration = {markings, mark = between positions 0.5 and 0.7 step 0.55 with {\arrow{Stealth}}}}] plot [smooth] coordinates {(-3.9,0.33) (-1.7,0.3) (-1,-0.26)};
\draw[thick, postaction={decorate, decoration = {markings, mark = between positions 0.36 and 0.6 step 0.55 with {\arrow{Stealth}}}}] plot [smooth] coordinates {(-3.9,-0.33) (-1.7,-0.3) (-1,0.26)};
\draw[thick, postaction={decorate, decoration = {markings, mark = between positions 0.65 and 0.9 step 0.7 with {\arrow{Stealth}}}}] (-3.2,-0.5) -- (-3.55,0.5);
\draw[thick, postaction={decorate, decoration = {markings, mark = between positions 0.65 and 0.9 step 0.7 with {\arrow{Stealth}}}}] (-2.5,-0.5) -- (-2.85,0.5);
\draw[thick, postaction={decorate, decoration = {markings, mark = between positions 0.65 and 0.9 step 0.7 with {\arrow{Stealth}}}}] (-1.8,-0.5) -- (-2.15,0.5);
\draw[thick, postaction={decorate, decoration = {markings, mark = between positions 0.6 and 0.9 step 0.7 with {\arrow{Stealth}}}}] (-2.7,0.5) -- (-3.33,-0.5);
\draw[thick, postaction={decorate, decoration = {markings, mark = between positions 0.6 and 0.9 step 0.7 with {\arrow{Stealth}}}}] (-2,0.5) -- (-2.63,-0.5);
\draw[thick, postaction={decorate, decoration = {markings, mark = between positions 0.95 and 0.99 step 0.5 with {\arrow{Stealth}}}}] (-3.4,0.5) to[bend left=30] (-3.9,0);
\draw[thick, postaction={decorate, decoration = {markings, mark = between positions 0.62 and 0.88 step 0.55 with {\arrow{Stealth}}}}] plot [smooth] coordinates {(-1,0) (-1.4,-0.01) (-1.6,-0.05) (-1.8,-0.2) (-2,-0.5)};
\draw[>={Stealth[length=6pt]}, <->] (-0.45,0) -- (0.55,0);
\draw[thick, postaction={decorate, decoration = {markings, mark = between positions 0.72 and 0.8 step 0.55 with {\arrow{Stealth}}}}] plot [smooth] coordinates {(1,-0.26) (1.3,0) (2,0.33) (4,0.33)};
\draw[thick, postaction={decorate, decoration = {markings, mark = between positions 0.57 and 0.6 step 0.55 with {\arrow{Stealth}}}}] plot [smooth] coordinates {(1,0.26) (1.3,0) (2,-0.33) (4,-0.33)};
\draw[thick, postaction={decorate, decoration = {markings, mark = between positions 0.65 and 0.9 step 0.7 with {\arrow{Stealth}}}}] (2.2,-0.5) -- (1.85,0.5);
\draw[thick, postaction={decorate, decoration = {markings, mark = between positions 0.65 and 0.9 step 0.7 with {\arrow{Stealth}}}}] (2.9,-0.5) -- (2.55,0.5);
\draw[thick, postaction={decorate, decoration = {markings, mark = between positions 0.65 and 0.9 step 0.7 with {\arrow{Stealth}}}}] (3.6,-0.5) -- (3.25,0.5);
\draw[thick, postaction={decorate, decoration = {markings, mark = between positions 0.6 and 0.9 step 0.7 with {\arrow{Stealth}}}}] (2.68,0.5) -- (2.04,-0.5);
\draw[thick, postaction={decorate, decoration = {markings, mark = between positions 0.6 and 0.9 step 0.7 with {\arrow{Stealth}}}}] (3.38,0.5) -- (2.75,-0.5);
\draw[thick, postaction={decorate, decoration = {markings, mark = between positions 0.45 and 0.49 step 0.5 with {\arrow{Stealth}}}}] (4,0) to[bend right=30] (3.5,-0.5);
\draw[thick, postaction={decorate, decoration = {markings, mark = between positions 0.5 and 0.88 step 0.55 with {\arrow{Stealth}}}}] plot [smooth] coordinates {(2,0.5) (1.8,0.2) (1.55,0.05) (1,0)};
\end{tikzpicture}
\end{align}
\vspace{-.2in}
\caption{Move equivalence of triple diagrams.}
\label{fig:7.6.1-7.6.3}
\vspace{-.1in}
\end{figure}

\begin{exercise}
Use \eqref{it:move-3} to prove the move-equivalence \eqref{eq:parallel-digon-to-monogon} below: 
\vspace{-.25in}
\begin{equation}
\label{eq:parallel-digon-to-monogon}
\setlength{\unitlength}{0.35pt}
\begin{picture}(435,90)(0,30)
\thicklines
	\put(70,85){$S$}
	\put(95,-20){$U$}
\put(70,10){\line(1,0){160}}
\put(70,10){\line(4,7){46}}
\put(70,10){\line(-4,-7){6}}
\put(150,10){\line(4,7){46}}
\put(150,10){\line(-4,-7){6}}
\put(110,80){\line(1,0){80}}
\put(110,80){\line(4,-7){46}}
\put(110,80){\line(-4,7){6}}
\put(190,80){\line(4,-7){46}}
\put(190,80){\line(-4,7){6}}
\put(230,10){\line(-2,-3){7}}
\put(70,10){\line(2,-3){7}}
\put(280,40){\line(1,0){20}}
\put(280,40){\line(2,3){20}}
\put(280,40){\line(5,-4){20}}
\put(20,40){\line(-1,0){20}}
\put(20,40){\line(-2,3){20}}
\put(20,40){\line(-5,-4){20}}
\qbezier(190,80)(230,80)(280,40)
\qbezier(110,80)(70,80)(20,40)
\qbezier(230,10)(260,10)(280,40)
\qbezier(70,10)(40,10)(20,40)
\qbezier(230,10)(250,40)(280,40)
\qbezier(70,10)(50,40)(20,40)
\put(110,10){\vector(1,0){10}}
\put(150,80){\vector(1,0){10}}
\put(190,10){\vector(1,0){10}}
\put(90,45){\vector(-4,-7){5}}
\put(170,45){\vector(-4,-7){5}}
\put(130,45){\vector(-4,7){5}}
\put(210,45){\vector(-4,7){5}}
\put(240,66){\vector(2,-1.1){1}}
\put(272,30){\vector(1.1,1){1}}
\put(250,32){\vector(-2.8,-1){1}}
\put(60,66){\vector(2,1.1){1}}
\put(13,51){\vector(4,-7){1}}
\put(53,30){\vector(-1,1.2){1}}
\put(370,45){\makebox(0,0){$\longleftrightarrow$}}
\end{picture}
\setlength{\unitlength}{0.35pt}
\begin{picture}(300,90)(0,30)
\thicklines
\put(70,10){\line(1,0){230}}
\put(70,10){\line(-1,0){70}}
\put(70,10){\line(4,7){46}}
\put(70,10){\line(-4,-7){6}}
\put(150,10){\line(4,7){46}}
\put(150,10){\line(-4,-7){6}}
\put(110,80){\line(1,0){190}}
\put(110,80){\line(4,-7){46}}
\put(110,80){\line(-4,7){6}}
\put(190,80){\line(4,-7){46}}
\put(190,80){\line(-4,7){6}}
\put(230,10){\line(-2,-3){7}}
\put(70,10){\line(2,-3){7}}
\put(280,40){\line(1,0){20}}
%\put(280,40){\line(2,3){20}}
%\put(280,40){\line(5,-4){20}}
\put(20,40){\line(-1,0){20}}
\put(20,40){\line(-2,3){20}}
%\put(20,40){\line(-5,-4){20}}
%\qbezier(190,80)(230,80)(280,40)
\qbezier(110,80)(70,80)(20,40)
%\qbezier(230,10)(260,10)(280,40)
%\qbezier(70,10)(40,10)(20,40)
\qbezier(230,10)(250,40)(280,40)
\qbezier(70,10)(50,40)(20,40)
\qbezier(20,40)(30,25)(20,10)
\qbezier(20,40)(0,25)(20,10)

\qbezier(20,10)(15,5)(15,0)
\qbezier(20,10)(30,5)(30,0)
\qbezier(15,0)(15,-10)(22.5,-10)
\qbezier(30,0)(30,-10)(22.5,-10)

\put(25,16){\vector(0,-1){1}}
%\put(30,5){\vector(0,1){1}}
\put(53,30){\vector(-1,1.2){1}}

\put(45,10){\vector(1,0){10}}
\put(110,10){\vector(1,0){10}}
\put(150,80){\vector(1,0){10}}
\put(190,10){\vector(1,0){10}}
\put(90,45){\vector(-4,-7){5}}
\put(170,45){\vector(-4,-7){5}}
\put(130,45){\vector(-4,7){5}}
\put(210,45){\vector(-4,7){5}}
%\put(240,66){\vector(2,-1.1){1}}
%\put(272,30){\vector(1.1,1){1}}
\put(250,32){\vector(-2.8,-1){1}}
\put(60,66){\vector(2,1.1){1}}
%\put(28,30){\vector(-1.1,1){1}}

\put(270,80){\vector(1,0){10}}
\put(270,10){\vector(1,0){10}}
\put(12.5,51){\vector(4,-7){1}}

\end{picture}
\end{equation}
\end{exercise}
\vspace{.05in}

\begin{lemma}
\label{lem:straighten2}
Let $\X$ be %a minimal triple diagram.
a triple diagram such that no triple diagram move-equivalent to~$\X$ contains a monogon.
Then the following statements~hold:
\begin{itemize}[leftmargin=.3in]
\item[{\rm (i)}]
No triple diagram move-equivalent to $\X$ has a badgon or a closed strand. 
\item[{\rm (ii)}]
Let $I$ be a minimal interval for the strand permutation associated with~$\X$. 
%see \cref{def:standard-triple}.  
Then $\X$ is move-equivalent to a diagram $\X'$ in which the strand 
connecting the endpoints of~$I$ 
is boundary-parallel along~$I$. 
%see \cref{def:boundary-parallel}.
\end{itemize}
\vspace{-.1in}
\end{lemma}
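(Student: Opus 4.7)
\medskip

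\noindent
For part (i), the argument is short. A badgon is by definition either a monogon or a parallel digon, and the hypothesis already rules out monogons in the move-equivalence class of $\X$. If some $\X''\sim\X$ contained a parallel digon, then the move equivalence \eqref{eq:parallel-digon-to-monogon} would convert it to a diagram $\X'''\sim\X''\sim\X$ containing a monogon, a contradiction. Hence no diagram move-equivalent to $\X$ has a badgon, and \cref{lem:no-closed-strands} then excludes closed strands.

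\medskip

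\noindent
For part (ii), the first step is to unpack what minimality of $I$ tells us: if another strand had both endpoints in the interior of $I$, one of its two intervals would sit strictly inside $I$, violating minimality. So every strand other than the strand $S$ joining the endpoints $s,t$ of $I$ meets the interior of $I$ in at most one point. Call the strands meeting the interior of $I$ \emph{entering}. Each entering strand crosses $S$ an odd number of times and each other strand an even number of times. The arc $S$ is boundary-parallel along $I$ precisely when each entering strand crosses $S$ exactly once and no further triple points lie in the closed region $R$ bounded by $S$ and $I$. The plan is therefore to select, among all diagrams $\X'\sim\X$, one that minimizes the number of triple points inside $R$, and to prove that any such minimizer is boundary-parallel along~$I$.

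\medskip

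\noindent
Suppose $\X'$ is such a minimizer and fails to be boundary-parallel. Then either (A) some triple point of $\X'$ in $R$ does not lie on $S$, or (B) every triple point in $R$ lies on $S$ but some strand $T$ crosses $S$ at least twice. In case (B), choose an innermost pair of consecutive crossings of $T$ with $S$, i.e., one cutting off a minimal sub-region of $R$; by part (i) this digon is anti-parallel, and by minimality of the sub-region no extra triple points obstruct it. This puts us in the configuration covered by one of the swivel chains \eqref{it:move-2} or \eqref{it:move-3} (depending on the remaining strands threading through it), which removes two triple points from $R$. In case (A), follow the strands through an innermost off-$S$ triple point $v\in R$ until they meet $S$ or exit through $I$; absence of badgons and closed strands forces this local picture to match \eqref{it:move-1}, whose application pushes $v$ off $S$ and reduces the count in $R$. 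Either way, minimality of $\X'$ is violated, so $S$ is boundary-parallel in $\X'$.

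\medskip

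\noindent
The main obstacle, as I see it, is exactly the ``innermost'' case analysis in the preceding paragraph: one must show that, after possibly performing preparatory swivel moves that do not increase the count in $R$, one of the three exercise patterns \eqref{it:move-1}--\eqref{it:move-3} applies literally, with matching strand orientations. Tracking these orientations so that the swivel from the exercise is actually available, and verifying that no intermediate move creates a monogon (which would violate the standing hypothesis on $\X$), is where I expect the bulk of the technical work to lie.
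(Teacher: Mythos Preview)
Your argument for part~(i) has a real gap. The move equivalence \eqref{eq:parallel-digon-to-monogon} is not a statement about an arbitrary parallel digon: it applies only to the very specific ``combed'' configuration pictured there, in which the two sides of the digon run parallel and the only triple points between them are transverse crossings by other strands. A generic parallel digon can have arbitrary junk both inside and threaded through it, and there is no direct swivel sequence from such a picture to the left-hand side of \eqref{eq:parallel-digon-to-monogon}. The paper handles this by proving (i) and (ii) \emph{simultaneously} by induction on the number of triple points: given a parallel digon with sides $S$ and $U$, one cuts along $S$, views the region containing the digon as a smaller triple diagram, and invokes the induction hypothesis for~(ii) there to make $U$ (or an intermediate strand $T$) boundary-parallel to~$S$. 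Only then is one in the situation of \eqref{eq:parallel-digon-to-monogon} (if the strands are co-oriented) or \eqref{it:move-1} (if anti-parallel). So part~(i) genuinely needs part~(ii) for smaller diagrams; it cannot be dispatched first as a standalone lemma.

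For part~(ii), your minimizer framework is shaky in two respects. First, the region $R$ is defined by the current position of $S$, and $S$ itself moves under swivel moves, so ``minimize the number of triple points in $R$'' is not a well-posed quantity over the move-equivalence class without further bookkeeping. Second, your cases (A) and (B) both assert that an innermost configuration matches one of \eqref{it:move-1}--\eqref{it:move-3} ``literally,'' but this is exactly what requires proof, as you yourself note. The paper's argument is instead a constructive two-step procedure, again powered by the simultaneous induction: first it removes all double intersections of other strands with $S$ (by applying the induction hypothesis for~(ii) inside the smaller region between $S$ and $I$, making the offending strand boundary-parallel to $S$, then using \eqref{it:move-1}); then, once every strand crosses $S$ at most once, it ``combs out'' the remaining triple points in $R$ from left to right, each time using the induction hypothesis to straighten the leftmost entering strand and then applying \eqref{it:move-2} or \eqref{it:move-3} depending on orientation. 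The key structural point you are missing is that the induction hypothesis for~(ii) on strictly smaller diagrams is what puts you into the exercise patterns; without it, the reductions you sketch do not go through.
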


\begin{proof}
We will simultaneously prove statements (i) and~(ii) 
by induction on the number of triple points in~$\X$. 
Thus, we assume that both (i) and~(ii) hold for triple diagrams 
that have fewer triple points than~$\X$. 

We first prove~(i). 
Suppose that a triple diagram $\X'\sim\X$ contains (non-self-intersecting) strands~$S$ and~$U$ forming
a parallel digon. 
The strand~$S$ cuts the disk~$\mathbf{D}$ into two regions.  
Let $R$ be the region containing the digon, with a small neighbourhood of~$S$ removed. 
Since the boundaries of the faces of~$\X'$ are consistently oriented, the same is true 
for the portion of~$\X'$ contained inside~$R$, so this portion can be viewed as a (smaller) triple diagram. 
Suppose that $U$ bounds a minimal interval within~$S$ (viewed as a portion of the boundary of~$R$).
Then by the induction assumption, $U$~can be moved to be boundary-parallel to~$S$.
Since $S$ and $U$ are co-oriented, 
we get the picture on the left-hand side of~\eqref{eq:parallel-digon-to-monogon}
(with $U$ running horizontally at the bottom). 
Applying~\eqref{eq:parallel-digon-to-monogon}, we obtain a monogon, a contradiction. 

If the subinterval of~$S$ cut out by~$U$ is not minimal, 
then there is a strand $T$ that cuts across~$S$ twice, creating a minimal interval within~$S$ 
and forming a digon inside~$R$. 
We may assume that this digon is anti-parallel (or else replace $U$ by~$T$ and repeat). 
By the induction assumption, we can apply swivel moves inside~$R$ to make $T$ boundary-parallel to~$S$. 
We then apply~\eqref{it:move-1} to remove the digon, as shown in \cref{fig:step-1}. 
Repeating this operation if necessary, we obtain a triple diagram in which 
$U$ bounds a minimal interval within~$S$;
we then argue as above to arrive at a contradiction. 

\pagebreak[3]

\begin{figure}[ht]
\begin{center}
\vspace{-.1in}
\begin{tikzpicture}

\filldraw[fill=white, ultra thick] (-4,0) circle (1.2cm);
\filldraw[fill=white, thick] (0,0) circle (1.2cm);
\filldraw[fill=white, thick] (4,0) circle (1.2cm);
\draw[>={Stealth[length=7pt]}, ->] (-2.3,0) -- (-1.6,0);
\draw[>={Stealth[length=7pt]}, ->] (1.7,0) -- (2.4,0);

% 1st circle
\def\first{(-4,0) circle (1.2cm)}
\def\second{(-5.19,-0.2) to[bend left=40] (-2.81,-0.2)}
  \fill[fill=white] \first;
\begin{scope}
    \clip \first;
    \fill[fill=yellow] \second;
    \path[fill=yellow] (-6,-0.2)--(-2,-0.2)--(-4,-3)--cycle;
\end{scope}

\draw[thick, postaction={decorate, decoration = {markings, mark = between positions 0.55 and 0.79 step 0.5 with {\arrow{Stealth}}}}] (-5.19,-0.2) to[bend left=40] (-2.81,-0.2);

\draw[thick] (-5,0.15) -- (-4.7,-0.1);
\draw[thick] (-4.9,-0.2) -- (-4.8,0.25);
\draw[thick] (-4.5,0.35) -- (-4.25,0.05);
\draw[thick] (-4.45,0) -- (-4.3,0.4);
\draw[thick] (-3.75,0.4) -- (-3.55,0);
\draw[thick] (-3.8,0.05) -- (-3.55,0.38);
\draw[thick] (-3.2,0.25) -- (-3.05,-0.2);
\draw[thick] (-3.25,-0.1) -- (-3,0.17);
\draw [thick, postaction={decorate, decoration = {markings, mark = between positions 0.55 and 0.69 step 0.35 with {\arrow[rotate=20]{Stealth}}}}] (-3.25,-0.1) to[out=235,in=25] (-3.75,-0.7) to[out=220,in=0] (-4.1,-0.8) to[out=160,in=280] (-4.6,-0.3) to[out=100,in=320] (-4.7,-0.1);
\node at (-5.03,-0.3) {\footnotesize $S$};
\node at (-3.6,-0.4) {\footnotesize $T$};
\node at (-3.1,-0.45) {\footnotesize $R$};
\node at (-4,-1.4) {\footnotesize $I$};

% 2nd circle
\draw[thick, postaction={decorate, decoration = {markings, mark = between positions 0.55 and 0.79 step 0.5 with {\arrow{Stealth}}}}] (-1.19,-0.2) to[bend left=40] (1.19,-0.2);
\draw[thick, postaction={decorate, decoration = {markings, mark = between positions 0.55 and 0.79 step 0.5 with {\arrow{Stealth}}}}] (1,0.2) to[bend left=60] (-1,0.2);
\draw[thick] (-0.5,-0.35) to[bend right=45] (-0.5,0.35);
\draw[thick] (-0.3,-0.37) to[bend left=45] (-0.3,0.36);
\draw[thick] (0.5,-0.35) to[bend left=45] (0.5,0.35);
\draw[thick] (0.3,-0.37) to[bend right=45] (0.3,0.36);
\draw[thick] (-0.82,-0.2) to[bend left=25] (-0.82,0.2);

% 3rd circle
\draw[thick, postaction={decorate, decoration = {markings, mark = between positions 0.45 and 0.59 step 0.5 with {\arrow{Stealth}}}}] (2.9,-0.45) to[bend left=30] (5.1,-0.45);
\draw[thick, postaction={decorate, decoration = {markings, mark = between positions 0.4 and 0.59 step 0.5 with {\arrow{Stealth}}}}] (4.9,0.2) -- (3.1,0.2);
\draw[thick] (4.1,-0.29) to[bend left=45] (4.1,0.36);
\draw[thick] (3.9,-0.29) to[bend right=45] (3.9,0.36);
\draw[thick] (4.7,-0.37) to[bend left=45] (4.7,0.33);
\draw[thick] (4.5,-0.33) to[bend right=45] (4.5,0.33);
\draw[thick] (3.5,-0.33) to[bend left=45] (3.5,0.33);
\draw[thick] (3.3,-0.37) to[bend right=45] (3.3,0.33);
\end{tikzpicture}
\vspace{-.25in}
\end{center}
\caption{Removing double intersections with $S$.}
\label{fig:step-1}
\end{figure}
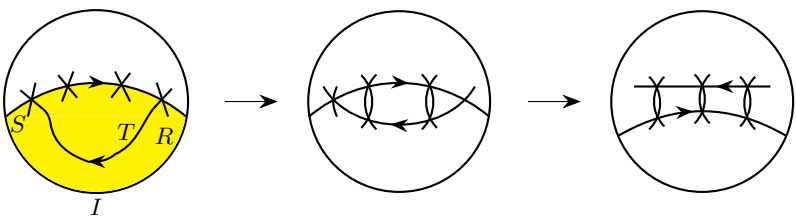

Thus, no triple diagram $\X'\sim\X$ contains badgons. 
By \cref{lem:no-closed-strands}, we conclude that any such $\X'$ does not contain closed strands either.
\iffalse
Now suppose that $\X'\sim\X$ contains a closed (necessarily non-self\-inter\-secting) strand~$S$.
%Without loss of generality, we may assume that $\X'=\X$. 
Let $R$ be the region enclosed by~$S$, with a small neigbourhood of~$S$ removed. 
The strands intersecting~$R$ form a triple diagram inside~$R$.
Since this diagram has fewer triple points than~$\X'$ (or equivalently~$\X$),
the induction assumption applies.
Therefore we can move some strand~$T$ in~$R$ so that it becomes boundary-parallel (to~$\partial R$). 
If $T$ is parallel to the corresponding segment of~$S$, 
then we can apply~\eqref{eq:parallel-digon-to-monogon} to create a monogon. 
Thus, $T$~must be anti-parallel to~$S$. 
We can then apply \eqref{it:move-1} to move this segment of~$T$ outside~$S$,
thereby decreasing the number of triple points inside~$S$.
Repeating this step if necessary, 
we arrive at a situation with no triple points inside~$S$. 
Applying further swivel moves, we move all strands crossing~$S$ away from~$S$ 
until there are only two of them left, 
and then a swivel move creates a self-intersection. 
\fi
This completes the induction step for statement~(i). 

We now proceed to proving statement~(ii). In addition to the induction assumption for~(ii), 
%
%repeatedly applying 
%transformations \eqref{it:move-1}--\eqref{it:move-3} to appropriately chosen fragments of~$\X$.  
%
%By \cref{lem:nomonogondigon}, %since $\X$ is minimal,
we may assume that neither~$\X$ 
nor any triple diagram move-equivalent to~$\X$ contains a badgon or a closed strand. 

Let $S$ be the strand connecting the endpoints of the minimal interval~$I$. 

\noindent
\emph{Step 1: Removing double intersections with $S$, see \cref{fig:step-1}.} 
Let $R$ be the region between the strand~$S$ and the interval~$I$, 
with a small neigborhood of~$S$ removed. 
%not including the triple crossings along $S$ itself.  
Suppose there is a strand that intersects $S$ more than once.  
Among such strands, take one that 
cuts out a minimal interval along the boundary of~$R$. 
Let $T$ denote the segment of this strand contained in~$R$. 
The portion of $\X$ contained inside~$R$ has fewer triple crossings than~$\X$, 
so by the induction assumption, we can make $T$ boundary-parallel to~$S$
by applying swivel moves inside~$R$.  
Now $T$ and $S$ form a (necessarily anti-parallel) digon, 
which we then remove using~\eqref{it:move-1}.  
We repeat this procedure until there are no strands left that intersect~$S$ more than once.  
Since the number of triple points along $S$ decreases each time, the process terminates.

\noindent
\emph{Step 2: Combing out the triple crossings.}
At this stage, no strand crosses $S$ more than once.
Since $I$ is minimal, no strand has both ends at~$I$. 
Since $\X$ contains no closed strands, 
every (non-self-intersecting) strand appearing between~$S$ and~$I$ 
must start or end at a point in~$I$ and cross~$S$.  
Suppose that $S$ is not boundary-parallel. 
Then there exists a strand~$T$ with an endpoint at~$I$
that passes through a triple point before hitting~$S$. 
Among all such~$T$, choose the one with the leftmost endpoint along~$I$, 
cf.\ Figures~\ref{fig:step-2} and~\ref{fig:step-2p} on the left.  
Let $R$ be the part of the region between $I$ and $S$ 
that lies to the right of any strand $T'$ located to the left~of~$T$.  
(By our choice of~$T$, all such strands $T'$ run directly from $I$ to~$S$, with no crossings in between.)  
As we have eliminated all double intersections with~$S$, 
the inter\-val corresponding to~$T$ (looking to the left) is minimal inside~$R$.
We can therefore use the induction assumption inside~$R$ to make~$T$ boundary-parallel.

What we do next depends on the orientation of~$T$ relative to~$S$. 
If $T$ is anti-parallel to~$S$, as in \cref{fig:step-2}, 
then we apply~\eqref{it:move-2} to make $T$ run directly to~$S$.
If $T$ is parallel to~$S$, as in \cref{fig:step-2p},  
then we apply~\eqref{it:move-3}.

We repeat this step until $S$ is boundary-parallel.  
%(One might wonder whether there could be disconnected components
%    of the diagram between $S$ and the boundary, but this is impossible,
%    since we started with a connected diagram, and the swivel move
%    preserves connectivity.)
\end{proof}

\begin{figure}[ht]
\begin{center}
\vspace{-.15in}
\begin{tikzpicture}

\filldraw[fill=white, ultra thick] (-4,0) circle (1.2cm);
\filldraw[fill=white, thick] (0,0) circle (1.2cm);
\filldraw[fill=white, thick] (4,0) circle (1.2cm);
\draw[>={Stealth[length=7pt]}, ->] (-2.3,0) -- (-1.6,0);
\draw[>={Stealth[length=7pt]}, ->] (1.7,0) -- (2.4,0);

% 1st circle
\def\first{(-4,0) circle (1.2cm)}
\fill[fill=white] \first;
%\begin{scope}
%    \clip \first;
%    \path[fill=yellow] (-5,-0.7) -- (-4.8,0.1) 
%    to[out=-10,in=180] (-4.1,0) 
%    to[out=0,in=210] (-3.7,0.05) 
%    to[out=260,in=50] (-4,-0.8) 
%    to[out=220,in=15] (-4.8,-1.2) 
%    -- cycle;
%\end{scope}
\draw[thick, postaction={decorate, decoration = {markings, mark = between positions 0.5 and 0.59 step 0.5 with {\arrow{Stealth}}}}] (-5,-0.68) -- (-4.75,0.3);
\draw[thick, postaction={decorate, decoration = {markings, mark = between positions 0.45 and 0.59 step 0.5 with {\arrow{Stealth}}}}] (-5.2,0.04) to[out=15,in=180] (-4.8,0.1) to[out=-10,in=180] (-4.1,0) to[out=0,in=210] (-3.7,0.05) to[out=30,in=180] (-3,0.2) to[out=0,in=155] (-2.8,0.15);
\draw[thick, postaction={decorate, decoration = {markings, mark = between positions 0.4 and 0.59 step 0.5 with {\arrow{Stealth}}}}] (-3.6,0.25) to[out=250,in=80] (-3.7,0.05) to[out=260,in=50] (-4,-0.8) to[out=220,in=15] (-4.5,-1.1);
\draw[thick, postaction={decorate, decoration = {markings, mark = between positions 0.7 and 0.79 step 0.5 with {\arrow{Stealth}}}}] (-4.6,0.3) to[bend right=20] (-5.1,-0.53);
\draw[thick] (-4,-0.55) -- (-3.7,-0.6);
\draw[thick] (-3.9,-0.45) -- (-3.8,-0.7);
\draw[thick] (-4.2,-0.8) -- (-3.95,-0.9);
\draw[thick] (-4.1,-0.7) -- (-4.05,-1);
\node at (-4.3,-0.4) {\footnotesize $R$};
\node at (-5,0.3) {\footnotesize $S$};
\node at (-4.4,-0.9) {\footnotesize $T$};

% 2nd circle
\draw[thick, postaction={decorate, decoration = {markings, mark = between positions 0.5 and 0.59 step 0.5 with {\arrow{Stealth}}}}] (-1,-0.68) -- (-0.75,0.3);
\draw[thick, postaction={decorate, decoration = {markings, mark = between positions 0.3 and 0.59 step 0.5 with {\arrow{Stealth}}}}] (-1.2,0.04) to[out=15,in=180] (-0.8,0.1) to[out=-10,in=180] (-0.1,0) to[out=0,in=210] (0.3,0.05) to[out=30,in=180] (1,0.2) to[out=0,in=155] (1.2,0.15);
\draw[thick, postaction={decorate, decoration = {markings, mark = between positions 0.5 and 0.59 step 0.5 with {\arrow{Stealth}}}}] (0.6,0.5) to[out=210,in=90] (0.3,0.05) to[out=260,in=50] (0,-0.8) to[out=220,in=15] (-0.5,-1.1);
\draw[thick, postaction={decorate, decoration = {markings, mark = between positions 0.7 and 0.79 step 0.5 with {\arrow{Stealth}}}}] (-0.6,0.3) to[bend right=20] (-1.1,-0.51);
\draw[thick] (0.45,-0.05) to[bend left=10] (0.15,0.15);
\draw[thick] (0.3,-0.7) to[bend left=30] (-0.1,0.2);
\draw[thick] (0.14,-0.8) to[bend right=30] (-0.25,0.15);
\draw[thick] (0,-1) to[bend left=25] (-0.36,0.2);
\draw[thick] (-0.15,-1.1) to[bend right=25] (-0.5,0.18);
\node at (-0.95,0.3) {\footnotesize $S$};
\node at (-0.39,-0.9) {\footnotesize $T$};

% 3rd circle
\draw[thick, postaction={decorate, decoration = {markings, mark = between positions 0.5 and 0.59 step 0.5 with {\arrow{Stealth}}}}] (3,-0.68) -- (3.25,0.3);
\draw[thick, postaction={decorate, decoration = {markings, mark = between positions 0.7 and 0.79 step 0.5 with {\arrow{Stealth}}}}] (3.4,0.3) to[bend right=20] (2.9,-0.5);
\draw[thick] (4.7,-0.5) to[bend left=30] (4.3,0.4);
\draw[thick] (4.54,-0.6) to[bend right=30] (4.15,0.35);
\draw[thick] (4.2,-0.9) to[bend left=25] (3.84,0.3);
\draw[thick] (4.05,-1) to[bend right=25] (3.7,0.28);
\draw[thick] (3.4,-1.05) to[out=30,in=240] (3.65,-0.1) to[out=70,in=170] (4.32,0.23) to[out=10,in=250] (4.7,0.5);
\draw[thick, postaction={decorate, decoration = {markings, mark = between positions 0.25 and 0.59 step 0.5 with {\arrow[rotate=9]{Stealth}}}}] (2.8,-0.05) to[out=60,in=180] (3.25,0.15) to[out=0,in=170] (3.9,-0.78) to[out=0,in=250] (4.8,0.15) to[out=70,in=190] (5.1,0.44);
\draw[thick] (3.7,-0.83) to[bend left=15] (3.62,0.12);
\node at (3,0.3) {\footnotesize $S$};
\node at (3.75,-1) {\footnotesize $T$};
\end{tikzpicture}
\vspace{-.2in}
\end{center}
\caption{Combing out the triple crossings: the anti-parallel case.}
\label{fig:step-2}
\end{figure}
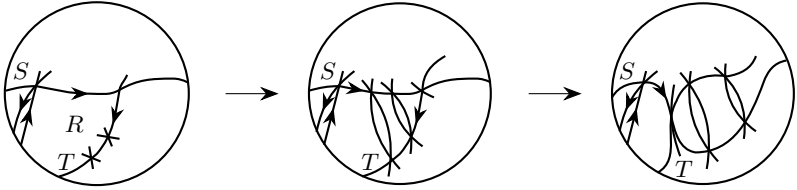

\begin{figure}[ht]
\begin{center}
\vspace{-.2in}
\begin{tikzpicture}

\filldraw[fill=white, thick] (-4,0) circle (1.2cm);
\filldraw[fill=white, thick] (0,0) circle (1.2cm);
\filldraw[fill=white, thick] (4,0) circle (1.2cm);
\draw[>={Stealth[length=7pt]}, ->] (-2.3,0) -- (-1.6,0);
\draw[>={Stealth[length=7pt]}, ->] (1.7,0) -- (2.4,0);

% 1st circle
\draw[thick, postaction={decorate, decoration = {markings, mark = between positions 0.15 and 0.39 step 0.5 with {\arrow{Stealth}}}}] (-5.2,0.04) to[out=15,in=180] (-4.8,0.1) to[out=-10,in=180] (-4.1,0) to[out=0,in=210] (-3.7,0.05) to[out=30,in=180] (-3,0.2) to[out=0,in=155] (-2.8,0.15);
\draw[thick, postaction={decorate, decoration = {markings, mark = between positions 0.85 and 0.89 step 0.5 with {\arrow[rotate=20]{Stealth}}}}] (-4.6,0.2) to[out=200,in=60] (-4.8,0) to[out=250,in=0] (-5.15,-0.3);
\draw[thick, postaction={decorate, decoration = {markings, mark = between positions 0.25 and 0.59 step 0.5 with {\arrow{Stealth}}}}] (-4.5,-1.1) to[out=15,in=220] (-4,-0.8) to[out=50,in=260] (-3.7,0.05) to[out=80,in=250] (-3.6,0.25);
\draw[thick] (-3.95,-0.4) -- (-3.65,-0.45);
\draw[thick] (-3.85,-0.3) -- (-3.75,-0.55);
\draw[thick] (-4.1,-0.7) -- (-3.85,-0.8);
\draw[thick] (-4.0,-0.6) -- (-3.95,-0.9);
\node at (-4.9,0.3) {\footnotesize $S$};
%\node at (-4.03,-1.05) {\footnotesize $T$};
\node at (-4.3,-0.4) {\footnotesize $R$};
\node at (-4.4,-0.9) {\footnotesize $T$};

\draw[thick] (-4.78,0.25) to[bend right=15] (-4.64,-0.1);

% 2nd circle
\draw[thick, postaction={decorate, decoration = {markings, mark = between positions 0.15 and 0.39 step 0.5 with {\arrow{Stealth}}}}] (-1.2,0.04) to[out=15,in=180] (-0.8,0.1) to[out=-10,in=180] (-0.1,0) to[out=0,in=210] (0.3,0.05) to[out=30,in=180] (1,0.2) to[out=0,in=155] (1.2,0.15);
\draw[thick, postaction={decorate, decoration = {markings, mark = between positions 0.85 and 0.89 step 0.5 with {\arrow[rotate=20]{Stealth}}}}] (-0.6,0.2) to[out=200,in=60] (-0.8,0) to[out=250,in=0] (-1.15,-0.3);
\draw[thick, postaction={decorate, decoration = {markings, mark = between positions 0.25 and 0.59 step 0.5 with {\arrow{Stealth}}}}] (-0.5,-1.1) to[out=15,in=220] (0,-0.8) to[out=50,in=260] (0.3,0.05) to[out=80,in=250] (0.4,0.35);
\node at (-0.9,0.3) {\footnotesize $S$};
\node at (-0.01,-1.05) {\footnotesize $T$};
\draw[thick, postaction={decorate, decoration = {markings, mark = between positions 0.6 and 0.69 step 0.5 with {\arrow{Stealth[length=6pt]}}}}] (0.49,0.2) to[bend right=50] (0.25,-0.75);
\draw[thick, postaction={decorate, decoration = {markings, mark = between positions 0.6 and 0.69 step 0.5 with {\arrow{Stealth[length=6pt]}}}}] (0.35,-0.65) to[bend left=15] (-0.4,0.15);
\draw[thick, postaction={decorate, decoration = {markings, mark = between positions 0.6 and 0.69 step 0.5 with {\arrow{Stealth[length=6pt]}}}}] (0.05,-0.91) to[bend left=10] (-0.8,0.2);
\draw[thick, postaction={decorate, decoration = {markings, mark = between positions 0.75 and 0.79 step 0.5 with {\arrow{Stealth[length=6pt]}}}}] (-0.3,0.25) to[out=255,in=110] (-0.1,-0.65) to[out=300,in=85] (-0.05,-0.94);

% 3rd circle
\draw[thick, postaction={decorate, decoration = {markings, mark = between positions 0.6 and 0.69 step 0.5 with {\arrow{Stealth[length=6pt]}}}}] (4.7,0.35) to[bend right=50] (4.55,-0.6);
\draw[thick, postaction={decorate, decoration = {markings, mark = between positions 0.6 and 0.69 step 0.5 with {\arrow{Stealth[length=6pt]}}}}] (4.65,-0.55) to[bend left=15] (3.9,0.35);
\draw[thick, postaction={decorate, decoration = {markings, mark = between positions 0.6 and 0.69 step 0.5 with {\arrow{Stealth[length=6pt]}}}}] (4.35,-0.71) to[bend left=10] (3.5,0.4);
\draw[thick, postaction={decorate, decoration = {markings, mark = between positions 0.75 and 0.79 step 0.5 with {\arrow{Stealth[length=6pt]}}}}] (4,0.45) to[out=255,in=110] (4.2,-0.45) to[out=300,in=85] (4.25,-0.74);
	\draw[thick, postaction={decorate, decoration = {markings, mark = between positions 0.2 and 0.49 step 0.5 with {\arrow{Stealth[length=6pt]}}}}] (3.5,-1.1) to[out=90,in=240] (3.25,-0.15) to[out=40,in=200] (3.45,0.3) to[out=10,in=180] (4.3,0.1) to[out=40,in=210](4.6,0.5); % to[out=0,in=220] (4.8,0.3);
	\draw[thick, postaction={decorate, decoration = {markings, mark = between positions 0.15 and 0.39 step 0.5 with {\arrow{Stealth[length=6pt]}}}}] (2.82,0.2) to[out=40,in=180] (3,0.25) to[out=350,in=130] (3.2,-0.15) to[out=330,in=180] (3.95,-0.8) to[out=10,in=280] (4.5,-0.1)   to[out=90,in=155] (5.2,0.15);
\draw[thick, postaction={decorate, decoration = {markings, mark = at position 0.93 with {\arrow[rotate=-20]{Stealth[length=6pt]}}}}] (3.7,0.45) to[out=200,in=0] (3.25,-0.17) to[out=200,in=45] (2.8,-0.1);
\node at (3.05,0.4) {\footnotesize $S$};
\node at (3.7,-1) {\footnotesize $T$};

\end{tikzpicture}
\vspace{-.25in}
\end{center}
\caption{Combing out the triple crossings: the parallel case.
}
%\vspace{-.1in}
\label{fig:step-2p}
\end{figure}
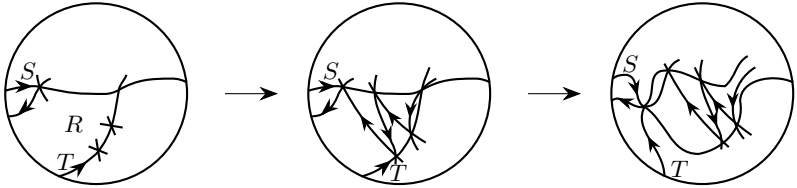

\vspace{-.1in}
\begin{lemma}
\label{lem:straighten}
For a triple diagram~$\X$, the following are equivalent:
\begin{itemize}[leftmargin=.3in]
\item[{\rm(a)}]
Any diagram $\X'$ move-equivalent to~$\X$ does not contain a monogon. 
\item[{\rm(b)}]
$\X$ is move-equivalent to any standard triple diagram with the same strand permutation. 
\item[{\rm(c)}]
$\X$ is minimal. 
\end{itemize}
In particular, any standard triple diagram is minimal. 
\end{lemma}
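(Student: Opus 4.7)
My plan is to establish the cyclic chain of implications (c) $\Rightarrow$ (a) $\Rightarrow$ (b) $\Rightarrow$ (c), with the \emph{In particular} clause about minimality of standard triple diagrams coming out automatically in the last step. The implication (c) $\Rightarrow$ (a) is immediate: by \cref{lem:nomonogondigon} a minimal triple diagram has no monogons, and by \cref{lem:min-diag-move-equiv} every diagram move-equivalent to a minimal one is itself minimal, hence also monogon-free.

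The substantive work lies in (a) $\Rightarrow$ (b), which I will prove by induction on the number $b$ of boundary vertices. Let $\X_0$ be an arbitrary standard triple diagram for the strand permutation $\pi=\pi_\X$, let $I$ denote the minimal boundary interval selected in the first step of the recursive construction of $\X_0$, and let $S_0$ be the corresponding boundary-parallel strand in $\X_0$. Since hypothesis (a) is precisely the premise of \cref{lem:straighten2}, part~(ii) of that lemma produces a diagram $\X' \sim \X$ in which the strand through the endpoints of $I$ runs boundary-parallel along $I$. Thus $\X'$ and $\X_0$ share the same ``outer shell,'' and excising this shell in each yields smaller triple diagrams $\X''$ and $\X_0''$ inside a smaller disk, both realizing the same reduced permutation $\pi'$; by construction $\X_0''$ is itself a standard triple diagram for $\pi'$. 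I then want to apply the inductive hypothesis to obtain $\X'' \sim \X_0''$ and lift this equivalence back through the shell to conclude $\X \sim \X_0$.

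The main obstacle, and the only nontrivial point in the induction, is verifying that $\X''$ inherits condition~(a). The key observation is that any sequence of swivel moves carried out on $\X''$ takes place in the interior disk and can be transplanted verbatim to $\X'$ without disturbing the boundary-parallel strand or its neighborhood. Consequently, if some $\Y \sim \X''$ contained a monogon, then by lifting we would obtain a monogon in a triple diagram move-equivalent to $\X' \sim \X$, contradicting~(a). With this inheritance established, the induction goes through, and the base case $b \le 1$ is trivial. Finally, (b) $\Rightarrow$ (c) together with the \emph{In particular} claim follows at once: pick any minimal triple diagram $\X_1$ with strand permutation $\pi$ (which exists since the triple-point count is a nonnegative integer); by (c)$\Rightarrow$(a) and the already-proved (a)$\Rightarrow$(b), $\X_1$ is move-equivalent to any standard diagram $\X_0$ for $\pi$, and then \cref{lem:min-diag-move-equiv} forces $\X_0$ to be minimal. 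Hence any $\X$ satisfying (b) is move-equivalent to a minimal diagram, and therefore minimal, closing the cycle.
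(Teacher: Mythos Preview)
Your proof is correct and follows essentially the same route as the paper. The paper's argument for (a)$\Rightarrow$(b) is simply ``choose a sequence of minimal intervals and repeatedly apply \cref{lem:straighten2},'' which is exactly your induction on~$b$ unpacked; the one point you make explicit that the paper leaves tacit is the inheritance of condition~(a) by the excised subdiagram~$\X''$, and your lifting argument for that is the right justification. Your derivation of (b)$\Rightarrow$(c) and of the minimality of standard diagrams also matches the paper's.
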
 

\begin{proof}
The implication (c)$\Rightarrow$(a) follows from Lemmas \ref{lem:min-diag-move-equiv}
and~\ref{lem:nomonogondigon}. 
To prove the implication (a)$\Rightarrow$(b), 
choose a sequence of minimal intervals and repeatedly apply \cref{lem:straighten2}. 
We have now established (c)$\Rightarrow$(b), so any minimal triple diagram is
move-equivalent to any standard triple diagram with the same strand permutation. 
It follows by \cref{lem:min-diag-move-equiv} that any standard triple diagram is minimal,
hence so is any diagram move-equivalent to a standard one. 
Thus (b)$\Rightarrow$(c) is proved. 
\end{proof}

\begin{proof}[Proof of \cref{thm:domino-flip}]
By \cref{lem:straighten}, any two minimal triple diagrams with strand permutation~$\pi$
are move-equivalent to any standard diagram with strand permutation~$\pi$,
and therefore to each other. 
\end{proof}

\begin{lemma}
\label{lem:propagate}
Let $\X$ and $\X'$ be triple diagrams related by a swivel move.  
If $\X$ contains a badgon, then so does~$\X'$.
\end{lemma}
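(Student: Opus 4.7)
The plan is to proceed by case analysis based on how the triple points of the badgon interact with the swivel region~$R$. The key observations are that outside of~$R$, the diagrams $\X$ and~$\X'$ are identical, and the swivel preserves the strand permutation. Consequently each strand has the same endpoints on the boundary of~$R$ (and of~$\mathbf{D}$) in both diagrams, and in particular each strand has the same global orientation in $\X$ and in~$\X'$.

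First I would handle the easy case where none of the triple points of the badgon lie inside~$R$. These triple points, and the three pairs of local branches meeting at each, are identical in $\X'$. For a monogon at $p\notin R$, the strand self-intersecting at $p$ in $\X$ has the same identity and endpoints in~$\X'$, so it still passes through $p$ twice there. For a parallel digon at $p_1,p_2\notin R$ involving strands $S,T$, both $p_1$ and $p_2$ survive as triple points in $\X'$, and the local orientations of $S$ and~$T$ near these points are unchanged, so $S$ and~$T$ still form a parallel digon. The only subtle point is to verify that the co-orientation condition between $p_1$ and $p_2$ is really local data at the two triple points (not data about the connecting segments), which follows from the fact that the strands $S$ and~$T$ carry fixed global orientations that agree on their portions lying outside~$R$.

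The bulk of the work is the remaining case, where at least one triple point of the badgon coincides with one of the two triple points of~$R$ in~$\X$. Here I would enumerate the possible configurations: (i) a monogon at a triple point of~$R$, (ii) a parallel digon with one triple point in~$R$ and one outside, and (iii) a parallel digon with both triple points in~$R$. For each sub-case, I would inspect the local picture in Figure~\ref{fig:2-2-move}, record which of the four strand segments inside~$R$ belong to the strand(s) forming the badgon, and track how the swivel re-routes these segments and re-pairs their local branches at the new triple points $q_1',q_2'$ of~$\X'$. In each sub-case one checks directly that a badgon persists in~$\X'$, though possibly as a different type (e.g., a monogon in~$\X$ may become a parallel digon in~$\X'$ involving the strand that used to self-intersect together with a second strand passing through the swivel region) or at a different location. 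A helpful simplification is that since orientations and connectivity on $\partial R$ are preserved, the co-orientation patterns entering the analysis are the same in both diagrams, which drastically restricts the configurations one must consider.

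The main obstacle will be the explicit verification for sub-case (iii), where both vertices of a parallel digon sit at the triple points of~$R$ in~$\X$. Here all four strand segments inside~$R$ are involved, and one must carefully compare the left and right sides of Figure~\ref{fig:2-2-move} with all compatible orientations of these four segments to confirm that a badgon (monogon or parallel digon) necessarily appears in~$\X'$. Once this finite check is done, the lemma follows.
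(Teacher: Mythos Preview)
Your case decomposition matches the paper's proof almost exactly: the paper also labels the two triple points of the swivel (as $x$ and~$y$) and the four participating strands (as $a,b,c,d$), then splits into the sub-cases (badgon avoids $x,y$; monogon at~$x$; parallel digon through~$x$ but not~$y$; parallel digon through both $x$ and~$y$). So the overall structure is right.

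There is, however, one sub-case where your description ``track how the swivel re-routes these segments and re-pairs their local branches at the new triple points'' does not suffice, and the paper's argument is genuinely non-local. This is the case of a parallel digon with one vertex at~$x$ and the other vertex outside~$R$, whose two sides lie on the strands $a$ and~$b$ (the two strands that cross at~$x$ in~$\X$ but do \emph{not} meet at either new triple point of~$\X'$). After the swivel, the crossing at~$x$ disappears and local tracking alone shows no badgon. The paper handles this by first disposing of monogons (so one may assume $\X$ has none), and then observing that the continuation of strand~$a$ from~$x$ into the interior of the digon must eventually exit; since it cannot cross itself, it must cross~$b$ at some further point. This extra crossing, which lies entirely outside~$R$, is what survives the swivel and yields a parallel digon in~$\X'$ at two points neither of which is a triple point of the swivel. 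Your plan should explicitly include (a) ordering the cases so that the no-monogon assumption is available when treating parallel digons, and (b) this Jordan-curve style argument for the $\{a,b\}$ sub-case; otherwise that branch of the analysis does not close.
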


%My proof is different from Dylan's proof of Lemma 15. 
\begin{proof}
We label the strands and the triple points 
involved in this swivel move 
by $a,b, c, d$, and $x, y$,  as shown in \cref{fig:22}.

\begin{figure}[ht]
\begin{center}
\vspace{-.1in}
\begin{tikzpicture}

% left
\draw[thick, postaction={decorate, decoration = {markings, mark = between positions 0.3 and 0.99 step 0.55 with {\arrow{Stealth[length=6pt]}}}}] (-1,0.6) -- (1,0.6);
\draw[thick, postaction={decorate, decoration = {markings, mark = between positions 0.3 and 0.99 step 0.55 with {\arrow{Stealth[length=6pt]}}}}] (1,-0.6) -- (-1,-0.6);
\draw[thick, postaction={decorate, decoration = {markings, mark = between positions 0.17 and 0.99 step 0.37 with {\arrow{Stealth[length=6pt]}}}}] (-0.7,-1.3) to[out=45,in=270] (0.2,0) to[out=90,in=315] (-0.7,1.3);
\draw[thick, postaction={decorate, decoration = {markings, mark = between positions 0.17 and 0.99 step 0.37 with {\arrow{Stealth[length=6pt]}}}}] (0.7,1.3) to[out=225,in=90] (-0.2,0) to[out=270,in=135] (0.7,-1.3);
\node at (0.85,1.1) {\footnotesize $a$};
\node at (0.85,-1.1) {\footnotesize $a$};
\node at (-0.85,1.1) {\footnotesize $b$};
\node at (-0.85,-1.1) {\footnotesize $b$};
\node at (1.17,0.54) {\footnotesize $c$};
\node at (-1.17,0.54) {\footnotesize $c$};
\node at (1.17,-0.54) {\footnotesize $d$};
\node at (-1.17,-0.54) {\footnotesize $d$};
\node at (0,0.82) {\footnotesize $x$};
\node at (0,-0.86) {\footnotesize $y$};
\node[circle, fill=black, draw=black, inner sep=0pt, minimum size=3pt] at (0,0.6) {};
\node[circle, fill=black, draw=black, inner sep=0pt, minimum size=3pt] at (0,-0.6) {};

% right
\draw[thick, postaction={decorate, decoration = {markings, mark = between positions 0.3 and 0.99 step 0.55 with {\arrow{Stealth[length=6pt]}}}}] (3.46,-1) -- (3.46,1);
\draw[thick, postaction={decorate, decoration = {markings, mark = between positions 0.3 and 0.99 step 0.55 with {\arrow{Stealth[length=6pt]}}}}] (4.54,1) -- (4.54,-1);
\draw[thick, postaction={decorate, decoration = {markings, mark = between positions 0.17 and 0.99 step 0.4 with {\arrow{Stealth[length=6pt]}}}}] (2.7,0.4) to[out=0,in=170] (4,-0.26) to[out=10,in=180] (5.3,0.4);
\draw[thick, postaction={decorate, decoration = {markings, mark = between positions 0.17 and 0.99 step 0.4 with {\arrow{Stealth[length=6pt]}}}}] (5.3,-0.4) to[out=170,in=0] (4,0.26) to[out=180,in=10] (2.7,-0.4);
\node at (4.54,1.2) {\footnotesize $a$};
\node at (4.54,-1.2) {\footnotesize $a$};
\node at (3.46,1.2) {\footnotesize $b$};
\node at (3.46,-1.2) {\footnotesize $b$};
\node at (2.5,0.4) {\footnotesize $c$};
\node at (5.5,0.4) {\footnotesize $c$};
\node at (2.5,-0.4) {\footnotesize $d$};
\node at (5.5,-0.4) {\footnotesize $d$};
\end{tikzpicture}
\vspace{-.2in}
\end{center}
\caption{A swivel move relating $\X$ and $\X'$.}
\label{fig:22}
\end{figure}

If $\X$ contains a badgon that involves neither~$x$ nor~$y$, 
then this badgon persists in~$\X'$.

Suppose $\X$ contains a monogon whose self-intersection point is (say)~$x$.
Thus, two of the strands $\{a, b, c\}$ coincide.
If $a=c$ (resp., $b=c$), 
then the same monogon persists in~$\X'$ because in \cref{fig:22}, 
strands $a$ and~$c$ (resp., $b$ and $c$) intersect in both $\X$ and~$\X'$.

If, on the other hand, $a=b$, then $\X'$ has a parallel digon, see \cref{fig:13}.

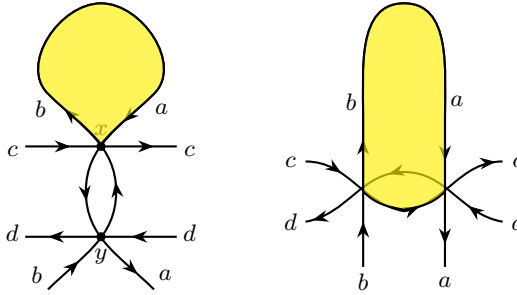
\begin{figure}[ht]
\begin{center}
%\vspace{-.2in}
\begin{tikzpicture}
\draw [thick, draw=black, fill=yellow, fill opacity=0.65] (3.46,0) -- (3.46,1) to[out=90,in=180] (4,2.5) to[out=0,in=90] (4.54,1) -- (4.54,0) to[out=225,in=10] (4,-0.24) to[out=170,in=315] (3.46,0);
\draw [thick, draw=black, fill=yellow, fill opacity=0.65] (-0.7,1.3) to[out=135,in=180] (0,2.5) to[out=0,in=45] (0.7,1.3) to[out=225,in=50] (0,0.63) to[out=130,in=315] (-0.7,1.3);
% left
\draw[thick, postaction={decorate, decoration = {markings, mark = between positions 0.3 and 0.99 step 0.55 with {\arrow{Stealth[length=6pt]}}}}] (-1,0.6) -- (1,0.6);
\draw[thick, postaction={decorate, decoration = {markings, mark = between positions 0.3 and 0.99 step 0.55 with {\arrow{Stealth[length=6pt]}}}}] (1,-0.6) -- (-1,-0.6);
\draw[thick, postaction={decorate, decoration = {markings, mark = between positions 0.17 and 0.99 step 0.37 with {\arrow{Stealth[length=6pt]}}}}] (-0.7,-1.3) to[out=45,in=270] (0.2,0) to[out=90,in=315] (-0.7,1.3);
\draw[thick, postaction={decorate, decoration = {markings, mark = between positions 0.17 and 0.99 step 0.37 with {\arrow{Stealth[length=6pt]}}}}] (0.7,1.3) to[out=225,in=90] (-0.2,0) to[out=270,in=135] (0.7,-1.3);
\node at (0.8,1.1) {\footnotesize $a$};
\node at (0.85,-1.1) {\footnotesize $a$};
\node at (-0.8,1.1) {\footnotesize $b$};
\node at (-0.85,-1.1) {\footnotesize $b$};
\node at (1.17,0.54) {\footnotesize $c$};
\node at (-1.17,0.54) {\footnotesize $c$};
\node at (1.17,-0.54) {\footnotesize $d$};
\node at (-1.17,-0.54) {\footnotesize $d$};
\node at (0,0.82) {\footnotesize $x$};
\node at (0,-0.86) {\footnotesize $y$};
\node[circle, fill=black, draw=black, inner sep=0pt, minimum size=3pt] at (0,0.6) {};
\node[circle, fill=black, draw=black, inner sep=0pt, minimum size=3pt] at (0,-0.6) {};
\draw[thick] (-0.7,1.3) to[out=135,in=180] (0,2.5) to[out=0,in=45] (0.7,1.3);

% right
\draw[thick, postaction={decorate, decoration = {markings, mark = between positions 0.3 and 0.99 step 0.55 with {\arrow{Stealth[length=6pt]}}}}] (3.46,-1) -- (3.46,1);
\draw[thick, postaction={decorate, decoration = {markings, mark = between positions 0.3 and 0.99 step 0.55 with {\arrow{Stealth[length=6pt]}}}}] (4.54,1) -- (4.54,-1);
\draw[thick, postaction={decorate, decoration = {markings, mark = between positions 0.17 and 0.99 step 0.4 with {\arrow{Stealth[length=6pt]}}}}] (2.7,0.4) to[out=0,in=170] (4,-0.26) to[out=10,in=180] (5.3,0.4);
\draw[thick, postaction={decorate, decoration = {markings, mark = between positions 0.17 and 0.99 step 0.4 with {\arrow{Stealth[length=6pt]}}}}] (5.3,-0.4) to[out=170,in=0] (4,0.26) to[out=180,in=10] (2.7,-0.4);
\node at (4.7,1.2) {\footnotesize $a$};
\node at (4.54,-1.2) {\footnotesize $a$};
\node at (3.3,1.2) {\footnotesize $b$};
\node at (3.46,-1.2) {\footnotesize $b$};
\node at (2.5,0.4) {\footnotesize $c$};
\node at (5.5,0.4) {\footnotesize $c$};
\node at (2.5,-0.4) {\footnotesize $d$};
\node at (5.5,-0.4) {\footnotesize $d$};
\draw[thick] (3.46,1) to[out=90,in=180] (4,2.5) to[out=0,in=90] (4.54,1);
\end{tikzpicture}
\vspace{-.2in}
\end{center}
\caption{A monogon in $\X$ results in a parallel digon in $\X'$.}
\label{fig:13}
\end{figure}
From now on, we can assume that there is no monogon in~$\X$.
Suppose $\X$ has a parallel digon whose two intersection points
include $x$ but not~$y$.  The sides of this parallel digon are 
either $\{a,b\}$ or $\{a,c\}$ or $\{b,c\}$.
The last two cases are easy because such a parallel digon will persist in~$\X'$,
since the strands $a$ and~$c$ (resp., $b$ and~$c$) intersect in both $\X$ and~$\X'$.

Now suppose that our parallel digon has sides $a$ and~$b$, 
see  \cref{fig:14} on the left.
(If the strands $a$ and $b$ go to the left and meet again there, then we \linebreak[3]
get the same picture 
but with the roles of $x$ and $y$  interchanged.)
Note that the end of strand $a$ shown inside the digon must 
extend outside of it, but it cannot intersect~$a$, as this would create a monogon.  
So strand $a$ must intersect strand~$b$ again,
see \cref{fig:14} in the middle.  
Then, after the swivel move, we get a parallel digon
	as shown in \cref{fig:14} on the right.

\begin{figure}[ht]
\begin{center}
\vspace{-.2in}
\begin{tikzpicture}[scale=0.85]
% left
\draw [thick, draw=black, fill=yellow, fill opacity=0.65] (0.7,1.3) to[out=30,in=145] (1.25,1.2) to[out=315,in=135] (2,-0.4) to [out=320,in=150] (2.07,-0.47) to[out=210,in=50] (2,-0.5) to[out=235,in=0] (0.2,-1.8) to[out=180,in=260] (-0.7,-1.3) to[out=45,in=270] (0.2,0) to[out=90,in=300] (0,0.6) to[out=58,in=225] (0.7,1.3);
\draw[thick, postaction={decorate, decoration = {markings, mark = between positions 0.3 and 0.99 step 0.55 with {\arrow{Stealth[length=6pt]}}}}] (-1,0.6) -- (1,0.6);
\draw[thick, postaction={decorate, decoration = {markings, mark = between positions 0.3 and 0.99 step 0.55 with {\arrow{Stealth[length=6pt]}}}}] (1,-0.6) -- (-1,-0.6);
\draw[thick, postaction={decorate, decoration = {markings, mark = between positions 0.17 and 0.99 step 0.37 with {\arrow{Stealth[length=6pt]}}}}] (-0.7,-1.3) to[out=45,in=270] (0.2,0) to[out=90,in=315] (-0.7,1.3);
\draw[thick, postaction={decorate, decoration = {markings, mark = between positions 0.17 and 0.99 step 0.37 with {\arrow{Stealth[length=6pt]}}}}] (0.7,1.3) to[out=225,in=90] (-0.2,0) to[out=270,in=135] (0.7,-1.3);
\node at (0.85,1.1) {\footnotesize $a$};
\node at (0.85,-1.1) {\footnotesize $a$};
\node at (-0.85,1.1) {\footnotesize $b$};
\node at (-0.85,-1.1) {\footnotesize $b$};
\node at (1.17,0.54) {\footnotesize $c$};
\node at (-1.17,0.54) {\footnotesize $c$};
\node at (1.17,-0.54) {\footnotesize $d$};
\node at (-1.17,-0.54) {\footnotesize $d$};
\node at (0,0.82) {\footnotesize $x$};
\node at (0,-0.86) {\footnotesize $y$};
\draw[thick] (-0.7,-1.3) to[out=260,in=180] (0.2,-1.8) to[out=0,in=235] (2,-0.5) to [out=50,in=205] (2.4,-0.1);
\draw[thick] (0.7,1.3) to[out=30,in=145] (1.25,1.2) to[out=315,in=135] (2,-0.4) to [out=320,in=180] (2.3,-0.5);
\node[circle, fill=black, draw=black, inner sep=0pt, minimum size=3pt] at (0,0.6) {};
\node[circle, fill=black, draw=black, inner sep=0pt, minimum size=3pt] at (0,-0.6) {};

% middle
\draw[thick, postaction={decorate, decoration = {markings, mark = between positions 0.3 and 0.99 step 0.55 with {\arrow{Stealth[length=6pt]}}}}] (4,0.6) -- (6,0.6);
\draw[thick, postaction={decorate, decoration = {markings, mark = between positions 0.3 and 0.99 step 0.55 with {\arrow{Stealth[length=6pt]}}}}] (6,-0.6) -- (4,-0.6);
\draw[thick, postaction={decorate, decoration = {markings, mark = between positions 0.17 and 0.99 step 0.37 with {\arrow{Stealth[length=6pt]}}}}] (4.3,-1.3) to[out=45,in=270] (5.2,0) to[out=90,in=315] (4.3,1.3);
\draw[thick, postaction={decorate, decoration = {markings, mark = between positions 0.17 and 0.99 step 0.37 with {\arrow{Stealth[length=6pt]}}}}] (5.7,1.3) to[out=225,in=90] (4.8,0) to[out=270,in=135] (5.7,-1.3);
\node at (5.85,1.1) {\footnotesize $a$};
\node at (5.85,-1.1) {\footnotesize $a$};
\node at (4.15,1.1) {\footnotesize $b$};
\node at (4.15,-1.1) {\footnotesize $b$};
\node at (6.17,0.54) {\footnotesize $c$};
\node at (3.83,0.54) {\footnotesize $c$};
\node at (6.17,-0.54) {\footnotesize $d$};
\node at (3.83,-0.54) {\footnotesize $d$};
\node at (5,0.82) {\footnotesize $x$};
\node at (5,-0.86) {\footnotesize $y$};
\node[circle, fill=black, draw=black, inner sep=0pt, minimum size=3pt] at (5,0.6) {};
\node[circle, fill=black, draw=black, inner sep=0pt, minimum size=3pt] at (5,-0.6) {};
\draw[thick] (4.3,-1.3) to[out=260,in=180] (5.2,-1.8) to[out=0,in=235] (7,-0.5) to [out=50,in=205] (7.4,-0.1);
\draw[thick] (5.7,1.3) to[out=30,in=145] (6.25,1.2) to[out=315,in=135] (7,-0.4) to [out=320,in=180] (7.3,-0.5);
\draw[thick] (5.7,-1.3) to[out=325,in=180] (6.5,-1.7);

% right
\draw [thick, draw=black, fill=yellow, fill opacity=0.65] (10.54,1) to[out=30,in=145] (11.4,1.1) to[out=315,in=135] (12,-0.4) to [out=320,in=145] (12.19,-0.55) to[out=235,in=37] (11.45,-1.36) to[out=135,in=350] (11.3,-1.3) to[out=180,in=300] (10.54,-1);
\draw[thick, postaction={decorate, decoration = {markings, mark = between positions 0.3 and 0.99 step 0.55 with {\arrow{Stealth[length=6pt]}}}}] (9.46,-1) -- (9.46,1);
\draw[thick, postaction={decorate, decoration = {markings, mark = between positions 0.3 and 0.99 step 0.55 with {\arrow{Stealth[length=6pt]}}}}] (10.54,1) -- (10.54,-1);
\draw[thick, postaction={decorate, decoration = {markings, mark = between positions 0.17 and 0.99 step 0.4 with {\arrow{Stealth[length=6pt]}}}}] (8.7,0.4) to[out=0,in=170] (10,-0.26) to[out=10,in=180] (11.3,0.4);
\draw[thick, postaction={decorate, decoration = {markings, mark = between positions 0.17 and 0.99 step 0.4 with {\arrow{Stealth[length=6pt]}}}}] (11.3,-0.4) to[out=170,in=0] (10,0.26) to[out=180,in=10] (8.7,-0.4);
\node at (10.54,1.2) {\footnotesize $a$};
\node at (10.54,-1.2) {\footnotesize $a$};
\node at (9.46,1.2) {\footnotesize $b$};
\node at (9.3,-1.1) {\footnotesize $b$};
\node at (8.5,0.4) {\footnotesize $c$};
\node at (11.5,0.4) {\footnotesize $c$};
\node at (8.5,-0.4) {\footnotesize $d$};
\node at (11.5,-0.4) {\footnotesize $d$};
\draw[thick, postaction={decorate, decoration = {markings, mark = between positions 0.25 and 0.39 step 0.6 with {\arrow{Stealth[length=6pt]}}}}] (12.4,-0.4) to[out=205,in=50] (12,-0.8) to[out=235,in=0] (10.2,-1.7) to [out=180,in=270] (9.46,-1);
\draw[thick] (10.54,1) to[out=30,in=145] (11.4,1.1) to[out=315,in=135] (12,-0.4) to [out=320,in=180] (12.4,-0.6);
\draw[thick] (10.54,-1) to[out=300,in=180] (11.3,-1.3) to[out=350,in=135] (11.6,-1.5);
\node at (11.45,-1.55) {\footnotesize $z$};
\node at (12.2,-0.3) {\footnotesize $w$};
\node[circle, fill=black, draw=black, inner sep=0pt, minimum size=3pt] at (12.2,-0.55) {};
\node[circle, fill=black, draw=black, inner sep=0pt, minimum size=3pt] at (11.45,-1.35) {};
\end{tikzpicture}
\vspace{-.25in}
\end{center}
\caption{Persistence of parallel digons under swivel moves.}
\label{fig:14}
\vspace{-.1in}
\end{figure}
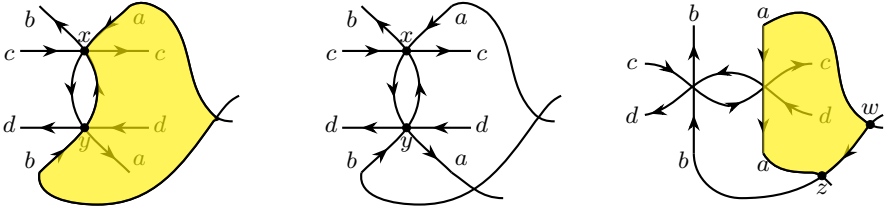

Finally, suppose there is a parallel digon in $\X$ whose two intersection points are
$x$ and $y$.  We can assume it is oriented from $x$ to $y$.
The two sides of the parallel digon should come from the following list:
\begin{itemize}[leftmargin=.4in]
\item[{\rm (aa)}]
	the (portion of the) arc along $a$ from $x$ to $y$; 
\item[{\rm (bb)}] 
the arc along $b$ from $x$ to $y$; 
\item[{\rm (cd)}]
an arc leaving $x$ along $c$, and returning to $y$
along $d$ (so $c=d$);
\item[{\rm (cb)}] 
an arc leaving $x$ along $c$, and returning to $y$ along $b$ (so $c=b$); 
\item[{\rm (bd)}]
an arc leaving $x$ along $b$, and returning to $y$ along $d$ (so $b=d$).
\end{itemize}
In  case (bb), we get a closed strand; it will persist in~$\X'$ and yield a badgon by
 \cref{lem:no-closed-strands}. 
In  cases (cb) and~(bd), we get a monogon, contradicting our assumption.
The remaining case is when the parallel digon has sides~(aa) and (cd);   
we then get a monogon in~$\X'$. (The picture is like 
	\cref{fig:13}, with the roles of $\X$ and $\X'$ swapped and some strands relabeled.)
\end{proof}

\begin{theorem}
\label{thm:minimal}
A  triple diagram is minimal if and only if it has no badgons. 
%-- in other words, it has no monogons or parallel digons.
\end{theorem}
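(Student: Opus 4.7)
The theorem packages together the two directions already isolated in the preceding lemmas, so my plan is to simply assemble the pieces rather than introduce new machinery.

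The forward direction is immediate from Lemma \ref{lem:nomonogondigon}, which already states that a minimal triple diagram contains neither monogons nor parallel digons, i.e., no badgons. So I would dispose of $(\Rightarrow)$ in a single sentence by invoking that lemma.

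For the reverse direction, suppose $\X$ has no badgons; I want to conclude that $\X$ is minimal. The strategy is to verify condition~(a) of Lemma \ref{lem:straighten}, namely that no diagram move-equivalent to $\X$ contains a monogon, and then apply the implication (a)$\Rightarrow$(c) of that lemma. For this, I would use Lemma \ref{lem:propagate} together with the fact that the swivel move is reversible (its inverse is again a swivel move): propagating the contrapositive along any finite sequence of swivel moves shows that if some $\X'\sim\X$ contained a badgon, then $\X$ would already contain a badgon, contrary to our assumption. In particular, no diagram move-equivalent to $\X$ has a monogon, and Lemma \ref{lem:straighten} then guarantees that $\X$ is minimal.

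Both ingredients are already proven, so there is no substantive obstacle left; the only care required is observing that Lemma \ref{lem:propagate} can be iterated along arbitrary move-equivalences because the swivel move is its own inverse (so the statement is really symmetric in $\X$ and $\X'$). With that remark in place, the proof is a two-line assembly of Lemmas \ref{lem:nomonogondigon}, \ref{lem:propagate}, and \ref{lem:straighten}.
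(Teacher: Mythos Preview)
Your proof is correct and follows exactly the same route as the paper's: the ``only if'' direction is Lemma~\ref{lem:nomonogondigon}, and the ``if'' direction combines Lemma~\ref{lem:propagate} (iterated along a sequence of swivel moves) with the implication (a)$\Rightarrow$(c) of Lemma~\ref{lem:straighten}. The paper's proof is just a two-line citation of these same lemmas, so your write-up is essentially an expanded version of it.
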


\begin{proof}
The ``only if'' direction is \cref{lem:nomonogondigon}. 
%if $\X$ is a minimal triple diagram, then $\X$ cannot  contain a badgon.
The ``if'' direction follows from \cref{lem:propagate}
and \cref{lem:straighten} (implication (a)$\Rightarrow$(c)). 
%
%Suppose that $\X$ does not contain a badgon. By \cref{lem:propagate}, 
%badgons propagate under swivel-moves, so no $\X'$ move-equivalent to $\X$ can 
%contain a monogon or parallel digon. But then by  \cref{lem:straighten}, $\X$ must be minimal.
\end{proof}

%We note the similarity between the criteria in \cref{thm:minimal} (minimality of triple diagrams) 
%and \cref{thm:reduced} (reducedness of plabic graphs).

\begin{lemma}
\label{cor:Dyl}
Assume that a triple diagram $\mathfrak{X}$ is not minimal.
Then there exists a diagram $\mathfrak{X}'$ move-equivalent to 
$\mathfrak{X}$ that contains a hollow monogon.
\end{lemma}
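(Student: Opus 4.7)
The plan is to leverage \cref{thm:minimal} together with \cref{lem:straighten2} via a minimality argument on badgons. Since $\mathfrak{X}$ is non-minimal, \cref{thm:minimal} provides a diagram in its move-equivalence class containing a badgon. Among all pairs $(\mathfrak{X}', B)$ with $\mathfrak{X}' \sim \mathfrak{X}$ and $B$ a badgon of $\mathfrak{X}'$, I would choose one minimizing the number $n$ of triple points strictly enclosed by $B$. The goal becomes showing $n=0$ and that $B$ is a monogon.

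A key preliminary observation is that \emph{a triple diagram cannot contain a hollow parallel digon}: if two parallel strand arcs ran from $x$ to $y$ without any triple points between them, the enclosed $2$-gon face would violate the consistent-orientation requirement for face boundaries in \cref{def:triple-diagram}, since traversing the face boundary, one arc would run with and the other against its strand orientation. Consequently, once $n=0$ is established, the badgon $B$ is automatically a hollow monogon, and we are done.

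To force $n=0$, I would argue by contradiction. Suppose $n \ge 1$. The strands inside the enclosed region $R$ of $B$, with $\partial R$ playing the role of a new ambient boundary, form a sub-triple-diagram $\mathfrak{X}'_R$. This sub-diagram must be badgon-free, for any badgon of $\mathfrak{X}'_R$ would also be a badgon of $\mathfrak{X}'$ enclosing strictly fewer than $n$ triple points (its boundary intersection points lie in $R$ but outside its own enclosed region), contradicting minimality. By \cref{lem:propagate}, the entire swivel equivalence class of $\mathfrak{X}'_R$ is then badgon-free, so in particular no member contains a monogon. Hence \cref{lem:straighten2}(ii) applies to $\mathfrak{X}'_R$: using swivel moves confined to $R$, the strand corresponding to a minimal interval of $\partial R$ can be made boundary-parallel along that interval. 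Call this strand $T$. Then $T$ together with a sub-arc of the boundary strand(s) of $B$ bounds a hollow digon inside $R$. Since hollow parallel digons do not exist, this digon must be anti-parallel, whereupon a swivel move analogous to~\eqref{it:move-1} pushes $T$ outside $R$, strictly reducing the count of triple points enclosed by $B$ while preserving $B$ itself. This contradicts the minimality of~$n$.

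The step I expect to be the main obstacle is the rigorous setup of $\mathfrak{X}'_R$ as a bona fide triple diagram, so that \cref{lem:straighten2} and its notion of ``minimal interval'' apply verbatim. One must place source and target markers along $\partial R$ consistent with the alternation and orientation conventions of \cref{def:triple-diagram}, and verify that the ``boundary-parallel'' conclusion inside $R$ produces a digon of the anticipated form with $\partial R$. Once this framework is settled and the push-out swivel move is spelled out, the descent argument above runs without further complication.
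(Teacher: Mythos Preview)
Your approach parallels the paper's in structure: both restrict attention to the interior of a badgon, invoke the boundary-parallel machinery of \cref{lem:straighten2}, and then split into parallel and anti-parallel cases. The anti-parallel case is handled the same way in both, via~\eqref{it:move-1}. The real obstacle, however, is not where you anticipate it (the setup of $\mathfrak{X}'_R$ as a triple diagram), but in your treatment of the parallel case.

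You argue that the digon formed by the boundary-parallel strand $T$ and the sub-arc of $B$ must be anti-parallel, because hollow parallel digons cannot exist. But this digon is \emph{not} hollow in $\mathfrak{X}'$: the cross-strands appearing in the boundary-parallel configuration (cf.\ \cref{fig:standard}) intersect the arc of $B$ at genuine triple points of $\mathfrak{X}'$, since that arc consists of actual strands rather than the ambient disk boundary. The enclosed region is therefore subdivided into several faces, and your face-orientation argument does not apply. Indeed, the left-hand side of~\eqref{eq:parallel-digon-to-monogon} displays exactly such a parallel digon with cross-strands, confirming that this case does occur and cannot be excluded. The same issue undermines your step from $n=0$ to ``$B$ is a hollow monogon'': even with no triple points strictly inside $B$, strands may still cross its boundary arcs.

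The paper meets the parallel case head-on rather than trying to exclude it: when $T$ runs parallel to the relevant boundary arc, one applies the move equivalence~\eqref{eq:parallel-digon-to-monogon} to produce a hollow monogon directly. (The paper also starts from a monogon, obtained via \cref{lem:straighten}, and inducts on the number of faces rather than minimizing over all badgons; this streamlines some of the bookkeeping in your setup.) Your argument can be repaired by invoking~\eqref{eq:parallel-digon-to-monogon} in the parallel case instead of attempting to rule it out.
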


\begin{proof}
We will argue by induction on the number of faces in~$\X$. 
If this number is 1 or~2, then the claim is vacuously true. 

By \cref{lem:straighten}, there exists $\X'\sim\X$ such that $\X'$ has a monogon. 
Let $M$ be the segment of a strand in~$\X'$ that forms a monogon;
we may assume  $M$ does not intersect itself except at its endpoints
(or else replace $M$ by its sub-segment). 
If the monogon encircled by~$M$ is hollow, we are done. 
Otherwise, consider the disk~$\mathbf{D}_\circ$ obtained by 
removing a small neighborhood of~$M$ from the interior of the monogon.
Let $\X'_\circ$ denote the portion of~$\X'$ contained in~$\mathbf{D}_\circ$;
this is a triple diagram with fewer faces than~$\X'$ (or equivalently~$\X$). 

The rest of the argument proceeds by showing that either we can 
apply local moves to $\X'_\circ$ to create a hollow monogon
inside $\mathbf{D}_\circ$ or we can apply moves to 
reduce the number of faces inside the monogon encircled by~$M$
(eventually producing a hollow monogon).
If $\X'_\circ$ is not minimal, then the induction assumption applies, so
we can transform~$\X'_\circ$ (thus~$\X'$ or~$\X$) into a 
move-equivalent triple diagram
containing a hollow monogon. 
Therefore, we may assume that  $\X'_\circ$ is minimal. 
Let $M_\circ$ denote the interval obtained from the boundary of~$\mathbf{D}_\circ$
by removing a point located near the vertex of our monogon. 
Let $I\subset M_\circ$ be a minimal interval of the triple diagram~$\X'_\circ$.
Since this triple diagram is minimal, we can, by \cref{lem:straighten}
(or~\cref{lem:straighten2}), 
apply local moves inside~$\mathbf{D}_\circ$
to transform $\X'_\circ$ into a triple diagram in which the strand~$T$ connecting 
the endpoints of~$I$ is boundary-parallel to~$I$. 
Let us now look at the digon~$D$ formed by~$T$ and the portion of~$M$ that runs along~$I$. 
If $D$ is anti-parallel, then we can push~$T$ outside the monogon as in 
\cref{fig:step-1}, 
reducing the number of faces enclosed by~$M$.
%then invoke the induction assumption. 
If, on the other hand, $D$ is parallel,
then we can use \eqref{eq:parallel-digon-to-monogon} to create a hollow monogon. 
\end{proof}

\section{From minimal triple diagrams to reduced plabic graphs}
\label{minred}

In this section, we use the machinery of triple diagrams and normal plabic graphs to prove 
\cref{prop:fixedlollipop}, 
\cref{thm:moves}, and 
\cref{cor:reduced=min-faces}.
In particular, we will be working with normal plabic graphs which are reduced and hence leafless, 
see \cref{rem:neverreduced}.  It follows that 
all the machinery that we have developed for leafless reduced plabic
graphs will apply here.
%\end{lemma}

%\begin{proof}
%All internal leaves in a normal plabic graph~$G$ are black.
%Whatever (M2)--(M3) moves we apply to a tree,
%it will always have a black leaf (so it can't collapse to 
%a white root or lollipop), and it will always have a white 
%vertex of degree at least $3$ (so it can't collapse to a black 
%root or lollipop).
%\end{proof}

\iffalse
\begin{lemma}
\label{lem:not-reduced=>osim}
Let $G$ be a non-reduced normal plabic graph.
Then there exists a plabic graph $G'\osim G$ (cf.\ \cref{def:omove})
containing one of the forbidden configurations shown in \cref{fig:fail}. 
\end{lemma}

\begin{proof}
Suppose $G$ has an internal (necessarily black) leaf~$u$ that is not a lollipop. 
Let $v$ be the unique (white, trivalent) vertex adjacent to~$u$. 
This gives us a forbidden configuration as in~\cref{fig:fail}(d). 
(Note that by \cref{lem:normal-no-collapse}, $G$~has no collapsible trees.) 

If, on the other hand, $G$ has no such internal leaves, 
then by \cref{pr:reduced-collapse}, $G$~can be transformed, 
via local moves that do not create leaves, into a graph containing a hollow digon.
\end{proof}
\fi

Recall from 
\cref{def:GX} 
and \cref{pr:bijplabictriple} 
that the map $G\to \mathfrak{X}(G)$ 
gives a bijection 
between  normal plabic graphs and  triple diagrams 
with the same number of boundary vertices; 
moreover, this bijection preserves the associated (resp., trip or strand) permutation. 

\begin{theorem}
\label{red-minimal}
A normal plabic graph $G$ is reduced if and only if 
the triple diagram  $\mathfrak{X}(G)$ %defined in \cref{def:GX} 
is minimal.
  Thus the map 
 $G \mapsto \mathfrak{X}(G)$ restricts to a bijection 
 between reduced normal plabic graphs and minimal triple diagrams.
\vspace{-5pt}
\end{theorem}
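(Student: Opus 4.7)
The plan is to prove both implications of the theorem by contraposition, using four principal tools established earlier: the bijection $G\leftrightarrow\X(G)$ between normal plabic graphs and triple diagrams from \cref{pr:bijplabictriple}; the translation of move equivalence via \cref{cor:newmoves} and \cref{lem:M1M2M3-to-triple}; the badgon characterisation of minimality in \cref{thm:minimal}, strengthened by \cref{cor:Dyl} to the existence of a hollow monogon in any non-minimal diagram; and \cref{lem:not-reduced=>osim}, which realises non-reducedness through the appearance of a forbidden configuration attained by $\osim$-moves. Once the ``iff'' part is proved, the bijective claim in the theorem is immediate: \cref{pr:bijplabictriple} restricts to a bijection between the two subclasses singled out by the equivalence.

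For the forward direction, assume $G$ is not reduced. By \cref{lem:not-reduced=>osim} there is a plabic graph $G'$ with $G'\osim G$ that contains one of the forbidden configurations of \cref{fig:fail}. \cref{lem:M1M2M3-to-triple} gives $\X(G')\sim\X(G)$ as triple diagrams, so by \cref{lem:min-diag-move-equiv} it suffices to exhibit a badgon in $\X(G')$. I will do this case by case, following the configurations of \cref{fig:fail}: a hollow digon in $G'$ forces the trip running along one of its two edges, after turning at each endpoint, to return along the other edge; since the enclosed face is empty, the two strand arcs bounding it either form a closed strand (which gives a badgon by \cref{lem:no-closed-strands}) or directly form a parallel digon. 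An internal non-lollipop leaf adjacent to a trivalent vertex of the opposite colour that does not root a collapsible tree likewise forces the trip entering along the leaf-edge to U-turn at the leaf and re-enter the neighbouring triple point along a different branch, producing a monogon.

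For the reverse direction, assume $\X(G)$ is not minimal. By \cref{cor:Dyl} there exists $\X'\sim\X(G)$ containing a hollow monogon, and by \cref{pr:bijplabictriple} I may write $\X'=\X(G')$ for a unique normal plabic graph $G'$; by \cref{cor:newmoves}, $G'\osim G$, and in particular $G'\sim G$. The hollow monogon is centred at a triple point, i.e.\ a white vertex $w$ of $G'$, and its enclosed face $F$ has no triple points inside. If $F$ is counterclockwise-oriented, then by \cref{def:G(X)} it contains exactly one black vertex $b$, necessarily connected only to $w$; since the monogon passes through $w$ twice, along two of its adjacent edges, this yields a hollow bicoloured digon between $w$ and $b$, which is configuration~(b) of \cref{fig:fail}. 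If $F$ is clockwise-oriented, then $F$ is already a face of $G'$, and the strand behaviour at $w$ localises to a configuration that, after the (M2)/(M3) moves removing any bivalent vertices on its boundary, is recognisable as (d) or (e) of \cref{fig:fail}. Either way, $G'$ is not reduced, and since reducedness is invariant under local moves (\cref{rem:comments-on-reduced-plabic}), neither is $G$.

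The principal obstacle is the case analysis matching the single type of bad feature provided by \cref{cor:Dyl} (a hollow monogon) with the two fairly different types of forbidden configurations listed in \cref{fig:fail}, and, symmetrically, matching each forbidden configuration to a badgon in the associated triple diagram. Both tasks require a careful unpacking of the rules of the road near the white vertex located at the monogon's self-intersection, and of the orientation of the face it encloses, but I do not expect any new conceptual ingredients to be needed beyond the local combinatorial dictionary between plabic graphs and triple diagrams provided by Definitions~\ref{def:GX}, \ref{def:G(X)}, and \ref{def:triple-non-normal}.
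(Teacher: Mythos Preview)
Your proposal is correct and follows essentially the same route as the paper: both implications by contraposition, using \cref{lem:not-reduced=>osim} together with \cref{lem:M1M2M3-to-triple} in one direction and \cref{cor:Dyl} together with \cref{pr:bijplabictriple} and \cref{cor:newmoves} in the other, followed by the local dictionary between hollow monogons and forbidden configurations. Two small points the paper handles that you gloss over: in the ``$G$ not reduced'' direction one must check, via \cref{lem:generalized-triple-diagram-connected} and the connectedness of $\X(G)$, that the generalized diagram $\X(G')$ is an honest triple diagram before applying \cref{thm:minimal}; and a bicolored hollow digon in $G'$ actually yields a \emph{monogon} in $\X(G')$ (see \cref{fig:18}) rather than a parallel digon---still a badgon, so your conclusion stands.
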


\begin{proof}
Suppose $\mathfrak{X}(G)$ is not minimal.  
By \cref{cor:Dyl}, there is a triple diagram $\mathfrak{X'} \!\sim\! \mathfrak{X}(G)$ 
such that $\mathfrak{X'}$ has a hollow monogon.  
By \cref{pr:bijplabictriple}, $\mathfrak{X'} \!=\! \mathfrak{X}(G')$ 
for some normal plabic graph~$G'$. 
Moreover,  \cref{cor:newmoves} implies that $G\osim G'$. 
The hollow monogon in~$\X'$ corresponds in the normal graph~$G'$ 
to one of the configurations shown in \cref{fig:18}: 
either a hollow digon (in which case by definition $G'$ is not reduced)
or 
a black leaf adjacent to a white trivalent vertex (in which case, as $G'$
is normal, it is not reduced, by 
\cref{rem:neverreduced}).
Either way, $G'$ is not reduced, so $G$ is not reduced either.
%	contains one of the forbidden configurations from \cref{fig:fail} 
%(cf.\ \cref{lem:normal-no-collapse}),
%so $G$ is not reduced.

Going in the other direction, let $G$ be a non-reduced normal plabic~graph.
Then either $G$ has a (necessarily) black leaf %(the only kind of leaf it can have),
or  $G\sim G'$ where $G'$ contains a hollow digon.
%	By \cref{lem:not-reduced=>osim}, 
%there exists $G'\osim G$ containing a forbidden configuration. 
%shown in \cref{fig:fail}.
%We may assume that $G'$ has no white leaves, 
%since growing those leaves does not help with producing ``forbidden configurations.''
By \cref{lem:M1M2M3-to-triple}, 
the triple diagram~$\X(G)$ is move-equivalent
to the (generalized) triple diagram~$\X(G')$. 
Since $\X(G)$ is connected, so is~$\X(G')$. 
It follows by \cref{lem:generalized-triple-diagram-connected} that $\X(G')$ is an honest triple diagram. 

If $G$ contains a black leaf, then $\X(G)$ contains a monogon, cf.\ \cref{fig:18},
hence is not minimal.
%The remaining argument depends on the type of a forbidden configuration present in~$G'$. 
%Since $G$ is normal and $G'\osim G$, it follows that $G'$ has no white leaves. 
If $G'$ contains a hollow digon with vertices of the same color,
then $\X(G')$ has a closed strand; hence $\X(G')$ is not minimal 
(by \cref{lem:nomonogondigon}) and neither is~$\X(G)$. 
If the vertices of the digon have different colors, cf.\ \cref{fig:18},
then $\X(G')$ contains a monogon, hence is not minimal. 
\end{proof}

\begin{figure}[ht]
\begin{center}
\vspace{-.2in}
\begin{tikzpicture}

\draw[thick] (0,0) -- (0.65,0);
\draw[thick] (1.6,0) -- (2.25,0);
\draw[thick] (0.65,0) to[out=80,in=180] (1.125,0.45) to[out=0,in=100] (1.6,0);
\draw[thick] (0.65,0) to[out=-80,in=180] (1.125,-0.45) to[out=0,in=-100] (1.6,0);
\node[circle, fill=black, draw=black, inner sep=0pt, minimum size=4pt] at (0.65,0) {};
\node[circle, thick, fill=white, draw=black, inner sep=0pt, minimum size=4pt] at (1.6,0) {};
\draw[red,thick, postaction={decorate, decoration = {markings, mark = between positions 0.98 and 0.99 step 0.6 with {\arrow{Stealth[length=6pt]}}}}] (2.25,0.12) to[out=180,in=20] (1.7,0.08) to[out=215,in=0] (1.125,-0.25) to[out=180,in=270] (0.85,0) to[out=90,in=180] (1.125,0.25) to[out=0,in=135] (1.7,-0.08) to[out=-20,in=180] (2.25,-0.12);
\node at (1.7,-0.35) {\small $x$};
\node at (2,0.26) {\small $e$};

\draw[thick] (4,0) -- (4.6,0);
\draw[thick] (4,0) -- (3.6,0.4);
\draw[thick] (4,0) -- (3.6,-0.4);
\node[circle, thick, fill=white, draw=black, inner sep=0pt, minimum size=4pt] at (4,0) {};
\node[circle, fill=black, draw=black, inner sep=0pt, minimum size=4pt] at (4.6,0) {};
\draw[red,thick, postaction={decorate, decoration = {markings, mark = between positions 0.11 and 0.29 step 0.6 with {\arrow{Stealth[length=6pt]}}}}] (3.5,0.35) to[out=-45,in=175] (4,0) to[out=-20,in=180] (4.6,-0.15) to[out=0,in=270] (4.8,0) to[out=90,in=0] (4.6,0.15) to[out=180,in=20] (4,0) to[out=185,in=45] (3.5,-0.35);
\node at (4.1,-0.25) {\small $x$};

\end{tikzpicture}
\vspace{-.2in}
\end{center}	
\caption{A hollow monogon in a triple diagram yields a forbidden configuration
	in the corresponding normal plabic graph.} %, cf.\ \cref{fig:fail}.}
\label{fig:18}
\end{figure}

\vspace{-.1in}

\begin{proof}[Proof of \cref{prop:fixedlollipop}]
Let $G$ be a reduced leafless plabic graph such that $\pi_G(i)\!=\!i$.
We need to show that the connected component of~$G$ containing 
the boundary vertex~$i$ %is a tree that 
%collapses to 
	is a lollipop at~$i$.
%cf.\ \cref{def:collapsible-tree}.

Suppose otherwise, that $G$ has no lollipop at $i$.  
Without loss of generality we can assume that $G$ has no lollipops at 
any other boundary vertex, since they don't affect which moves we can apply.
%Without loss of generality, we can assume that $G$ has no trees  
%collapsing to other lollipops either. 
By \cref{lem:biptri2}, 
%we can apply moves (M2) and (M3) to $G$ to get 
 $G$ is move-equivalent to a normal plabic graph~$G'$.
The trip permutations of $G$ and~$G'$ coincide with each other 
(by \cref{exercise:trip-invariant})~and with 
the strand permutation of the triple diagram~$\X(G')$ (by \cref{pr:bijplabictriple}).
Since $G$ is reduced, so is~$G'$; hence $\X(G')$ is minimal by \cref{red-minimal}.

%Consider the boundary vertex $i$ in the normal plabic graph~$G'$. 
Let $d$ be the degree of the black vertex adjacent to the boundary vertex~$i$ in~$G'$. 
	It is impossible that $d=1$, since moves (M1), (M2), (M3)
	never create degree $1$ vertices.
%
%If $d\!=\!1$, then there is a sequence of local moves relating the
%component~of~$G$ containing~$i$ 
%to the black lollipop at~$i$~in~$G'$. % via some sequence of moves.
%Since local moves preserve the number of internal faces, and a black lollipop
%has no internal faces, no (M1) move appears in the sequence of moves.
%Therefore this component must be a (collapsible) tree and we are done.
%
If $d=2$ (see \cref{fig:baddigon} on the left), 
then $\pi_G(i)\!=\!i$ implies that $\X(G')$ has a monogon,  
so it cannot be minimal, cf.\ \cref{lem:nomonogondigon}. 
If $d\ge 3$, then we get a par\-al\-lel digon  
(see \cref{fig:baddigon} on the right), again contradicting the minimality of~$\X(G')$. 
\end{proof}

\begin{figure}[ht]
\begin{center}
\vspace{-.25in}
\begin{tikzpicture}

% left
\draw[thick] (-2,0) -- (-4,0);
\node[circle, thick, fill=white, draw=black, inner sep=0pt, minimum size=4pt] (1) at (-3,0.5) {};
\node[circle, fill=black, draw=black, inner sep=0pt, minimum size=4pt] (15) at (-3,0.25) {};
\draw[thick] (1) -- (-3,0);
\draw[thick] (1) -- (-3.5,0.9);
\draw[thick] (1) -- (-2.5,0.9);
\draw[red,thick, postaction={decorate, decoration = {markings, mark = between positions 0.24 and 0.99 step 0.65 with {\arrow{Stealth[length=5pt]}}}}] (-2.7,0) to[out=90,in=-60] (-3,0.5) to[out=120,in=290] (-3.5,1.2);
\draw[red,thick, postaction={decorate, decoration = {markings, mark = between positions 0.28 and 0.99 step 0.65 with {\arrow{Stealth[length=5pt]}}}}] (-2.5,1.2) to[out=250,in=60] (-3,0.5) to[out=-120,in=90] (-3.3,0);
\draw[red, thick, densely dotted] (-3.5,1.2) to[out=90,in=180] (-3,2) to[out=0,in=90] (-2.5,1.2);

% right
\draw[thick] (2,0) -- (4,0);
\node[circle, fill=black, draw=black, inner sep=0pt, minimum size=4pt] (11) at (3,0.5) {};
\node[circle, thick, fill=white, draw=black, inner sep=0pt, minimum size=4pt] (12) at (3.4,0.9) {};
\node[circle, thick, fill=white, draw=black, inner sep=0pt, minimum size=4pt] (13) at (2.6,0.9) {};
\draw[thick] (11) -- (3,0);
\draw[thick] (11) -- (12);
\draw[thick] (11) -- (13);
\draw[thick] (13) -- (2.2,1.05);
\draw[thick] (13) -- (2.6,1.3);
\draw[thick] (12) -- (3.8,1.05);
\draw[thick] (12) -- (3.4,1.3);
\draw[red,thick, postaction={decorate, decoration = {markings, mark = between positions 0.88 and 0.99 step 0.65 with {\arrow{Stealth[length=5pt]}}}}] (2.5,1.3) to[out=270,in=120] (2.6,0.9) to[out=-60,in=90] (2.9,0);
\draw[red,thick, postaction={decorate, decoration = {markings, mark = between positions 0.28 and 0.99 step 0.68 with {\arrow{Stealth[length=5pt]}}}}] (3.1,0) to[out=90,in=-120] (3.4,0.9) to[out=60,in=270] (3.5,1.3);
\draw[red,thick, densely dotted, postaction={decorate, decoration = {markings, mark = between positions 0.55 and 0.99 step 0.68 with {\arrow[rotate=-10]{Stealth[length=5pt]}}}}] (3.5,1.3) to[out=90,in=0] (3,2) to[out=180,in=90] (2.5,1.3);
\draw[red,thick, postaction={decorate, decoration = {markings, mark = between positions 0.55 and 0.99 step 0.68 with {\arrow[rotate=10]{Stealth[length=5pt]}}}}] (3.9,1.2) to[out=-150,in=30] (3.4,0.9) to[out=-140,in=0] (3,0.7) to[out=180,in=-40] (2.6,0.9) to[out=150,in=-30] (2.1,1.2);

\end{tikzpicture}
\vspace{-.2in}
\end{center}
	\caption{The vicinity of $i$ in~$G'$.}
	\label{fig:baddigon}
\vspace{-.1in}
\end{figure}
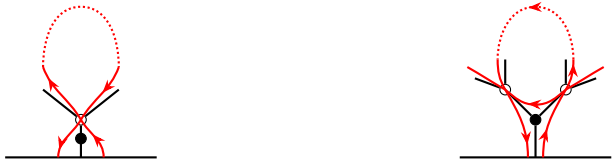
%\begin{figure}
%\includegraphics[height=2in]{IMG_0105.eps}
%	\caption{The vicinity of $i$}
%	\label{fig:baddigon}
%\end{figure}

\begin{proof}[Proof of \cref{thm:moves}]
	Let $G$ and $G'$ be reduced (leafless) plabic graphs.  
If $G\sim G'$, then $\dpi_G = \dpi_{G'}$ by \cref{exercise:forward}.
We need to show the converse. 

	Let $G$ and $G'$ be reduced (leafless) plabic graphs such that
$\dpi_G = \dpi_{G'}$.
If this decorated permutation has a fixed point at some vertex~$i$, 
then by \cref{prop:fixedlollipop}, applying local moves if needed,
both $G$ and $G'$ have a lollipop
of the same color in position~$i$.
Delete this lollipop in both graphs; the resulting
graphs are still reduced, and their decorated trip permutations coincide.  
So without loss of generality, we may assume that
$\dpi_G \!=\! \dpi_{G'}$ has no fixed points and accordingly
 $G$ and $G'$ have no %trees collapsing to 
	lollipops.  
Applying local moves as needed,  we can furthermore assume,
in light of \cref{lem:biptri2}, that both $G$ and $G'$ are normal.
Since they are reduced, \cref{red-minimal} implies 
that the triple diagrams $\mathfrak{X}(G)$ and $\mathfrak{X}(G')$ are minimal.  
By \cref{pr:bijplabictriple}, we moreover have 
$\pi_{\mathfrak{X}(G)} = \pi_G = \pi_{G'} = \pi_{\mathfrak{X}(G')}$.
Invoking \cref{thm:domino-flip}, we conclude that 
$\mathfrak{X}(G)$ and $\mathfrak{X}(G')$ are move-equivalent. 
By \cref{thm:moves-moves}
	and \cref{lem:star}, the same is true for $G$ and~$G'$.
\end{proof}

\vspace{-3pt}

\begin{proof}[Proof of \cref{cor:reduced=min-faces}]
Local moves do not change the number of faces.
It follows by \cref{thm:moves} that all reduced plabic graphs 
with a given decorated trip permutation have the same number of faces. 

Changing the color of a lollipop transforms a reduced plabic graph 
into another reduced graph with the same number of faces
and the same trip permutation (but with different decoration). 
Therefore all reduced plabic graphs~$G$ with $\pi(G)=\pi$ 
have the same number of faces. 

It remains to show that if $G$ is not reduced and has no internal leaves other than lollipops,
then there exists a plabic graph $G'$ with $\pi(G')=\pi$ and with fewer faces than~$G$. 
Since $G$ is not reduced, 
%under our assumptions on~$G$, 
%\cref{pr:reduced-collapse} applies, 
$G$ can be transformed by local moves 
that do not create internal leaves into a plabic graph~$G''$ containing a hollow digon.  
%
%We begin by removing bivalent vertices and collapsing all unicolored edges. 
%If we encounter a loop, then remove it together with everything that the loop encloses. 
%The resulting graph has fewer faces and the same trip permutation. 
%
%Now the graph is bipartite, with no internal leaves and no bivalent vertices. 
%Insert bivalent vertices near the boundary to make it normal. 
%(This isn't really necessary, but it's convenient to use the terminology.) 
%
We claim that there exists a plabic graph $G'''$
(\emph{not} move-equivalent to~$G''$) 
that has the same trip permutation as~$G''$, 
but fewer faces compared to~$G''$. 
The graph $G'''$ is constructed as follows.
If the vertices of the hollow digon in~$G''$ are of the same color, 
then remove one of the sides of the digon (keeping its vertices) to get~$G'''$. 
If, on the other hand, the vertices of the digon have different colors,
then remove both sides of the digon; 
if one of the vertices was bivalent, then remove it as well. 
It is straightforward to check that in each case, the %(decorated) 
trip permutation
does not change, whereas the number of faces decreases by~1~or~2. 
\end{proof}

\vspace{-2pt}

\begin{remark}
As we have seen, A.~Postnikov's theory of plabic graphs %from his 2006 preprint
\cite{postnikov} is closely related to 
D.~Thurston's theory of triple diagrams \cite{thurston}. %first posted to the \texttt{arXiv} in 2004.
In particular, reduced plabic graphs are essentially minimal triple diagrams in disguise.  
	If one starts with a non-reduced (leafless) plabic graph, one can 
	apply the moves together with the reduction move (R1) in order to 
	transform the graph into a reduced one.
	Similarly,
	one can apply reduction moves to a 
%	While we have not discussed it here, there are some \emph{reduction \linebreak[3]
%moves}
%that can be repeatedly applied to a non-reduced plabic graph 
 a non-minimal triple diagram
in order to eventually make it minimal.
%Similarly, there are local reduction moves that can be used to transform 
%non-minimal triple diagrams into minimal ones. 
Here, however, the two theories diverge: %when it comes to reduction moves:
reduction moves for triple diagrams preserve the strand permutation, 
but reduction moves for plabic graphs do not preserve the trip permutation.
In spite of that, reduction for plabic graphs fits 
into the theory of the totally nonnegative Grassmannian,
as it is compatible with its cell decomposition, cf.\ \cite[Section~12]{postnikov}. We will discuss this in a subsequent chapter.
\end{remark}

%\clearpage
\newpage

\section{The bad features criterion}\label{sec:bad}

In this section, we provide a criterion for deciding whether a (leafless) 
plabic graph is reduced or not. 
%We first explain (see \cref{def:normalize}) how to 
%transform an arbitrary plabic graph~$G$ into a normal plabic graph $N(G)$ move-equivalent to~$G$ 
%(or conclude that $G$ is not reduced). 
%We then use a criterion based on \cref{red-minimal} to determine 
%whether $N(G)$ (hence~$G$) is reduced or not. 

%Recall the notion of a roundtrip from \cref{def:trip}. 

\begin{lemma}
\label{lem:roundtrips} 
A reduced leafless plabic graph has no roundtrips.
%Also, $G$ has no loops, i.e., edges whose endpoints coincide. 
\end{lemma}

\begin{proof}
We may assume that our plabic graph $G$ does not contain white lollipops. 
%	or 
%trees that collapse to white lollipops.
%(Collapsing such trees and 
(Removing lollipops 
does not affect whether a graph is reduced or whether it has a roundtrip.) 
Since $G$ is leafless, by~\cref{lem:biptri2} 
it is move-equivalent 
%(up to the removal of some white lollipops) 
to a normal plabic graph~$G'$.
Since $G'$ is reduced, $\X(G')$ is minimal (see \cref{red-minimal}).
Hence $\X(G')$ has no closed strands (see \cref{lem:nomonogondigon}),  
so $G'$ has no roundtrips. 
Since roundtrips persist under local moves, 
$G$ has no roundtrips either. 
%Suppose $G$ has a loop~$e$ based at a black (resp., white) vertex. 
%(Some edges and vertices might be enclosed by~$e$.)
%Then the trip that traverses~$e$ clockwise (resp., counterclockwise) is a roundtrip,
%a contradiction. 
\end{proof}

\begin{definition}
\label{def:bad-features}
If a trip passes through an edge~$e$ of a plabic graph 
twice (in the opposite directions), 
we call this an \emph{essential self-intersection}.  

If for two edges $e_1$ and $e_2$, there are two distinct trips each of~which
passes first through~$e_1$ and then through~$e_2$,
we call this a \emph{bad double crossing}.

We use the term \emph{bad features} 
to collectively refer to 
\begin{itemize}[leftmargin=.2in]
\item 
roundtrips (see \cref{def:trip}), 
\item
essential self-intersections, and
\item
bad double crossings.  
\end{itemize}
These notions are illustrated in \cref{fig:bad-features}. 
\end{definition}

\begin{figure}[ht]
\begin{center}
%\vspace{.2in}
\begin{tabular}{ccc}
\begin{tabular}{c}
\ \\[-.6in]
\setlength{\unitlength}{.6pt}
%\begin{picture}(60,65)(0,-3)
\begin{picture}(50,55)(10,-3)
\thicklines
\put(20,0){\circle*{6}}
\put(50,0){\circle{6}}
\put(0,20){\circle{6}}
\put(20,40){\circle*{6}}
\put(50,40){\circle{6}}
\put(70,20){\circle*{6}}

\put(23,0){\line(1,0){24}}
\put(23,40){\line(1,0){24}}
\put(2,18){\line(1,-1){16}}
\put(2,22){\line(1,1){16}}
\put(68,18){\line(-1,-1){16}}
\put(68,22){\line(-1,1){16}}
\put(-2,20){\line(-1,0){15}}
\put(68,20){\line(-1,0){15}}
\put(52,-2){\line(1,-1){10}}
\put(52,42){\line(1,1){10}}
\put(22,2){\line(1,1){10}}
\put(22,38){\line(1,-1){10}}
\end{picture}
\\[.1in]
\setlength{\unitlength}{1pt}
\begin{picture}(45,20)(0,10)
\thicklines
\put(15,15){\circle*{4}}
\put(30,15){\circle*{4}}
\put(13,15){\line(-1,0){10}}
\put(32,15){\line(1,0){10}}
%\put(34,17){$e$}
\qbezier(15,15)(22,30)(30,17)
\qbezier(15,15)(22,0)(30,13)
\end{picture}
\end{tabular}
&
\qquad {\ }
\begin{tabular}{c}
\ \\[-.5in]
\setlength{\unitlength}{1pt}
\begin{picture}(45,20)(0,10)
\thicklines
\put(15,15){\circle*{4}}
\put(30,15){\circle{4}}
\put(13,15){\line(-1,0){10}}
\put(32,15){\line(1,0){10}}
\put(34,17){$e$}
\qbezier(15,15)(22,30)(30,17)
\qbezier(15,15)(22,0)(30,13)
\end{picture}
\\[.1in]
\setlength{\unitlength}{1pt}
\begin{picture}(45,20)(-5,10)
\thicklines
\put(15,15){\circle{4}}
\put(30,15){\circle*{4}}
\put(20,17){$e$}
\put(17,15){\line(1,0){11}}
\put(13,15){\line(-2,1){9}}
\put(13,15){\line(-2,-1){9}}
\end{picture}
\end{tabular}
\qquad {\ }
&
\setlength{\unitlength}{.6pt}
\begin{picture}(50,55)(7,-3)
\thicklines

\put(0,0){\circle{6}}
\put(0,30){\circle*{6}}
\put(30,0){\circle*{6}}
\put(30,30){\circle{6}}
\put(60,0){\circle{6}}
\put(60,30){\circle*{6}}

\put(2.5,0){\line(1,0){25}}
\put(32,0){\line(1,0){26}}
\put(2,30){\line(1,0){26}}
\put(32,30){\line(1,0){26}}

\put(0,2){\line(0,1){26}}
\put(30,2){\line(0,1){26}}
\put(60,2){\line(0,1){26}}

\put(-2,-2){\line(-1,-1){15}}
\put(-2,32){\line(-1,1){15}}
\put(62,-2){\line(1,-1){15}}
\put(62,32){\line(1, 1){15}}

\put(-20,14){$e_1$}
\put(65,14){$e_2$}

\end{picture}
\\[.3in]
(a) & (b) & (c) 
\end{tabular}
\end{center}
\vspace{-.2in} 
\caption{Plabic graph fragments representing ``bad features:'' (a)~a~roundtrip;
(b) essential self-intersection; (c) bad double crossing.
The fragment at the bottom of column (b) may not appear
in a leafless plabic graph, but may occur in a normal plabic graph.}
\label{fig:bad-features}
\end{figure}

\begin{lemma}
\label{lem:badfeatures}
A normal plabic graph $G$ has a bad feature %(see \cref{def:bad-features}) 
if and only if the associated triple diagram $\X(G)$ has a badgon.
%
%More specifically,
%\begin{itemize}[leftmargin=.2in]
%\item 
%$G$ has a roundtrip if and only if $\X(G)$ has a closed strand; 
%\item
%$G$ has an essential self-intersection if and only if $\X(G)$ has a monogon; 
%\item
%$G$ has a bad double crossing %(which might involve a roundtrip) 
%if and only if $\X(G)$ has a parallel digon.
%\end{itemize}
\end{lemma}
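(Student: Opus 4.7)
The plan is to translate each type of bad feature of a normal plabic graph $G$ directly into a type of badgon in the associated triple diagram $\X(G)$, and conversely. Throughout I will use the construction from \cref{def:GX} and the bijection between trips in $G$ and strands in $\X(G)$ given by \cref{pr:bijplabictriple}: each trip $\tau$ gives rise to a strand $S_\tau$ that runs alongside the edges of $\tau$, keeping each edge on its left, and crossings of strands at triple points correspond exactly to the white vertices of $G$ that lie on multiple trips.

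For the forward implication, I would split into three cases based on the type of bad feature. If $G$ has a roundtrip, then the corresponding strand is closed, and by \cref{lem:no-closed-strands} the presence of a closed strand in a triple diagram forces the existence of a badgon. If $G$ has an essential self-intersection at an edge $e$---so a single trip $\tau$ traverses $e$ in both directions---then $S_\tau$ runs along both sides of $e$; following $S_\tau$ past the white endpoint of $e$ (a triple point) shows that $S_\tau$ must cross itself there, producing a monogon. If $G$ has a bad double crossing through edges $e_1$ and $e_2$, then two distinct trips $\tau_1, \tau_2$ both pass through $e_1$ before $e_2$, and their strands cross at two triple points, one near each of $e_1, e_2$; because both trips traverse the edges in the same relative order, the orientations agree between the two crossings, yielding a parallel digon.

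For the reverse implication, I would split into cases based on the type of badgon. If $\X(G)$ has a monogon, then a strand $S_\tau$ self-intersects at a triple point $v$, which corresponds to a trivalent white vertex of $G$ through which the trip $\tau$ passes twice; since $v$ is trivalent, a pigeonhole argument forces $\tau$ to traverse one of the three edges incident to $v$ in both directions, producing an essential self-intersection. If $\X(G)$ has a parallel digon formed by two strands $S_1, S_2$ crossing at triple points $x$ and $y$, then the corresponding trips $\tau_1, \tau_2$ both pass through the white vertices $x$ and $y$; a local analysis via the rules of the road identifies two edges---one adjacent to $x$ and one adjacent to $y$---which both $\tau_1$ and $\tau_2$ traverse in the order specified by the digon's orientation, giving a bad double crossing. (The degenerate case in which the two strands of the parallel digon coincide is subsumed by the monogon analysis, as it forces the single trip to re-traverse an edge.)

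The main obstacle lies in the careful local combinatorial analysis at white vertices. In both directions, one must use the rules of the road (sharpest left turn at a white vertex) together with the details of the construction in \cref{def:GX} to translate between trips in $G$ and strands in $\X(G)$ around each white vertex. A subtle point is the role of orientation: the convention that strands keep their edges on the left ensures that it is precisely \emph{parallel} (and not anti-parallel) digons that correspond to bad double crossings, and one must verify that anti-parallel digons do not encode any of the bad features listed in \cref{def:bad-features}.
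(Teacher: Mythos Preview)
Your proposal is correct and follows essentially the same approach as the paper: both argue case-by-case, matching roundtrips with closed strands, essential self-intersections with monogons, and bad double crossings with parallel digons, and both use the pigeonhole argument at a trivalent white vertex for the converse direction. Your treatment of the roundtrip case is in fact slightly more explicit than the paper's, since you invoke \cref{lem:no-closed-strands} to pass from a closed strand to a badgon, whereas the paper only records the correspondence between roundtrips and closed strands and leaves that last step implicit.
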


\begin{proof}
Let $G$ be a normal plabic graph. 
The strands in the triple diagram $\X=\X(G)$ closely follow the trips in~$G$. 
Therefore $\X$ has a closed strand if and only if $G$ has a roundtrip. 

If $G$ has an essential self-intersection (resp., a bad double crossing),
then $\X$ has a monogon (resp., a parallel digon).
To see that, take each  edge~$e$ involved in a bad feature
and consider the white end~$v$ of~$e$. 
The strands corresponding to the trips involved in the bad feature 
will intersect at~$v$; thus $v$ will be a vertex of the corresponding badgon. 
Cf.\ Figures~\ref{fig:18} and~\ref{fig:17}.

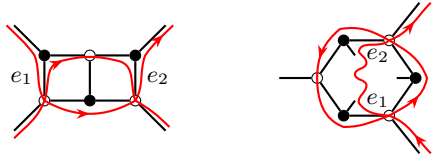
\begin{figure}[ht]
\begin{center}
\vspace{-.1in}
\begin{tikzpicture}

%\draw[thick] (-6.1,0) circle (0.28cm);
%\draw[thick] (-5.8,0) -- (-5.3,0);
%\draw[thick] (-6.4,0) -- (-6.9,0);
%\node[circle, thick, fill=white, draw=black, inner sep=0pt, minimum size=4pt] at (-5.8,0) {};
%\node[circle, fill=black, draw=black, inner sep=0pt, minimum size=4pt] at (-6.4,0) {};
%\node at (-5.5,0.25) {\small $e$};
%\draw[red,thick, postaction={decorate, decoration = {markings, mark = between positions 0.97 and 0.99 step 0.6 with {\arrow{Stealth[length=5pt]}}}}] (-5.3,0.1) to[out=185,in=45] (-5.8,0) to[out=215,in=0] (-6.1,-0.2) to[out=180,in=270] (-6.3,0) to[out=90,in=180] (-6.1,0.2) to[out=0,in=135] (-5.8,0) to[out=-45,in=175] (-5.3,-0.1);

%\draw[thick] (-4,0) -- (-3.4,0);
%\draw[thick] (-4,0) -- (-4.4,0.4);
%\draw[thick] (-4,0) -- (-4.4,-0.4);
%\node[circle, thick, fill=white, draw=black, inner sep=0pt, minimum size=4pt] at (-4,0) {};
%\node[circle, fill=black, draw=black, inner sep=0pt, minimum size=4pt] at (-3.4,0) {};
%\draw[red,thick, postaction={decorate, decoration = {markings, mark = between positions 0.11 and 0.29 step 0.6 with {\arrow{Stealth[length=6pt]}}}}] (-4.5,0.35) to[out=-45,in=175] (-4,0) to[out=-20,in=180] (-3.4,-0.15) to[out=0,in=270] (-3.2,0) to[out=90,in=0] (-3.4,0.15) to[out=180,in=20] (-4,0) to[out=185,in=45] (-4.5,-0.35);
%\node at (-3.8,0.25) {\small $e$};

\node[circle, fill=black, draw=black, inner sep=0pt, minimum size=4pt] (1) at (-0.6,0.3) {};
\node[circle, thick, fill=white, draw=black, inner sep=0pt, minimum size=4pt] (2) at (0,0.3) {};
\node[circle, fill=black, draw=black, inner sep=0pt, minimum size=4pt] (3) at (0.6,0.3) {};
\node[circle, thick, fill=white, draw=black, inner sep=0pt, minimum size=4pt] (4) at (-0.6,-0.3) {};
\node[circle, fill=black, draw=black, inner sep=0pt, minimum size=4pt] (5) at (0,-0.3) {};
\node[circle, thick, fill=white, draw=black, inner sep=0pt, minimum size=4pt] (6) at (0.6,-0.3) {};
\draw[thick] (1) -- (2); \draw[thick] (2) -- (3);
\draw[thick] (4) -- (5); \draw[thick] (5) -- (6);
\draw[thick] (1) -- (4); \draw[thick] (2) -- (5);
\draw[thick] (3) -- (6);
\draw[thick] (1) -- (-1,0.7); 
\draw[thick] (3) -- (1,0.7);
\draw[thick] (4) -- (-1,-0.7); 
\draw[thick] (6) -- (1,-0.7);
\draw[red,thick, postaction={decorate, decoration = {markings, mark = between positions 0.5 and 0.99 step 0.45 with {\arrow{Stealth[length=5pt]}}}}] (-1.1,0.7) to[out=-45,in=125] (-0.73,0.3) to[out=290,in=120] (-0.6,-0.3) to[out=-30,in=210] (0.6,-0.3) to[out=60,in=250] (0.73,0.3) to[out=55,in=225] (1.1,0.7);
\draw[red,thick, postaction={decorate, decoration = {markings, mark = between positions 0.4 and 0.79 step 0.45 with {\arrow[rotate=10]{Stealth[length=5pt]}}}}] (-1,-0.8) to[out=45,in=250] (-0.6,-0.3) to[out=70,in=200] (-0.4,0.22) to[out=15,in=165] (0.4,0.22) to[out=-20,in=110] (0.6,-0.3) to[out=330,in=135] (1.05,-0.65);
\node at (-0.9,0) {\footnotesize $e_1$};
\node at (0.9,0) {\footnotesize $e_2$};

\node[circle, thick, fill=white, draw=black, inner sep=0pt, minimum size=4pt] (11) at (3,0) {};
\node[circle, fill=black, draw=black, inner sep=0pt, minimum size=4pt] (12) at (3.35,0.5) {};
\node[circle, thick, fill=white, draw=black, inner sep=0pt, minimum size=4pt] (13) at (3.95,0.5) {};
\node[circle, fill=black, draw=black, inner sep=0pt, minimum size=4pt] (14) at (4.3,0) {};
\node[circle, thick, fill=white, draw=black, inner sep=0pt, minimum size=4pt] (15) at (3.95,-0.5) {};
\node[circle, fill=black, draw=black, inner sep=0pt, minimum size=4pt] (16) at (3.35,-0.5) {};
\draw[thick] (11) -- (12); \draw[thick] (12) -- (13);
\draw[thick] (13) -- (14); \draw[thick] (14) -- (15);
\draw[thick] (15) -- (16); \draw[thick] (16) -- (11);
\draw[thick] (11) -- (2.5,0);
\draw[thick] (12) -- (3.5,0.3);
\draw[thick] (13) -- (4.35,1);
\draw[thick] (14) -- (4.05,0);
\draw[thick] (15) -- (4.35,-1);
\draw[thick] (16) -- (3.5,-0.3);
\draw[red,thick, postaction={decorate, decoration = {markings, mark = between positions 0.1 and 0.99 step 0.8 with {\arrow{Stealth[length=5pt]}}}}] (4.5,-1) to[out=135,in=-20] (3.95,-0.5) to[out=175,in=-55] (3.55,-0.33) to[out=100,in=270] (3.65,-0.1) to[out=90,in=270] (3.5,0.05) to[out=90,in=270] (3.65,0.2) to[out=90,in=-60] (3.55,0.35) to[out=80,in=-135] (3.95,0.5) to[out=20,in=-135] (4.5,1);
\draw[red,thick, postaction={decorate, decoration = {markings, mark = between positions 0.44 and 0.99 step 0.5 with {\arrow{Stealth[length=5pt]}}}}] (3,0) to[out=270,in=160] (3.35,-0.65) to[out=0,in=-160] (3.95,-0.5) to[out=30,in=260] (4.45,0) to[out=110,in=-30] (3.95,0.5) to[out=150,in=10] (3.3,0.65) to[out=-140,in=90] (3,0);
\node at (3.8,-0.3) {\footnotesize $e_1$};
\node at (3.8,0.3) {\footnotesize $e_2$};

\end{tikzpicture}
\vspace{-.3in}
\end{center}
\caption{
%An essential self-intersection, a 
A bad double crossing in $G$ yields a
parallel digon in~$\X(G)$.}
\label{fig:17}
\end{figure}

Conversely, suppose that $\X$ has a monogon with self-intersection
corresponding to the white vertex~$v$ of~$G$. There are three 
strand segments of $\X$ that pass through~$v$, each running along 
two distinct 
edges incident to~$v$; 
 because we have a self-intersection, two of these
strands segments are part of the same strand $s$.  
%Each of these two strand segments includes two distinct edges incident to~$v$.
Since $v$ is trivalent, the pigeonhole principle implies that 
two of the four edges that $s$ runs along
must coincide.
This yields an essential self-intersection in~$G$.
A~similar argument shows that if  $\X$ has a parallel digon,
then $G$ has a bad double crossing. 
%
%This means that $\X$ is not minimal (see \cref{thm:minimal}). 
%Therefore, by \cref{cor:Dyl}, $\X$~is move-equivalent to 
%a triple diagram $\X'$ containing an hollow monogon.
%The corresponding normal plabic graph $G'=G(\X')$ contains a bad feature
%(either a bad double crossing or an essential self-intersection), 
%see \cref{fig:18}. 
%Since $G$ and $G'$ are related via a sequence of urban renewal or normal flip moves
%(see \cref{thm:moves-moves}), 
%it follows by \cref{lem:bad-features-normal-moves}
%that $G$ also contains a bad feature. 
\end{proof}

\begin{corollary}
\label{cor:bad<=>bad}
Let $G$ be a normal plabic graph. 
Let $\X=\X(G)$ be the corresponding triple diagram.
Then the following are equivalent:
\begin{itemize}[leftmargin=.2in]
\item 
$G$ is reduced;
\item
$\X$ is minimal;
\item 
$G$ has no bad features;
\item
$\X$ has no badgons. 
\end{itemize}
\end{corollary}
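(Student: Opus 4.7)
The plan is to chain together three results established earlier in the excerpt, obtaining the four-way equivalence essentially for free. Concretely, I will argue that each of the three ``horizontal'' equivalences I need has already been proved:

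First, the equivalence between ``$G$ is reduced'' and ``$\X$ is minimal'' is exactly the content of \cref{red-minimal}, so there is nothing to do there. Second, the equivalence between ``$\X$ is minimal'' and ``$\X$ has no badgons'' is precisely \cref{thm:minimal}. Third, the equivalence between ``$G$ has no bad features'' and ``$\X$ has no badgons'' is \cref{lem:badfeatures}. Stringing these three biconditionals together yields the desired four-way equivalence.

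So the proof will be a one-paragraph bookkeeping argument of the form: apply \cref{red-minimal} to get (i)$\Leftrightarrow$(ii); apply \cref{thm:minimal} to get (ii)$\Leftrightarrow$(iv); apply \cref{lem:badfeatures} to get (iii)$\Leftrightarrow$(iv). No new ideas are required, and there is no obstacle to speak of, since all the substantive work (minimality characterizations via badgons, bijective correspondence of normal plabic graphs with triple diagrams, and the translation of bad features into badgons) has been carried out in the preceding sections. The only thing worth being slightly careful about is making sure the hypothesis that $G$ is \emph{normal} is in force for each invocation, which is needed both so that $\X(G)$ is well-defined as a genuine (not merely generalized) triple diagram, and so that the lemmas listed above apply verbatim.
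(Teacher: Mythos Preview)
Your proposal is correct and matches the paper's proof essentially verbatim: the paper also chains \cref{red-minimal}, \cref{thm:minimal}, and \cref{lem:badfeatures} in exactly the order you describe.
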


\begin{proof}
By \cref{red-minimal}, %the normal plabic graph 
$G$ is reduced if and only if %the triple diagram 
$\X$~is minimal. 
By virtue of \cref{thm:minimal}, $\X$ is minimal if and only if $\X$ has no badgons. 
By \cref{lem:badfeatures}, $\X$ has no badgons if and only if $G$ has no bad features. 
\end{proof}

The following result  is a version of \cite[Theorem 13.2]{postnikov}. 

\begin{theorem}
\label{thm:reduced}
A normal plabic graph %without internal leaves. % (other than lollipops).
%cf.\ \cref{def:collapsible-tree} and \cref{lem:collapse-tree}.
%(Lollipops are allowed.) 
%Then $G$ 
 is reduced if and only if it does not contain any bad features. 
 A leafless plabic graph is reduced if and only if it does not contain any bad features. 
\end{theorem}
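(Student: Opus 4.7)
The proof is essentially a one-line invocation of \cref{cor:bad<=>bad}, so the plan is simply to chain together the equivalences that have already been established. Concretely, for a normal plabic graph $G$, I would write ``$G$ is reduced $\Leftrightarrow$ $\X(G)$ is minimal $\Leftrightarrow$ $\X(G)$ has no badgons $\Leftrightarrow$ $G$ has no bad features,'' and cite \cref{red-minimal}, \cref{thm:minimal}, and \cref{lem:badfeatures} respectively for the three equivalences. Since all four conditions have been shown to be equivalent in \cref{cor:bad<=>bad}, the ``if and only if'' claim in the theorem is an immediate consequence.

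In more detail, the first equivalence is the content of \cref{red-minimal}, which used the bijection $G \mapsto \X(G)$ between normal plabic graphs and triple diagrams (\cref{pr:bijplabictriple}) to transport reducedness to minimality. The second equivalence is the ``bad features'' criterion at the triple-diagram level proved in \cref{thm:minimal}: a triple diagram is minimal iff it contains no monogon or parallel digon. The third equivalence is \cref{lem:badfeatures}, which translated each type of badgon in $\X(G)$ into a specific bad feature in~$G$ (closed strands to roundtrips, monogons to essential self-intersections, parallel digons to bad double crossings).

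There is essentially no obstacle left at this stage, since all the hard work was done in the earlier sections: the identification of minimality with the absence of badgons (via the swivel-move propagation argument of \cref{lem:propagate} and the straightening argument of \cref{lem:straighten2}), the passage from normal plabic graphs to minimal triple diagrams (via \cref{cor:Dyl} combined with \cref{pr:bijplabictriple}), and the local translation of bad features to badgons. If I had to highlight any single step that the present theorem relies on most heavily, it would be \cref{lem:badfeatures}, since this is the place where the notion of ``bad feature'' (defined intrinsically on the plabic graph in terms of trips passing twice through an edge or two trips passing through the same pair of edges) is matched case-by-case with the intrinsic notion of a badgon in a triple diagram.

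Accordingly, the proof I would write is just: ``By \cref{red-minimal}, $G$ is reduced if and only if $\X(G)$ is minimal. By \cref{thm:minimal}, this holds if and only if $\X(G)$ has no badgons, which by \cref{lem:badfeatures} is equivalent to $G$ having no bad features.\qed''
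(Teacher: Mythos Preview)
Your proposal is correct and matches the paper's approach exactly: the paper states the theorem immediately after \cref{cor:bad<=>bad} and simply notes that the corollary ``provides the following criterion,'' without even writing a separate proof environment. Your chain of equivalences via \cref{red-minimal}, \cref{thm:minimal}, and \cref{lem:badfeatures} is precisely how \cref{cor:bad<=>bad} itself is proved, so there is nothing to add.
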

\begin{proof}
	The first statement is a consequence of 
\cref{cor:bad<=>bad}.
The second statement follows from the first, using 
\cref{lem:biptri2}, plus the fact that the moves relating a leafless plabic graph
to a normal plabic graph neither add nor remove  bad features.
\end{proof}

For example, any plabic graph containing one of the fragments shown 
in \cref{fig:bad-features} is necessarily not reduced. 

\pagebreak[3]

%\begin{remark}
%\label{rem:}
%For \emph{any} plabic graph~$G$, 
%\cref{thm:reduced} can be used in conjunction with the procedure 
%described in \cref{def:normalize}
%to determine whether $G$ is reduced or not. 
%\end{remark}

%The following examples illustrate \cref{thm:reduced}: 

\begin{remark}
Recall from Remark~\ref{rem:reduceddecomp} that an expression (possibly non-reduced) 
of an element of a symmetric group
as a product of simple reflections can be represented by %(a version of) 
a wiring diagram.  
%In this case the reduced decompositions 
% correspond to collections of $n$ piecewise-straight lines
%such that each pair of lines intersects at most once.
As plabic graphs can be viewed as generalizations of wiring diagrams (see \cref{def:wiringplabic}), 
reduced plabic graphs may be viewed as a generalization of 
reduced expressions.
In this context, the criterion of \cref{thm:reduced} corresponds 
to the condition that each pair of lines in the wiring diagram
intersect at most once.
\end{remark}

\section{Affine permutations}
\label{sec:affine}

By \cref{thm:moves}, move-equivalence classes of reduced plabic graphs 
are labeled by decorated permutations. 
An alternative labeling utilizes 
($(a,b)$-bounded) \emph{affine permutations}, 
introduced and studied in this section. 

\begin{definition}
\label{def:anti}
For a decorated permutation $\dpi$ on $b$ letters, we say that $i\in \{1,\dots,b\}$ is
an \emph{anti-excedance} of $\dpi$ if either $\dpi^{-1}(i)>i$ or  $\dpi(i)=\overline{i}$.  
%The number of anti-excedances of~$\dpi$ (which we usually denote by~$a$)
%is equal to the number of values 
%$i\in \{1,\dots,b\}$ such that $\dpi(i)<i$ or $\dpi(i)=\overline{i}$.  
\end{definition}

We will usually let $a$ denote the number of anti-excedances. % of a decorated permutation.

\begin{example}
\label{example:523641}
The decorated permutation $\dpi = (5,\underline{2},\overline{3},6,4,1)$ on $b=6$ letters 
(cf.\ \cref{fig:plabic3}) has $a=3$ anti-excedances, namely, $3$, $4$, and $1$.
%	$1$, $4$, and~$\overline{3}$. 
%	Indeed, $\dpi^{-1}(1)=6>1$, and $\dpi^{-1}(4)=5>4$. 
\end{example}

\begin{definition}
\label{def:affinization}
Let $\dpi$ be a decorated permutation on $b$ letters with $a$ anti-excedances. 
The \emph{affinization} of $\dpi$ 
is the map ~$\affpi:\ZZ\to\ZZ$ constructed as follows.
For $i\in\{1,\dots,b\}$, we set 
%If $\dpi(i)<i$, set $\affpi(i):=\dpi(i)+b$.
%If $\dpi(i)=\overline{i}$  set
%$\affpi(i):=i$.
%If $\dpi(i)=\underline{i}$ set
%$\affpi(i):=i+b$.
%Finally, if $\dpi(i)>i$, set $\affpi(i):=\dpi(i)$.
	%\begin{equation*}
	%	\affpi(i)= \begin{cases}
	%		\dpi(i) &\text{ if }\dpi(i)>i \text{ or }\dpi(i)=\underline{i}, \\
	%		\dpi(i)+b &\text{ if } \dpi(i)<i \text{ or }\dpi(i)=\overline{i}.
	%	\end{cases}
	%\end{equation*}
	\begin{equation*}
		\affpi(i)= \begin{cases}
			\dpi(i) &\text{ if }\dpi(i)>i,\\ 
			i &\text{ if }\dpi(i)=\underline{i}, \\
			\dpi(i)+b &\text{ if } \dpi(i)<i,\\
			i+b &\text{ if }\dpi(i)=\overline{i}.
		\end{cases}
	\end{equation*}
%to define a map $\{1,2,\dots,b\} \to \{1, 2,\dots, 2b\}$ such that $i \leq \affpi(i) \leq i+b$; 
We then extend $\affpi$ to~$\ZZ$ so that it satisfies
\begin{equation}
\label{eq:affpi+b}
\affpi(i+b)=\affpi(i)+b \quad (i\in\ZZ). 
\end{equation}
We note that 
\begin{align}
\label{eq:i<f(i)<i+b}
&i \leq \affpi(i) \leq i+b \quad (i\in\ZZ)  , \\
\label{eq:sum-aex}
&\sum_{i=1}^b (\affpi(i)-i) = b\cdot \#\{ i\in\{1,\dots,b\} \mid \dpi(i)<i \text{\ or\ } \dpi(i)=\overline{i}\}
= ab. 
\end{align}
\end{definition}

\begin{example}
\label{ex:affinization}
Continuing with $\dpi \!=\! (5,\underline{2},\overline{3},6,4,1)$ 
from \cref{example:523641}, we get 
%the affinization $\affpi$ given by 
$\affpi(1)\!=\!5$, $\affpi(2)\!=\!2$, $\affpi(3)\!=\!9$, $\affpi(4)\!=\!6$, $\affpi(5)\!=\!10$, $\affpi(6)\!=\!7$,
or more succinctly,
\begin{equation*}
\affpi=(\dots,5,2,9,6,10,7,\dots)=(\cdots\  \boxed{5\ 2\ 9\ 6\ 10\ 7}\ 11\ 8\ 15\ 12\ 16\ 13\ \cdots).
\end{equation*} 
(The boxed terms are the values at $1,\dots,b$. 
They determine the rest of the sequence by virtue of~\eqref{eq:affpi+b}.) 
In accordance with \eqref{eq:sum-aex}, we have
\begin{equation*}
(5+2+9+6+10+7)-(1+\cdots+6)=39-21=18=3\cdot 6=ab. 
\end{equation*}
\end{example}

With the above construction in mind, we introduce the following notion. 

\begin{definition}
\label{def:affine}
Let $a$ and $b$ be positive integers. 
An~\emph{$(a,b)$-bounded affine permutation} 
is a bijection $f:\ZZ \to \ZZ$ satisfying the following conditions: 
\begin{itemize}[leftmargin=.2in]
\item 
$f(i+b)=f(i)+b\,$ for all $i\in \ZZ$; 
\item
$i \leq f(i) \leq i+b\,$ for all $i\in\ZZ$; 
\item
$\sum_{i=1}^b (f(i)-i) = ab$.
\end{itemize}
\end{definition}

\begin{lemma}\cite{KLS}
\label{lem:affinization-type}
The correspondence $\dpi\mapsto \affpi$ (see \cref{def:affinization}) restricts to 
 a bijection between decorated permutations on $b$ letters with $a$ anti-excedances and 
the $(a,b)$-bounded affine permutations. 
\end{lemma}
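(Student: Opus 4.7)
The plan is to verify that the map $\dpi \mapsto \affpi$ of \cref{def:affinization} lands in the target set and then to exhibit an explicit inverse. For the forward direction, properties (i) and (iii) of \cref{def:affine} are built into the construction via \eqref{eq:affpi+b} and \eqref{eq:sum-aex}, while the bounds in (ii) follow at once by inspection of the four cases in \cref{def:affinization}. Bijectivity of $\affpi$ on $\ZZ$ then reduces, using periodicity, to showing that $\affpi(1), \ldots, \affpi(b)$ represent distinct residues modulo $b$; this is automatic because those residues are precisely the values $\dpi(1), \ldots, \dpi(b)$, which form a permutation of $\{1, \ldots, b\}$.

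For the inverse, given an $(a,b)$-bounded affine permutation $f$, I would define $\dpi_f(i)$ for $i \in \{1, \ldots, b\}$ by reducing $f(i)$ modulo $b$ and decorating when appropriate: set $\dpi_f(i) = \underline{i}$ if $f(i) = i$, set $\dpi_f(i) = \overline{i}$ if $f(i) = i + b$, and otherwise let $\dpi_f(i)$ be the unique element of $\{1, \ldots, b\} \setminus \{i\}$ congruent to $f(i)$ modulo $b$. The bound $i \leq f(i) \leq i + b$ guarantees this is well-defined, and the underlying map $i \mapsto \dpi_f(i)$ (forgetting decorations) is a bijection of $\{1, \ldots, b\}$ because $f$ descends to a bijection of $\ZZ/b\ZZ$. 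A routine check against the four cases in \cref{def:affinization} then shows that $\dpi \mapsto \affpi \mapsto \dpi_{\affpi}$ is the identity on decorated permutations, while $f \mapsto \dpi_f \mapsto \widetilde{\pi}_{\dpi_f}$ is the identity on affine permutations (using periodicity to extend the agreement from $\{1, \ldots, b\}$ to all of $\ZZ$).

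The only point requiring genuine argument is the matching of the parameter $a$: namely, that the inverse map sends $(a,b)$-bounded affine permutations to decorated permutations with exactly $a$ anti-excedances. Let $A = \#\{i \in \{1,\ldots,b\} : \dpi_f(i) < i \text{ as integers}\}$ and $B = \#\{i : \dpi_f(i) = \overline{i}\}$; by the reformulation in \cref{def:anti}, the anti-excedance count of $\dpi_f$ equals $A + B$. Case by case, $f(i) - i$ and $\dpi_f(i) - i$ differ by exactly $b$ in the $A + B$ positions where $\dpi_f(i) \leq i$ and $f(i) > i$, and agree otherwise. Summing over $i = 1, \ldots, b$ gives
\[ \sum_{i=1}^{b}(f(i) - i) \;=\; \sum_{i=1}^{b}(\dpi_f(i) - i) + b(A + B) \;=\; b(A+B), \]
since the underlying permutation on $\{1,\ldots,b\}$ contributes zero to the first sum. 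Comparing with the defining identity $\sum_{i=1}^{b}(f(i) - i) = ab$ yields $A + B = a$. This short counting identity is the main (though modest) obstacle, as it is where the parameter $a$ gets pinned down on both sides of the bijection.
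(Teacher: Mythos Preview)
Your proof is correct and follows essentially the same approach as the paper: define the inverse by reducing $f(i)$ modulo $b$ with the appropriate decorations at $f(i)=i$ and $f(i)=i+b$, and then pin down the anti-excedance count via the identity $\sum_{i=1}^b (f(i)-i)=ab$. The paper phrases the last step slightly differently---characterizing anti-excedances as exactly those $i$ with $f(i)>b$ and noting that $\sum(f(i)-i)=b\cdot\#\{i:f(i)>b\}$---but this is the same computation as your $b(A+B)$, just organized around a different (equivalent) description of the anti-excedance set.
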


\cref{lem:affinization-type} is illustrated in \cref{fig:affinization} (the first two columns). 

\begin{figure}[ht]
\vspace{-.1in}
\[
\begin{array}{|c|c|c|}
                \hline
\dpi & \affpi & \ell(\affpi)  \\
\hline 
&& \\[-.1in]
\overline{1}\ \underline{2}\ \underline{3} & \cdots \boxed{4\ 2\ 3}\ 7\ 5\ 6\ \cdots & 2 \\[.02in]
\underline{1}\ \overline{2}\ \underline{3} & \cdots \boxed{1\ 5\ 3}\ 4\ 8\ 6\ \cdots & 2 \\[.02in]
\underline{1}\ \underline{2}\ \overline{3} & \cdots \boxed{1\ 2\ 6}\ 4\ 5\ 9\ \cdots & 2 \\[.02in]
2\ 1\ \underline{3} & \cdots \boxed{2\ 4\ 3}\ 5\ 7\ 6\ \cdots & 1 \\[.02in]
\underline{1}\ 3\ 2 & \cdots \boxed{1\ 3\ 5}\ 4\ 6\ 8\ \cdots & 1 \\[.02in]
3\ \underline{2}\ 1 & \cdots \boxed{3\ 2\ 4}\ 6\ 5\ 7\ \cdots & 1 \\[.02in]
2\ 3\ 1 & \cdots \boxed{2\ 3\ 4}\ 5\ 6\ 7\ \cdots & 0 \\[.02in]
\hline
\end{array}
\]
\vspace{-.15in}
\caption{Decorated permutations $\dpi$ on $b\!=\!3$ letters with $a\!=\!1$ anti-excedance; 
the corresponding $(a,b)$-bounded affine permutations~$\affpi$;
and the lengths $\ell(\affpi)$ of these affine permutations, cf.
	\cref{def:inv-affpi}.}
\label{fig:affinization}
\vspace{-.15in}
\end{figure}

\begin{proof}
If $\dpi$ is a decorated permutation on $b$ letters with $a$ anti-excedances,
then \eqref{eq:affpi+b}--\eqref{eq:sum-aex}
show that $\affpi$ is an $(a,b)$-bounded affine permutation.

	Conversely, given an $(a,b)$-bounded affine permutation $f:\ZZ\to\ZZ$,
we can define the decorated permutation $\dpi$ on $b$ letters by
\begin{equation*}
\dpi(i)=\begin{cases}
\underline{i} & \text{if $f(i)=i$;} \\
\overline{i} & \text{if $f(i)=i+b$;} \\
	f(i) & \text{if $f(i)\le b$ and $f(i)\neq i$;}\\
	f(i)-b & \text{if $f(i)> b$ and $f(i) \neq i+b$.} 
\end{cases}
\end{equation*}
We claim that~$\dpi$ has $a$ anti-excedances.
Using the inequality $i\le f(i)\le i+b$, 
we conclude that the anti-excedances of~$\dpi$ 
are in bijection with the values $i\in\{1,\dots,b\}$ such that $f(i)>b$.
The claim follows from the observation that 
	%On~the other hand, 
$ab=\sum_1^b (f(i)-i)= 
	b \cdot \#\{i\in\{1,\dots,b\} \mid f(i)>b\}$.
	%there are exactly $a$ such values. 
\end{proof}

%\pagebreak[3]

Recall from \cref{ex:enumeration} that the number of decorated 
permutations on $b$ letters is equal to $b!\sum_{k=0}^b \frac{1}{k!}$. 
We next refine this formula by taking into account the number of anti-excedances. 

Let $D_{a,b}$ denote the number of decorated permutations on $b$ letters with $a$ anti-excedances 
(or the number of $(a,b)$-bounded affine permutations, cf.\ \cref{lem:affinization-type}). 
The following result is reproduced here without a proof. 

\begin{proposition}[{\rm 
	\cite[Proposition~23.1]{postnikov}}] 
\label{pr:postnikov-23.1}
%	Let $D_{a,b}$ be as in \cref{pr:A_{a,b}(1)}.  Then
We have
\begin{equation*}
\sum_{0\le a\le b} D_{a,b} \,\, x^a \, \frac{y^b}{b!} = e^{xy}\, \frac{x-1}{x-e^{y(x-1)}}.
\end{equation*}
\end{proposition}

\begin{definition}
\label{def:inv-affpi}
An \emph{inversion} of 
$\affpi$ is a pair of integers $(i,j)$ such that $i<j$ and $\affpi(i)>\affpi(j)$.  
Two inversions $(i,j)$ and $(i',j')$ are \emph{equivalent}
if $i'-i=j'-j\in b\ZZ$.  
The \emph{length}
$\ell(\affpi)$ of $\affpi$ is the number of equivalence classes of inversions.
(We note that $\ell(\affpi)$ equals the number
$\algn(\dpi)$ of \emph{alignments} 
 of~$\dpi$, as defined in~\cite{postnikov}.)
This number is finite since for any inversion $(i,j)$, we have $i<j<i+b$. 
Indeed, if $j\ge i+b$, then 
$\affpi(j)\ge j\ge i+b\ge \affpi(i)$. 
See~\cref{fig:affinization}. 
\end{definition}

We will now state, without proof, a refinement of Proposition~\ref{pr:postnikov-23.1} 
that enumerates decorated 
permutations on $b$ letters with respect to both the number of 
anti-excedances and the number of inversions. 
To this end, we~set
\begin{equation*}
D_{a,b}(q)= \sum_{\dpi} q^{a(b-a)-\algn(\dpi)} = \sum_{\affpi} q^{a(b-a) - \ell(\affpi)},
\end{equation*}
where the 
first sum is over all decorated permutations on $b$ letters with $a$ anti-excedances, and the second 
sum is over all $(a,b)$-bounded affine permutations.  
The significance of this polynomial is that the coefficient of $q^r$ in $D_{a,b}(q)$ is 
the number of $r$-dimensional positroid cells in the totally nonnegative Grassmannian
	$\Gr_{a,b}^{\geq 0}$, see~\cite{Williams}.

%Let $[j]=1+q+q^2+ \dots + q^{j-1}$ denote the \emph{$q$-analogue} of~$j$.

\begin{theorem}[{\rm 
	\cite[Theorem 4.1]{Williams}}]
\label{th:LW}
We have
%	{\small
\begin{equation*}
	D_{a,b}(q)=q^{-a^2} \sum_{i=0}^{a-1} (-1)^i \binom{b}{ i} (q^{ai} [a-i]^i  [a-i+1]^{b-i} - q^{(a+1)i}[a-i-1]^i [a-i]^{b-i}), 
\end{equation*} 
where we use the ``$q$-analogue'' notation $[j]=1+q+q^2+ \dots + q^{j-1}$. 
\end{theorem}

We next introduce an important special class of bounded affine permutations. 

\begin{definition}
Let $\affpi$ be an $(a,b)$-bounded affine permutation,
an affinization of a decorated permutation~$\dpi$, cf.\ \cref{lem:affinization-type}. 
We refer to a position $i\in\ZZ$ such that $\affpi(i)\equiv i\bmod b$ 
	(in other words, $\affpi(i)\in\{i,i+b\}$; and if $1\leq i \leq b$ then
 $\dpi(i  )\in\{\underline{i},\overline{i}\}$) 
as a \emph{fixed point} of~$\affpi$.
If every $i\in\ZZ$ is a fixed point of~$\affpi$,
then we say that $\affpi$ is \emph{equivalent to the identity modulo~$b$} 
(or that $\dpi$ is a \emph{decoration of the identity}). 
\end{definition}

\begin{lemma}
\label{lem:inv-affpi-bound}
Let $\affpi$ be an $(a,b)$-bounded affine permutation
that is equivalent to the identity modulo~$b$. 
Then $\ell(\affpi)= a(b-a)$. 
\end{lemma}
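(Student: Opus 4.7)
The plan is to exploit the strong hypothesis that $\affpi$ is equivalent to the identity modulo $b$, which by \eqref{eq:i<f(i)<i+b} forces $\affpi(i) \in \{i, i+b\}$ for every $i \in \ZZ$. Setting $S = \{i \in \{1,\dots,b\} : \affpi(i) = i+b\}$, the normalization $\sum_{i=1}^{b}(\affpi(i)-i)=ab$ from \cref{def:affine} immediately gives $|S| = a$, since each element of $S$ contributes $b$ to the sum while each element of its complement contributes $0$.

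Next I would count equivalence classes of inversions directly. By the remark after \cref{def:inv-affpi}, every equivalence class of inversions has a unique representative $(i,j)$ with $1 \le i \le b$ and $i < j < i+b$. For such a pair, $\affpi(i) \in \{i, i+b\}$ and $\affpi(j) \in \{j, j+b\}$, and a short case check shows that $\affpi(i) > \affpi(j)$ holds if and only if $\affpi(i) = i+b$ and $\affpi(j) = j$; the other three configurations give $\affpi(i) \le \affpi(j)$ using $i < j < i+b$. Writing $j'$ for the unique element of $\{1,\dots,b\}$ congruent to $j$ modulo $b$, periodicity yields $\affpi(j) = j$ iff $j' \notin S$.

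Finally, for fixed $i \in S$, the set $\{i+1,\dots,i+b-1\}$ consists of $b-1$ consecutive integers whose residues in $\{1,\dots,b\}$ are precisely $\{1,\dots,b\}\setminus\{i\}$ (the missing residue is $i$, realized by the excluded value $i+b$). Hence the number of valid $j$ is $|\{1,\dots,b\}\setminus(S\cup\{i\})| = b - a$, where $S \cup \{i\} = S$ because $i \in S$. Summing over $i \in S$ yields $|S|(b-a) = a(b-a)$, which is the claim. The only technical content is the four-case check and the uniqueness of representatives in the stated range; neither presents any real obstacle.
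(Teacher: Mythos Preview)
Your proof is correct and follows essentially the same approach as the paper: both identify the set $S$ (the paper calls it~$I$) of positions where $\affpi(i)=i+b$, observe that $|S|=a$, and count inversion classes as pairs consisting of one element of $S$ and one element of its complement. The paper presents the representatives slightly differently---splitting according to whether the second index lies in $\{1,\dots,b\}$ or in $\{b+1,\dots,2b\}$---but the underlying bijection with $S\times(\{1,\dots,b\}\setminus S)$ is identical, and your explicit four-case check simply fills in a verification the paper leaves to the reader.
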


\begin{proof}
Let
$I=\{i\in\{1,\dots,b\}\mid \affpi(i)=i+b\}$ and 
$\underline{I}
=\{i\in\{1,\dots,b\}\mid \affpi(i)=i\}
%=\{1,\dots,b\}\setminus I
$. 
Then $|I|=a$ %(that is, $I$ has $a$ elements) 
and $|\underline{I}|=b-a$. 
The equivalence classes of inversions of~$\affpi$ are described
by the following list of representatives:
\begin{equation*}
\{(i,j)\in I\times\underline{I} \mid 1\le i<j\le b\} \cup \{(i,j+b)\mid (i,j)\in I\times\underline{I},  1\le j<i\le b\}. 
\end{equation*}
The cardinality $\ell(\affpi)$ of this set is equal to $|I\times\underline{I}|=a(b-a)$. 
\end{proof}

\begin{lemma}
\label{lem:affpi-adjacent}
If $\affpi$ is not equivalent to the identity modulo~$b$, then there exist $i,j\in\ZZ$
such that 
\begin{align}
\label{eq:ij-pair-1}
&1\le i<j\le b, \\
\label{eq:ij-pair-2}
&\affpi(i)<\affpi(j), \\
\label{eq:ij-pair-3}
&\text{every position $h$ such that $i<h<j$ is a fixed point of~$\affpi$, and} \\
\label{eq:ij-pair-4}
&\text{neither $i$ nor $j$ are fixed points of~$\affpi$.} 
\end{align}
\end{lemma}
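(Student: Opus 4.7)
The plan is to show that along the list of non-fixed points $p_1 < p_2 < \cdots < p_m$ of $\affpi$ in $\{1, \ldots, b\}$, the values $\affpi(p_k)$ must cross the threshold $b$, which then yields the desired increasing pair via a discrete intermediate value argument. Let $p_1 < p_2 < \cdots < p_m$ enumerate these non-fixed points. Using the periodicity $\affpi(i+b) = \affpi(i)+b$, the map $\affpi$ descends to a bijection $\bar\affpi \colon \ZZ/b\ZZ \to \ZZ/b\ZZ$; identifying $\ZZ/b\ZZ$ with $\{1, \ldots, b\}$, the fixed points of $\bar\affpi$ are precisely the fixed points of $\affpi$ in this range. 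Hence $\bar\affpi$ restricts to a derangement of $\{p_1, \ldots, p_m\}$, and since derangements do not exist on one-element sets, $m \ge 2$.

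The key observation I would record next is the following dichotomy. By \eqref{eq:i<f(i)<i+b} together with $\affpi(p_k) \notin \{p_k, p_k + b\}$, every non-fixed point satisfies $\affpi(p_k) \in \{p_k+1, \ldots, p_k+b-1\}$, and this range splits as the disjoint union $\{p_k+1, \ldots, b\} \sqcup \{b+1, \ldots, p_k+b-1\}$. In the first case $\bar\affpi(p_k) = \affpi(p_k) > p_k$; in the second, $\bar\affpi(p_k) = \affpi(p_k) - b < p_k$. Applying this to $p_1$: since $\bar\affpi(p_1) \in \{p_2, \ldots, p_m\} \subseteq \{p_1+1, \ldots, b\}$, it cannot lie below $p_1$, forcing $\affpi(p_1) \le b$. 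Dually, $\bar\affpi(p_m) \in \{p_1, \ldots, p_{m-1}\} \subseteq \{1, \ldots, p_m-1\}$ cannot exceed $p_m$, so $\affpi(p_m) > b$.

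Since $\affpi(p_1) \le b < \affpi(p_m)$, the finite sequence $\affpi(p_1), \affpi(p_2), \ldots, \affpi(p_m)$ crosses the threshold $b$ at some point: there exists an index $k$ with $\affpi(p_k) \le b < \affpi(p_{k+1})$, so in particular $\affpi(p_k) < \affpi(p_{k+1})$. Setting $i := p_k$ and $j := p_{k+1}$, the four conditions all hold by construction: \eqref{eq:ij-pair-1} because $p_k, p_{k+1} \in \{1, \ldots, b\}$ and $p_k < p_{k+1}$; \eqref{eq:ij-pair-2} by the inequality just derived; \eqref{eq:ij-pair-3} because consecutive elements of the list $p_1 < \cdots < p_m$ have only fixed points of $\affpi$ strictly between them; and \eqref{eq:ij-pair-4} since $p_k$ and $p_{k+1}$ were selected from the list of non-fixed points. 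The main conceptual step is the dichotomy in the second paragraph: recognizing that ``low'' values $\affpi(p) \le b$ push residues upward while ``high'' values $\affpi(p) > b$ push them downward, which forces the extreme non-fixed points $p_1$ and $p_m$ to sit on opposite sides of the threshold. Once this is in hand, the rest is essentially automatic.
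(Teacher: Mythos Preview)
Your proof is correct. Both you and the paper enumerate the non-fixed points $p_1<\cdots<p_m$ in $\{1,\dots,b\}$ and use that $\affpi$ induces a derangement on them modulo~$b$, but the execution diverges from there. The paper argues by contradiction: assuming no valid pair exists forces $\affpi(i_1)>\cdots>\affpi(i_m)>i_m$, so every value has the form $i_\ell+b$; then the value $i_m+b$ must occur as some $\affpi(i_j)$ with $j\neq m$, violating the bound $\affpi(i_j)\le i_j+b$. Your argument is direct: you show $\affpi(p_1)\le b$ and $\affpi(p_m)>b$ by reading off where the residue $\bar\affpi(p_k)$ can land, and then a discrete intermediate-value step locates an increasing consecutive pair. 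Your route is slightly more constructive and arguably cleaner; the paper's contradiction is terser but less explicit about where the pair actually sits.
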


\begin{proof}
Suppose such a pair $(i,j)$ does not exist.
Let $i_1<\dots<i_m$ be the elements of $\{1,\dots,b\}$ that are not fixed points of~$\affpi$.
Then %$\affpi(i_1)\ge\cdots\ge\affpi(i_m)$, so that
\begin{equation*}
i_1<\dots<i_m<\affpi(i_m)\le\cdots\le\affpi(i_1). 
\end{equation*}
We conclude that none of the values $\affpi(i_j)$ 
is of the form~$i_\ell$ and consequently is of the form $i_\ell+b$. 
In particular, $\affpi(i_j)=i_m+b$ for some~$j\neq m$. 
This implies  $\affpi(i_j) > i_j+b$, a contradiction. 
\end{proof}

%\pagebreak

We next describe an algorithm for factoring affine permutations
that will be used in Section~\ref{sec:bridge}. 

\begin{definition}
\label{def:BCFW0}
Let $\affpi$ be an $(a,b)$-bounded affine permutation. 
\begin{itemize}[leftmargin=.2in]
\item If $\affpi$ is not equivalent to the identity modulo~$b$, 
then use \cref{lem:affpi-adjacent} to find positions $i,j\in\ZZ$ 
satisfying \eqref{eq:ij-pair-1}--\eqref{eq:ij-pair-4}. 
\item Swap the values of~$\affpi$ in positions $i$ and~$j$ 
(and more generally, in positions $i+mb$ and $j+mb$, for all~$m\in\ZZ$). 
%Any entries in the resulting permutation
%which are fixed points are designated as \emph{frozen}, and henceforth ignored.
\item Repeat this procedure until we obtain an affine permutation 
that is equivalent to the identity modulo~$b$.
\end{itemize}
The ordered list of transpositions $(ij)$ produced by the above algorithm is 
called the \emph{bridge factorization} of $\affpi$.
\end{definition}

An example of a bridge factorization is shown in \cref{fig:factorization-bridges}. 

%The resulting permutation will be equivalent to the identity modulo~$b$. 

%        \renewcommand{\arraystretch}{1.2}
\begin{figure}[ht]
\vspace{-.1in}
\[
\begin{array}{| cccccc | c | c |}
                \hline
 1 & 2 & 3 & 4 & 5 & 6 & (i,j) & \begin{array}{c} \text{number of}\\ \text{inversions}\end{array}  \\
%(i,j) & \downarrow & \downarrow & \downarrow & \downarrow & \downarrow & \downarrow   \\
                \hline 
                &&&&&& & \\[-10pt]
               4 & 6 & 5 & 7 & 8 & 9 & & 1 \\
               &&&&&& (34) & \\
               4  & 6 &  7 & 5 & 8 & 9 & & 2 \\
               &&&&&&  (23) & \\
               4 & 7 & 6 & 5 & 8 & 9 & & 3\\
               &&&&&&  (12) & \\
                \boxed{7}  & 4 & 6 & 5 & 8 & 9 & & 4\\
               &&&&&& (56) & \\
               \boxed{7}  & 4 & 6 & 5 & 9 & 8 & & 5\\
               &&&&&&  (45) & \\
               \boxed{7}  & 4 & 6 & 9 & \boxed{5} & 8 & & 6\\
               &&&&&& (34) & \\
               \boxed{7}  & 4 & \boxed{9}  & 6 & \boxed{5}  & 8 & & 7\\
               &&&&&&  (46) & \\
               \boxed{7}  & 4 & \boxed{9}  & 8 & \boxed{5} & \boxed{6} & & 8\\
               &&&&&&  (24) & \\
               \boxed{7}  & \boxed{8} & \boxed{9}  & \boxed{4} & \boxed{5} & \boxed{6} & & 9\\[-10pt]
                &&&&&& & \\
 \hline
\end{array}
\]
\vspace{-5pt}
\caption{Applying the algorithm described  in \cref{def:BCFW0} to the $(a,b)$-bounded affine permutation
$\affpi = (4,6,5,7,8,9)$, with $b=6$ and $a=3$. 
%The associated decorated permutation is $\dpi=(4,6,5,1,2,3)$.
The resulting bridge factorization is the sequence
$(34), (23), (12), (56), (45), (34), (46), (24)$.
The entries corresponding to fixed points are boxed.
%The bridge factorization of $\affpi$ is 
%}
	\label{fig:factorization-bridges}}
        \end{figure}

\begin{remark}
\label{rem:l(affpi)-bound}
In view of~\eqref{eq:sum-aex},  
the affine permutation at hand remains $(a,b)$-bounded 
after each step of the algorithm in \cref{def:BCFW0}.  
Moreover, the  algorithm in \cref{def:BCFW0}
terminates because each swap increases 
the length of the affine permutation by~$1$;
this number is bounded by \cref{def:inv-affpi}. 
\end{remark}

\begin{proposition}\label{prop:maxlength}
Among all $(a,b)$-bounded affine permutations~$\affpi$,
the ones that have the maximal possible length $\ell(\affpi)=a(b-a)$
are precisely the ones that are equivalent to the identity modulo~$b$. 
\end{proposition}
\begin{proof}
This follows from \cref{lem:inv-affpi-bound} and 
\cref{rem:l(affpi)-bound}.
\end{proof}

\newpage

\section{Bridge decompositions}
\label{sec:bridge}

Bridge decompositions \cite[Section 3.2]{amplitudes} 
provide a useful 
recursive construction of reduced plabic graphs with a given decorated trip permutation.  
%This construction will find applications 
%in Sections~\ref{sec:edge-labels} and~\ref{sec:face-labels}. 

\begin{definition} %[\emph{The BCFW-bridge construction}]
\label{def:BCFW}
A \emph{bridge} is a graph fragment shown
in \cref{fig:bridges} on the left.
Let $\dpi$ be a decorated permutation on $b$ letters that has $a$ anti-excedances,
and let $\affpi$ be the corresponding affine permutation.
To build a plabic graph associated to $\affpi$, 
we begin by introducing a white (resp., black) lollipop
in each position $i$ with $\dpi(i)=\overline{i}$ (resp., $\dpi(i)=\underline{i}$).
If $\dpi$ is a decoration of the identity, we are done.  Otherwise,
we 
%	generate a sequence of transpositions $(i,j)$ %giving a factorization of~$\affpi$, 
%following 
	use %the algorithm in 
	\cref{def:BCFW0} to produce a \emph{bridge 
	factorization}, 
then attach successive bridges in the corresponding positions,  
as in \cref{fig:bridges}.
The resulting graph is called a \emph{bridge decomposition} of~$\affpi$, 
or sometimes a BCFW bridge decomposition, due to its relation with 
the Britto-Cachazo-Feng-Witten recursion in quantum field theory, see~\cite{amplitudes}.
\end{definition}

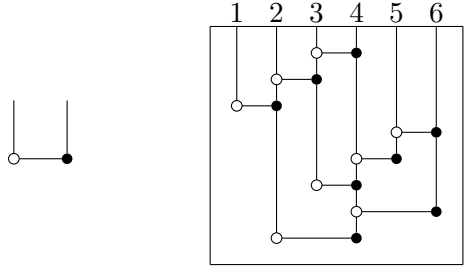
\begin{figure}[ht]
\begin{center}
\vspace{-.15in}
\setlength{\unitlength}{1pt}
%single bridge
\begin{picture}(80,95)

\thicklines
%vertices
\put(10, 40){\circle{4}}
\put(30, 40){\circle*{4}}

%edges
\put(12,40){\line(1,0){16}}
\put(10,42){\line(0,1){20}}
\put(30,42){\line(0,1){20}}

\end{picture}\hspace{0in}
\begin{picture}(95,95)

\thicklines
%square
\put(0,0){\line(1, 0){95}}
\put(0,0){\line(0, 1){90}}
\put(0,90){\line(1, 0){95}}
\put(95,0){\line(0,1){90}}

%vertices grouped by column
\put(10, 60){\circle{4}}

\put(25, 10){\circle{4}}
\put(25, 60){\circle*{4}}
\put(25, 70){\circle{4}}

\put(40, 30){\circle{4}}
\put(40, 70){\circle*{4}}
\put(40, 80){\circle{4}}

\put(55, 10){\circle*{4}}
\put(55, 20){\circle{4}}
\put(55, 30){\circle*{4}}
\put(55, 40){\circle{4}}
\put(55, 80){\circle*{4}}

\put(70, 40){\circle*{4}}
\put(70, 50){\circle{4}}

\put(85, 20){\circle*{4}}
\put(85, 50){\circle*{4}}

%vertical edges grouped by column
\put(10,90){\line(0,-1){28}}

\put(25,90){\line(0,-1){18}}
\put(25,68){\line(0,-1){6}}
\put(25,58){\line(0,-1){46}}

\put(40,90){\line(0,-1){8}}
\put(40,78){\line(0,-1){6}}
\put(40,68){\line(0,-1){36}}

\put(55,90){\line(0,-1){8}}
\put(55,78){\line(0,-1){36}}
\put(55,38){\line(0,-1){6}}
\put(55,28){\line(0,-1){6}}
\put(55,18){\line(0,-1){6}}

\put(70,90){\line(0,-1){38}}
\put(70,48){\line(0,-1){6}}

\put(85,90){\line(0,-1){38}}
\put(85,48){\line(0,-1){26}}

%horizontal edges from top to bottom
\put(42,80){\line(1,0){11}}
\put(27,70){\line(1,0){11}}
\put(12,60){\line(1,0){11}}
\put(72,50){\line(1,0){11}}
\put(57,40){\line(1,0){11}}
\put(42,30){\line(1,0){11}}
\put(57,20){\line(1,0){26}}
\put(27,10){\line(1,0){26}}

%labels on top
\put(10,95){\makebox(0,0){$\mathbf{1}$}}
\put(25,95){\makebox(0,0){$\mathbf{2}$}}
\put(40,95){\makebox(0,0){$\mathbf{3}$}}
\put(55,95){\makebox(0,0){$\mathbf{4}$}}
\put(70,95){\makebox(0,0){$\mathbf{5}$}}
\put(85,95){\makebox(0,0){$\mathbf{6}$}}

\end{picture}
\vspace{-.1in}
\end{center}
\caption{\emph{Left}: a single bridge. 
\emph{Right}: the bridge decomposition associated to the 
factorization constructed in \cref{fig:factorization-bridges}.  
The resulting plabic graph has trip permutation $\dpi= (4,6,5,1,2,3)$. 
Moreover, we have $\dpi=(24)(46)(34)(45)(56)(12)(23)(34)$,
the product of the transpositions $(i,j)$ generated by the algorithm
(reading right to left).}
\vspace{-.1in}
\label{fig:bridges}
\end{figure}

\begin{proposition}
\label{pr:number-of-bridges}
A bridge decomposition of an $(a,b)$-bounded affine permutation~$\affpi$ 
uses $a(b-a)-\ell(\affpi)$ bridges.
\end{proposition}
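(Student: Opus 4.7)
The plan is to track the length $\ell(\affpi)$ through the factorization algorithm of \cref{def:BCFW0} and conclude by a telescoping argument. By the construction in \cref{def:BCFW}, a bridge decomposition of $\affpi$ attaches one bridge per swap produced by the algorithm. Let $\affpi = \affpi_0, \affpi_1, \dots, \affpi_N$ denote the intermediate $(a,b)$-bounded affine permutations, so the number of bridges equals $N$. Since $\affpi_N$ is equivalent to the identity modulo $b$, \cref{lem:inv-affpi-bound} gives $\ell(\affpi_N) = a(b-a)$.

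The heart of the argument is the claim, already asserted (without proof) in \cref{def:BCFW0}, that each swap increases the length by exactly one: $\ell(\affpi_{k+1}) = \ell(\affpi_k) + 1$ for $0 \le k < N$. Granting this, telescoping gives
\begin{equation*}
N \;=\; \ell(\affpi_N) - \ell(\affpi_0) \;=\; a(b-a) - \ell(\affpi),
\end{equation*}
which is the desired formula.

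To justify the claim, fix a step of the algorithm and let $(i,j)$ be the chosen transposition, satisfying \cref{lem:affpi-adjacent}. Write $\affpi'$ for the result of swapping the values of $\affpi$ at positions $i+mb$ and $j+mb$ for all $m \in \ZZ$. The key preliminary observation, extracted from the hypothesis that positions strictly between $i$ and $j$ are fixed points, is that the residues $(i+1) \bmod b, \dots, (j-1) \bmod b$ are already occupied by those intermediate positions; this forces $\affpi(i) \in \{j, j+1, \dots, i+b-1\}$ and $\affpi(j) \in \{j+1, \dots, i+b\}$. In particular $\affpi(j) - \affpi(i) \le b-(j-i) < b$, so the ``wrap-around'' equivalence class of $(j-b,\, i)$ is not an inversion of either $\affpi$ or $\affpi'$.

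With this in hand, one compares inversions class by class (in the sense of \cref{def:inv-affpi}). The class of $(i,j)$ is not an inversion of $\affpi$ (since $\affpi(i) < \affpi(j)$) but becomes one in $\affpi'$, contributing the decisive $+1$. For each intermediate fixed point $h \in (i,j)$, the constraint $\affpi(h) \in \{h, h+b\}$ combined with the tight bounds on $\affpi(i)$ and $\affpi(j)$ pins down the inversion status of the four affected classes---represented by $(i,h)$, $(h,j)$, $(h-b, i)$, and $(j, h+b)$---to be the same before and after the swap, so each contributes $0$. For each ``distant'' residue $h$ (with $h \bmod b$ not among $i \bmod b, \dots, j \bmod b$), the four associated equivalence classes pair up: the value shifts $\affpi(i) \mapsto \affpi(j)$ at position $i$ and $\affpi(j) \mapsto \affpi(i)$ at position $j$ are mirror images, so $\Delta[(i,h)] + \Delta[(j,h)] = 0$ and analogously for the periodic partners $(h-b,i)$ and $(h-b,j)$. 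I expect the main technical obstacle to be this final pairwise cancellation for distant residues: one must carefully enumerate and match up the equivalence classes involving each distant $h$ so that they pair into cancelling pairs, and check the boundary cases where $h$ is adjacent modulo $b$ to $i$ or $j$. Once this is verified, summing all contributions yields exactly $+1$, establishing the claim and hence the formula.
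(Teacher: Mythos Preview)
Your proposal is correct and follows the same telescoping argument as the paper, which simply cites \cref{lem:inv-affpi-bound} together with Definitions~\ref{def:BCFW0} and~\ref{def:BCFW}. In fact you go further than the paper: the claim that each swap increases $\ell$ by exactly one is asserted in \cref{def:BCFW0} without justification, and your case analysis (the wrap-around class $(j-b,i)$, the intermediate fixed points, and the swapping of inversion status for distant residues) correctly fills in that gap.
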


\begin{proof}
See \cref{lem:inv-affpi-bound} and 
Definitions \ref{def:BCFW0} and~\ref{def:BCFW}. 
\end{proof}

\begin{theorem}
\label{thm:bridge}
Let $\dpi$ be a decorated permutation on $b$ letters that has $a$ anti-excedances. 
Let $\affpi$ be the associated $(a,b)$-bounded affine permutation. 
Then any bridge decomposition of $\affpi$ is a reduced plabic graph 
with the decorated trip permutation~$\dpi$.
%The construction of this  bridge decomposition  involves $a(b-a)-\ell(\affpi)$ bridges.
\end{theorem}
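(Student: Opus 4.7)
The plan is to prove both conclusions---that a bridge decomposition of $\affpi$ has decorated trip permutation $\dpi$ and is a reduced plabic graph---simultaneously by induction on the number of bridges $k = a(b-a) - \ell(\affpi)$, as computed in \cref{pr:number-of-bridges}. For the base case $k=0$, the affine permutation $\affpi$ is equivalent to the identity modulo $b$, so $\dpi$ is a decoration of the identity, and the bridge decomposition consists solely of lollipops whose colors are dictated by whether $\affpi(i)=i$ or $\affpi(i)=i+b$. Such a graph manifestly has decorated trip permutation $\dpi$, and since every internal vertex is a lollipop, no graph move-equivalent to it contains either a hollow digon or a bad internal leaf, so it is reduced.

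For the inductive step, let $(i,j)=(i_1,j_1)$ be the first transposition chosen by the algorithm of \cref{def:BCFW0} when factoring $\affpi$, and set $\affpi'=\affpi\circ(i,j)$. Since each swap increases length by one, we have $\ell(\affpi')=\ell(\affpi)+1$, and the bridge decomposition of $\affpi'$ uses $k-1$ bridges. By the induction hypothesis, this decomposition yields a reduced plabic graph $G'$ with decorated trip permutation $\dpi'$. The bridge decomposition $G$ of $\affpi$ is then obtained by attaching one bridge at columns $(i,j)$ on top of $G'$. I would first verify that the decorated trip permutation changes correctly by tracing the rules of the road through the new bridge: a trip entering at new boundary position $i$ descends to the bridge's white vertex, turns sharp left across the horizontal bridge edge to the black vertex, turns sharp right, and enters column $j$ of $G'$ where it follows a trip of $G'$; the symmetric scenario occurs at position $j$, while trips entering other positions are unaffected. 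This yields the new decorated trip permutation $\dpi'\circ(i,j)=\dpi$. The conditions of \cref{lem:affpi-adjacent}---in particular $\affpi(i),\affpi(j)\notin\{i,i+b,j,j+b\}$---imply $\dpi'(i)\neq j$ and $\dpi'(j)\neq i$, so no trip rebounds through the bridge and the resulting decorations remain consistent with the colors of lollipops persisting from $G'$.

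For the reducedness of $G$, my plan is to apply the bad features criterion of \cref{thm:reduced}, after first normalizing $G$ via the procedure of \cref{def:normalize}; this reduces the task to verifying that $G$ contains no roundtrip, no essential self-intersection, and no bad double crossing. Roundtrips are immediately ruled out because every trip of $G$ originates at a boundary vertex: the two trips altered by the new bridge continue into trips of $G'$, which has no roundtrips by induction. An essential self-intersection in $G$ not already present in $G'$ would have to involve the new horizontal bridge edge, and this would force $\dpi'(j)=i$, which we have already excluded. The main obstacle lies in the case of bad double crossings: such a new crossing would force the two trips entering at the new boundary vertices $i$ and $j$, after traversing the bridge, to share two consecutive edges downstream in $G'$. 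To rule this out I would invoke the precise conditions from \cref{lem:affpi-adjacent}---$\affpi(i)<\affpi(j)$ together with every strictly intermediate position being a fixed point of $\affpi$---to force the trips of $G'$ emerging from columns $i$ and $j$ to be separated by a region of fixed-point lollipops, preventing a second common crossing. Equivalently, in the language of \cref{sec:triple-diagrams}, attaching the bridge corresponds to a local modification of the triple diagram $\X(G')$ that introduces exactly one new triple point while preserving badgon-freeness, which by \cref{red-minimal} together with \cref{thm:minimal} yields the reducedness of $G$.
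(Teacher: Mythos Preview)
Your overall strategy---induction on the number of bridges, checking the trip permutation by tracing through the new bridge, and then invoking the bad-features/badgon criterion---is exactly the approach of the paper. The base case and the trip-permutation computation are fine.

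The gap is in how you dispose of the parallel digon (bad double crossing) case. Your claim that the trips emerging from columns $i$ and $j$ are ``separated by a region of fixed-point lollipops, preventing a second common crossing'' is not a valid argument: those lollipops sit only at the boundary positions $h$ with $i<h<j$, while the two trips in question plunge into $G'$ and can meet anywhere inside the disk, far from columns $i,\dots,j$. What actually rules out a second crossing is a global endpoint analysis. In the paper's version, one observes that the two sides of a putative parallel digon through the new white vertex $w$ must lie on the strands $S_i$ and $S_j$; since $G'$ is reduced, the tails of $S_i,S_j$ beyond the second digon vertex $w'$ cannot meet again, and neither can terminate at $i$, at $j$, or at any $h$ with $i<h<j$ (using \eqref{eq:ij-pair-3}--\eqref{eq:ij-pair-4}). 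A short case check on where $S_i$ and $S_j$ then land forces $\dpi(i)<i$ and $\dpi(j)>j$, or $\dpi(i)>\dpi(j)>j$, or $\dpi(j)<\dpi(i)<i$; in every case $\affpi(i)>\affpi(j)$, contradicting \eqref{eq:ij-pair-2}. Your ``equivalently, in the triple-diagram language'' sentence simply asserts badgon-freeness rather than proving it, so it does not fill the gap.

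A smaller point: your essential self-intersection argument (``would force $\dpi'(j)=i$'') captures only one of the two monogon cases. Since $\dpi'(j)=\dpi(i)$, the condition $\dpi'(j)=i$ is the same as $i$ being a fixed point of $\dpi$, which is indeed excluded. But there is a second way for a monogon to form at the white vertex of the bridge, corresponding to $i$ being a fixed point of $\dpi'$; you need to rule that out as well (it would force column $i$ in $G'$ to carry no bridges below, making the white vertex bivalent rather than trivalent).
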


\begin{proof}
%Let $\affpi$ be the affine permutation associated to $\dpi$.
We use induction on the number of bridges  $\beta=a(b-a)-\ell(\affpi)$.
If $\beta=0$, then $\dpi$ is a decoration of the identity
(see \cref{prop:maxlength}), 
so the bridge decomposition consists entirely of lollipops,
and we are done. 

Now suppose that $\dpi$ is not a decoration of the identity.  
Proceeding as in \cref{def:BCFW0}, we construct a bridge factorization
%	sequence
%of transpositions 
	$\sigma_1, \sigma_2,\dots, \sigma_\beta$, 
where $\sigma_1 = (ij)$ satisfies \eqref{eq:ij-pair-1}--\eqref{eq:ij-pair-4}. 
Let $G$ be the plabic graph obtained by attaching bridges
according to $\sigma_1,\dots, \sigma_{\beta}$ (from top to bottom).
By the induction assumption, attaching bridges according to 
$\sigma_2, \dots, \sigma_\beta$ as in \cref{def:BCFW} produces
a reduced plabic graph $G'$ with the trip permutation~$\dpi'= \sigma_\beta \cdots \sigma_2$. 
This graph has $\beta-1$ bridges and is obtained by removing the topmost horizontal edge~$e$ from~$G$
and applying local moves (M2) to remove the endpoints of~$e$. 

Conversely, $G$ is obtained from $G'$ by attaching a bridge in position $(i,j)$ at the top of~$G'$. 
(To illustrate, in \cref{fig:bridges} we have $(i,j)=(3,4)$.) 
When we add this bridge to~$G'$, the trips starting at $i$ and $j$ get their ``tails'' swapped: 
the trip $T_i$ (resp.,~$T_j$) in~$G$
that begins at~$i$ (resp., at~$j$) traverses~$e$ and continues along the trip that 
used to begin at~$j$ (resp., at~$i$) in~$G'$; all other trips remain the same.
Hence the trip permutation of $G$ is $\dpi' \sigma_1 = \dpi$.

It remains to show that $G$ is reduced.
One option is to use 
the ``bad features'' criterion of \cref{thm:reduced}.
%However, this theorem 
%requires the plabic graph to be normal, so we will need to %adjust the strategy
	%replace $G$ by a suitable normal graph~$N(G)$. 
However, it will  be more convenient for us to utilize the triple diagram version 
of the criterion, cf.\ \cref{cor:bad<=>bad}. 

We use \cref{lem:biptri2} to construct 
a normal graph $N(G)$ and the associated triple diagram
 $\X(G)=\X(N(G))$. 
	%the algorithm in 
	%\cref{def:normalize}. 
%(Note that stages 1, 4 and 6 of the algorithm are not 
%needed since every vertex in~$G$ has degree 2 or~3, or is a lollipop 
%	that gets removed.) 
	%Up to the addition/removal of lollipops,
 %	the triple diagram $\X(N(G))$ 
%is isotopic to the triple diagram~$\X(G)$ constructed directly from~$G$ as 
%in %\cref{def:triple-non-normal}. 
%\cref{lem:biptri2}.
The graph $G$ is reduced if and only if $N(G)$ is reduced, 
which is 
	%in turn 
	equivalent to the triple diagram $\X=\X(G)$ being minimal, 
or to $\X$ having no badgons, see \cref{cor:bad<=>bad}. 
Thus, our goal is to show $\X$ has no badgons.

%We first observe that $\X$ has no closed strands since each of the three strands in~$\X$
%that pass through~$w$ either begins or ends near one of the boundary vertices $i$ and~$j$. 

By the induction assumption, %we can rely on the fact that 
	the triple diagram $\X'=\X(G')$ has no badgons. 
It follows that any potential badgon in $\X$ must involve the white endpoint~$w$ of  edge~$e$; 
otherwise this feature would have already been present in~$\X'$. 
In particular, this means that $w$ is trivalent. %, i.e., not an elbow of~$G$. 

A monogon in $\X$ would have to have its vertex at~$w$.
Three of the six half-strands at~$w$ run straight to or from the boundary,
so we need to use two of the remaining three; 
moreover, those two half-strands have to be oppositely oriented.
There are two such cases to consider.
In one case, $i$ would be a fixed point of~$\dpi$, contradicting our choice of~$(i,j)$. 
In the other case, $i$ would be a fixed point of~$\dpi'=\dpi(G')$,
which can also be ruled out since in that case, 
$i$~would not participate in any bridge in~$G'$,
making it impossible to produce the bottom vertex 
of the vertical edge pointing downwards from~$w$. 

Finally, suppose that $\X$ contains a parallel digon. 
One of the vertices of the digon has to be~$w$; let $w'$ denote the other vertex.
Since the two sides of the digon are oriented in the same way at~$w$,
it follows that these sides lie on the strands $S_i$ and~$S_j$
that start near the vertices $i$ and~$j$, respectively. 

\pagebreak[3]

By our choice of bridge $(i,j)$, 
every $h$ with $i<h<j$ is a fixed point of $\dpi$, but
$i$ and $j$ are not fixed points.  
It follows that $S_i$ 
(resp. $S_j$) does not terminate at $i$
(resp. $j$),
%, $S_j$ does not 
%terminate at $j$, 
and neither terminates between 
the boundary vertices $i$ and~$j$.
%(since every $h$ with $i<h<j$ is a fixed point of~$\dpi$). 
We explained in the monogon case that $S_j$ cannot terminate at~$i$;
one can similarly argue that $S_i$ cannot terminate at~$j$.
Also, neither $S_i$ nor $S_j$ intersects itself. 
Moreover the ``tails'' of $S_i$ and~$S_j$ 
that start at the second vertex~$w'$ of the digon
do not intersect each other
(since otherwise a parallel digon would have been present in~$\X'$). 
It follows that either
$\dpi(i)<i$ and $\dpi(j)>j$, or 
$\dpi(i)>\dpi(j)>j$, or $\dpi(j)<\dpi(i)<i$.  
In each case, we get $\affpi(i)>\affpi(j)$, which 
contradicts the way we chose $i$ and~$j$.
\end{proof}

\begin{corollary}
\label{permtoG}
Let $\dpi$ be a decorated permutation on $b$ letters.  Then there exists
a reduced plabic graph whose decorated trip permutation is $\dpi$.
\end{corollary}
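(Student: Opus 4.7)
The plan is to realize $\dpi$ as the decorated trip permutation of an explicitly constructed reduced plabic graph, using the bridge decomposition machinery just developed. The strategy is essentially to cite Theorem~\ref{thm:bridge} after passing from decorated permutations to affine permutations, so the proof should be very short.

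First I would pass from $\dpi$ to its affinization $\affpi$, an $(a,b)$-bounded affine permutation with $a$ equal to the number of anti-excedances of $\dpi$; this is well-defined by Definition~\ref{def:affinization} and gives a bijection by Lemma~\ref{lem:affinization-type}. Next I would apply the BCFW bridge decomposition algorithm from Definition~\ref{def:BCFW} to $\affpi$: start by placing a white lollipop at every position $i$ with $\dpi(i)=\overline{i}$ and a black lollipop at every $i$ with $\dpi(i)=\underline{i}$, then run the swap algorithm of Definition~\ref{def:BCFW0} on $\affpi$ to obtain a sequence of transpositions, and attach the corresponding bridges from top to bottom as in Figure~\ref{fig:bridges}. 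The termination of this algorithm is guaranteed by Remark~\ref{rem:l(affpi)-bound}, which bounds the length of an $(a,b)$-bounded affine permutation and hence the number of bridges needed (cf.\ Proposition~\ref{pr:number-of-bridges}).

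Finally I would invoke Theorem~\ref{thm:bridge}, which asserts that any bridge decomposition of $\affpi$ is a reduced plabic graph whose decorated trip permutation is precisely $\dpi$. This yields the desired reduced plabic graph and concludes the proof.

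There is really no obstacle here; all of the substantive work has been absorbed into Theorem~\ref{thm:bridge} and the affinization bijection of Lemma~\ref{lem:affinization-type}. The only point that warrants a brief verbal check is that the algorithm of Definition~\ref{def:BCFW0} applies to every $\affpi$ that is not equivalent to the identity modulo $b$ (supplied by Lemma~\ref{lem:affpi-adjacent}), and that when $\affpi$ is equivalent to the identity modulo $b$, the lollipop-only graph is already a valid reduced plabic graph with the correct decorated trip permutation---a trivial base case.
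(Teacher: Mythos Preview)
Your proposal is correct and matches the paper's primary approach: the paper's proof simply says to use Theorem~\ref{thm:bridge}, exactly as you do. The paper also mentions an alternative route via the standard triple diagram construction (Definition~\ref{def:standard-triple}) combined with Proposition~\ref{pr:bijplabictriple} and Theorem~\ref{red-minimal}, but your bridge-decomposition argument is one of the two methods the paper itself offers.
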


\begin{proof}
Use either \cref{thm:bridge} or the construction in \cref{def:standard-triple}
(together with \cref{pr:bijplabictriple} and \cref{red-minimal}).
\end{proof}

\begin{corollary}
\label{cor:number-faces}
Let $G$ be a reduced plabic graph with the decorated trip permutation~$\dpi$. 
If $\dpi$ has $b$ letters and $a$ anti-excedances, then the number of faces in~$G$
is $a(b-a)-\ell(\affpi)+1$. 
\end{corollary}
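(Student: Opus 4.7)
The plan is to reduce the problem to a single convenient model and then count faces via Euler's formula. I first invoke \cref{thm:moves}: any two reduced plabic graphs with decorated trip permutation $\dpi$ are move-equivalent, and since the local moves (M1)--(M3) all preserve the number of faces, the face count $F(G)$ depends only on $\dpi$. It therefore suffices to compute $F$ for one reduced plabic graph with decorated trip permutation~$\dpi$, and by \cref{thm:bridge}, any bridge decomposition of $\affpi$ provides such a graph.

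To count faces in a bridge decomposition, I apply Euler's formula to the CW-decomposition of the disk $\mathbf{D}$ whose $1$-skeleton is $G \cup \partial\mathbf{D}$. Since every internal vertex of a bridge decomposition is connected in~$G$ to some boundary vertex (in accordance with \cref{def:plabic}), every face is an open $2$-cell, so the Euler characteristic $1$ of the disk gives
\begin{equation*}
(V_{\mathrm{int}} + b) - (E_G + b) + F = 1, \qquad \text{i.e.,} \qquad F = 1 + E_G - V_{\mathrm{int}},
\end{equation*}
where $V_{\mathrm{int}}$ and $E_G$ are the numbers of internal vertices and edges of~$G$. Now I count: each of the $\phi$ initial lollipops contributes exactly $1$ internal vertex and $1$ edge (net $0$ to $E_G - V_{\mathrm{int}}$); and each bridge, considered as a whole, contributes $2$ internal vertices (one white, one black) and $3$ edges (the horizontal bridge edge together with the two vertical edges immediately above the bridge vertices in their respective columns)---net $+1$ to $E_G - V_{\mathrm{int}}$. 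Since the algorithm of \cref{def:BCFW0} only swaps currently non-fixed positions, and fixed points of~$\dpi$ remain fixed throughout, lollipops and bridges occupy disjoint columns, so the contributions add without interference.

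Combining this accounting with \cref{pr:number-of-bridges}, which gives $\beta := a(b-a) - \ell(\affpi)$ bridges in any bridge decomposition of~$\affpi$, we obtain $V_{\mathrm{int}} = \phi + 2\beta$ and $E_G = \phi + 3\beta$, hence $E_G - V_{\mathrm{int}} = \beta$ and
\begin{equation*}
F = 1 + \beta = a(b-a) - \ell(\affpi) + 1,
\end{equation*}
as claimed. The one place requiring care is the per-bridge bookkeeping---specifically, making sure each bridge accounts for exactly one vertical edge per column regardless of whether the bridge is the topmost, a middle, or the bottommost in that column, and regardless of whether one of its vertices ends up trivalent or bivalent (as in column~$1$ of \cref{fig:bridges}). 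I expect this to be the only genuine obstacle; once it is verified, everything else is immediate from \cref{thm:moves}, \cref{thm:bridge}, and Euler's formula.
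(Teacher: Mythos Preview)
Your proof is correct and takes essentially the same route as the paper: reduce to a bridge decomposition via \cref{thm:moves} and \cref{thm:bridge}, then finish using \cref{pr:number-of-bridges}. The paper's final step is the one-line observation that each bridge added to the all-lollipop graph increases the face count by one, whereas you reach the same conclusion $F = 1 + \beta$ through an Euler-characteristic computation; your per-bridge bookkeeping (each bridge contributes $2$ internal vertices and $3$ edges, each lollipop $1$ and $1$, with lollipop columns disjoint from bridge columns) is correct, so the concern you flag does not materialize.
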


\begin{proof}
The number of faces is invariant under local moves. 
Therefore, by \cref{thm:moves}, it suffices to establish this formula for 
a particular reduced plabic graph with decorated trip permutation~$\dpi$. 
By \cref{thm:bridge}, we can use a bridge decomposition of~$\affpi$.
Since each bridge adds one face to the graph, the claim follows by \cref{pr:number-of-bridges}. 
\end{proof}

Let $\dpi_{a,b}$ denote the decorated permutation on $b$ letters defined by
\begin{equation}
\label{eq:dpi-ab}
	\dpi_{a,b}=\begin{cases}
		(a+1,a+2,\dots,b,1,2,\dots,a) &\text{ if }1 \leq a \leq b-1\\
		(1,2,\dots, a) &\text{ if }a=0\\
		(\overline{1},\overline{2},\dots,\overline{a}) & \text{ if }a=b
	\end{cases}
\end{equation}

\begin{exercise}
\label{exer:length=0}
Let $\dpi$ be a decorated permutation on $b$ letters that has $a$~anti-excedances.
Show that if $\ell(\affpi)=0$, then $\dpi=\dpi_{a,b}$. 
\end{exercise}

\begin{corollary}
\label{cor:max-number-faces}
Let $G$ be a reduced plabic graph whose decorated trip permutation $\dpi_G$ 
has $b$ letters and $a$ anti-excedances.
Then $G$ has at most $a(b-a)+1$ faces. 
Moreover it has $a(b-a)+1$ faces if and only if $\dpi_G=\dpi_{a,b}$. 
\end{corollary}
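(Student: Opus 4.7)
The plan is to deduce this corollary as a short consequence of \cref{cor:number-faces} combined with \cref{exer:length=0}. The identity established in \cref{cor:number-faces} tells us that the number of faces of any reduced plabic graph $G$ with decorated trip permutation $\dpi_G$ (having $b$ letters and $a$ anti-excedances) is exactly $a(b-a) - \ell(\affpi) + 1$, where $\affpi$ is the affinization of $\dpi_G$. Since the length $\ell(\affpi)$ is a count of equivalence classes of inversions (see \cref{def:inv-affpi}), it is a nonnegative integer. Therefore the number of faces of~$G$ is at most $a(b-a) + 1$, which gives the upper bound.

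Next, the bound is achieved precisely when $\ell(\affpi) = 0$, i.e., when the associated affine permutation has no inversions at all. Invoking \cref{exer:length=0}, the condition $\ell(\affpi) = 0$ forces $\dpi_G = \dpi_{a,b}$. This settles the ``only if'' direction of the second assertion.

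For the converse direction, I would explicitly compute $\affpi$ for $\dpi_{a,b} = (a+1, a+2, \dots, b, 1, 2, \dots, a)$ and verify $\ell = 0$. Following \cref{def:affinization}: for $1 \le i \le b-a$, we have $\dpi_{a,b}(i) = i+a > i$, so $\affpi(i) = i+a$; for $b-a < i \le b$, we have $\dpi_{a,b}(i) = i-(b-a) < i$, so $\affpi(i) = i-(b-a)+b = i+a$. Thus $\affpi(i) = i + a$ for every $i \in \{1,\dots,b\}$, and the periodicity relation $\affpi(i+b) = \affpi(i)+b$ then gives $\affpi(i) = i+a$ for all $i \in \mathbb{Z}$. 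In particular, $\affpi$ is strictly increasing, so it has no inversions and $\ell(\affpi)=0$. Plugging back into \cref{cor:number-faces} yields $a(b-a)+1$ faces, as desired.

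The argument is essentially immediate once \cref{cor:number-faces} and \cref{exer:length=0} are in hand, so there is no real obstacle; the only care needed is in the direct computation showing $\affpi_{a,b}$ is the ``shift by $a$'' affine permutation, which is the sort of bookkeeping that the reader can verify by unpacking \cref{def:affinization}.
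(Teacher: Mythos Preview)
Your proof is correct and follows exactly the same approach as the paper, which simply states that the result is immediate from \cref{cor:number-faces} and \cref{exer:length=0}. You have supplied the natural unpacking of that remark, including the explicit verification that $\affpi_{a,b}(i)=i+a$ for the converse direction.
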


\begin{proof}
This is immediate from \cref{cor:number-faces} and \cref{exer:length=0}. 
\end{proof}

\begin{remark}
The \emph{permutohedron} $\mathcal{P}_n$ \cite[Exercise~4.64a]{ec1}
is a  polytope whose $n!$ vertices are labeled by permutations in the symmetric group~$\mathcal{S}_n$. 
Shortest paths in the $1$-skeleton of~$\mathcal{P}_n$ encode 
reduced expressions %of the longest element 
in~$\mathcal{S}_n$, and its $2$-dimensional faces correspond to their local (braid) transformations, 
cf.\ \cref{exercise:braid-equivalence}. 
Similarly, 
paths in the $1$-skeleton of the \emph{bridge polytope}~\cite{WilliamsBridge} 
encode bridge decompositions of the decorated permutation~$\tilde{\pi}_{a,b}$; 
its $2$-dimensional faces correspond to local moves in plabic graphs.
\end{remark}

\newpage

\section{Edge labels of reduced plabic graphs}
\label{sec:edge-labels}

%We conclude this section by presenting two criteria 
%for testing whether a given plabic graph is reduced.

\begin{definition}
\label{def:edgelabeling}
Let $G$ be a leafless plabic graph.
Let us label the edges of~$G$ by subsets of integers 
that indicate which one-way trips traverse a given edge; 
more precisely, for each boundary vertex~$i$, 
we include~$i$ in the label of every edge contained in the trip that starts at~$i$.  
By~\cref{rem:atmost2}, each edge will be labeled by at most two integers.
See \cref{fig:plabicresonance}. 

We say that $G$ has the \emph{resonance property} 
if after labeling the edges of~$G$ as in \cref{def:edgelabeling}, 
the following condition is satisfied at each internal vertex~$v$ that is not a lollipop: 
\begin{itemize}[leftmargin=.2in]
\item 
there exist numbers $i_1 < \dots < i_m$
such that the edges incident to~$v$ are labeled by the two-element sets
$\{i_1,i_2\}, \{i_2,i_3\}, \dots, \{i_{m-1},i_m\}, \{i_1,i_m\}$,
appearing in clockwise order.
\end{itemize}
In particular, each edge of $G$ that is not incident to a lollipop is labeled by 
a two-element subset.
See \cref{fig:plabicresonance}. 
\end{definition}

%\begin{example}
%In \cref{fig:plabicresonance}, 
%the three edge labels around the vertex~$v$ (resp., at~$v'$), 
%listed in the clockwise order, are:
%$\{1,2\}, \{2,4\}, \{1,4\}$ (resp., $\{1,2\}, \{2,5\}, \{1,5\}$). 
%\end{example}

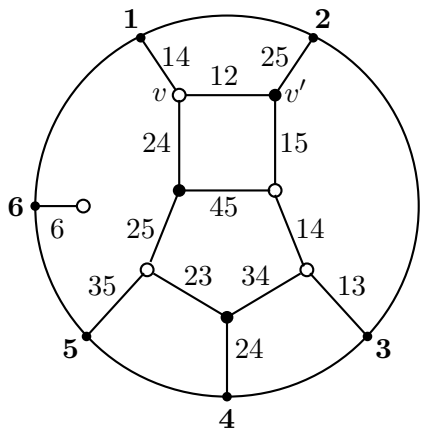
\begin{figure}[ht]
\begin{center}
\vspace{-.1in} 
\setlength{\unitlength}{1.4pt}
\begin{picture}(100,125)(-30,-70)
\thicklines
\multiput(1,0)(1,30){2}{\line(1,0){27.5}}
\multiput(0,1)(30,1){2}{\line(0,1){27.5}}
\put(-30,-5){\circle{4}}
\put(-45,-5){\circle*{3}}
\put(-45,-5){\line(1,0){13}}
\put(0,30){\circle{4}}
\put(30,30){\circle*{4}}
\put(30,0){\circle{4}}
\put(0,0){\circle*{4}}
\put(-10,-25){\circle{4}}
\put(15,-40){\circle*{4}}
\put(40,-25){\circle{4}}
\put(0,0){\line(-10,-25){9.5}}
\put(30,-1.5){\line(10,-25){9}}
\put(15,-40){\line(-25,15){23}}
\put(15,-40){\line(25,15){23}}
\put(15,-5){\circle{120}}
\put(15,-65){\circle*{3}}
\put(15,-40){\line(0,-1){24}}
\put(-12,48){\circle*{3}}
\put(42,48){\circle*{3}}
\put(-12,48){\line(12,-18){11}}
\put(42,48){\line(-12,-18){11}}
\put(-29,-46){\circle*{3}}
\put(59,-46){\circle*{3}}
\put(-29,-46){\line(19,21){18}}
\put(59,-46){\line(-19,21){18}}
\put(-15,54){\makebox(0,0){$\mathbf{1}$}}
\put(45,54){\makebox(0,0){$\mathbf{2}$}}
\put(64,-50){\makebox(0,0){$\mathbf{3}$}}
\put(15,-72){\makebox(0,0){$\mathbf{4}$}}
\put(-34,-50){\makebox(0,0){$\mathbf{5}$}}
\put(-51,-5){\makebox(0,0){$\mathbf{6}$}}
\put(-1,42){\makebox(0,0){${14}$}}
\put(14,36){\makebox(0,0){${12}$}}
\put(-6,30){\makebox(0,0){$v$}}
\put(36.5,31.5){\makebox(0,0){$v'$}}
\put(30,42){\makebox(0,0){${25}$}}
\put(-38,-12){\makebox(0,0){$6$}}
			\put(-12,-12){\makebox(0,0){$25$}}
			\put(-7,15){\makebox(0,0){$24$}}
			\put(36,15){\makebox(0,0){$15$}}
			\put(14,-5){\makebox(0,0){$45$}}
			\put(41,-12){\makebox(0,0){$14$}}
			\put(54,-30){\makebox(0,0){$13$}}
			\put(24,-27){\makebox(0,0){$34$}}
			\put(6,-27){\makebox(0,0){$23$}}
			\put(-24,-30){\makebox(0,0){$35$}}
			\put(22,-50){\makebox(0,0){$24$}}

\put(0,30){\white{\circle*{3.3}}}
\put(30,0){\white{\circle*{3.3}}}
\put(-10,-25){\white{\circle*{3.3}}}
\put(40,-25){\white{\circle*{3.3}}}
\end{picture}
\end{center}
\vspace{-.1in} 
\caption{A reduced plabic graph  
from Figures~\ref{fig:plabic}(b) and~\ref{fig:plabic-move-equiv}. 
Its edge labeling 
%Here $\dpi_{G'} = (3,4,5,1,2,\overline{6})$.
exhibits the resonance property, see \cref{def:edgelabeling}. 
For example, the edge labels around the vertex~$v$ (resp.,~$v'$), 
listed in clockwise order, are
$\{1,2\}, \{2,4\}, \{1,4\}$ (resp., $\{1,2\}, \{2,5\}, \{1,5\}$). 
}
\label{fig:plabicresonance}
\end{figure}

\begin{remark}
\label{rem:resonance-1-2}
%If a plabic graph has an internal leaf that is not a lollipop, 
%then the resonance property fails. 
%
At a bivalent vertex~$v$, the resonance condition is satisfied if and only if 
the two trips passing through~$v$ are distinct and none of them is a roundtrip. 
\end{remark}

\begin{remark}
\label{rem:trivalentresonance}
If a plabic graph $G$ is trivalent (apart from lollipops), then 
the resonance property is equivalent to the following requirement
at each interior vertex~$v$ (other than a lollipop): 
\begin{itemize}[leftmargin=.2in]
\item 
the three edges incident to~$v$ have labels
$\{a,b\}$, $\{a,c\}$, and $\{b,c\}$, for some $a<b<c$, 
and moreover this (lexicographic) ordering of labels corresponds to 
the counterclockwise direction around~$v$.
\end{itemize}
For example, in \cref{fig:plabicresonance},  
the edge labels around the vertex~$v$ (resp.,~$v'$) 
are, in lexicographic order, 
$\{1,2\}, \{1,4\}, \{2,4\}$ (resp., $\{1,2\}, \{1,5\}, \{2,5\}$). 
The three edges carrying these labels
appear  in the counterclockwise order around~$v$ (resp.,~$v'$).
\end{remark}

\begin{exercise}
Verify that none of the plabic graphs shown in 
\cref{fig:bad-features} (draw a disk around each of the fragments)
satisfy the resonance property.
\end{exercise}

\begin{theorem}[{\cite[Theorem 10.5]{kodwil}}]
\label{thm:resonance}
Let $G$ be a leafless plabic graph. % without internal leaves other than lollipops. 
Then $G$ is reduced if and only if it has the resonance property.
\end{theorem}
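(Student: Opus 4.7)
The plan is to prove both directions by combining move-invariance of the resonance property with the bad features criterion (\cref{thm:reduced}) and the existence of bridge decompositions (\cref{thm:bridge}). The first step I would carry out is to establish that the resonance property is invariant under local moves (M1), (M2), (M3), under the standing assumption that no internal leaves (other than lollipops) are created or destroyed. Since trip permutations are invariant under local moves, the trips through an edge in the affected region transform in a controlled way, so the edge labels transform locally too. The move (M2) merely subdivides or merges an edge, leaving all labels intact; the move (M3) contracts or splits a unicolored edge, and the two-element labels at the merged/split vertex fit together into the same cyclic pattern, because (M3) does not change the order in which any trip traverses edges around the affected vertex; the square move (M1) is the most interesting case, and here one checks directly that the labels on the four new inner edges can be computed from the labels on the four ``outer'' edges using the cyclic condition (this is analogous to the mutation rule for face labels derived in \cref{sec:face-labels}).

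For the forward direction (reduced implies resonance), I would use the fundamental theorem \cref{thm:moves} together with \cref{thm:bridge}: every decorated trip permutation admits a bridge decomposition as a reduced representative, and it suffices by Step~1 to verify resonance for these. I would proceed by induction on the number of bridges. The base case consists entirely of lollipops, for which the resonance condition is vacuous (there are no internal non-lollipop vertices). In the inductive step, adding a bridge at the top introduces precisely one black and one white trivalent vertex and ``swaps the tails'' of two trips $T_i$ and $T_j$ exactly as described in the proof of \cref{thm:bridge}. One verifies directly that the three edges at each new vertex carry labels $\{a,b\},\{a,c\},\{b,c\}$ in the correct cyclic order, using the knowledge of which trips traversed each edge in the smaller bridge decomposition. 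A small additional check is needed at interior vertices whose incident set of trips changes when the new bridge is added, but these changes are entirely local to the endpoints of the bridge.

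For the backward direction (resonance implies reduced), I would argue by contradiction. Suppose $G$ satisfies resonance but is not reduced. By Step~1 and the normalization algorithm of \cref{def:normalize}, we may replace $G$ by a normal plabic graph $N(G)$ which is still resonant, and then by \cref{thm:reduced} $N(G)$ must contain a bad feature. The first two types are immediate: an edge on a roundtrip that is not shared with any one-way trip carries a label of size zero, an edge on a roundtrip shared with one one-way trip carries a label of size one, and the edge of an essential self-intersection carries a size-one label $\{i\}$, each of which contradicts the two-element label requirement imposed by resonance at any adjacent non-lollipop internal vertex. The remaining case, a bad double crossing, is the main obstacle. Here two edges $e_1,e_2$ share the label $\{p,q\}$, and the trips $T_p,T_q$ cut out a region $R$ bounded by an arc of $T_p$ and an arc of $T_q$. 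I would pick the bad double crossing that makes $R$ innermost (minimizing the number of interior faces) so that no bad feature is contained strictly inside $R$, and then show that the cyclic resonance condition at each internal vertex along the two arcs propagates the ``partner label'' of $T_p$ in an inconsistent way: tracking the partner label along the $T_p$-arc from $e_1$ to $e_2$ must return the value $q$, but the absence of interior bad features forces the partner labels to evolve through distinct intermediate values, eventually contradicting either the orientation-alternation around face boundaries in the associated triple diagram or (equivalently) the fact that the innermost parallel digon in $\X(N(G))$ cannot coexist with a consistent resonance labeling in its interior. The hard part of the whole proof is controlling this propagation argument in the bad double crossing case; the cleanest formulation of the needed invariant is likely via the triple diagram machinery of \cref{sec:triple-diagrams}--\cref{sec:mintriple}, translating resonance into a constraint on the labels carried by strand segments of $\X(N(G))$ and then contradicting the existence of a parallel digon directly.
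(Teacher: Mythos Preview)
Your forward direction matches the paper's approach: verify resonance for bridge decompositions by induction, then transport it via move-invariance and \cref{thm:moves}. Two details you understate. First, adding a bridge swaps the labels $i$ and $j$ on \emph{every} edge of $G'$, not just near the new bridge; the paper's \cref{lem:resonance} checks that this global swap preserves resonance at each interior trivalent vertex, using that no edge of $G'$ carries the label $\{i,j\}$ (since $G$ is reduced) nor any label involving $h$ with $i<h<j$ (by \eqref{eq:ij-pair-3}). Second, move-invariance of resonance fails when an (M3) move creates a leaf, so you cannot simply invoke Step~1 on an arbitrary move sequence from $G$ to the bridge decomposition. The paper handles this by passing to trivalent representatives $G_3\sim G$ and $G'_3\sim G'$ and then applying \cref{thm:newmoves1}, which connects them by (M1) and (M4) moves only; these never create leaves.

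Your backward direction has a genuine gap. Routing through $N(G)$ and \cref{thm:reduced} forces you into the bad double crossing case, which you leave as an unfinished ``propagation argument.'' The paper bypasses this entirely by using \cref{pr:reduced-collapse} instead of the bad features criterion: if $G$ is not reduced and has no internal leaves other than lollipops, then $G$ is connected by local moves \emph{that do not create internal leaves} to a graph $G'$ containing a hollow digon. Your Step~1 (the paper's \cref{lem:moves-resonance}) then transfers resonance from $G$ to $G'$. But the edge labels around a hollow digon visibly fail resonance: a unicolored digon has sides labeled by singletons, and a bicolored digon has both sides labeled by the same pair $\{i,j\}$, so the cyclic condition at either endpoint cannot hold. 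This finishes the argument in one stroke, with no need to analyze bad double crossings at all.
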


\cref{thm:resonance} is proved below in this section,
following a few remarks and auxiliary lemmas.  

%\begin{remark}
%\cref{thm:resonance} can be used to test whether a given plabic graph
%(potentially having internal leaves) is reduced or not. 
%To this end, use the moves (M2) and (M3) 
%to get rid of collapsible trees. 
%	If an internal leaf (not a lollipop) remains, then the graph is not reduced. 
%Otherwise, one can apply the criterion in \cref{thm:resonance}. 
%\end{remark}

\begin{remark}
We find the resonance criterion of \cref{thm:resonance} easier to check than
the ``bad features'' criterion of \cref{thm:reduced}.
\end{remark}

\begin{remark}
Certain reduced plabic graphs were realized 
as tropical curves in \cite{kodwil}, where it was shown that 
the resonance property corresponds to the \emph{balancing condition}
for tropical curves.  
\end{remark}

\begin{lemma}
\label{lem:moves-resonance}
The resonance property is preserved under the local moves {\rm(M1)--(M3)}.
%except when the decontraction move {\rm(M3)} creates a new leaf.  
\end{lemma}

\begin{proof}
The square move~(M1) only changes the labels
of the sides of the square, see \cref{fig:M1labels}. 
Moreover, the labels around each vertex match the labels around the opposite vertex
after the square move, with the same cyclic order. 
Hence this move preserves the resonance property. 

The case of the local move~(M2) is easy, cf.\ \cref{rem:resonance-1-2}. 

For the case of the local move (M3) around a degree $4$ black vertex, 
see \cref{fig:M3labels}.
(The case of white vertices and of higher degree vertices of both colors is similar.)
\end{proof}

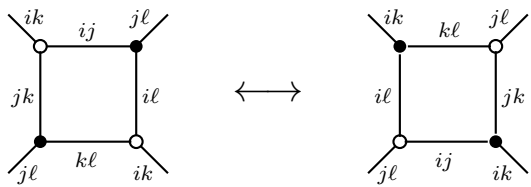
\begin{figure}[ht]%[htbp]
\begin{center}
%\vspace{-.1in}
\setlength{\unitlength}{1.2pt}
\begin{picture}(30,36)(0,0)
\thicklines
\multiput(1,0)(1,30){2}{\line(1,0){27.1}}
\multiput(0,1)(30,1){2}{\line(0,1){27.1}}
\put(0,0){\circle*{4}}
\put(0,30){\circle{4}}
\put(30,30){\circle*{4}}
\put(30,0){\circle{4}}
\put(-10,40){\line(1,-1){8.5}}
\put(-10,-10){\line(1,1){9}}
\put(40,-10){\line(-1,1){8.5}}
\put(40,40){\line(-1,-1){9}}
\put(-5,37){$\scriptstyle ik$}
\put(28,37){$\scriptstyle j\ell$}
\put(12,33){$\scriptstyle ij$} %kl
\put(-9,12){$\scriptstyle jk$} %il
\put(32,12){$\scriptstyle i\ell$} %jk
\put(11,-8){$\scriptstyle k\ell$} % ij
\put(-7,-12){$\scriptstyle j\ell$}
\put(29,-12){$\scriptstyle ik$}
\end{picture}
\qquad
\begin{picture}(40,36)(0,0)
\put(20,15){\makebox(0,0){\Large{$\longleftrightarrow$}}}
\end{picture}
\qquad
\begin{picture}(30,36)(0,0)
\put(-5,37){$\scriptstyle ik$}
\put(28,37){$\scriptstyle j\ell$}
\put(12,33){$\scriptstyle k\ell$} %kl
\put(-8,12){$\scriptstyle i\ell$} %il
\put(32,12){$\scriptstyle jk$} %jk
\put(11,-8){$\scriptstyle ij$} % ij
\put(-7,-12){$\scriptstyle j\ell$}
\put(29,-12){$\scriptstyle ik$}

\thicklines
\multiput(2,0)(-1,30){2}{\line(1,0){26.5}}
\multiput(0,1.8)(30,-0.5){2}{\line(0,1){26.5}}
\put(0,0){\circle{4}}
\put(0,30){\circle*{4}}
\put(30,30){\circle{4}}
\put(30,0){\circle*{4}}
\put(-10,40){\line(1,-1){9}}
\put(-10,-10){\line(1,1){8.5}}
\put(40,-10){\line(-1,1){9}}
\put(40,40){\line(-1,-1){8.5}}
\end{picture}
\end{center}
%\vspace{-.1in}
\caption{Transformation of edge labels under a square move~(M1).
%Here we have that $i,j,k,\ell$ are ordered in cyclic order, i.e. , $i<j<k<\ell$ or $j<k<\ell<i$ or $k<\ell<i<j$ or $\ell<i<j<k$.
}
\label{fig:M1labels}
\end{figure}

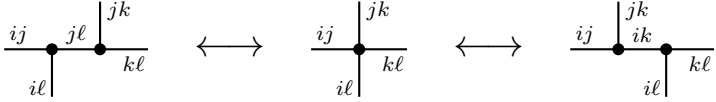
\begin{figure}[ht]%[htbp]
\begin{center}
\vspace{-.1in} 
\setlength{\unitlength}{1.8pt}
\begin{picture}(30,20)(0,0)
\thicklines
\put(10,10){\circle*{2.5}}
\put(20,10){\circle*{2.5}}

\put(10,0){\line(0,1){10}}
\put(20,10){\line(0,1){10}}
\put(0,10){\line(1,0){30}}

\put(3,13){\makebox(0,0){$\scriptstyle ij$}}
\put(15,13){\makebox(0,0){$\scriptstyle j\ell$}}
\put(27,7){\makebox(0,0){$\scriptstyle k\ell$}}
\put(24,18){\makebox(0,0){$\scriptstyle jk$}}
\put(7,2){\makebox(0,0){$\scriptstyle i\ell$}}

\end{picture}
\begin{picture}(30,20)(0,0)
\put(15,10){\makebox(0,0){\Large{$\longleftrightarrow$}}}
\end{picture}
\begin{picture}(20,20)(0,0)
\thicklines
\put(10,10){\circle*{2.5}}

\put(10,0){\line(0,1){20}}
\put(0,10){\line(1,0){20}}

\put(3,13){\makebox(0,0){$\scriptstyle ij$}}
\put(17,7){\makebox(0,0){$\scriptstyle k\ell$}}
\put(14,18){\makebox(0,0){$\scriptstyle jk$}}
\put(7,2){\makebox(0,0){$\scriptstyle i\ell$}}

\end{picture}
\begin{picture}(30,20)(0,0)
\put(15,10){\makebox(0,0){\Large{$\longleftrightarrow$}}}
\end{picture}
\begin{picture}(30,20)(0,0)
\thicklines
\put(10,10){\circle*{2.5}}
\put(20,10){\circle*{2.5}}

\put(20,0){\line(0,1){10}}
\put(10,10){\line(0,1){10}}
\put(0,10){\line(1,0){30}}

\put(3,13){\makebox(0,0){$\scriptstyle ij$}}
\put(15,13){\makebox(0,0){$\scriptstyle ik$}}
\put(27,7){\makebox(0,0){$\scriptstyle k\ell$}}
\put(14,18){\makebox(0,0){$\scriptstyle jk$}}
\put(17,2){\makebox(0,0){$\scriptstyle i\ell$}}

\end{picture}
\vspace{-.1in} 
\end{center}
\caption{Transformation of edge labels under a local move~(M3) at a 4-valent black vertex.
(Alternatively, make all the vertices white.)}
\label{fig:M3labels}
\end{figure}

\pagebreak[3]

\begin{lemma}
\label{lem:resonance}
Any plabic graph obtained via the bridge decomposition construction
(see \cref{def:BCFW}) 
%of $\affpi$ 
has the resonance property. 
\end{lemma}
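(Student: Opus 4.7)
I would proceed by induction on the number~$\beta$ of bridges. The base case $\beta=0$ is vacuous: the decomposition consists only of initialization lollipops at the fixed points of~$\dpi$, so there are no non-lollipop internal vertices to check.

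For the inductive step, assume the bridge decomposition $G'$ of $\dpi' = \dpi\sigma_1$ (with $\beta-1$ bridges) satisfies resonance, and form $G$ by attaching the topmost bridge $\sigma_1 = (i,j)$ on top of $G'$. At the new white vertex $w$, I would trace the three trips through it using the rules of the road: the trip from~$i$ enters $w$ from above and crosses the bridge, the trip from~$j$ (after crossing the bridge via~$b$) exits $w$ downward into column~$i$ of~$G'$, and a third trip indexed by $k := \pi_G^{-1}(i)$ rises from below and exits upward. This gives the clockwise edge labels $\{i,k\}$, $\{i,j\}$, $\{j,k\}$ at~$w$. Since every $h\in(i,j)$ is a fixed point of $\affpi$ by the algorithm's choice, and by \cref{prop:fixedlollipop} (combined with \cref{thm:bridge}) any trip originating at such a fixed point is a lollipop trip that cannot reach a trivalent interior vertex, we must have $k \notin (i,j)$. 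Combined with $k \neq i$ (else $i$ would be a fixed point of $\dpi$, contradicting \eqref{eq:ij-pair-4}), this forces $k < i$ or $k > j$; either way, $\{i,j,k\}$ sits in cyclic increasing order around $w$, which is exactly the pattern required by resonance. The analysis at the black vertex $b$ is symmetric, and the bivalent cases (when column $i$ or $j$ has no structure below the bridge in $G'$) reduce to the trivial two-trip bivalent check of \cref{rem:resonance-1-2}.

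It remains to check the inherited internal vertices of~$G'$. Since, within $G'$, the trips $T_i^{(G)}$ and $T_j^{(G)}$ follow the respective paths of $T_j^{(G')}$ and $T_i^{(G')}$ (their tails swap across the new bridge), the edge labels at an inherited vertex $v$ transform by applying the swap $i \leftrightarrow j$ to whichever labels involve these two indices. When at most one of $T_i^{(G')}, T_j^{(G')}$ passes through~$v$, the same fixed-point argument shows that the remaining trip indices at $v$ lie outside $(i,j)$, so $i$ and $j$ occupy the same rank among the three labels at $v$; the cyclic ordering demanded by resonance is then preserved under the relabeling.

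The main obstacle, which I expect to be the hardest part, is the remaining case in which both $T_i^{(G')}$ and $T_j^{(G')}$ pass through a common inherited vertex~$v$: at such a vertex the simultaneous swap $i \leftrightarrow j$ would reverse the cyclic order of the three labels around $v$ and would generically destroy resonance. I plan to rule out this case by showing that $T_i^{(G')}$ and $T_j^{(G')}$ are in fact vertex-disjoint in $G'$. Viewing the bridge decomposition as a sorting-network-like structure in which wires carry trip indices through the subsequent comparator swaps $\sigma_2, \ldots, \sigma_\beta$, the adjacency condition on $\sigma_1 = (i,j)$---namely that every position strictly between $i$ and $j$ is a fixed point of $\affpi$---can be leveraged to show that the south-going trips on wires $i$ and $j$ separate in $G'$ and are never again routed through a common bridge vertex. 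Establishing this combinatorial disjointness is where the precise choice of $\sigma_1$ by the algorithm in \cref{def:BCFW0} plays its essential role, and once it is in hand the inductive step is complete.
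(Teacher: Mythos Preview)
Your inductive framework matches the paper's exactly, and your analysis at the two new bridge vertices is correct. The difference lies in how you handle the ``main obstacle'': the possibility that both trips $T_i^{(G')}$ and $T_j^{(G')}$ pass through a common inherited vertex, equivalently that some edge of~$G'$ carries the label~$ij$.

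You propose to rule this out by a direct sorting-network argument, which you leave unfinished. The paper dispatches it in one line using a tool you have already invoked: by \cref{thm:bridge}, $G$ is reduced. If $G'$ had an edge labeled~$ij$, then in~$G$ the trips originating at~$i$ and~$j$ would traverse both the new bridge edge and that edge of~$G'$, in the same order---a bad double crossing, contradicting reducedness. Hence no edge of~$G'$ carries the label~$ij$, and the swap $i\leftrightarrow j$ never acts on two labels at the same inherited vertex. Since you already cite \cref{thm:bridge} (to justify that fixed points are lollipops), you have this fact available; using it here replaces your entire final paragraph with a single sentence and closes the gap.
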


\begin{proof}
We will show that, more concretely, the edge labels around  
trivalent vertices in such a plabic graph~$G$ 
follow one of the patterns described in \cref{fig:resonance}.
	%, with the 
%labels around white vertices satisfying
%$r<i<j$ or $i<j<r$ and the labels around 
%black vertices satisfying $s<i<j$ or $i<j<s$.
We will establish this result by induction on~$\beta$, the number of bridges, 
following the strategy used in the proof of \cref{thm:bridge}. 

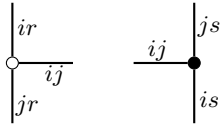
\begin{figure}[ht]
%\centering
\begin{center}
\begin{tikzpicture}[scale=0.8]

\node[circle, thick, fill=white, draw=black, inner sep=0pt, minimum size=4.5pt] (1) at (0,0) {};
\node[circle, fill=black, draw=black, inner sep=0pt, minimum size=4.5pt] (2) at (3,0) {};
\draw[thick] (1) -- (0,1);
\draw[thick] (1) -- (1,0);
\draw[thick] (1) -- (0,-1);
\draw[thick] (2) -- (3,1);
\draw[thick] (2) -- (2,0);
\draw[thick] (2) -- (3,-1);
\node at (0.25,0.6) {\footnotesize $ir$};
\node at (0.7,-0.2) {\footnotesize $ij$};
\node at (0.25,-0.7) {\footnotesize $jr$};
\node at (2.4,0.2) {\footnotesize $ij$};
\node at (3.25,0.6) {\footnotesize $js$};
\node at (3.25,-0.6) {\footnotesize $is$};

\end{tikzpicture}
\vspace{-.1in}
\end{center}
\caption{Edge labels near trivalent vertices in a bridge decomposition.  
At a white vertex, shown on the left, either $r<i<j$ or $i<j<r$.  
At a black vertex, shown on the right, either $s<i<j$ or $i<j<s$.}
\label{fig:resonance}
\end{figure}

Let $G$ be a bridge decomposition of $\affpi$, associated to 
the sequence of transpositions $\sigma_1,\dots, \sigma_\beta$.  
Thus $\dpi=\dpi_G=\sigma_\beta \cdots \sigma_1$.
Here $\sigma_1 = (i j)$, where $i$ and $j$ satisfy \eqref{eq:ij-pair-1}--\eqref{eq:ij-pair-4}.
Let $G'$ be the bridge decomposition associated to $\sigma_2,\dots, \sigma_\beta$, so that 
$G$ is obtained from $G'$ by adding
a single bridge in position $(i,j)$ at the top of~$G'$. 

Suppose the result is true for 
$G'$.  We need to verify it for $G$.
Let $r = \dpi^{-1}(i)$ and $s=\dpi^{-1}(j)$.
Adding the bridge in position $(i,j)$ at the top of~$G'$
adds at most two trivalent vertices:
it adds a white (respectively, black)
trivalent vertex provided that $r\neq j$
(respectively, $s\neq i$).  
The cases when one of the vertices on the bridge is bivalent are easy to verify,
so we are going to assume that $r\neq j$ and $s\neq i$.
In this case, the local configuration around positions $i$ and $j$
in $G'$ and $G$ is as shown in \cref{fig:resonance2}.

\begin{figure}[ht]
\begin{center}
\vspace{-.2in}
\begin{tikzpicture}

% left
\draw[thick] (0,0) -- (4,0);
\draw[thick] (1.5,0) -- (1.5,-1.5); 
\draw[thick] (2.5,0) -- (2.5,-1.5);
\node at (2,0.85) {$G'$};
\node at (0.2,0.3) {$1$};
\node at (0.5,0.3) {$2$};
\node at (1,0.3) {$...$};
\node at (1.5,0.3) {$i$};
\node at (2.5,0.3) {$j$};
\node at (3.1,0.3) {$...$};
\node at (3.7,0.3) {$b$};
\node at (1.2,-0.8) {$ir$};
\node at (2.8,-0.8) {$js$};

% right
\draw[thick] (6,0) -- (10,0);
\draw[thick] (7.5,0) -- (7.5,-1.5); 
\draw[thick] (8.5,0) -- (8.5,-1.5);
\draw[thick] (7.5,-0.7) -- (8.5,-0.7);
\node[circle, thick, fill=white, draw=black, inner sep=0pt, minimum size=4pt] at (7.5,-0.7) {};
\node[circle, fill=black, draw=black, inner sep=0pt, minimum size=4pt] at (8.5,-0.7) {};
\node at (8,0.85) {$G$};
\node at (6.2,0.3) {$1$};
\node at (6.5,0.3) {$2$};
\node at (7,0.3) {$...$};
\node at (7.5,0.3) {$i$};
\node at (8.5,0.3) {$j$};
\node at (9.1,0.3) {$...$};
\node at (9.7,0.3) {$b$};
\node at (7.2,-0.3) {$ir$};
\node at (8.8,-0.3) {$js$};
\node at (7.2,-1.2) {$jr$};
\node at (8.8,-1.2) {$is$};
\node at (8,-0.5) {$ij$};

\end{tikzpicture}
\vspace{-.2in}
\end{center}
\caption{The local configuration around positions $i$ and $j$
	in  $G'$ and $G$.}
\label{fig:resonance2}
\end{figure}
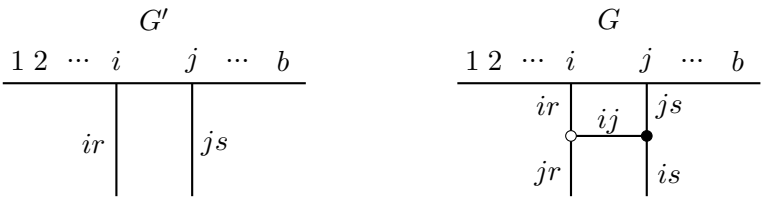

Recall that $i<j$ and moreover any $h$ such that $i<h<j$ is a fixed point of~$\dpi$.
For the reasons indicated in the proof of \cref{thm:bridge}, 
adding the bridge $(i,j)$ at the top of $G'$ 
has the effect of replacing the label $i$ (resp.,~$j$) by~$j$ (resp.,~$i$) 
in every edge label outside of the bridge.

If $G'$ has an edge with the label $ij$, 
then $G$ has a bad double crossing involving the trips originating at~$i$ and~$j$.
This however is impossible since $G$ is reduced, by \cref{thm:bridge}.
Therefore $G'$ has no edge with label~$ij$. 
Furthermore, $G'$ has no edge with a label $h$ for $i<h<j$.
Since all trivalent vertices of $G'$ satisfy the resonance condition of 
\cref{fig:resonance}, the same remains true after 
switching the labels $i$ and~$j$.
Thus, all trivalent vertices of $G$ that were present
in $G'$ satisfy this resonance condition.

Finally, the two new trivalent vertices in $G$ satisfy this condition because 
$i<j$ and we can exclude $i<r<j$ and $i<s<j$ because of \eqref{eq:ij-pair-3}. 
\end{proof}

\begin{proof}[Proof of \cref{thm:resonance}]
We first establish the ``if'' direction. % of the theorem. 
Let $G$ be a leafless plabic graph.
Suppose that $G$ has the resonance property.
We want to show that $G$ is reduced. 

Assume the contrary. 
By definition,
%\cref{pr:reduced-collapse}, 
$G$ can be transformed by local moves  (that is, (M1), (M2), (M3))
%that don't create internal leaves 
into a plabic graph~$G'$ containing a hollow monogon or a hollow digon.  
Since $G$ has the resonance property, so does~$G'$, by \cref{lem:moves-resonance}. 
This yields a contradiction because the labels around a hollow monogon or 
digon do not satisfy
the resonance property, see \cref{fig:fail2}.

\begin{figure}[ht]
\begin{center}
\vspace{-.05in}
%\includegraphics[height=1in]{FailureReduced.ps}
%\\
\setlength{\unitlength}{1.1pt}
\begin{picture}(35,20)(0,10)
\thicklines
\put(15,15){\circle*{4.5}}
\put(13,15){\line(-1,0){10}}
\cbezier(15,17)(30,40)(30,-10)(15,13)
\put(5,18){$\scriptstyle ii$}
\put(20,25){$\scriptstyle i$}
\end{picture}
\quad
\begin{picture}(45,20)(0,10)
\thicklines
\put(15,15){\circle{4}}
\cbezier(15,17)(30,40)(30,-10)(15,13)
\put(5,18){$\scriptstyle ii$}
\put(20,25){$\scriptstyle i$}
\put(15,15){\color{white}\circle*{3}}
\put(13,15){\line(-1,0){10}}
\end{picture}
\quad
\begin{picture}(45,20)(0,10)
\thicklines
\put(4,18){$\scriptstyle ij$}
\put(32,18){$\scriptstyle ij$}
\put(20,25){$\scriptstyle i$}
\put(19,-2){$\scriptstyle j$}
\put(15,15){\circle{4}}
\put(30,15){\circle{4}}
\put(13,15){\line(-1,0){10}}
\put(32,15){\line(1,0){10}}
\qbezier(15,17)(22,30)(30,17)
\qbezier(15,13)(22,0)(30,13)
\end{picture}
\quad
\begin{picture}(45,20)(0,10)
\thicklines
\put(6,17){$\scriptstyle i$}
\put(34,18){$\scriptstyle j$}
\put(20,26){$\scriptstyle ij$}
\put(19,-2){$\scriptstyle ij$}
\put(15,15){\circle*{4}}
\put(30,15){\circle{4}}
\put(13,15){\line(-1,0){10}}
\put(32,15){\line(1,0){10}}
\qbezier(15,15)(22,30)(30,17)
\qbezier(15,15)(22,0)(30,13)
\end{picture}
\quad
\begin{picture}(45,20)(0,10)
\put(4,18){$\scriptstyle ij$}
\put(32,18){$\scriptstyle ij$}
\put(20,27){$\scriptstyle j$}
\put(19,-2){$\scriptstyle i$}
\thicklines
\put(15,15){\circle*{4}}
\put(30,15){\circle*{4}}
\put(13,15){\line(-1,0){10}}
\put(32,15){\line(1,0){10}}
\qbezier(15,15)(22,30)(30,17)
\qbezier(15,15)(22,0)(30,13)
\end{picture}
%{\ }
%\begin{picture}(30,20)(-10,10)
%\thicklines
%\put(7,17){$\scriptstyle i$}
%\put(15,22){$\scriptstyle ij$}
%\put(15,2){$\scriptstyle ij$}
%\put(15,15){\circle{4}}
%\put(1,15){\circle*{4}}
%\put(13,15){\line(-1,0){10}}
%\put(16.6,16.2){\line(4,3){12}}
%\put(16.6,13.8){\line(4,-3){12}}
%\end{picture}
%\quad
%\begin{picture}(30,20)(-10,10)
%\thicklines
%\put(7,17){$\scriptstyle i$}
%\put(15,22){$\scriptstyle ij$}
%\put(15,2){$\scriptstyle ij$}
%\put(15,15){\circle*{4}}
%\put(1,15){\circle{4}}
%\put(13,15){\line(-1,0){10}}
%\put(16.6,16.2){\line(4,3){12}}
%\put(16.6,13.8){\line(4,-3){12}}
%\end{picture}
\end{center}
\caption{A plabic graph containing a hollow digon fails to satisfy the resonance property.}
\label{fig:fail2}
\end{figure}
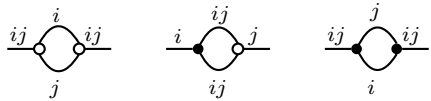

We next establish the ``only if'' direction.
Suppose that $G$ is reduced, with $\dpi_G=\dpi$.  We know from 
\cref{thm:bridge} that there is a bridge
decomposition $G'$---a reduced plabic graph---with trip permutation~$\dpi$. 
By \cref{lem:resonance}, $G'$ has the resonance property.
By \cref{thm:moves}, $G\sim G'$.
But now by  \cref{lem:moves-resonance}, the resonance property is 
preserved under the local moves, so 
$G$ has the resonance property as well.
%$G$ has the resonance property if and only if $G_3$ does.
%Since $G$ has no internal leaves, there exists a trivalent
%plabic graph $G_3\sim G$, see \cref{lem:bitri}.
%Moreover it can be seen from the proof of this lemma that $G$ and $G_3$
%are related via local moves, % that do not create internal leaves,
%so by \cref{lem:moves-resonance}, $G$ has the resonance property if and only if $G_3$ does.
%Similarly, by removing bivalent vertices from~$G'$,
%we obtain a trivalent graph $G'_3\sim G'$
%that has the resonance property.
%Since $G_3$ and $G'_3$ are  move-equivalent to each other, 
%by \cref{lem:moves-resonance}, these local moves 
%preserve the resonance property, so $G_3$ has the resonance property
%and therefore $G$ has it as well.
\end{proof}

\newpage

\section{Face labels of reduced plabic graphs}
\label{sec:face-labels}

In this section, we use the notion of a trip introduced in \cref{def:trip}
to label each face of a reduced (leafless) plabic graph by a collection of positive integers.
These face labels generalize the labeling 
of diagonals in a polygon by Pl\"ucker coordinates (cf.\ Section~\ref{sec:Ptolemy}) 
as well as the labeling of faces in 
(double or ordinary) wiring diagrams by chamber minors   
(cf.\ Sections~\hbox{\ref{sec:baseaffine}--\ref{sec:matrices}}).
In a subsequent chapter, we will
%Chapter~\ref{ch:Grassmannians}, we will 
relate the face labels of reduced plabic graphs to Pl\"ucker coordinates that form an extended cluster 
for the standard cluster structure on a Grassmannian or, more generally, on a Schubert or positroid 
subvariety within it.

\begin{remark}
%\label{rem:}
	Let $G$ be a reduced (leafless) plabic graph. 
Let $T_i$ be the one-way trip in~$G$ that begins at a boundary vertex~$i$
and ends at a boundary vertex~$j$. 

If $i\neq j$, then we claim that there are two kinds of faces in~$G$: 
those on the left side of the trip~$T_i$ and those on the right side of it.
%If $G$ is normal, then 
	This claim follows from the fact (see \cref{thm:reduced}) 
that $G$ does not contain essential self-intersections. 
%For a general reduced plabic graph,
%the claim can be deduced from the case of normal graphs 
%using the procedure described in \cref{def:normalize}. 

If $i=j$, then by \cref{prop:fixedlollipop}, the boundary vertex $i$ is 
incident to
%	the root of a tree 
%that collapses to 
	a lollipop. 
If this lollipop is white (resp., black), then we declare that all faces of $G$ 
lie on the left (resp., right) side of the trip~$T_i$.
\end{remark}

\begin{definition}
\label{def:faces}
	Let $G$ be a reduced (leafless) plabic graph with boundary vertices $1,\dots,b$.
We define two natural face labelings of $G$, cf.\ \cref{fig:plabic3}: 
\begin{itemize}[leftmargin=.2in]
\item 
in the \emph{source labeling} $\mathcal{F}_{\source}(G)$,  each face $f$ of $G$ is labeled by the set 
\[
I_{\source}(f)=\{i \mid \text{$f$ lies to the left of the trip starting at vertex $i$} \}; 
\]
\item 
in the \emph{target labeling} $\mathcal{F}_{\target}(G)$, each face $f$ of $G$ is labeled by the set 
\[
I_{\target}(f)=\{i \mid
			\text{$f$ is to the left of the trip ending at vertex~$i$}\}. 
\]
\end{itemize}
\end{definition}

\begin{figure}[ht]%[htbp]
\begin{center}
\begin{tabular}{cc}
\hspace{3mm}
\setlength{\unitlength}{1pt}
\begin{picture}(100,120)(-20,-70)
\thicklines
%\multiput(1,0)(1,30){2}{\line(1,0){27.5}}
%\multiput(0,1)(30,1){2}{\line(0,1){27.5}}
%\put(0,30){\circle{4}}
\put(30,30){\circle*{5}}
\put(50,3){\circle{5}}
%\put(0,0){\circle*{4}}
\put(-10,0){\circle{5}}
\put(30,-25){\circle*{5}}
%\put(40,-25){\circle{4}}
%\put(0,0){\line(-10,-25){9}}
%\put(30,-1.5){\line(10,-25){9}}
%\put(15,-40){\line(-25,15){23}}
%\put(15,-40){\line(25,15){23}}
\put(15,-5){\circle{120}}
\put(15,-65){\circle*{3}}
%\put(15,-40){\line(0,-1){24}}
\put(-12,48){\circle*{3}}
\put(42,48){\circle*{3}}
\put(74,3){\circle*{3}}
%\put(-12,48){\line(12,-18){11}}
\put(42,48){\line(-12,-18){11}}
\put(-29,-46){\circle*{3}}
\put(59,-46){\circle*{3}}
%\put(-29,-46){\line(19,21){18}}
\put(59,-46){\line(-29,21){30}}
\put(15,-65){\line(15,40){15}}
\put(-29,-46){\line(19,46){18}}
\put(-10,1){\line(-2,48){2}}
\put(74,3){\line(-1,0){23}}
\put(-9,0){\line(40,-25){38}}
\put(-14,54){\makebox(0,0){$\mathbf{1}$}}
\put(44,54){\makebox(0,0){$\mathbf{2}$}}
\put(79,3){\makebox(0,0){$\mathbf{3}$}}
\put(64,-50){\makebox(0,0){$\mathbf{4}$}}
\put(15,-72){\makebox(0,0){$\mathbf{5}$}}
\put(-33,-50){\makebox(0,0){$\mathbf{6}$}}
\put(25,5){\makebox(0,0){$\mathbf{135}$}}
\put(35,-45){\makebox(0,0){$\mathbf{134}$}}
\put(5,-25){\makebox(0,0){$\mathbf{345}$}}
\put(-30,0){\makebox(0,0){$\mathbf{356}$}}

\put(50,3){\white{\circle*{4.25}}}
\put(-10,0){\white{\circle*{4.25}}}
\end{picture}
\hspace{3mm}
&
\hspace{3mm}
\setlength{\unitlength}{1pt}
\begin{picture}(100,120)(-40,-70)
\thicklines
%\multiput(1,0)(1,30){2}{\line(1,0){27.5}}
%\multiput(0,1)(30,1){2}{\line(0,1){27.5}}
%\put(0,30){\circle{4}}
\put(30,30){\circle*{5}}
\put(50,3){\circle{5}}
%\put(0,0){\circle*{4}}
\put(-10,0){\circle{5}}
\put(30,-25){\circle*{5}}
%\put(40,-25){\circle{4}}
%\put(0,0){\line(-10,-25){9}}
%\put(30,-1.5){\line(10,-25){9}}
%\put(15,-40){\line(-25,15){23}}
%\put(15,-40){\line(25,15){23}}
\put(15,-5){\circle{120}}
\put(15,-65){\circle*{3}}
%\put(15,-40){\line(0,-1){24}}
\put(-12,48){\circle*{3}}
\put(42,48){\circle*{3}}
\put(74,3){\circle*{3}}
%\put(-12,48){\line(12,-18){11}}
\put(42,48){\line(-12,-18){11}}
\put(-29,-46){\circle*{3}}
\put(59,-46){\circle*{3}}
%\put(-29,-46){\line(19,21){18}}
\put(59,-46){\line(-29,21){30}}
\put(15,-65){\line(15,40){15}}
\put(-29,-46){\line(19,46){18}}
\put(-10,1){\line(-2,48){2}}
\put(74,3){\line(-1,0){23}}
\put(-9,0){\line(40,-25){38}}
\put(-14,54){\makebox(0,0){$\mathbf{1}$}}
\put(44,54){\makebox(0,0){$\mathbf{2}$}}
\put(79,3){\makebox(0,0){$\mathbf{3}$}}
\put(64,-50){\makebox(0,0){$\mathbf{4}$}}
\put(15,-72){\makebox(0,0){$\mathbf{5}$}}
\put(-33,-50){\makebox(0,0){$\mathbf{6}$}}
\put(25,5){\makebox(0,0){$\mathbf{345}$}}
\put(35,-45){\makebox(0,0){$\mathbf{356}$}}
\put(5,-25){\makebox(0,0){$\mathbf{346}$}}
\put(-30,0){\makebox(0,0){$\mathbf{134}$}}

\put(50,3){\white{\circle*{4.25}}}
\put(-10,0){\white{\circle*{4.25}}}
\end{picture}
\hspace{3mm}
\\[.15in]
(a) \hspace{.3in} & \hspace{.3in} (b) 
\end{tabular}
\vspace{-.2in}
\end{center}
\caption{(a) The source labeling $\mathcal{F}_{\source}(G)$
of a reduced plabic graph~$G$.
(b) The target labeling $\mathcal{F}_{\target}(G)$. 
Here $\dpi_G = (5,\underline{2},\overline{3},6,4,1)$.
Every face is labeled by a subset of cardinality~$3$, in agreement with 
\cref{thm:faces}, cf.\ \cref{example:523641}.
}
\label{fig:plabic3}
\end{figure}
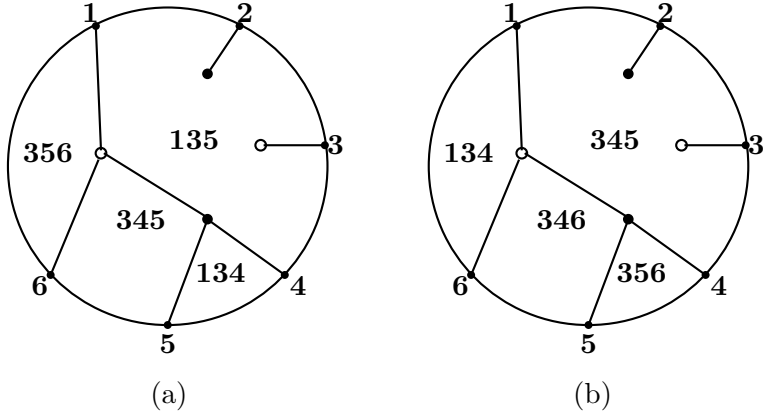
\begin{remark} 
\label{rem:edge-vs-face}
The edge labeling and the face labeling
of a reduced plabic graph $G$ are related as follows:
if two faces $f$ and $f'$ of $G$ are separated by a single edge
whose edge label is $\{i,j\}$, then the face label of $f'$
is obtained from that of $f$ by either removing $i$ and 
adding $j$, or removing $j$ and adding~$i$.
\end{remark}

%\begin{theorem}\label{thm:samesize}
%Let $G$ be a reduced plabic graph with $n$ boundary vertices,
%whose trip permutation has precisely $k$ anti-excedances.
%Then 
%  its face labeling $\mathcal{F}_{\source}(G)$ 
%	(or $\mathcal{F}_{\target}(G)$) assigns a subset of size $k$
%	to every face.
%\end{theorem}
\begin{theorem}
\label{thm:faces}
	Let $G$ be a reduced (leafless) plabic graph with $b$ boundary vertices.
Let $a$ denote the number of anti-excedances in the trip permutation~$\pi_G$.
Let us label the faces of $G$ using either the source or the target labeling. % see \cref{def:faces}.
Then every face of $G$ will be labeled by an $a$-element subset of $\{1,\dots,b\}$.
\end{theorem}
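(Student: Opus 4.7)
The plan is to reduce the theorem to two independent tasks: showing that all source labels in $G$ have a common cardinality $c$, and then computing $c=a$ by examining the boundary faces. For the first task, I would apply moves (M2)--(M3) to assume $G$ has no internal leaves other than lollipops; these moves preserve reducedness, the trip permutation, and the face labels. By \cref{thm:resonance}, $G$ then satisfies the resonance property, so every edge separating two distinct faces carries a two-element label. \cref{rem:edge-vs-face} then implies that the source labels of adjacent faces differ by swapping a single element, hence have the same cardinality; since the dual graph of faces of $G$ is connected, a common cardinality $c$ exists.

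For the second task, I would work with the boundary faces $f_1, \dots, f_b$, where $f_i$ is the face adjacent to the boundary arc running clockwise from vertex $i$ to vertex $i+1$ (indices mod~$b$). The key topological input is that in a reduced plabic graph every trip $T_j$ is a simple curve in the disk: by \cref{thm:reduced} no edge is traversed twice, and by the resonance property, at each internal vertex $v$ the two edges carrying label $j$ are consecutive in the cyclic order around $v$, so $T_j$ visits $v$ at most once. Combining the rules of the road with the clockwise labeling of the boundary, I would verify that $T_j$ leaves vertex $j$ with $f_j$ on its left and arrives at vertex $\pi_G(j)$ with $f_{\pi_G(j)-1}$ on its left. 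By Jordan's theorem applied to the simple closed curve formed by $T_j$ together with the boundary arc from $\pi_G(j)$ back to $j$, the set of boundary faces on the left of $T_j$ is the cyclic interval $f_j, f_{j+1}, \dots, f_{\pi_G(j)-1}$. Re-expressed in terms of the affinization $\affpi$ of $\dpi_G$, this reads
\[
|I_{\source}(f_i)| = |\{j \in \{i-b+1, \dots, i\} : \affpi(j) > i\}|,
\]
with the white- and black-lollipop conventions in \cref{def:faces} matching the cases $\affpi(j)=j+b$ and $\affpi(j)=j$.

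Summing over $i$, substituting $k=i-j$, and using the periodicity $\affpi(m+b)=\affpi(m)+b$,
\[
c\cdot b \;=\; \sum_{i=1}^b |I_{\source}(f_i)| \;=\; \sum_{k=0}^{b-1}\sum_{m=1}^b [\affpi(m) > m+k] \;=\; \sum_{m=1}^b (\affpi(m)-m) \;=\; ab,
\]
by \eqref{eq:sum-aex}, giving $c=a$. The analogous statement for target labels would follow from the same endpoint analysis: the bijection $I_{\target}(f) = \dpi_G(I_{\source}(f))$ (interpreting $\dpi_G$ as a bijection on $\{1,\dots,b\}$ with lollipop positions fixed) immediately yields $|I_{\target}(f)|=a$. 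The main obstacle is the topological justification in the middle paragraph: one must argue that each trip $T_j$, although defined purely combinatorially by the rules of the road, actually traces a simple non-self-crossing curve in the disk and thereby admits an unambiguous ``left region,'' and handle the decorated-fixed-point cases carefully so that they match the convention set in \cref{def:faces}.
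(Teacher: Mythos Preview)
Your argument is correct, and the first half (common cardinality via the resonance property and \cref{rem:edge-vs-face}) coincides with the paper's. The second half is genuinely different. The paper computes the common cardinality~$c$ by invoking \cref{thm:moves} to reduce to a single representative graph, namely a bridge decomposition of~$\affpi$, and then argues by induction on the number of bridges (the base case being a union of lollipops, whose unique face is labeled by the $a$ white-lollipop positions). You instead compute the source labels of the boundary faces $f_1,\dots,f_b$ directly: the Jordan-curve property of each trip $T_j$ (which you correctly extract from resonance) shows that $f_i$ lies to the left of $T_j$ precisely when $j\le i<\affpi(j)$, and then summing and using~\eqref{eq:sum-aex} gives $cb=ab$. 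Your route yields more information---an explicit description of the boundary face labels as cyclic intervals---and avoids a second explicit appeal to \cref{thm:moves} and \cref{thm:bridge}, though these are already buried in the proof of \cref{thm:resonance} that you invoke. The paper's route is shorter and bypasses the topological verification you flag at the end; your identification of that verification as the main obstacle is accurate, but the ingredients you list (simplicity of trips from resonance, plus the left/right check at the two boundary endpoints) do suffice to carry it out.
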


\pagebreak[3]

\begin{proof}
	By \cref{thm:resonance}, every reduced (leafless) plabic graph $G$ has the resonance
property, which in particular means that every edge label of $G$ 
consists of two distinct numbers.  It then follows from \cref{rem:edge-vs-face}
that every face label of $G$ has the same cardinality.  It remains
to show that this cardinality is~$a$, the number of anti-excedances of~$\dpi=\dpi_G$.

Furthermore, it is sufficient to establish the latter claim for one particular
reduced plabic graph with the trip permutation~$\dpi_G$,
e.g., for a bridge decomposition of~$\affpi$. 
Indeed, any two reduced plabic graphs with the same trip permutation
are related by local moves, and any such move 
preserves all labels except at most one, see \cref{ex:face-labels-square}.

To prove the theorem for bridge decompositions,
we use induction on the number of bridges~$\beta$.
In the base case $\beta=0$, 
the bridge decomposition~$G$ consists of $a$
white lollipops, $b-a$ black lollipops, and no bridges.  
Thus $G$ has a single face, labeled by the 
$a$-element subset indicating the positions of the white lollipops.

Consider a bridge decomposition $G$ built 
from a bridge decomposition $G'$ by adding a bridge
in position $(i,j)$ at the top of~$G'$, as in \cref{fig:resonance2}.  
Both $G$ and $G'$ have trip permutations with $a$ anti-excedances
(cf.\ \cref{rem:l(affpi)-bound}), 
so by the induction assumption, the faces in $G'$ have cardinality~$a$. 
Since $G$ inherits most of its faces from~$G'$, 
and all face labels of $G$ have the same cardinality, this cardinality is equal to~$a$. 
\end{proof}

%It is natural to examine how the face labels change under the various local moves in a plabic graph:  

\begin{exercise}
\label{ex:face-labels-square}
%Let $G$ be a plabic graph.
Verify that applying a move
{\rm (M2)} or {\rm (M3)} does not affect the face labels of a plabic graph, 
whereas applying the square move {\rm (M1)}  changes the  face labels
as shown in \cref{fig:M1Plucker}.  
%	(The target face labels change in 
%a very similar way.)
\end{exercise}

\begin{figure}[ht]%[htbp]
\begin{center}
\vspace{.1in}
\setlength{\unitlength}{1pt}
\begin{picture}(40,36)(0,0)
\thicklines
\multiput(1,0)(1,30){2}{\line(1,0){27.2}}
\multiput(0,1)(30,1){2}{\line(0,1){27.2}}
\put(45,45){\makebox(0,0){$\mathbf{i}$}}
\put(45,-15){\makebox(0,0){$\mathbf{j}$}}
\put(-15,-15){\makebox(0,0){$\mathbf{k}$}}
\put(-15,45){\makebox(0,0){$\mathbf{l}$}}
\put(42,15){\makebox(0,0){$ilS$}}
\put(15,-10){\makebox(0,0){$ijS$}}
\put(-14,15){\makebox(0,0){$jkS$}}
\put(15,40){\makebox(0,0){$klS$}}
\put(15,15){\makebox(0,0){$ikS$}}

\put(0,0){\circle*{4}}
\put(0,30){\circle{4}}
\put(30,30){\circle*{4}}
\put(30,0){\circle{4}}
\put(-10,40){\line(1,-1){8.5}}
\put(-10,-10){\line(1,1){9}}
\put(40,-10){\line(-1,1){8.5}}
\put(40,40){\line(-1,-1){9}}
\end{picture}
\qquad
\begin{picture}(40,36)(0,0)
\put(15,15){\makebox(0,0){\Large{$\longleftrightarrow$}}}
\end{picture}
\qquad
\begin{picture}(30,36)(0,0)
\thicklines
\multiput(1.7,0)(0,30){2}{\line(1,0){26.5}}
\multiput(0,1.7)(30,0){2}{\line(0,1){26.5}}
\put(45,45){\makebox(0,0){$\mathbf{i}$}}
\put(45,-15){\makebox(0,0){$\mathbf{j}$}}
\put(-15,-15){\makebox(0,0){$\mathbf{k}$}}
\put(-15,45){\makebox(0,0){$\mathbf{l}$}}
\put(42,15){\makebox(0,0){$ilS$}}
\put(15,-10){\makebox(0,0){$ijS$}}
\put(-14,15){\makebox(0,0){$jkS$}}
\put(15,40){\makebox(0,0){$klS$}}
\put(15,15){\makebox(0,0){$jlS$}}
\put(0,0){\circle{4}}
\put(0,30){\circle*{4}}
\put(30,30){\circle{4}}
\put(30,0){\circle*{4}}
\put(-10,40){\line(1,-1){9}}
\put(-10,-10){\line(1,1){8.5}}
\put(40,-10){\line(-1,1){9}}
\put(40,40){\line(-1,-1){8.5}}
\end{picture}
\end{center}
\vspace{.2in}
\caption{The effect of the square move (M1) on the  face labeling.  Here
$\mathbf{i,j,k,l}$ are the (source or target) labels of the trips that traverse
the outer edges towards the central square;
$S$~is an arbitrary set of labels disjoint from $\{i,j,k,l\}$; and
$abS$ is a shorthand for the set $\{a,b\} \cup S$.}
\label{fig:M1Plucker}
\end{figure}
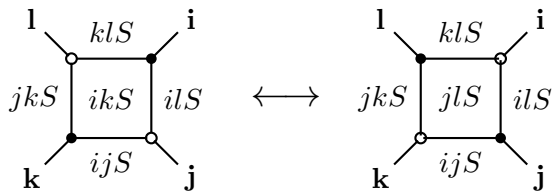

The face labelings of plabic graphs 
can be used to recover %other combinatorial labelings that we encountered before: 
the labelings of diagonals in a polygon by Pl\"ucker coordinates 
as well as the labelings of chambers in (ordinary or double) wiring diagrams by minors: 

\pagebreak[3]

\begin{exercise}
Let $T$ be a triangulation of a convex $m$-gon $\mathbf{P}_m$,
and let $G(T)$ be the plabic graph defined in \cref{TriangulationA}. 
Explain how to label the boundary vertices of $G(T)$ in such a way that the 
face labeling of $G(T)$ recovers the labeling of diagonals of $\mathbf{P}_m$ by pairs of integers.
Cf.\ \cref{fig:triang-plabic}. 
	%Show that the source face labeling of $G(T)$ recovers
	%the natural labeling of the triangulation $T$ induced by 
	%the labeling of the vertices of $\mathbf{P}_m$.
\end{exercise}

\begin{figure}[ht]
\begin{center}
\hspace{-.6in}
\setlength{\unitlength}{2.3pt}
\begin{picture}(60,65)(0,-3)
\thicklines
  \multiput(0,20)(60,0){2}{\line(0,1){20}}
  \multiput(20,0)(0,60){2}{\line(1,0){20}}
  \multiput(0,40)(40,-40){2}{\line(1,1){20}}
  \multiput(20,0)(40,40){2}{\line(-1,1){20}}

  \multiput(20,0)(20,0){2}{\circle*{1}}
  \multiput(20,60)(20,0){2}{\circle*{1}}
  \multiput(0,20)(0,20){2}{\circle*{1}}
  \multiput(60,20)(0,20){2}{\circle*{1}}

% \thinlines
\put(40,0){\line(1,2){20}}
\put(0,40){\line(1,0){60}}
\put(0,20){\line(2,-1){40}}
\put(0,40){\line(1,-1){40}}
\put(20,60){\line(2,-1){40}}

\put(15,16){\makebox(0,0){$P_{46}$}}
\put(23,22){\makebox(0,0){$P_{47}$}}
\put(46,22){\makebox(0,0){$P_{24}$}}
\put(34,42.5){\makebox(0,0){$P_{27}$}}
\put(30,52){\makebox(0,0){$P_{28}$}}
\put(40,-3){\makebox(0,0){$\mathbf{4}$}}
\put(63,20){\makebox(0,0){$\mathbf{3}$}}
\put(63,40){\makebox(0,0){$\mathbf{2}$}}
\put(40,63){\makebox(0,0){$\mathbf{1}$}}
\put(20,63){\makebox(0,0){$\mathbf{8}$}}
\put(-3,40){\makebox(0,0){$\mathbf{7}$}}
\put(-3,20){\makebox(0,0){$\mathbf{6}$}}
\put(20,-3){\makebox(0,0){$\mathbf{5}$}}
\end{picture}
	\hspace{1.7cm}\setlength{\unitlength}{2pt}
\begin{picture}(50,50)(0,-8)
\thicklines
% \thinlines
	\multiput(0,20)(60,0){2}{\red{\line(0,1){20}}}
	\multiput(20,0)(0,60){2}{\red{\line(1,0){20}}}
	\multiput(0,40)(40,-40){2}{\red{\line(1,1){20}}}
	\multiput(20,0)(40,40){2}{\red{\line(-1,1){20}}}
\put(40,0){\red{\line(1,2){20}}}
	\put(0,40){\red{\line(1,0){60}}}
	\put(0,20){\red{\line(2,-1){40}}}
	\put(0,40){\red{\line(1,-1){40}}}
	\put(20,60){\red{\line(2,-1){40}}}

\thicklines
  \multiput(20,0)(20,0){2}{\circle{3}}
  \multiput(20,60)(20,0){2}{\circle{3}}
  \multiput(0,20)(0,20){2}{\circle{3}}
  \multiput(60,20)(0,20){2}{\circle{3}}

\thicklines
\put(30,30){\circle{80}}

\put(32,28){\circle*{3}}
\put(12,21){\circle*{3}}
\put(22,5){\circle*{3}}
\put(55,21){\circle*{3}}
\put(20,48){\circle*{3}}
\put(38,55){\circle*{3}}

\put(32,28){\line(1,0.42){26.7}}
\put(32,28){\line(-1,0.38){30.5}}
\put(32,28){\line(0.3,-1){8}}

\put(12,21){\line(-0.65,1){11.5}}
\put(12,21){\line(1,-0.75){27}}
\put(12,21){\line(-1,-0.1){10.5}}

\put(22,5){\line(1,-0.27){16.4}}
\put(22,5){\line(-0.35,-1){1.3}}
\put(22,5){\line(-1,0.67){21}}

\put(55,21){\line(1, -0.15){3.8}}
\put(55,21){\line(0.27, 1){4.7}}
\put(55,21){\line(-0.7, -1){14}}

\put(20,48){\line(0, 1){10.5}}
\put(20,48){\line(-1, -0.4){18.5}}
\put(20,48){\line(5, -1){38.5}}

\put(38,55){\line(1, 2.5){1.5}}
\put(38,55){\line(-3.6, 1){16.5}}
\put(38,55){\line(4.4, -3){20.8}}

\multiput(20,0)(20,0){2}{\circle{3}}
\multiput(20,60)(20,0){2}{\circle{3}}
\multiput(0,20)(0,20){2}{\circle{3}}
\multiput(60,20)(0,20){2}{\circle{3}}
  
\put(19,-1){\line(-0.6,-1){3.8}}
\put(41,-1){\line(0.6,-1){3.8}}
  
\put(19,61){\line(-0.6,1){3.8}}
\put(41,61){\line(0.6,1){3.8}}
  
\put(-1, 19){\line(-1, -0.6){6.1}}
\put(-1, 41){\line(-1, 0.6){6.1}}

\put(61, 19){\line(1, -0.6){6.1}}
\put(61, 41){\line(1, 0.6){6.1}}

\put(46,-10){\makebox(0,0){$\mathbf{5}$}}
\put(69,14){\makebox(0,0){$\mathbf{4}$}}
\put(70,46){\makebox(0,0){$\mathbf{3}$}}
\put(46,70){\makebox(0,0){$\mathbf{2}$}}
\put(14,70){\makebox(0,0){$\mathbf{1}$}}
\put(-9.5,46){\makebox(0,0){$\mathbf{8}$}}
\put(-9.5,14){\makebox(0,0){$\mathbf{7}$}}
\put(14,-10){\makebox(0,0){$\mathbf{6}$}}

\end{picture}
\end{center}
	\caption{A triangulation $T$ of an octagon and the corresponding
	plabic graph $G(T)$, \emph{cf.}\ Figure~\ref{fig:quiver-triangulation}. 
}
\label{fig:triang-plabic}
\end{figure}

\begin{exercise}
Let $D$ be a wiring diagram with $m$ wires. 
Let $G(D)$ be the plabic graph defined in
\cref{def:wiringplabic}, see also \cref{fig:wiring-plabic}.
Label the boundary vertices of $G(D)$ by the numbers $1,\dots,2m$ 
in the clockwise order, starting with a $1$ at the lower left boundary vertex of $G(D)$.  
Label the faces of $G(D)$ using the source labeling $\mathcal{F}_{\source}(G)$. 
Show that intersecting each face label with the set $\{1,2,\dots,m\}$ recovers the labeling of $D$
by chamber minors.
See \cref{fig:wiring-plabic}. 
\end{exercise}

%\newsavebox{\ssone}
%\setlength{\unitlength}{2.4pt} 
%\savebox{\ssone}(10,20)[bl]{
%\thicklines 
%\qbezier(5,5)(7,10)(10,10)
%\qbezier(5,5)(3,0)(0,0)
%\qbezier(5,5)(3,10)(0,10)
%\qbezier(5,5)(7,0)(10,0)
%\put(0,20){\line(1,0){10}} 
%}

\begin{figure}
\begin{center}
\vspace{-.3in}
\begin{picture}(60,40)(0,-18) 
%\begin{picture}(30,20)(-6,3) 
\put(0,0){\makebox(0,0){\usebox{\ssone}}} 
\put(10,0){\makebox(0,0){\usebox{\sstwo}}} 
\put(20,0){\makebox(0,0){\usebox{\ssone}}} 
\put(-8,-10){\makebox(0,0){$1$}}
\put(-8,0){\makebox(0,0){$2$}}
\put(-8,10){\makebox(0,0){$3$}}
  \put(-5,-5){$_{\mathbf{\dark{1}}}$}
  \put(8,-5){$_{\mathbf{\dark{2}}}$}
  \put(23,-5){$_{\mathbf{\dark{3}}}$}

  \put(0,5){$_{\mathbf{\dark{12}}}$}
  \put(15,5){$_{\mathbf{\dark{23}}}$}
  
	\put(7.5,15){$_{\mathbf{\dark{123}}}$}
\end{picture}
\setlength{\unitlength}{1.2pt}
\begin{picture}(60,40)(0,-18)
\thicklines
\put(0,40){\line(1,0){28.5}}
\put(60,40){\line(-1,0){28.5}}
\put(0,20){\line(1,0){8.5}}
\put(11.5,20){\line(1,0){37}}
\put(60,20){\line(-1,0){8.5}}
\put(0,0){\line(1,0){60}}
\put(10,1.5){\line(0,1){17}}
\put(30,21.5){\line(0,1){17}}
\put(50,1.5){\line(0,1){17}}
\put(10,0){\circle{3}}
\put(30,20){\circle{3}}
\put(50,0){\circle{3}}
\put(10,20){\circle*{3}}
\put(30,40){\circle*{3}}
\put(50,20){\circle*{3}}
\put(-8,0){\makebox(0,0){$1$}}
\put(-8,20){\makebox(0,0){$2$}}
\put(-8,40){\makebox(0,0){$3$}}
\put(68,0){\makebox(0,0){$6$}}
\put(68,20){\makebox(0,0){$5$}}
\put(68,40){\makebox(0,0){$4$}}
  \put(-3,10){$_{\mathbf{\dark{145}}}$}
  \put(24,10){$_{\mathbf{\dark{245}}}$}
  \put(52,10){$_{\mathbf{\dark{345}}}$}

  \put(10,30){$_{\mathbf{\dark{124}}}$}
  \put(40,30){$_{\mathbf{\dark{234}}}$}
  
	\put(24,50){$_{\mathbf{\dark{123}}}$}
	\put(24,-10){$_{\mathbf{\dark{456}}}$}

\put(10,0){\white{\circle*{2.3}}}
\put(30,20){\white{\circle*{2.3}}}
\put(50,0){\white{\circle*{2.3}}}
\end{picture}
\vspace{-.25in}
\end{center}
	\caption{A wiring diagram $D$ and the plabic graph $G(D)$ with the source 
labeling of its faces.}
\label{fig:wiring-plabic}
\end{figure}
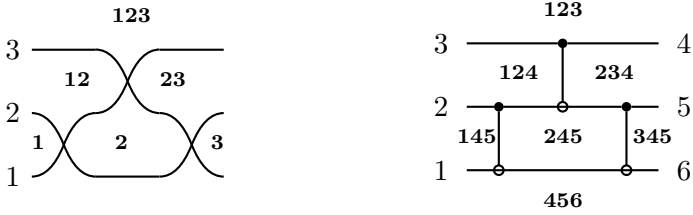

\pagebreak

\begin{exercise}
\label{exercise:DWD-source-labeling}
Let $D$ be a double wiring diagram with $m$ pairs of wires. 
Let $G(D)$ be the plabic graph defined in \cref{ex:DWD}.
Label the boundary vertices of $G(D)$ by the numbers
\begin{equation*}
1,2,\dots,m-1,m,m',\dots,2',1'
\end{equation*}
in clockwise order, starting with the label $1$ at the lower left boundary vertex of~$G(D)$.  
Label the faces of $G=G(D)$ using the source labeling $\mathcal{F}_{\source}(G)$, 
so that each face gets labeled by 
$I'\cup J$, where $I'\subset \{1',\dots,m'\}$ and $J\subset \{1,\dots,m\}$.  
Let $I$ denote the set obtained from $I'$ by replacing each $i'$ by $i$.
Show that mapping each face label $I'\cup J$ to the pair $([1,m]\setminus I, {J})$ 
recovers the labeling of $D$ by chamber minors.
See \cref{fig:better-0119}. 
\end{exercise}

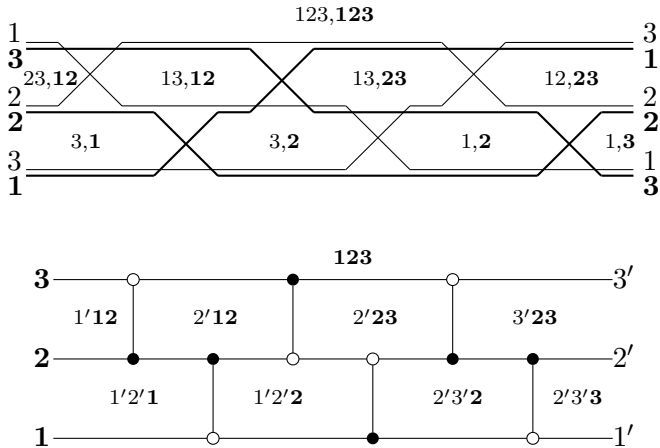
\begin{figure}[ht]
\setlength{\unitlength}{1.2pt}
\begin{center}
\begin{picture}(180,50)(6,0)
\thicklines
\dark{

  \put(0,0){\line(1,0){40}}
  \put(60,0){\line(1,0){100}}
  \put(180,0){\line(1,0){10}}
  \put(0,20){\line(1,0){40}}
  \put(60,20){\line(1,0){10}}
  \put(90,20){\line(1,0){70}}
  \put(180,20){\line(1,0){10}}
  \put(0,40){\line(1,0){70}}
  \put(90,40){\line(1,0){100}}

  \put(40,0){\line(1,1){20}}
  \put(70,20){\line(1,1){20}}
  \put(160,0){\line(1,1){20}}

  \put(40,20){\line(1,-1){20}}
  \put(70,40){\line(1,-1){20}}
  \put(160,20){\line(1,-1){20}}

  \put(193,-6){$\mathbf{3}$}
  \put(193,14){$\mathbf{2}$}
  \put(193,34){$\mathbf{1}$}

  \put(-6,-6){$\mathbf{1}$}
  \put(-6,14){$\mathbf{2}$}
  \put(-6,34){$\mathbf{3}$}
}

\light{
\thinlines

  \put(0,2){\line(1,0){100}}
  \put(120,2){\line(1,0){70}}
  \put(0,22){\line(1,0){10}}
  \put(30,22){\line(1,0){70}}
  \put(120,22){\line(1,0){10}}
  \put(150,22){\line(1,0){40}}
  \put(0,42){\line(1,0){10}}
  \put(30,42){\line(1,0){100}}
  \put(150,42){\line(1,0){40}}

  \put(10,22){\line(1,1){20}}
  \put(100,2){\line(1,1){20}}
  \put(130,22){\line(1,1){20}}

  \put(10,42){\line(1,-1){20}}
  \put(100,22){\line(1,-1){20}}
  \put(130,42){\line(1,-1){20}}

  \put(193,2){${1}$}
  \put(193,22){${2}$}
  \put(193,42){${3}$}

  \put(-6,2){${3}$}
  \put(-6,22){${2}$}
  \put(-6,42){${1}$}

  \put(14,10){$_{\light{3},\mathbf{\dark{1}}}$}
  \put(76,10){$_{\light{3},\mathbf{\dark{2}}}$}
  \put(136,10){$_{\light{1},\mathbf{\dark{2}}}$}
  \put(181,10){$_{\light{1},\mathbf{\dark{3}}}$}

  \put(-1,30){$_{\light{23},\mathbf{\dark{12}}}$}
  \put(42,30){$_{\light{13},\mathbf{\dark{12}}}$}
  \put(102,30){$_{\light{13},\mathbf{\dark{23}}}$}
  \put(162,30){$_{\light{12},\mathbf{\dark{23}}}$}
  \put(84,50){$_{\light{123},\mathbf{\dark{123}}}$}
}
\end{picture}
		\end{center}
			\vspace{.5cm}
		\begin{center}
	%%%%%%%%%%%%%%%%%%%%%%%%%%%%%%%%%%%%%%%%%%%%%%%%%%%%%%%%
    \setlength{\unitlength}{1.5pt}
\begin{picture}
% \thicklines
            (140,45)(0,-5)
  \put(-5,38){$\mathbf{3}$}
  \put(-5,18){$\mathbf{2}$}
  \put(-5,-2){$\mathbf{1}$}
			\put(140,38){${3'}$}
  \put(140,18){${2'}$}
  \put(140,-2){${1'}$}
				\put(70,45){$_{\mathbf{\dark{123}}}$}
  \put(14,10){$_{\light{1'2'}\mathbf{\dark{1}}}$}
  \put(50,10){$_{\light{1'2'}\mathbf{\dark{2}}}$}
  \put(95,10){$_{\light{2'3'}\mathbf{\dark{2}}}$}
  \put(125,10){$_{\light{2'3'}\mathbf{\dark{3}}}$}

  \put(5,30){$_{\light{1'}\mathbf{\dark{12}}}$}
  \put(35,30){$_{\light{2'}\mathbf{\dark{12}}}$}
  \put(75,30){$_{\light{2'}\mathbf{\dark{23}}}$}
  \put(115,30){$_{\light{3'}\mathbf{\dark{23}}}$}
				\thicklines
				\put(0,20){\line(1,0){58.5}}
				\put(61.5,20){\line(1,0){17}}
				\put(81.5,20){\line(1,0){58.5}}
				\put(0,0){\line(1,0){38.5}}
				\put(140,0){\line(-1,0){18.5}}
				\put(118.5,0){\line(-1,0){77}}
				\put(0,40){\line(1,0){18.5}}
				\put(21.5,40){\line(1,0){37}}
				\put(61.5,40){\line(1,0){37}}
				\put(101.5,40){\line(1,0){38.5}}
				
				\put(20,21.5){\line(0,1){17}}
				\put(80,1.5){\line(0,1){17}}
				\put(60,21.5){\line(0,1){17}}
				\put(40,1.5){\line(0,1){17}}
				\put(100,21.5){\line(0,1){17}}
				\put(120,1.5){\line(0,1){17}}
				
				\put(20,20){\circle*{3}}
				\put(40,20){\circle*{3}}
				\put(60,20){\circle{3}}
				\put(80,20){\circle{3}}
				\put(100,20){\circle*{3}}
				\put(120,20){\circle*{3}}
				
				\put(20,40){\circle{3}}
				\put(60,40){\circle*{3}}
				\put(100,40){\circle{3}}
				
				\put(40,0){\circle{3}}
				\put(80,0){\circle*{3}}
				\put(120,0){\circle{3}}
				% \thicklines
				% \linethickness{1.5pt}
\end{picture}
\end{center}
\caption{Double wiring diagram labeling from plabic graphs.
The~labeling of a double wiring diagram $D$ 
is obtained from the source labeling of the associated plabic graph $G(D)$ 
%as follows. 
%Each subset $I' \cup J$ (here $I'\subset \{1',2',\dots\}$ and $J\subset \{1,2,\dots\}$)
%is mapped to the pair $([1,m]\setminus I, \mathbf{J})$,
%where $I$ denotes the set obtained from $I'$ by mapping each $i'$ to~$i$.
using the recipe described in \cref{exercise:DWD-source-labeling}.}
\label{fig:better-0119}
\end{figure}

%\clearpage
\newpage

\section{Grassmann necklaces and weakly separated collections}\label{weaksep}

Fix two nonnegative integers $b$ and $a\leq b$.  
We denote by $\binom{[b]}{a}$ the set of all $a$-element subsets of $\{1,\dots,b\}$. 

In this section, we provide an intrinsic combinatorial characterization
of the subsets of $\binom{[b]}{a}$ that arise as sets of face labels of reduced plabic graphs.
The proofs are omitted. 

\begin{definition}[{\rm \cite{leclerc-zelevinsky}}]
We say that two $a$-element subsets $I,J\in\binom{[b]}{a}$
are \emph{weakly separated} if 
 and only if,
after drawing the numbers $1,2,\dots,b$ clockwise around a circle,
there exists a chord separating the sets $I\setminus J$ and $J\setminus I$ from each other.
More specifically, $I$ and $J$ are weakly separated if 
there do not exist $i, j, i', j'\in\{1,\dots,b\}$ such that
\begin{itemize}[leftmargin=.2in]
\item
%$i, j, i', j'$ are cyclically ordered, i.e., 
$i<j<i'<j'$ or $j<i<j'<i'$; % or $i'<j'<i<j$ or $j'<i<j<i'$; 
\item 
$i,i'\in I \setminus J$ and $j,j'\in J\setminus I$.
\end{itemize}
\end{definition}

\begin{theorem}[{\rm \cite{danilov-karzanov-koshevoy,oh-postnikov-speyer}}]
\label{th:face-labels-weakly-sep}
Let $I$ and $J$ be target face labels of two faces in a 
reduced plabic graph. 
Alternatively, let $I$ and $J$ be source face labels of two faces in a 
reduced plabic graph. 
Then $I$ and~$J$ are weakly separated.
\end{theorem}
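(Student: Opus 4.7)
My plan is to reduce to a specific reduced plabic graph in each move-equivalence class via \cref{thm:moves}, verify the claim there by induction, and then propagate it to all reduced plabic graphs.

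First, I would verify the claim for bridge decompositions (\cref{def:BCFW}) by induction on the number of bridges. The base case consists of a configuration of lollipops with a single face, whose single label is trivially weakly separated from itself. In the inductive step, attaching a bridge at position $(i,j)$ at the top of $G'$ changes the plabic graph in a controlled way: a single new face is created, and certain existing face labels are modified in a manner predictable from the interchange $\sigma_1 = (i,j)$ applied to trips at the top. Using conditions \eqref{eq:ij-pair-1}--\eqref{eq:ij-pair-4} that governed the choice of $(i,j)$---in particular $i < j$ and that every position strictly between $i$ and $j$ is a fixed point of the current affine permutation---I would check that the new face label is weakly separated from every existing label and that the update applied to old labels does not create any cyclic interleaving pattern $iji'j'$.

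Second, I would show that the property of having a weakly separated collection of face labels is preserved by the local moves. Moves (M2) and (M3) leave face labels unchanged by \cref{ex:face-labels-square}, so there is nothing to check. For the square move (M1), the only label that changes is the central one, which goes from $ikS$ to $jlS$ (\cref{fig:M1Plucker}). The work thus reduces to the purely combinatorial claim: \emph{if $K \subseteq \{1,\dots,b\}$ has $|K|=a$ and is weakly separated from each of $ikS$, $ijS$, $jkS$, $klS$, $ilS$, then $K$ is weakly separated from $jlS$.} I would prove this by case analysis based on $K \cap \{i,j,k,l\}$: any hypothetical cyclic pattern $i^*j^*(i')^*(j')^*$ witnessing a failure of weak separation between $K$ and $jlS$ must involve at least two elements of $\{i,j,k,l\}$, and in every such case one checks that the same or a related pattern also witnesses a failure with one of the five given labels, a contradiction. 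Invoking \cref{thm:moves}, every reduced plabic graph is move-equivalent to a bridge decomposition with the same decorated trip permutation, so the two previous steps together yield the theorem.

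The main obstacle is the combinatorial lemma in the second step. While each individual case in the analysis is routine, the bookkeeping is delicate, since the cyclic order on $\{1,\dots,b\}$ interacts nontrivially with the four distinguished positions $i, j, k, l$ and the common part $S$. A more conceptual alternative would be a direct topological argument: the hypothesized cyclic pattern $i < j < i' < j'$ in $I \triangle J$ produces four trips $T^{(i)}, T^{(j)}, T^{(i')}, T^{(j')}$ all separating the two faces in question, and one might hope to derive a contradiction from the absence of bad double crossings (\cref{thm:reduced}). Making such a picture rigorous is subtle, as trips need not be embedded curves and can share internal vertices, so the move-based approach seems the most reliable route.
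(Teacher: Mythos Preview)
The paper does not give its own proof of this statement: the introduction to \cref{weaksep} says explicitly that ``the proofs are omitted,'' and the theorem simply cites \cite{danilov-karzanov-koshevoy,oh-postnikov-speyer}. So there is no paper argument to compare against; your proposal has to be judged on its own.

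Your two-step strategy (verify for bridge decompositions, then propagate via \cref{thm:moves} and \cref{ex:face-labels-square}) is the natural one and is essentially how this is usually done. There is, however, a genuine gap in your Step~2. The ``purely combinatorial claim'' you isolate is false as stated: take $K=ikS$ itself. It is trivially weakly separated from all five sets $ikS,ijS,jkS,klS,ilS$, yet when $i,j,k,l$ are cyclically ordered, $ikS$ and $jlS$ are \emph{not} weakly separated (their symmetric difference $\{i,j,k,l\}$ alternates). The correct statement is the standard \emph{mutation lemma}: if $i,j,k,l$ are cyclically ordered and $K$ is weakly separated from the four ``edges'' $ijS,jkS,klS,ilS$, then $K$ is weakly separated from at least one of the two ``diagonals'' $ikS,jlS$. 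For your application you also need (i)~that $i,j,k,l$ are indeed cyclically ordered in the square of a reduced plabic graph---this is where reducedness enters, via the resonance property (\cref{thm:resonance}) at the four corner vertices---and (ii)~that $ikS$ does not occur as the label of some other face, i.e.\ that face labels are distinct. Neither of these is addressed in your outline, and (i) in particular is exactly the ingredient that makes the case analysis go through.

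Your Step~1 is also thin. Adding a bridge at $(i,j)$ swaps $i\leftrightarrow j$ in all face labels below the bridge and introduces one new face. Swapping $i\leftrightarrow j$ preserves weak separation only because every position strictly between $i$ and $j$ is a fixed point of the current permutation (conditions \eqref{eq:ij-pair-3}--\eqref{eq:ij-pair-4}), hence lies in all face labels or in none, so $i$ and $j$ are effectively cyclically adjacent; you should say this explicitly. You also need to identify the label of the new face and check it against the rest. None of this is hard, but none of it is written down.
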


\begin{definition}
\label{def:wekley-sep-collection}
A collection  $\mathcal{C} \subset \binom{[b]}{a}$
of $a$-element subsets of $[b]$ is \emph{weakly separated} if 
any $I,J\!\in\!\mathcal{C}$ are weakly separated.  
Thus, \cref{th:face-labels-weakly-sep} asserts that 
	the collection of target (or source) face labels of a 
reduced plabic graph is weakly separated.
%
%Moreover these collections are maximal in a sense
%	that will be made precise in \cref{thm:ops} and \cref{thm:ops2}.
%
A weakly separated collection $\mathcal{C}$ is called \emph{maximal} 
if it is not contained in any other weakly separated collection.
See \cref{fig:plabic4}.
\end{definition}

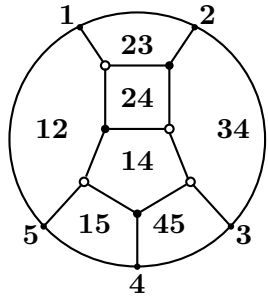
\begin{figure}[ht]%[htbp]
\begin{center}
\vspace{-.05in}
\setlength{\unitlength}{.8pt}
\begin{picture}(100,120)(-40,-70)
\thicklines
\multiput(1,0)(1,30){2}{\line(1,0){27.5}}
\multiput(0,1)(30,1){2}{\line(0,1){27.5}}
\put(0,30){\circle{4}}
\put(30,30){\circle*{4}}
\put(30,0){\circle{4}}
\put(0,0){\circle*{4}}
\put(-10,-25){\circle{4}}
\put(15,-40){\circle*{4}}
\put(40,-25){\circle{4}}
\put(0,0){\line(-10,-25){9}}
\put(30,-1.5){\line(10,-25){9}}
\put(15,-40){\line(-25,15){23}}
\put(15,-40){\line(25,15){23}}
\put(15,-5){\circle{120}}
\put(15,-65){\circle*{3}}
\put(15,-40){\line(0,-1){24}}
\put(-12,48){\circle*{3}}
\put(42,48){\circle*{3}}
\put(-12,48){\line(12,-18){11}}
\put(42,48){\line(-12,-18){11}}
\put(-29,-46){\circle*{3}}
\put(59,-46){\circle*{3}}
\put(-29,-46){\line(19,21){18}}
\put(59,-46){\line(-19,21){18}}
\put(-18,54){\makebox(0,0){$\mathbf{1}$}}
\put(48,54){\makebox(0,0){$\mathbf{2}$}}
\put(65,-50){\makebox(0,0){$\mathbf{3}$}}
\put(15,-73){\makebox(0,0){$\mathbf{4}$}}
\put(-35,-50){\makebox(0,0){$\mathbf{5}$}}
\put(15,40){\makebox(0,0){$\mathbf{23}$}}
\put(15,15){\makebox(0,0){$\mathbf{24}$}}
\put(15,-15){\makebox(0,0){$\mathbf{14}$}}
\put(60,0){\makebox(0,0){$\mathbf{34}$}}
\put(30,-45){\makebox(0,0){$\mathbf{45}$}}
\put(-5,-45){\makebox(0,0){$\mathbf{15}$}}
\put(-25,0){\makebox(0,0){$\mathbf{12}$}}
\end{picture}
\vspace{-.1in}
\end{center}
\caption{The target face labeling of the 
reduced plabic graph $G$ from \cref{fig:plabic}(a).
The set of labels $\{12, 23, 34, 45, 15, 24, 14\}$
is a maximal weakly separated collection in~$\binom{[5]}{2}$.
Here $\dpi_G = \dpi_{2,5}=(3,4,5,1,2)$.}
\label{fig:plabic4}
\end{figure}

\begin{theorem}[{\rm \cite{danilov-karzanov-koshevoy,oh-postnikov-speyer}}]
\label{thm:ops}
For $\mathcal{C} \subset \binom{[b]}{a}$, the following are equivalent:
\begin{itemize}[leftmargin=.2in]
\item 
$\mathcal{C}$ is a maximal weakly separated collection; 
\item
$\mathcal{C}$ is the set of target face labels of a reduced plabic graph~$G$
with $\dpi_G\!=\!\dpi_{a,b}$ (see \eqref{eq:dpi-ab}). 
\end{itemize}
In that case, the cardinality of $\mathcal{C}$ is equal to  $|\mathcal{C}| = a(b-a)+1$.
\end{theorem}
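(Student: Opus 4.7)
The plan is to prove the implication from plabic graphs to collections directly from the results stated earlier, and to obtain the converse by a mutation argument, with the main difficulty being a purity statement for weakly separated collections.

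First, suppose $\mathcal{C}$ is the target face-label collection of a reduced plabic graph $G$ with $\tilde\pi_G = \tilde\pi_{a,b}$. The decorated permutation $\tilde\pi_{a,b}$ has exactly $a$ anti-excedances, so Theorem \ref{thm:faces} gives $\mathcal{C} \subset \binom{[b]}{a}$, Theorem \ref{th:face-labels-weakly-sep} shows $\mathcal{C}$ is weakly separated, and Corollary \ref{cor:max-number-faces} yields $|\mathcal{C}| = a(b-a)+1$. Maximality of $\mathcal{C}$ as a weakly separated collection then follows \emph{provided} we know no weakly separated collection in $\binom{[b]}{a}$ exceeds cardinality $a(b-a)+1$ — a fact I would deduce from the purity argument below.

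For the converse, given a maximal weakly separated $\mathcal{C}$, I would build the desired reduced plabic graph by a mutation-by-mutation procedure. Call the transformation of face-label collections depicted in Figure \ref{fig:M1Plucker}, namely replacing $\{i,k\}\cup S$ by $\{j,\ell\}\cup S$ in the presence of the four surrounding labels $\{i,j\}\cup S,\ \{j,k\}\cup S,\ \{k,\ell\}\cup S,\ \{i,\ell\}\cup S$, a \emph{mutation} at $\{i,k\}\cup S$. A short combinatorial check on four-element windows shows mutations preserve weak separation. Fix a reference reduced plabic graph $G_0$ with $\tilde\pi_{G_0}=\tilde\pi_{a,b}$, for instance the one produced by the bridge decomposition algorithm (Theorem \ref{thm:bridge}), and let $\mathcal{C}_0$ be its face-label collection. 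Granting that any two maximal weakly separated collections in $\binom{[b]}{a}$ are connected by a sequence of mutations, I can realize a chain $\mathcal{C}_0, \mathcal{C}_1, \ldots, \mathcal{C}_N = \mathcal{C}$ by applying the corresponding square moves (M1) to $G_0$, obtaining a reduced plabic graph $G$ whose face-label collection is $\mathcal{C}$ and whose trip permutation remains $\tilde\pi_{a,b}$ throughout, since local moves preserve it by Exercise \ref{exercise:forward}.

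The hard part — effectively the whole content of the theorem — is the \emph{purity} statement: every maximal weakly separated collection in $\binom{[b]}{a}$ has cardinality $a(b-a)+1$, and any two such collections are connected by mutations. I would attack this by induction on the symmetric difference from a fixed reference collection (for instance, the collection of cyclic intervals together with a convenient triangulation of the remaining labels), using two inputs: first, that any non-maximal weakly separated collection admits an element that can be adjoined; second, that inside every 4-element window $\{i<j<k<\ell\}$ the configuration of the associated 3-subsets governs precisely when a mutation is available. Following the strategy of Oh--Postnikov--Speyer (or Danilov--Karzanov--Koshevoy), one shows the mutation graph on maximal weakly separated collections is connected; this simultaneously forces their common cardinality and supplies the chain needed for the construction in the previous paragraph. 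Once purity is in hand, both directions of the equivalence — and the formula $|\mathcal{C}| = a(b-a)+1$ — fall out immediately.
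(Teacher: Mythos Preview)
The paper does not prove this theorem: the section opens with ``The proofs are omitted,'' and \cref{thm:ops} is simply stated with citations to \cite{danilov-karzanov-koshevoy,oh-postnikov-speyer}. So there is no in-paper argument to compare against.

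Your outline is reasonable and correctly identifies the structure of the proof. The forward direction does indeed follow from the results already established (\cref{thm:faces}, \cref{th:face-labels-weakly-sep}, \cref{cor:max-number-faces}), modulo one point you glossed over: \cref{cor:max-number-faces} counts \emph{faces}, and you need that distinct faces receive distinct target labels before you can conclude $|\mathcal{C}|=a(b-a)+1$. This injectivity is not proved in the chapter and is itself part of what the cited papers establish. For the converse, you correctly isolate the real content as the purity/connectivity statement for maximal weakly separated collections, and you defer it to the strategy of Oh--Postnikov--Speyer or Danilov--Karzanov--Koshevoy. That is honest, but it means your ``proof'' is really a reduction to the cited literature---which is exactly what the paper does by omitting the proof and citing those sources. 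The substantive argument (purity via plabic tilings or via the wiring/tiling machinery of DKK) is genuinely hard and is not something one can sketch in a paragraph; your induction-on-symmetric-difference idea is too vague to count as a proof plan for it.
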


\begin{remark}
A general formula for the number of maximal 
weakly separated collections in $\binom{[b]}{a}$ is unknown.
For $a=2$, the maximal weakly separated collections in 
$\binom{[b]}{2}$ are in bijection with triangulations of a convex $b$-gon, 
so they are counted by the Catalan numbers~$C_{b-2}$, where 
$C_n = \frac{1}{n+1} \binom{2n}{n}$.
For $a=3$ and $b=6,\dots, 12$,  the number of maximal weakly separated collections 
in $\binom{[b]}{3}$ is equal to 
$34, 259, 2136, 18600, 168565, 1574298, 15051702$.
See~\cite{Early} for more data.  
\end{remark}

\cref{thm:ops} can be generalized to arbitrary reduced
plabic graphs.  To state this result, we will need the following notion.

\begin{definition}[{\cite[Definition 16.1]{postnikov}}] 
A \emph{Grassmann necklace} of type $(a,b)$ is a sequence
$\mathcal{I} = (I_1,\dots,I_b)$ of subsets $I_i\in \binom{[b]}{a}$ 
such that, for $i=1,\dots,b$, we have 
$I_{i+1}\supset I_i\setminus \{i\}$.
(Here the indices are taken modulo $b$, so that $I_1\supset I_b\setminus \{b\}$.)  
Thus, if $i\notin I_i$, then $I_{i+1}=I_i$.
\end{definition}

In other words, either $I_{i+1}=I_i$ 
or $I_{i+1}$ is obtained from $I_i$ by deleting
$i$ and adding another element.  
Note that if $I_{i+1}=I_i$, then either $i$ belongs to all elements
$I_j$ of the necklace, or $i$ belongs to none of them.

\begin{example}
\label{ex:grassmann-necklace}
The sequence $\mathcal{I}=(126, 236, 346, 456, 156, 126)$ is 
a Grassmann necklace of type $(3,6)$.
\end{example}

\begin{definition}
\label{def:ell-order}
For $\ell\in\{1,\dots,b\}$, 
we define the linear order $<_{\ell}$ on $\{1,\dots,b\}$ as follows:
\begin{equation*}
\ell <_{\ell} \ell+1 <_{\ell} \ell+2 <_{\ell} \dots <_{\ell}b <_{\ell} 1 <_{\ell} \dots 
<_{\ell} \ell-1.
\end{equation*}

For a decorated permutation $\dpi$ on $b$ letters, we say that 
$i\in \{1,\dots,b\}$ is an \emph{$\ell$-anti-excedance} of $\dpi$ if 
either $\dpi^{-1}(i) >_\ell i$ or if $\dpi(i)=\overline{i}$.
Thus, a $1$-anti-excedance is the same as an (ordinary) anti-excedance, 
as in \cref{def:anti}.  
\end{definition}

It is not hard to see that the
number of $\ell$-anti-excedances does not depend on the choice of $\ell\in \{1,\dots,b\}$,
so we simply refer to this quantity as the number of anti-excedances.

\begin{lemma}
\label{lem:decpermnecklace}
Decorated permutations 
on $b$ letters with $a$ anti-excedances are in bijection with 
 Grassmann necklaces
$\mathcal{I}$ 
of type $(a,b)$.  
\end{lemma}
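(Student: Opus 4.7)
The plan is to construct explicit inverse bijections in both directions. For the forward map, given a decorated permutation $\dpi$ on $b$ letters with $a$ anti-excedances, I would set $I_\ell$ to be the set of $\ell$-anti-excedances of $\dpi$ in the sense of \cref{def:ell-order}. The observation following that definition (that the number of $\ell$-anti-excedances does not depend on $\ell$) yields $|I_\ell|=a$. To verify the necklace condition $I_{\ell+1}\supset I_\ell\setminus\{\ell\}$, I would note that the two cyclic orders $<_\ell$ and $<_{\ell+1}$ agree on $\{1,\dots,b\}\setminus\{\ell\}$, since passing from one to the other merely relocates $\ell$ from the minimum to the maximum position. Hence for any $i\in I_\ell\setminus\{\ell\}$, the $\ell$-anti-excedance condition ($\dpi(i)=\overline{i}$, or $\dpi^{-1}(i)>_\ell i$---in the latter case necessarily $\dpi^{-1}(i)\neq\ell$) transfers unchanged to $<_{\ell+1}$.

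For the inverse map, given a Grassmann necklace $\mathcal{I}=(I_1,\dots,I_b)$ of type $(a,b)$, I would define $\dpi$ case-by-case for each $i\in\{1,\dots,b\}$. If $I_{i+1}=(I_i\setminus\{i\})\cup\{k\}$ with $k\neq i$, set $\dpi(i)=k$; if $I_{i+1}=I_i$ and $i\in I_i$, set $\dpi(i)=\overline{i}$; if $I_{i+1}=I_i$ and $i\notin I_i$, set $\dpi(i)=\underline{i}$. The necklace condition (together with $|I_i|=|I_{i+1}|=a$) rules out any other scenario.

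The main obstacle is verifying that this recipe really produces a decorated permutation, that is, that the underlying map on $\{1,\dots,b\}$ (with decorated fixed points treated as ordinary fixed points) is a bijection. The key observation is that the necklace condition forces an element $k$ to leave the sequence---to have $k\in I_\ell$ but $k\notin I_{\ell+1}$---only when $\ell=k$. Since $I_{b+1}=I_1$, over one cycle the number of entries of $k$ equals the number of exits, and both are at most $1$. A short case analysis then gives exactly three possibilities for each $k$: (i) $k$ lies in every $I_\ell$, in which case $\dpi(k)=\overline{k}$ and no other $j$ satisfies $\dpi(j)=k$; (ii) $k$ lies in no $I_\ell$, in which case $\dpi(k)=\underline{k}$ and again no $j$ satisfies $\dpi(j)=k$; or (iii) $k$ enters at a unique step $j\neq k$, forcing $\dpi(j)=k$, and exits at step $k$. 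In each case $k$ has a unique preimage under the underlying map, so $\dpi$ is a bijection.

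Finally, I would check that the two constructions are mutual inverses by direct bookkeeping. Starting from $\dpi$ with $I_\ell$ defined as the $\ell$-anti-excedances, each $k$ with $\dpi(k)\in\{1,\dots,b\}$ belongs to $I_\ell$ precisely when $\ell$ lies in the cyclic arc from $\dpi^{-1}(k)+1$ to $k$, while decorated fixed points correspond to $k$ belonging to all or none of the $I_\ell$. Unpacking these arcs shows the inverse recipe returns $\dpi$; the opposite composition is handled by the parallel computation carried out in the proof of bijectivity above. Once this bookkeeping is in place, the lemma follows.
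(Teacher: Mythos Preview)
Your proposal is correct and follows essentially the same approach as the paper: the two maps you define (from $\dpi$ to the necklace of $\ell$-anti-excedance sets, and from $\mathcal{I}$ back to $\dpi$ via the rule $\dpi(i)=j$ when $I_{i+1}=(I_i\setminus\{i\})\cup\{j\}$, with the appropriate decorations when $I_{i+1}=I_i$) are exactly the ones in the paper's proof. The paper merely records these two constructions and asserts the bijection, whereas you actually supply the verifications---that the necklace condition holds, that the backward map lands in permutations, and that the compositions are identities---so your write-up is in fact more complete than the paper's.
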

\begin{proof}
	To go from $\mathcal{I}$ to the corresponding
decorated permutation $\dpi = \dpi(\mathcal{I})$, we set
$\dpi(i)=j$ whenever $I_{i+1}=(I_i\setminus \{i\}) \cup \{j\}$
for $i\neq j$.  
If $i\notin I_i=I_{i+1}$ then $\dpi(i) = 
\underline{i}$, and 
 if $i\in I_i=I_{i+1}$ then $\dpi(i) = 
\overline{i}$.  

Going in the other direction, let $\dpi$ be a decorated permutation. 
For $\ell\in\{1,\dots,b\}$, we denote by $I_\ell$ the set of $\ell$-anti-excedances of~$\dpi$.  
Then $\mathcal{I} =\mathcal{I}(\dpi)= (I_1,\dots, I_b)$ is the corresponding Grassmann necklace.
\end{proof}

%NOTE: $\mathcal{I}$ will correspond to the face labels of the boundary regions of $G$
%when we use the \emph{TARGET} labels of faces.  Need to discuss both of them here.

\begin{example}
Let $\mathcal{I}\!=\!(126, 236, 346, 456, 156, 126)$, cf.\ \cref{ex:grassmann-necklace}.
Then $\dpi(\mathcal{I}) = (3,4,5,1,2,\overline{6})$.
\end{example}

\begin{example}
\label{ex:pi-ab-necklace}
Let $\dpi_G=\dpi_{a,b}$, cf.~\eqref{eq:dpi-ab}. 
The corresponding Grassmann necklace (cf.\ \cref{lem:decpermnecklace})
is given by
\begin{equation}
\label{eq:I-dpi-ab}
\mathcal{I}(\dpi_{a,b})=(\{1,2,\dots,a\}, \{2,3,\dots,a,a+1\}, \dots, \{b,1,2,\dots, a-1\}). 
\end{equation}
\end{example}

\begin{definition}
\label{def:ell-partial-order}
We extend the linear order $<_{\ell}$ on $\{1,\dots,b\}$ 
to a partial order on~$\binom{[b]}{a}$, as follows.
Let
\begin{align*}
&I=\{i_1,\dots,i_a\}, \,\,\quad i_1<_{\ell} i_2 <_{\ell} \dots <_{\ell} i_a; \\
&J=\{j_1,\dots,j_a\}, \quad j_1<_{\ell} j_2 <_{\ell} \dots <_{\ell} j_a. 
\end{align*}
Then, by definition, $I \leq_{\ell} J$ 
if and only if 
$i_1 \leq_{\ell} j_1,\dots, i_a \leq_{\ell} j_a$.
\end{definition}

\begin{definition}
\label{def:necklace-to-positroid}
	For a Grassmann necklace $\mathcal{I}=(I_1,\dots,I_b)$ of type $(a,b)$, we define the associated 
\emph{positroid} $\mathcal{M}_{\mathcal{I}}$ by
\begin{equation*}
\mathcal{M}_{\mathcal{I}}=\{J\in \textstyle\binom{[b]}{a} \ \vert \ I_\ell \leq_{\ell}
J \text{ for all }\ell \in \{1,\dots,b\}\}.
\end{equation*}
\end{definition}

As we will see in a subsequent chapter, % ~\ref{ch:Grassmannians},  
positroids are the (realizable) \emph{matroids} that 
arise from full rank $a\times b$  matrices with all Pl\"ucker coordinates nonnegative.
Abstractly, one may also define a \emph{positively oriented matroid} to be an oriented
matroid on $\{1,2,\dots,b\}$ 
whose chirotope takes nonnegative values on any ordered subset $\{i_1< \dots < i_a\}$.
By \cite{Ardila-Rincon-Williams}, these two notions are the same, in other words,
every positively oriented matroid is realizable.  

\begin{example}
\label{ex:positroid-ab}
Let $\mathcal{I}=\mathcal{I}(\dpi_{a,b})$, see~\eqref{eq:I-dpi-ab}. 
Then $\mathcal{M}_{\mathcal{I}}=\binom{[b]}{a}$,
i.e., the positroid associated with~$\mathcal{I}$ contains all $a$-element subsets of $\{1,\dots,b\}$. 
\end{example}

\begin{definition}
\label{def:strongly equivalent}
Two reduced plabic graphs are called \emph{strongly equivalent}
if they have the same sets of face labels.  

We note that two plabic graphs which are connected
via moves (M2) and (M3) are strongly equivalent.
\end{definition}

Recall that $\mathcal{F}_{\target}(G)$ denotes the collection of target-labels of faces of 
a reduced plabic graph $G$.

\begin{theorem}[{\cite[Theorem~1.5]{oh-postnikov-speyer}}] 
\label{thm:ops2}
Fix a decorated permutation $\dpi$ on $b$ letters with $a$ anti-excedances.
Let~$\mathcal{I}$ be the corresponding Grassmann necklace of type $(a,b)$, 
cf.\ \cref{lem:decpermnecklace}. 
Let $\mathcal{M}_{\mathcal{I}}$ be the associated positroid, 
cf.\ \cref{def:necklace-to-positroid}.
Then the map $G \mapsto \mathcal{F}_{\target}(G)$ gives a bijection~between
\begin{itemize}[leftmargin=.2in]
\item 
the strong equivalence classes of reduced plabic graphs $G$ 
with decorated trip permutation $\dpi_G = \dpi$ and 
\item
the collections $\mathcal{C} \subset \binom{[b]}{a}$ that are maximal (with respect to inclusion) 
among the weakly separated collections satisfying 
$\mathcal{I} \subseteq \mathcal{C} \subseteq \mathcal{M}_{\mathcal{I}}$.
\end{itemize}
\end{theorem}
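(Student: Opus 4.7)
The plan is to verify that $\mathcal{F}_{\target}(G)$ lies in the target set (well-definedness), deduce injectivity from the very definition of strong equivalence, and then establish surjectivity by a mutation argument that uses Theorem \ref{thm:ops} as the base case. First I would check the three containment/maximality conditions. For $\mathcal{I} \subseteq \mathcal{F}_{\target}(G)$: for each $i$, identify the face $f_i$ of $G$ lying immediately clockwise of boundary vertex~$i$ and compute its target label directly from \cref{def:faces}; the set of labels $j$ whose trip $T_j$ has $f_i$ on its left is exactly the set of $i$-anti-excedances of $\dpi_G$, matching $I_i$ under the bijection of \cref{lem:decpermnecklace}. For $\mathcal{F}_{\target}(G) \subseteq \mathcal{M}_{\mathcal{I}}$: given a face $f$ with target label $J$ and an index~$\ell$, choose a path of faces from $f_\ell$ to $f$, apply \cref{rem:edge-vs-face} to track how the label changes edge by edge, and show that each step preserves $I_\ell \leq_\ell \,\cdot\,$ using the resonance property (\cref{thm:resonance}) to control which swaps can occur. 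Weak separatedness is \cref{th:face-labels-weakly-sep}.

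Next I would establish the maximality half of well-definedness, that $\mathcal{F}_{\target}(G)$ cannot be properly enlarged inside $\mathcal{M}_{\mathcal{I}}$ while remaining weakly separated. The cleanest route is a cardinality count: by \cref{cor:number-faces}, $|\mathcal{F}_{\target}(G)| = a(b-a) - \ell(\affpi) + 1$, and one shows independently (via the theory of plabic tilings in~\cite{oh-postnikov-speyer}, or by induction on $\ell(\affpi)$ using bridge decompositions from \cref{sec:bridge}) that every maximal weakly separated collection in $\mathcal{M}_{\mathcal{I}}$ containing $\mathcal{I}$ has exactly this cardinality, so $\mathcal{F}_{\target}(G)$ is forced to be maximal. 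Injectivity on strong equivalence classes is then automatic: by \cref{def:strongly equivalent}, strong equivalence is defined to mean equality of face labels, so distinct classes produce distinct collections.

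For surjectivity, given $\mathcal{C}$ in the target set I would start from an arbitrary reduced plabic graph~$G_0$ with $\dpi_{G_0} = \dpi$ (which exists by \cref{permtoG}) and transform~$G_0$ into a graph $G$ with $\mathcal{F}_{\target}(G) = \mathcal{C}$ by a sequence of square moves. The engine is a mutation lemma: any two maximal weakly separated collections in the target set are connected through single-element swaps $\mathcal{C} \leftrightarrow (\mathcal{C}\setminus\{ikS\})\cup\{jlS\}$ exactly as in \cref{fig:M1Plucker}, and each such swap is realized by a square move (M1) on the corresponding plabic graph. The swap-connectedness for the top cell $\dpi_{a,b}$ follows from \cref{thm:ops}; for general $\dpi$ the argument extends by induction, using that any element of $\mathcal{M}_{\mathcal{I}}$ not in $\mathcal{C}$ must be obtainable from $\mathcal{C}$ through a single weakly separated swap (the existence of a ``mutable'' element). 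The main obstacle is precisely this extension: one needs to verify both that any such $\mathcal{C}$ admits a nontrivial mutation when $\mathcal{C}\neq \mathcal{F}_{\target}(G_0)$ and that the corresponding square move on the plabic graph side stays within the move-equivalence class defined by $\dpi$ (which requires condition \eqref{eq:square-move-tricky}, automatic for reduced graphs by \cref{rem:restricted-M1}), producing a face-label change that exactly matches the desired swap. Once this mutation correspondence is in place, the surjective portion follows by iterating the swap until $\mathcal{F}_{\target}(G)$ coincides with~$\mathcal{C}$, terminating because the symmetric difference strictly shrinks at each step.
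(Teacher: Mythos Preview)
The paper does not contain a proof of this theorem. Section~\ref{weaksep} opens with the sentence ``The proofs are omitted,'' and \cref{thm:ops2} is stated purely as a citation to \cite[Theorem~1.5]{oh-postnikov-speyer}, with no argument supplied. There is therefore nothing in the paper to compare your proposal against.

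Your sketch is a plausible outline of how the result is organized in the literature, but note that it rests on \cref{thm:ops} and \cref{th:face-labels-weakly-sep}, both of which are likewise stated without proof in this chapter. So within the paper's internal logic you would be deducing one unproved black box from two others. The actual proof in \cite{oh-postnikov-speyer} relies on the machinery of plabic tilings (which you allude to parenthetically); in particular, the ``mutation lemma'' you invoke for surjectivity---that any two maximal weakly separated collections inside $\mathcal{M}_{\mathcal{I}}$ containing $\mathcal{I}$ are connected by single-element swaps---is one of the main results of that paper and is not a consequence of the tools developed here. Your outline correctly identifies this as the crux, but it is not something one can fill in by induction on $\ell(\affpi)$ alone without substantial additional work.
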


\begin{remark}
Let $\mathcal{I}=\mathcal{I}(\dpi_{a,b})$. 
Then $\mathcal{I}=\mathcal{I}(\dpi_{a,b})$ is given by~\eqref{eq:I-dpi-ab}. 
Each of the $b$ cyclically consecutive subsets in \eqref{eq:I-dpi-ab}
is weakly separated from every other $a$-element subset of $\{1,\dots,b\}$,
so every maximal weakly separated collection $\mathcal{C} \subset \binom{[b]}{a}$ 
must contain~$\mathcal{I}$. 
Furthermore, $\mathcal{M}_{\mathcal{I}}=\binom{[b]}{a}$ (see \cref{ex:positroid-ab}),
so any such~$\mathcal{C}$ automatically  
satisfies the inclusions $\mathcal{I} \subseteq \mathcal{C} \subseteq \mathcal{M}_{\mathcal{I}}$.
We thus recover \cref{thm:ops} as a special case of \cref{thm:ops2}.
\end{remark}

%%% Local Variables:
%%% mode: latex
%%% TeX-master: book.tex
%%% End:

\backmatter

%%%%%%\include{references}
%\include{bibliography}

%bibliographystyle{amsalpha}
\bibliographystyle{acm}
\bibliography{bibliography}
\label{sec:biblio}

%RESTORE IN THE FINAL VERSION:
%\printindex

\end{document}